\definecolor{darkgreen}{rgb}{0,0.6,0}
\newcolumntype{M}[1]{>{\centering\arraybackslash}m{#1}} 
\let\originalleft\left 
\let\originalright\right
\renewcommand{\left}{\mathopen{}\mathclose\bgroup\originalleft}
\renewcommand{\right}{\aftergroup\egroup\originalright}
\newcommand{\N}{\mathbb{N}}
\newcommand{\Z}{\mathbb{Z}}
\newcommand{\R}{\mathbb{R}}
\newcommand{\T}{\mathbb{T}}
\newcommand{\C}{\mathbb{C}}
\newcommand{\D}{\mathbb{D}}
\newcommand{\cL}{\mathcal{L}}
\newcommand{\cF}{\mathcal{F}}
\newcommand{\cH}{\mathcal{H}}
\newcommand{\cX}{\mathcal{X}}
\newcommand{\cC}{\mathcal{C}}
\newcommand{\cS}{\mathcal{S}}
\newcommand{\cG}{\mathcal{G}}
\newcommand{\cR}{\mathcal{R}}
\newcommand{\cE}{\mathcal{E}}
\newcommand{\cK}{\mathcal{K}}
\newcommand{\cQ}{\mathcal{Q}}
\newcommand{\cV}{\mathcal{V}}
\newcommand{\cU}{\mathcal{U}}
\newcommand{\cM}{\mathcal{M}}
\newcommand{\sL}{\mathscr{L}}
\newcommand{\sR}{\mathscr{R}}
\newcommand{\sP}{\mathscr{P}}
\newcommand{\sJ}{\mathscr{J}}
\newcommand{\sfA}{\mathsf{A}}
\newcommand{\sfD}{\mathsf{D}}
\newcommand{\sfE}{\mathsf{E}}
\newcommand{\sfF}{\mathsf{F}}
\newcommand{\sfG}{\mathsf{G}}
\newcommand{\sfL}{\mathsf{L}}
\newcommand{\sfM}{\mathsf{M}}
\newcommand{\sfR}{\mathsf{R}}
\newcommand{\sfd}{\mathsf{d}}
\newcommand{\pOmega}{{\partial\Omega}}
\newcommand{\dtn}{\Lambda_{\gamma}}
\newcommand{\dtnz}{\Lambda_{\gamma_0}}
\newcommand{\ntd}{\mathscr{R}_{\gamma}}
\newcommand{\ntdz}{\mathscr{R}_{\gamma_0}}
\newcommand{\tntd}{\widetilde{\mathscr{R}}_{\gamma}}
\newcommand{\tntdz}{\widetilde{\mathscr{R}}_{\gamma_0}}
\newcommand{\BV}{\mathsf{BV}}
\DeclarePairedDelimiterX{\iptemp}[2]{\langle}{\rangle}{#1, #2}
\newcommand{\ip}{\iptemp}
\DeclarePairedDelimiterX{\normtemp}[1]{\lVert}{\rVert}{#1}
\newcommand{\norm}{\normtemp}
\DeclarePairedDelimiterX{\abstemp}[1]{\lvert}{\rvert}{#1}
\newcommand{\abs}{\abstemp}
\DeclarePairedDelimiterX{\trtemp}[1]{(}{)}{#1}
\DeclarePairedDelimiterX{\SEtemp}[2]{(}{)}{#1, #2}
\DeclarePairedDelimiterX{\SGtemp}[1]{(}{)}{#1}
\DeclareRobustCommand{\rchi}{{\mathpalette\irchi\relax}}
\newcommand{\irchi}[2]{\raisebox{\depth}{$#1\chi$}} 
\newcommand{\set}[2]{{\left\{ #1 \,\middle|\, #2 \right\}}}
\newcommand{\defeq}{\coloneqq} 
\newcommand{\eqdef}{\eqqcolon} 
\DeclarePairedDelimiterX{\floor}[1]{\lfloor}{\rfloor}{#1} 
\DeclarePairedDelimiterX{\ceil}[1]{\lceil}{\rceil}{#1} 
\DeclareMathOperator{\Id}{Id} 
\newcommand{\HS}{\mathrm{HS}} 
\DeclareMathOperator{\image}{Ran} 
\DeclareMathOperator{\domain}{Dom} 
\DeclareMathOperator{\Int}{Int} 
\newcommand{\comp}{\textsf{c}} 
\newcommand{\Law}{\operatorname{Law}} 
\newcommand{\normal}{\mathcal{N}} 
\newcommand{\Unif}{\mathrm{Unif}}
\newcommand{\diff}{d}
\newcommand{\dd}[1]{\,\diff{#1}}
\newcommand{\lap}{\Delta} 
\newcommand{\onebm}{\mathds{1}} 
\DeclareMathOperator{\supp}{supp} 
\newcommand{\al}{\alpha}
\newcommand{\ep}{\varepsilon}
\def\qfa{\quad\text{for all}\quad}
\def\qa{\quad\text{and}\quad}
\def\qw{\quad\text{where}\quad}
\def\qon{\quad\text{on}\quad}
\def\qif{\quad\text{if}\quad}
\newcommand{\slot}{{\,\cdot\,}}
\newcommand{\sfit}[1]{\textsf{\small{#1}}} 
\newcommand{\myitem}[1]{%
	\item[#1]\protected@edef\@currentlabel{#1}%
}
\newcommand{\mysecspace}{\hskip.5em}
\colorlet{siaminlinkcolor}{green!50!black}
\colorlet{siamexlinkcolor}{red!50!black}
\colorlet{siamreviewcolor}{black!50}
\theoremstyle{thmstyleone}%
\newtheorem{theorem}{Theorem}[section]
\newtheorem{proposition}[theorem]{Proposition}%
\newtheorem{corollary}[theorem]{Corollary}%
\newtheorem{lemma}[theorem]{Lemma}%
\theoremstyle{thmstyletwo}%
\newtheorem{remark}{Remark}%
\theoremstyle{thmstylethree}%
\newtheorem{definition}{Definition}%
\newtheorem{assumption}{Assumption}%
\newtheorem{condition}{Condition}%
\numberwithin{equation}{section}
\numberwithin{theorem}{section}
\crefname{hypothesis}{Hypothesis}{Hypotheses}
\crefname{assumption}{Assumption}{Assumptions}
\crefname{condition}{Condition}{Conditions}
\crefname{fact}{Fact}{Facts}
\crefname{notation}{Notation}{Notation}
\crefname{problem}{Problem}{Problems}
\begin{document}

\title[Extension and approximation of the EIT inverse map]{Extension and neural operator approximation of the electrical impedance tomography inverse map}

\author[1]{\fnm{Maarten V.} \spfx{de} \sur{Hoop}}\email{mdehoop@rice.edu}

\author[2]{\fnm{Nikola B.} \sur{Kovachki}}\email{nkovachki@nvidia.com}

\author[3]{\fnm{Matti} \sur{Lassas}}\email{matti.lassas@helsinki.fi}

\author*[4,5,6]{\fnm{Nicholas H.} \sur{Nelsen}}\email{nnelsen@oden.utexas.edu}

\affil[1]{\normalsize \orgdiv{Simons Chair in Computational and Applied Mathematics and Earth Science}, \orgname{Rice University}, \orgaddress{\city{Houston}, \state{TX} \postcode{77005}, \country{USA}}}

\affil[2]{\normalsize \orgdiv{NVIDIA AI}, \orgname{NVIDIA Corporation}, \orgaddress{\city{Santa Clara}, \state{CA} \postcode{95051}, \country{USA}}}

\affil[3]{\normalsize \orgdiv{Department of Mathematics and Statistics}, \orgname{University of Helsinki}, \orgaddress{\street{P.O. Box 68 FI-00014}, \city{Helsinki}, \country{Finland}}}

\affil[4]{\normalsize \orgdiv{Department of Mathematics}, \orgname{Massachusetts Institute of Technology}, \orgaddress{\city{Cambridge}, \state{MA} \postcode{02139}, \country{USA}}}

\affil[5]{\normalsize \orgdiv{Department of Mathematics}, \orgname{Cornell University}, \orgaddress{\city{Ithaca}, \state{NY} \postcode{14853}, \country{USA}}}

\affil[6]{\normalsize \orgdiv{Oden Institute for Computational Engineering and Sciences}, \orgname{The University of Texas at Austin}, \orgaddress{\city{Austin}, \state{TX} \postcode{78712}, \country{USA}}}

\abstract{This paper considers the problem of noise-robust neural operator approximation for the solution map of Calder\'on's inverse conductivity problem. In this continuum model of electrical impedance tomography (EIT), the boundary measurements are realized as a noisy perturbation of the Neumann-to-Dirichlet map's integral kernel. The theoretical analysis proceeds by extending the domain of the inversion operator to a Hilbert space of kernel functions. The resulting extension shares the same stability properties as the original inverse map from kernels to conductivities, but is now amenable to neural operator approximation. Numerical experiments demonstrate that Fourier neural operators excel at reconstructing infinite-dimensional piecewise constant and lognormal conductivities in noisy setups both within and beyond the theory’s assumptions. The methodology developed in this paper for EIT exemplifies a broader strategy for addressing nonlinear inverse problems with a noise-aware operator learning framework.
}

\keywords{Calder\'on problem, extension, universal approximation, neural operator, inverse map, stability estimates, compactness, boundary manifold}

\pacs[MSC Classification]{35R30 (Primary) 65N21, 68T07 (Secondary)}

\maketitle

\setcounter{tocdepth}{3}
\tableofcontents

\clearpage

\section{Introduction}\label{sec:intro}
\emph{Electrical impedance tomography} (EIT) is a canonical nonlinear and severely ill-posed inverse boundary value problem. Let $ \Omega\subset \R^d $ be a bounded domain representing the medium of interest and $ \pOmega $ be its smooth boundary. In EIT, the physical model is the linear elliptic partial differential equation (PDE)
\begin{align}\label{eqn:elliptic}
\begin{split}
    -\nabla\cdot(\gamma \nabla u)&=0 \ \text{ in } \Omega\,, \\ 
    \gamma\dfrac{\partial u}{\partial \mathsf{n}}&=g \ \text{ on } \pOmega\,.
\end{split}
\end{align}
In the context of medical imaging, the solution $u$ is the electric potential, $\gamma$ is the unknown interior electrical conductivity, $g$ is the applied boundary current, and $\partial/\partial\mathsf{n}$ is the outward normal derivative. The inverse problem is to recover $ \gamma $ inside $\Omega$ given noisy measurements on $\pOmega$. Mathematically, these measurements are encoded by the \emph{Neumann-to-Dirichlet} (NtD) map $\ntd$ assigning to each applied boundary current $g$ the resulting boundary voltage $u|_{\pOmega}$. This leads to the so-called \emph{Calder\'on problem}.

Applications of EIT include lung monitoring, stroke detection, non-destructive testing, and geophysical exploration. Traditional direct reconstruction methods are grounded in PDE analysis and have well-understood stability properties. However, they can rely on strong regularization that blurs sharp inclusions and small scale structure in the reconstructions. Iterative methods are a local alternative based on optimization, but they can be too slow for real-time use. Moreover, both classes of methods are usually applied independently to each set of measurements and do not exploit historical information. Obtaining EIT reconstructions that are simultaneously accurate, noise-robust, and computationally efficient remains challenging, especially for highly heterogeneous or discontinuous conductivities.

\emph{Operator learning} offers a distinct, amortized viewpoint. Instead of repeatedly running the classic inverse solver for each new set of measurements from the map $\ntd$, one learns a single approximation of the inversion operator $\ntd\mapsto\gamma $ from training data using a suitable network architecture. Once trained, this map can be rapidly evaluated on new observations. It implicitly encodes rich prior information derived from historical datasets. The present paper develops and analyzes neural operator approximations for EIT in dimension $d\geq 2$. The theoretical analysis establishes an approximation theorem that is valid for noisy inputs. Numerically, the paper demonstrates that appropriately designed and trained neural operators can deliver highly accurate reconstructions of discontinuous conductivities given noisy measurements; see \cref{fig:compare_dbar} for a preview.

\Cref{sec:intro_contrib} presents the contributions of this paper and gives a high-level overview of the paper's main result and its proof. \Cref{sec:intro_lit} briefly reviews relevant literature. \Cref{sec:intro_outline} outlines the structure of the remainder of the paper.

\subsection{Contributions and proof overview}\label{sec:intro_contrib}
In the setting of Calder\'on's problem and EIT, this paper blends operator learning of inverse maps with the rich stability theory of nonlinear inverse problems. Our main contributions are as follows.

\begin{enumerate}[label=(C\arabic*),leftmargin=2.5\parindent,topsep=1.67ex,itemsep=0.5ex,partopsep=1ex,parsep=1ex]
    \item \label{contr:extend} \textbf{Extension of inverse map.}\indent
    We establish that the EIT inversion operator---usually defined between subsets of two Banach spaces---also maps between subsets of two Hilbert spaces, and moreover, can be extended to the whole input Hilbert space with a controlled modulus of continuity.

    \item \label{contr:approx} \textbf{Approximation of inverse map.}\indent
    Building on Contribution~\ref{contr:extend}, we prove that there exists a Fourier neural operator (FNO) \cite{li2020fourier} that uniformly approximates the EIT inverse map over a compact set of admissible conductivities and noisy NtD map data perturbations.

    \item \label{contr:numeric} \textbf{Numerical reconstructions.}\indent
    We numerically demonstrate that trained FNOs can rapidly and accurately reconstruct two and three phase discontinuous conductivities and lognormal conductivities from highly noisy NtD map integral kernel functions, in regimes both within and beyond the scope of the theoretical assumptions required by Contribution~\ref{contr:approx}.
\end{enumerate}

Along the way to Contributions~\ref{contr:extend}~and~\ref{contr:approx}, we develop several technical results that may be of independent interest. In dimension $d=2$, we work with an admissible set $\cX$ of conductivities bounded in $L^\infty$ and in total variation norm. This set contains certain piecewise constant functions, which makes our results widely applicable. We obtain quantitative logarithmic stability estimates on the set $\cX$, while accommodating boundary data not just in the form of NtD maps, but also in the form of integral kernel functions corresponding to these NtD maps. We also show that $\cX$ is compact in the $L^p(\Omega)$ topology. On the machine learning side, we generalize neural operators such as FNO to accommodate (\emph{i}) functions defined on compact manifolds instead of just bounded domains and (\emph{ii}) input and output domains of different spatial dimension.

\begin{figure}[tb]
    \centering
    \captionsetup{skip=10pt}
    \includegraphics[width=\textwidth]{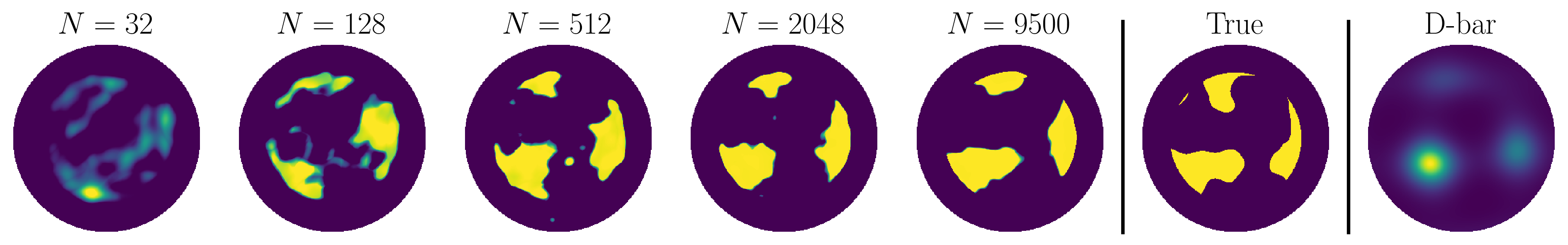}\\[-0.3\baselineskip]
    \includegraphics[width=\textwidth]{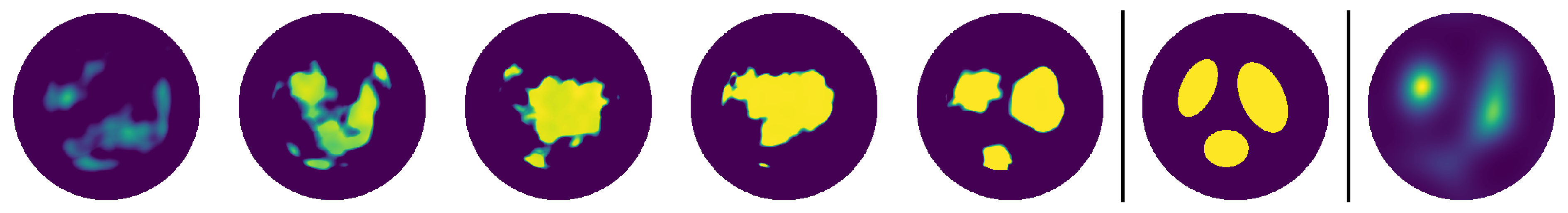}\\[-0.3\baselineskip]
    \includegraphics[width=\textwidth]{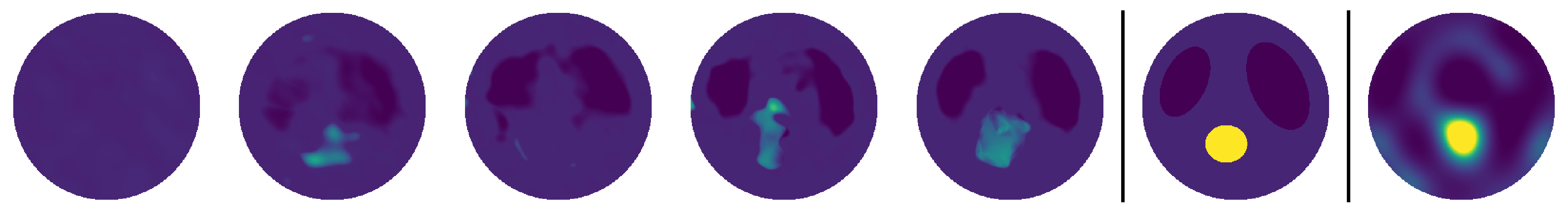}
    \caption{FNO reconstruction of three discontinuous conductivities as training dataset size $N$ increases. D-bar method \cite{knudsen2009regularized} reconstructions are shown in comparison. The noise level is $1\%$.}
    \label{fig:compare_dbar}
\end{figure}

The main result of this paper is \cref{thm:fno_approx_main}, which is the FNO approximation theorem for EIT as mentioned in Contribution~\ref{contr:approx}. The theorem holds for boundary data outside of the range of the EIT forward map. Data can leave this range for several reasons, including noisy measurements, discretization errors, or modeling errors. Our theory handles general worst-case perturbations and thus encompasses all three examples. 
The proof of \cref{thm:fno_approx_main} requires several steps, as illustrated in \cref{fig:flowchart}. We now give an overview of the argument.

\begin{figure}[tb]
    \centering
    \captionsetup{skip=10pt}
    \begin{tikzpicture}[node distance=2cm, scale=1.0, every node/.style={transform shape}, every text node part/.style={align=center}]
    \tikzset{
        section/.style={rectangle, minimum width=3cm, minimum height=1cm, text centered, rounded corners, minimum height=2em, draw=black, fill=#1},
        arrow/.style={thick, ->, >=Stealth}
    }

    \node (1) [section=white!90!gray] {\textbf{Lemma~\ref{lem:ntd_hs_any_dim}}\\NtD maps are\\Hilbert--Schmidt};
    \node (2) [section=white!30!gray, below of=1] {\textbf{Corollary~\ref{cor:forward_stability_kernel}}\\ Forward map stability};
    \node (3) [section=white!90!gray, below of=2] {\textbf{Lemma~\ref{lem:compact_set}}\\$\mathcal{X}_d(R)\subset L^p(\Omega)$\\is compact};
    \node (4) [section=white!30!gray, right of=3, xshift=2.5cm] {\textbf{Corollary~\ref{cor:compact_set_kernel}}\\Domain of inverse map\\is compact};
    \node (5) [section=white!60!gray, right of=1, xshift=2.5cm] {\textbf{Proposition~\ref{prop:stability_kernel}}\\Inverse map stability};
    \node (6) [section=white!90!gray, right of=5, xshift=2.5cm] {\textbf{Lemma~\ref{lem:hs_to_op}}\\NtD map norm estimates};
    \node (7) [section=white!40!black, below of=5] {\color{white}\textbf{Theorem~{\hypersetup{linkcolor=darkgreen}\ref{thm:extension_eit_kernel}}}\\ \color{white}Stable extension\\ \color{white} of inverse map};
    \node (8) [section=white!20!black, right of=4, xshift=2.5cm] {\color{white}\textbf{Main Theorem~{\hypersetup{linkcolor=darkgreen}\ref{thm:fno_approx_main}}}\\ \color{white}FNO approximation\\ \color{white}of inverse map};
    \node (9) [section=white!90!gray, above of=8] {\textbf{Lemma~\ref{lem:left_inv_pou}}\\Local representation of\\boundary manifold $\pOmega$};
    
    \draw [arrow, line width=1.5pt] (1) to (2);
    \draw [arrow, line width=1.5pt] (2) to (4);
    \draw [arrow, line width=1.5pt] (3) to (4);
    \draw [arrow, line width=1.5pt] (1) to (5);
    \draw [arrow, line width=1.5pt] (6) to (5);
    \draw [arrow, line width=1.5pt] (5) to (7);
    \draw [arrow, line width=1.5pt] (7) to (8);
    \draw [arrow, line width=1.5pt] (9) to (8);
    \draw [arrow, line width=1.5pt] (4) to (8);
\end{tikzpicture}
    \caption{An overview of \cref{thm:fno_approx_main}'s proof structure.}
    \label{fig:flowchart}
\end{figure}

\subparagraph*{\emph{\textbf{Extension.}}}
Universal approximation theorems for neural operators such as FNO are by now well-established \cite{kovachki2021neural,kovachki2021universal,lanthaler2023nonlocality}. The first part of the proof of \cref{thm:fno_approx_main} requires us to identify the correct function-to-function operator to approximate with the FNO. Since we desire quantitative robustness to boundary data perturbations, we must extend the domain of the Calder\'on inversion operator to a larger space in which the perturbations belong. The FNO will approximate this extension. To extend the inverse map, we apply a result due to Benyamin~and~Lindenstrauss~\cite[Thm.~1.12, p.~18]{benyamini1998geometric} (see \cref{thm:hilbert_extension_ref}). This theorem extends a uniformly continuous map between subsets of Hilbert spaces to a globally-defined map with the same modulus of continuity. We identify appropriate Hilbert spaces of Hilbert--Schmidt operators and square integrable kernel functions with \cref{lem:ntd_hs_any_dim}. Together with some technical elliptic PDE estimates for NtD maps (\cref{lem:hs_to_op}), we show in \cref{prop:stability_kernel} that the inverse map from NtD integral kernels to conductivities in the admissible set $\cX$ is logarithmically stable between Hilbert spaces. Then invoking \cref{thm:hilbert_extension_ref}, we deduce the desired extension in \cref{thm:extension_eit_kernel}.
	
\subparagraph*{\emph{\textbf{Approximation.}}}
With the continuous function-to-function inverse map between Hilbert spaces now identified in \cref{thm:extension_eit_kernel}, it remains to apply the universal approximation theorem for FNOs and control errors due to boundary data perturbations. We apply a simple triangle inequality to decompose the reconstruction error into a stability part and an approximation error part \cite[Sec. 4.1.2, Eqn. (4.10), p. 27]{nelsen2025operator}. \Cref{thm:extension_eit_kernel} already handles the stability term. For the approximation error term, FNO universal approximation is uniform over a compact set of inputs. In our setting, this is a set of NtD integral kernel functions equal to the image of admissible conductivities $\cX$ under the EIT forward map. This image is indeed compact (\cref{cor:compact_set_kernel}) by the compactness of $\cX$ (\cref{lem:compact_set}) and the continuity of the forward map (\cref{cor:forward_stability_kernel}). A final technical issue is that the input kernels are bivariate functions on $\pOmega\times\pOmega$, which is a manifold that is incompatible with standard FNOs. Based on \cref{lem:left_inv_pou}, we instead use coordinate charts to define the FNO on local patches. \Cref{thm:fno_approx_main} follows.

\subsection{Literature review}\label{sec:intro_lit}
The Calder\'on problem and its application to EIT have been extensively studied from both analytic and computational perspectives \cite{Borcea,mueller2012linear,uhlmann2009electrical,hanke2017taste}.
The inverse boundary value problem for the conductivity equation
was originally proposed by Calder\'on \cite{calderon2006inverse} in 1980 and studied for analytic
conductivities in \cite{KohnVogelius1,KohnVogelius2}.
Sylvester and Uhlmann \cite{SylvesterUhlmann} proved the unique identifiability of the
conductivity in dimension $d\geq 3$ for isotropic conductivities which
are $C^\infty$-smooth. Analytic reconstruction methods based
on the complex geometrical optics solutions were given by Nachman 
\cite{Nachman1988} for the inverse conductivity problem and by Novikov \cite{Novikov1998} for the corresponding scattering problem. In three dimensions or higher, unique identifiability of the
conductivity is proven in \cite{HabermanTataru2013} for $C^1$ conductivities.
The problem has also been solved with measurements only on a part of the boundary \cite{KenigSjostrandUhlmann}.

In $d=2$ dimensions, the first global solution of the inverse conductivity problem is due to Nachman \cite{nachman1996global} for conductivities with two derivatives. In this seminal paper, the $\overline\partial$ technique was used for the first time in the study of Calder\'on's inverse problem.
Finally, Astala and P\"aiv\"arinta \cite{astala2006calderon} proved the uniqueness of the inverse 
problem in the form of its original formulation from \cite{calderon2006inverse}, i.e., for general isotropic conductivities in $L^\infty$ which are bounded from below and above by positive constants. For further developments on the uniqueness of the inverse problem, see
\cite{astala2006calderon,AstalaLassasPaivarinta2016,FerreiraKSU,faraco2018characterization,caro2016global,IdeUhlmannCPAM,
novikov2010global,uhlmann2009electrical}. 

Stability is much more delicate. Classical logarithmic-type stability estimates go back to \cite{alessandrini1988stable,barcelo2001stability,clop2010stability}, with further improvements under lower regularity assumptions such as $C^{1,\alpha}$ \cite{caro2013stability} or $W^{2,p}$ conductivities (with $p>d\geq 3$) \cite{choulli2023comments}. In parallel, a growing literature studies Lipschitz stability for finite-dimensional parametrizations and measurement models \cite{alberti2022infinite,harrach2019uniqueness,garde2024linearized,garde2025infinite}. This line of work provides a firm justification for reconstructing realistic conductivities from discretized data.

Regarding traditional numerical algorithms and regularization strategies for EIT, classical approaches include layer stripping \cite{somersalo1991layer}, D-bar and nonlinear Fourier methods \cite{knudsen2007d,knudsen2009regularized,mueller2003direct,astala2010numerical,astala2014nonlinear,mueller2020d,isaacson2021d,lytle2020nachman}, and variational formulations with total variation or level set regularization \cite{chung2005electrical,tanushev2007piecewise,jin2012analysis,rondi2008regularization}. Monotonicity and range-based methods provide yet another family of reconstruction strategies with guarantees \cite{garde2017convergence,sun2025learned}. These are especially natural in versions of the complete electrode model \cite{stuart2016bayesian,garde2021mimicking,garde2022series,harrach2019uniqueness}. Within this broad literature, most reconstruction results are performed under H\"older or Sobolev regularity assumptions on the conductivities, or for piecewise constant models with a fixed number of partitions \cite{faraco2018characterization,barcelo2001stability,clop2010stability}. In contrast, the current paper, at least in dimension two, studies more complicated sets of conductivities containing discontinuous functions of bounded variation.

Motivated by the promise of highly accurate and sharper resolution reconstructions, data-driven solution methods for inverse problems have gained traction in recent times \cite{arridge2019solving,ghattas2021learning,ongie2020deep}. In particular, there is substantial literature on combining deep learning with classical reconstruction pipelines in EIT and related imaging modalities. Examples include Deep~D-bar and Beltrami-Net methods, which learn post-processing or enhancement maps on top of D-bar reconstructions \cite{hamilton2018deep,hamilton2019beltrami,hamilton2016hybrid,siltanen2020electrical}, as well as end-to-end and hybrid approaches for EIT \cite{colibazzi2022learning,tanyu2023electrical,agnelli2020classification,beretta2025discrete,alsaker2024ct}; related ideas have been explored in full waveform inversion \cite{ding2022coupling}. These methods have demonstrated impressive empirical performance on a range of tasks, including inclusion detection, stroke classification, and hybrid CT/EIT imaging \cite{sun2025learned,agnelli2025stroke,tanyu2023electrical}. 

The present work builds on and contributes to a rapidly evolving theory of operator learning: the data-driven approximation of maps between infinite-dimensional spaces~\cite{kovachki2024operator,boulle2024mathematical,subedi2025operator,nelsen2024operator}. Once trained, such maps can be used as fast surrogate models in numerical analysis and scientific computing problems. Within this emerging subfield of scientific machine learning, a class of operator learning architectures known as neural operators are particularly effective for approximating function-to-function maps encoded by parametrized PDEs~\cite{kovachki2021neural}. Although the bulk of existing research centers on well-posed forward operator learning, the use of operator learning to accelerate or directly solve inverse problems is also gaining significant attention \cite{nelsen2025operator}, both in deterministic~\cite{kaltenbach2023semi,chen2023let,nguyen2024tnet,wang2024latent,yang2021seismic,zhang2024bilo,long2025invertible,guo2023transformer,lingsch2024fuse,jiao2024solving,massa2022approximation} and Bayesian settings~\cite{herrmann2020deep,baptista2024conditional,cao2023residual,cao2024lazydino,cao2025derivative}.

Concerning inverse problems for PDEs, neural operator methods have recently been proposed as flexible surrogates for mapping high-dimensional measurement data to unknown parameter fields. Neural Inverse Operators (NIO) learn a map from boundary measurement data---treated as an empirical measure of sensor locations and values---to unknown parameters, and have been successfully applied to solve various nonlinear inverse problems \cite{molinaro2023neural,guerra2025learning,nelsen2025operator}. Similarly, transformer-based architectures for inverse boundary value problems \cite{guo2023transformer} and direct sampling–inspired networks for EIT \cite{guo2021construct} process boundary data as unordered sets or point clouds, echoing the empirical measure viewpoint of NIO. However, these approaches are typically trained on noiseless or low-noise forward simulations and are primarily designed to invert data that lie in the range of the forward map.

In contrast, the current paper goes further by constructing and analyzing an extension of the EIT inverse map that is suitable for out-of-range or highly noisy measurement data. Indeed, quantitative control of the extension's modulus of continuity allows one to bound the influence of noise on the reconstruction error. Some works that are closest in spirit to the present setting also employ neural operators and extension-type approximation arguments~\cite{castro2024calderon,abhishek2024solving,pineda2023deep}. In comparison to these works, the current paper treats genuinely infinite-dimensional sets of conductivities, works with absolute instead of relative boundary maps, and provides a more detailed stability and approximation analysis for the approximate inverse map in the presence of noisy measurements. See \cref{sec:approx_discuss} for additional discussion.

\subsection{Outline}\label{sec:intro_outline}
The remainder of this paper is organized as follows. \Cref{sec:prelim} introduces essential background material. \Cref{sec:extend} makes Contribution~\ref{contr:extend} by stably extending the EIT inversion operator to a map between subsets of two Hilbert spaces. \Cref{sec:approx} contains the approximation theory and main result of the paper, making Contribution~\ref{contr:approx}. \Cref{sec:numerics} addresses Contribution~\ref{contr:numeric} by training and evaluating neural operators on three challenging EIT datasets. \Cref{sec:conclusion} provides concluding remarks and an outlook toward future developments for data-driven EIT and beyond. The appendix collects remaining proofs (\cref{app:proofs}), auxiliary lemmas (\cref{app:lemmas}), and numerical experiment details (\cref{app:numerics}).

\section{Preliminaries}\label{sec:prelim}
This section introduces notation (\cref{sec:prelim_notation}), mathematical preliminaries (\cref{sec:prelim_func}), the EIT problem setup (\cref{sec:prelim_eit}), and neural operators (\cref{sec:prelim_neural}).

\subsection{General notation}\label{sec:prelim_notation}
We use the term \emph{domain} to mean an open connected set $\Omega$ in $\R^d$ with sufficiently smooth boundary. We use $\mathsf{n}$ to denote the outward pointing unit vector normal to the boundary $\pOmega$ of domain $\Omega$ and $\overline{\Omega}$ for its closure. The Euclidean inner product on $\R^d$ between vectors $x$ and $y$ is denoted by $x\cdot y$ and the induced norm by $\abs{\slot}$. The constant function $x\mapsto 1$ is denoted by $\onebm$. We occasionally invoke the notation $a \lesssim b$ for nonnegative real numbers $a$ and $b$ if $a\leq Cb$ for an unimportant constant $C>0$ and similarly $a\gtrsim b$ if $b\lesssim a$. We write $a \asymp b$ if both $a\lesssim b$ and $b\lesssim a$. We let $\sP(X)$ denote the set of Borel probability measures supported on a set $X$. The identity map on $X$ is written as $\Id_X$. Define $\Z_{\geq 0}\defeq \{0,1,\ldots\}$ and $\R_{\geq 0}$ (respectively, $\R_{> 0}$) by $[0,\infty)$ (respectively, $(0,\infty)$). For $z_1+\mathsf{i}z_2\eqdef z\in\C$, we write $\overline{z}\defeq z_1-\mathsf{i}z_2$ for the complex conjugate. We use the convention that complex inner products on vector and function spaces are linear in the first argument and conjugate-linear in the second argument.

\subsection{Function spaces and linear operators}\label{sec:prelim_func}
For a domain $\Omega\subset \R^d$, we work with the standard Lebesgue spaces $L^p(\Omega;\C)$ for $p\in[1,\infty]$ and Sobolev Hilbert spaces $H^k(\Omega;\C)$ for $k\in\Z_{\geq 0}$ \cite{adams2003sobolev,lions2012non}. When $k\in\R$, we use interpolation $(k>0)$ and duality $(k<0)$ to define $H^k(\Omega;\C)$ \cite[Appendix~A]{abraham2019statistical}. We often omit the function space codomain if it is the real line $\R$ or complex plane $\C$, or when no confusion is caused by doing so. For example, we write $L^p(\Omega;\C)\equiv L^p(\Omega)$. However, we mostly work in the real-valued subsets of complex function spaces. Similarly, we may occasionally omit the domain of the function space under consideration if it is obvious from the context, e.g., writing $L^\infty$ instead of $L^\infty(\Omega)$.

The space of functions with \emph{bounded variation} (BV) \cite[Chp.~14.1]{leoni2017first} plays an important role in this paper. For an open set $U\subseteq\R^d$, the total variation of an element $v\in L^1_{\mathrm{loc}}(U;\R)$ is defined by
\begin{align}
	V(v;U)\defeq\sup\set{\int_{U}v(x)\nabla\cdot\phi(x)\dd{x}}{\phi\in C_c^{\infty}(U;\R^d) \,\ \text{and} \,\ \norm{\phi}_{L^\infty(U;\R^d)}\leq 1},
\end{align}
where $C_c^\infty(U;\R^d)$ is the set of infinitely continuously differentiable $\R^d$-valued functions with compact support in $U$. The set of real-valued BV functions is defined as
\begin{align}
	\mathsf{BV}(U)\defeq \set{v\in L^1(U)}{V(v;U)<\infty}\,.
\end{align}
We endow $\mathsf{BV}(U)$ with the norm
\begin{align}
	v\mapsto \norm{v}_{\BV}\defeq \max\Bigl(\norm{v}_{L^1(U)}, V(v;U)\Bigr)\,.
\end{align}
It follows that $(\BV(U),\norm{\slot}_\BV)$ is a Banach space \cite{leoni2017first}.

Next, for $\al\in(0,1]$, we recall the H\"older space $C^{1,\al}(\overline{\Omega})$ of real-valued continuously differentiable functions with $\al$-H\"older first derivatives on the closure $\overline{\Omega}$ of a domain $\Omega\subset\R^d$ \cite[Sec.~1.29]{adams2003sobolev}. It is a Banach space when endowed with the norm
\begin{align}
    v\mapsto \norm{v}_{C^{1,\al}}\defeq\max\Bigl(\norm{v}_{C^1}, \max_{\abs{\beta}=1}\abs{\partial^\beta v}_{C^{0,\al}}\Bigr)\,,
\end{align}
where the $C^1(\overline{\Omega})$ norm and $C^{0,\al}(\overline{\Omega})$ seminorm are defined for a given $v$ by
\begin{align}
    \norm{v}_{C^1}\defeq \max_{\abs{\beta}\leq 1}\sup_{x\in\Omega}\abs[\big]{(\partial^\beta v)(x)}
    \qa
    \abs{v}_{C^{0,\al}}\defeq \sup_{\substack{(x,x')\in\Omega\times\Omega\\x\neq x'}}\frac{\abs{v(x)-v(x')}}{\abs{x-x'}^\al}\,,
\end{align}
respectively. We have adopted standard multi-index notation in the preceding displays.

We now turn to linear operators. The Banach space of continuous linear operators from a Banach space $X$ to a Banach space $Y$ is denoted by $\sL(X;Y)$. It is equipped with the operator norm $T\mapsto \norm{T}_{\sL(X;Y)}\defeq \sup_{\norm{x}_X\leq 1}\norm{Tx}_Y$. The Hilbert space of Hilbert--Schmidt operators between Hilbert spaces $\cU$ and $\cV$ is denoted as $\HS(\cU;\cV)$. It has inner product $(S,T)\mapsto \ip{S}{T}_{\HS(\cU;\cV)}\defeq \sum_{j\in\Z_{\geq 0}}\ip{Se_j^{(\cU)}}{Te_j^{(\cU)}}_{\cV}$ for any orthonormal basis $\{e_j^{(\cU)}\}_{j\in\Z_{\geq 0}}$ of $\cU$. If $\cU=\cV$, we write $\HS(\cU)$ and similarly for $\sL(\cU)$. For $u\in\cU$ and $v\in\cV$, their tensor product is the linear operator $v\otimes_\cU u\colon w\mapsto\ip{w}{u}_\cU v$.

For $s\in\R$, we make frequent use of the Sobolev Hilbert spaces $H^s(\pOmega)$ of functions on the boundary $\pOmega$ of domain $\Omega$. To define them, we follow \cite[Appendix~A, pp.~192--193]{abraham2019statistical}. Let $\sigma$ be the surface measure on $\pOmega$. Since $\Omega$ is a domain, $\pOmega$ is a compact manifold. The (negative) Laplace--Beltrami operator $-\Delta_{\pOmega}$ on this manifold has a real discrete spectrum consisting of nonnegative eigenvalues $\{\lambda_j\}_{j\in\Z_{\geq 0}}$ that we sort in nondecreasing order. Let $\{\varphi_j\}_{j\in\Z_{\geq 0}}$ denote the corresponding $L^2(\pOmega)$-orthonormal real-valued eigenfunctions, i.e., $-\Delta_{\pOmega}\varphi_j=\lambda_j\varphi_j$. Define
\begin{align}\label{eqn:defn_boundary_sobolev_spectral}
    (h_1,h_2)\mapsto \ip{h_1}{h_2}_{H^s(\pOmega)}\defeq \sum_{j=0}^\infty (1+\lambda_j)^s\ip{h_1}{\varphi_j}_{L^2(\pOmega)}\overline{\ip{h_2}{\varphi_j}}_{L^2(\pOmega)}\,.
\end{align}
Then we write $H^s(\pOmega)\defeq \{h\in L^2(\pOmega)\colon \norm{h}_{H^s(\pOmega)}<\infty\}$ if $s\geq 0$. Otherwise, $H^s(\pOmega)$ is taken to be the completion of $L^2(\pOmega)$ under the norm induced by \eqref{eqn:defn_boundary_sobolev_spectral}. It is useful to define the zero-mean Hilbert subspace
\begin{align}
    H_\diamond^{s}(\pOmega)\defeq \set{h\in H^{s}(\pOmega)}{\int_\pOmega h(x) \, \sigma(dx) = 0}
\end{align}
and also the quotient Hilbert space $H^s(\pOmega)/\C$ of equivalence classes of functions that are equal up to the addition of complex scalars. Since $\varphi_0$ is proportional to $\onebm$ and $\lambda_0=0$, the quotient norm $\norm{\,[h]\,}_{H^s(\pOmega)/\C}\defeq \inf_{z\in\C}\norm{h-z}_{H^s(\pOmega)}=\norm{h_0}_{H^s(\pOmega)}$ for some zero-mean representative $h_0\in H_\diamond^s(\pOmega)$ of equivalence class $[h]\in H^s(\pOmega)/\C$ \cite[Remark~3.5, pp.~1931--1932]{garde2022series}. The quotient space $H^s(\Omega)/\C$ is defined in the same way.

The spectral definition \eqref{eqn:defn_boundary_sobolev_spectral} implies that $\{\varphi^{(s)}_j\defeq (1+\lambda_j)^{-s/2}\varphi_j\}_{j=0}^\infty$ is an orthonormal basis of $H^s(\pOmega)$ \cite[p.~193, Remark i]{abraham2019statistical}. Similarly, $\{\varphi^{(s)}_j\}_{j=1}^\infty$ is an orthonormal basis of both $H_\diamond^s(\pOmega)$ and $H^s(\pOmega)/\C$. Using these facts, for any real $s$ and $t$ and natural numbers $J$ and $K$, we define the orthogonal projection operator (tensor) $\Pi_{JK}\in \sL(\HS(H^s_\diamond(\pOmega);H^t(\pOmega)/\C))$ by
\begin{align}\label{eqn:projection_operator}
    T\mapsto \Pi_{JK}T\defeq \sum_{j=1}^{J}\sum_{k=1}^K\ip{T \varphi_k^{(s)}}{\varphi_j^{(t)}}_{H^t(\pOmega)}\,
    \varphi_j^{(t)}\otimes_{H^s(\pOmega)}\varphi_k^{(s)}\,.
\end{align}
The range of $\Pi_{JK}$ is a finite-dimensional subspace that is independent of $s$ and $t$ \cite[Appendix~B, p.~194]{abraham2019statistical}. When $J=K$, we define $P_J\defeq \Pi_{JJ}$.

\subsection{Electrical impedance tomography}\label{sec:prelim_eit}
In EIT imaging, real data is acquired by applying current patterns and measuring the resulting voltage at a finite number of electrodes attached to the boundary of an object \cite{cheney1999electrical}. The goal is to use such data to infer hidden structure about the object's interior. Calder\'on's problem \cite{calderon2006inverse} is a continuum model for EIT in which an uncountably infinite number of measurements is assumed to be available for inversion. We now describe this model. For a more comprehensive mathematical treatment, we refer the reader to several books and review articles \cite{feldman2025calderon,uhlmann2009electrical,uhlmann201230}.

In the rest of this paper, we fix $ \Omega\subset \R^d $ to be a bounded nonempty domain with $C^\infty$-smooth boundary $ \pOmega $. Consider the linear elliptic PDE \eqref{eqn:elliptic} with positive almost everywhere (a.e.) conductivity coefficient $ \gamma\in L^{\infty}(\Omega;\R_{>0}) $ and zero-mean Neumann boundary data $ g\in H_\diamond^{-1/2}(\pOmega) $. The (weak) solution $u=u_{\gamma,g}$ exists in $H^1(\Omega)/\C $ and is unique \cite[Chp.~2]{lions2012non}. Moreover, the NtD map $\ntd$ is defined (in the trace sense) by
\begin{equation}\label{eqn:ntd}
	g\mapsto \ntd g \defeq u_{\gamma,g}\big|_{\pOmega}
\end{equation}
and satisfies $ \ntd \in \sL(H_\diamond^{-1/2}(\pOmega);H_{\phantom{\diamond}}^{1/2}(\pOmega)/\C)$. A related object is the Dirichlet-to-Neumann (DtN) map $ \dtn \in \sL(H^{1/2}(\pOmega)/\C;H^{-1/2}_\diamond(\pOmega))$, which is the continuous inverse of $\ntd$. These linear operators represent all possible measurements that can be taken at the boundary under the model \eqref{eqn:elliptic}. Although defined between complex Hilbert spaces, both operators $\ntd$ and $\dtn$ map real-valued functions to real-valued functions \cite[Remark on p.~176]{abraham2019statistical}.

Calder\'on's inverse conductivity problem is to recover $ \gamma $ from knowledge of (a potentially noisy version of) the boundary operator $\dtn $. Notice that although $\dtn$ itself is linear, the mapping $\gamma\mapsto\dtn$ is nonlinear. Consequently, the Calder\'on problem is a canonical example of a severely ill-posed nonlinear inverse problem. Although most of the mathematical literature focuses on the DtN map $\dtn$, the present paper instead aims to accurately approximate a variant of the inversion operator $\ntd\mapsto \gamma$ defined for NtD maps. This NtD setup is more amenable to practical measurement systems, machine learning models, and finite-dimensional approximations.

The main results of this paper will hold for real-valued conductivities in an admissible set $\cX$. At a minimum, inverse problem solutions belonging to this set should be unique and (weakly) stable to perturbations of the boundary data. To define $\cX$, first fix $\Omega'\subset\R^d$ to be a domain that is compactly contained in $\Omega$, i.e.,
\begin{align}\label{eqn:compact_support_set}
    \overline{\Omega'}\subset \Omega\,.
\end{align}
Next, fix $m>0$ and $M\geq \max(1, m)$ to be strictly positive numbers. Define sets
\begin{align}\label{eqn:conductivity_set_prelim}
    \begin{split}
        \Gamma &\defeq \set{\gamma\in L^\infty(\Omega;\R)}{m\leq \gamma(x)\leq M \,\ \text{for a.e.} \,\ x\in\Omega}\qa\\
      \Gamma' &\defeq \set{\gamma\in\Gamma}{\gamma = \onebm \,\ \text{a.e. on} \,\ \Omega\setminus\Omega' }
        \,.
    \end{split}
\end{align}
Conductivities $\gamma\in \Gamma'$ have the property that $\gamma\equiv 1$ in a neighborhood of the boundary $\partial\Omega$. This fact will be crucial later in the paper when we exploit regularity theory for elliptic PDEs near the boundary to control norms of the NtD map. Additionally, it will be useful at times to view elements $\gamma\in\Gamma'$ as functions $\widetilde{\gamma}\colon\R^d\to\R$ on the whole of $\R^d$ by defining $\widetilde{\gamma}$ to be the extension of $\gamma$ with the property that $\widetilde{\gamma}\equiv 1$ on $\R^d\setminus\Omega$. This extension is continuous because $\gamma\equiv 1$ on $\Omega\setminus\Omega'$.

Now fix $R>0$ and $\al\in(0,1]$. We require conductivities in $\cX$ to satisfy \emph{a priori} uniform bounds that depend on the spatial dimension $d$. To this end, define
\begin{align}\label{eqn:conductivity_set}
    \begin{split}
        \cX_{\BV}(R) &\defeq \set{\gamma\in \Gamma' \cap \mathsf{BV}(\Omega)}{\norm{\gamma}_{\BV}\leq R}\,,\\
        \cX_{C^{1,\al}}(R) &\defeq \set{\gamma\in \Gamma' \cap C^{1,\al}(\overline{\Omega})}{\norm{\gamma}_{C^{1,\al}}\leq R}\,, \qa \\
        \cX_d(R) &\defeq 
        \begin{cases}
            \cX_{\BV}(R), & \qif d=2\,,\\
            \cX_{C^{1,\al}}(R), & \qif d\geq 3\,.\\
        \end{cases}
    \end{split}
\end{align}
We then take our admissible set to be $\cX\defeq \cX_d(R)$. To endow $\cX_d(R)$ with metric structure, fix $p\in[1,\infty)$. We equip the admissible set $\cX_d(R)$ with the metric $\sfd_p\colon \cX_d(R)\times \cX_d(R)\to\R_{\geq 0}$ induced by the $L^p(\Omega)$ norm. That is,
\begin{align}\label{eqn:metric_distance}
	\sfd_p(\gamma,\gamma_0)\defeq \norm{\gamma-\gamma_0}_{L^p(\Omega)}
\end{align}
for any $\gamma$ and $\gamma_0$ in $\cX_d(R)$. Due to the uniform $L^\infty$ bounds in the definition of $\Gamma$, on $\cX_d(R)$ all $L^p$ distances $\sfd_p$ for $p\in [1,\infty)$ are topologically equivalent. However, the $L^1$ norm is better suited than the usual $L^\infty$ norm to detect closeness of discontinuous or nearly discontinuous conductivities. Thus, we use $p=1$ in our numerical experiments. In the theoretical analysis, we work with the $L^p$ distance $\sfd_p$ for general $p$, but with a primary emphasis on the $p=2$ case.

\subsection{Neural operators}\label{sec:prelim_neural}
We use \emph{neural operators} to approximate the nonlinear mapping from boundary measurements to conductivities arising in the Calder\'on problem. Neural operators mimic the feedforward, compositional structure of finite-dimensional neural networks in a function space-consistent way \cite{kovachki2021neural}. They take the form
\begin{align}\label{eqn:fno_torus}
    \Psi^{(\mathrm{NO})}\defeq \cQ\circ\cL_L\circ\cL_{L-1}\circ\cdots\circ\cL_2\circ\cL_1\circ\cS\,.
\end{align}
The local Nemytskii operators $\cS$ and $\cQ$ in \eqref{eqn:fno_torus} act pointwise on input functions. They are usually chosen to be shallow neural networks, e.g., $\cQ(h(x))=A_1\varsigma(A_0 h(x) + a_0) +a_1$ for a function $h$, point $x$, pointwise nonlinearity $\varsigma$, and weights and biases $\{A_0,a_0,A_1,a_1\}$. In contrast, the $L$ hidden layer operators $\{\cL_\ell\}_{\ell=1}^L$ act \emph{nonlocally}. Each hidden layer $\cL_\ell$ maps vector-valued functions to vector-valued functions. The vector dimension $d_\mathrm{c}$ is typically fixed for all $\ell$ and is called the channel width of the neural operator. Suppose that the underlying domain is a bounded subset $\sfD\subset\R^n$.
The basic hidden layer parametrization is
\begin{equation}\label{eqn:no_layers}
    \bigl(\cL_{\ell}(h)\bigr)(x) = \varsigma_\ell\bigl(W_\ell h(x) + (\cK_\ell h)(x) + b_\ell(x)\bigr)
\end{equation}
for an input function $h\colon \sfD\to\R^{d_\mathrm{c}} $ and evaluation point $x\in\sfD$. The function $\varsigma_\ell\colon\R\to\R$ is a pointwise nonlinear activation; usually the last hidden layer activation $\varsigma_L$ is set to the identity. The local weight matrix $W_\ell \in \R^{d_\mathrm{c}\times d_\mathrm{c}}$ acts pointwise, while $b_\ell\colon\sfD\to\R^{d_\mathrm{c}}$ is a vector-valued bias function. The main innovation of neural operators is the linear kernel integral operator $\cK_\ell$ given by
\begin{align}\label{eqn:no_kernel}
    (\cK_\ell h)(x)\defeq\int_\sfD \kappa_\ell(x,y)h(y)\dd{y}\,.
\end{align}
The matrix-valued kernel $\kappa_\ell\colon\sfD\times\sfD\to\R^{d_\mathrm{c}\times d_\mathrm{c}}$ determines the type of the neural operator architecture~\cite[Sec.~4]{kovachki2021neural}. This paper primarily considers the FNO~\cite{li2020fourier}. The FNO is a neural operator of the form \eqref{eqn:fno_torus} with kernel integral operator \eqref{eqn:no_kernel} parametrized as the Fourier series
\begin{align}\label{eqn:fno_layer}
    (\cK_\ell h)(x) = \left\{\sum_{k \in \Z^n}\left(\sum_{j=1}^{d_\mathrm{c}} \bigl(P^{(k)}_\ell\bigr)_{l j}\ip[\big]{e^{2\pi \mathsf{i} \ip{k}{\slot}_{\R^n}}}{h_j}_{L^2(\mathbb{T}^n;\mathbb{C})} \right) \, e^{2\pi \mathsf{i} \ip{k}{x}_{\R^n}}\right\}_{l=1}^{d_\mathrm{c}}.
\end{align}
The Fourier series coefficients $P^{(k)}_\ell \in \mathbb{C}^{d_{c} \times d_{c}}$ for each $k\in\Z^n$ are trainable. By choosing $\sfD\defeq \T^n \simeq [0,1]^n_{\mathrm{per}}$ to be the unit hypertorus, the parametrization \eqref{eqn:fno_layer} induces a convolutional structure that admits efficient numerical implementation via the Fast Fourier Transform (FFT) \cite{lanthaler2024discretization,li2020fourier}. More sophisticated versions maintain computational efficiency without assuming periodicity or well-behaved geometries~\cite{li2023geometry,li2023fourier,li2025geometric,huang2025operator,lanthaler2023nonlocality}.

\section{Extension of the inverse map}\label{sec:extend}
This section carries out the inverse problem extension procedure that forms the foundation of our neural operator approximation. The main result of the section relies on the following abstract extension theorem due to Benyamin and Lindenstrauss~\cite[Thm.~1.12, p.~18]{benyamini1998geometric} that pertains to uniformly continuous maps between Hilbert spaces.

\begin{theorem}[Benyamin--Lindenstrauss extension]\label{thm:hilbert_extension_ref}
    Let $\cH_1$ and $\cH_2$ be Hilbert spaces. Suppose that $U$ is a subset of $\cH_1$ and that $\omega\colon \R_{>0}\to \R_{\geq 0}$ is a nonnegative concave function with the property that $\lim_{t\downarrow 0}\omega(t)=0$. If $G\colon U\to \cH_2$ is a uniformly continuous map with modulus of continuity bounded above pointwise by $\omega$, then there exists a uniformly continuous map $\sfG\colon \cH_1\to \cH_2$ defined on the whole of $\cH_1$ such that $\sfG|_{U}=G$. The modulus of continuity of the extension $\sfG$ is also bounded above by $\omega$.
\end{theorem}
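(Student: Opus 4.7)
The plan is to prove the theorem via Zorn's lemma applied to the poset of partial modulus-preserving extensions, with the main geometric input being a Kirszbraun-type intersection property of closed balls in the Hilbert space $\cH_2$. Concretely, I would first reduce to the case where $U$ is closed: since $G$ is uniformly continuous into the complete space $\cH_2$, it extends by continuity to $\overline{U}$ while preserving the pointwise modulus bound $\omega$ (this uses the hypothesis $\lim_{t\downarrow 0}\omega(t)=0$ so that Cauchy sequences in $U$ map to Cauchy sequences in $\cH_2$). Without loss of generality, I would also replace $\omega$ by its least concave majorant, which is again concave, vanishing at $0$, and dominates the original modulus.

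Next, I would consider the collection $\mathcal{E}$ of pairs $(V,H)$ where $\overline{U}\subseteq V\subseteq \cH_1$ and $H\colon V\to\cH_2$ satisfies $\norm{H(x)-H(x')}_{\cH_2}\leq \omega(\norm{x-x'}_{\cH_1})$ for all $x,x'\in V$ together with $H|_{\overline{U}}=G$. Order $\mathcal{E}$ by extension. Every chain has an upper bound obtained by taking the union of domains and the common restriction, so Zorn's lemma produces a maximal element $(V^\star,H^\star)$. The remaining task is to show $V^\star=\cH_1$: if not, pick $x_0\in\cH_1\setminus V^\star$ and attempt to define $H^\star(x_0)$ as any point lying in the intersection
\begin{equation*}
\mathcal{B}(x_0)\defeq \bigcap_{y\in V^\star}\overline{B}_{\cH_2}\!\bigl(H^\star(y),\ \omega(\norm{x_0-y}_{\cH_1})\bigr).
\end{equation*}
Any point in $\mathcal{B}(x_0)$ yields a strictly larger modulus-preserving extension, contradicting maximality, so the proof reduces to showing $\mathcal{B}(x_0)\neq\varnothing$.

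The hard part, and the real content of the theorem, is verifying non-emptiness of $\mathcal{B}(x_0)$. Since each ball is closed and convex and at least one is bounded, weak compactness in $\cH_2$ reduces this to the finite intersection property: for any finite set $\{y_1,\ldots,y_n\}\subset V^\star$, one must find $z\in\cH_2$ with $\norm{z-H^\star(y_i)}_{\cH_2}\leq \omega(\norm{x_0-y_i}_{\cH_1})$ for every $i$. This is precisely the Kirszbraun-type step, and this is where the Hilbert structure of $\cH_2$ and the concavity of $\omega$ are essential. I would argue by contradiction: if no such $z$ exists, then by a standard compactness/convexity minimization argument there is a configuration $(\lambda_i)_{i=1}^n$ of nonnegative weights summing to one such that the weighted inequalities are incompatible, and expanding $\sum_{i,j}\lambda_i\lambda_j\norm{H^\star(y_i)-H^\star(y_j)}_{\cH_2}^2$ using the Hilbert parallelogram identity in $\cH_2$, together with the hypothesis $\norm{H^\star(y_i)-H^\star(y_j)}_{\cH_2}\leq \omega(\norm{y_i-y_j}_{\cH_1})$ and concavity/subadditivity of $\omega$, yields a contradiction against the corresponding identity for $(x_0,y_i)$ in $\cH_1$. (This is the standard Kirszbraun scheme, with the Lipschitz constant replaced by the concave modulus; the concavity is what permits the subadditive comparison $\omega(\norm{y_i-y_j})\leq \omega(\norm{y_i-x_0})+\omega(\norm{x_0-y_j})$ needed to close the estimate.)

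Finally, once $\mathcal{B}(x_0)\neq\varnothing$ is established, any selection $z\in\mathcal{B}(x_0)$ gives a genuine extension of $H^\star$ to $V^\star\cup\{x_0\}$ with modulus still bounded by $\omega$, contradicting maximality. Hence $V^\star=\cH_1$ and $\sfG\defeq H^\star$ is the desired globally defined extension with modulus of continuity majorized by $\omega$. The main obstacle throughout is the Kirszbraun-type intersection step: the analogous statement fails in general Banach codomains, and it is precisely the Hilbert geometry of $\cH_2$ (via the parallelogram identity) combined with the concavity of $\omega$ that makes the finite intersection property go through.
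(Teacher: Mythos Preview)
The paper does not prove this theorem; it is quoted as a black-box result from Benyamini and Lindenstrauss \cite[Thm.~1.12, p.~18]{benyamini1998geometric} and then applied in \cref{thm:extension_eit_kernel}. So there is no ``paper's own proof'' to compare against.

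Your Zorn's-lemma-plus-Kirszbraun-intersection outline is the standard route to this result and is essentially how it is established in that reference. The reduction to closed $U$, the partial-extension poset, the weak-compactness reduction to the finite intersection property, and the identification of the ball-intersection step as the geometric crux are all correct. One caution on the hard step: the bare subadditivity bound $\omega(\norm{y_i-y_j})\leq \omega(\norm{y_i-x_0})+\omega(\norm{x_0-y_j})$ that you highlight, when squared and summed with weights $\lambda_i\lambda_j$, overshoots the target inequality by a factor of~$2$; closing the gap requires using the full Hilbert variance identity $\sum_i\lambda_i\norm{x_0-y_i}^2=\norm{x_0-\bar y}^2+\tfrac12\sum_{i,j}\lambda_i\lambda_j\norm{y_i-y_j}^2$ on the $\cH_1$ side together with the fact that a nonnegative concave $\omega$ with $\omega(0^+)=0$ has $t\mapsto\omega(t)^2$ dominated appropriately. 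Your sketch gestures at this but does not execute it; the contradiction argument needs a slightly sharper comparison than subadditivity alone. That said, your overall architecture is right, and since the paper simply cites the result, your proposal goes well beyond what the paper itself provides.
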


The aim of this section is to apply \cref{thm:hilbert_extension_ref} with $G$ representing the EIT solution operator mapping the admissible boundary data to the conductivity. Once found, such an extension allows us to control the inverse problem reconstruction error of a neural operator method by the noise level in the boundary measurements and the network approximation error itself \cite[Sec.~4.1.2, Eqn.~(4.10), p.~27]{nelsen2025operator}. In particular, this approach does not require quantitative stability of the neural operator, unlike in \cite{pineda2023deep}.

The rest of \cref{sec:extend} is primarily devoted to verifying the hypotheses of \cref{thm:hilbert_extension_ref}. \Cref{sec:extend_stability} develops a stability result for Calder\'on's problem (with DtN maps) that is applicable to the admissible set $\cX_d(R)$ of conductivities considered in this paper. This is a first step toward identifying $\omega$ and $\cH_2$. Next, \cref{sec:extend_bounds} establishes various bounds on NtD maps and their differences. These estimates expose the smoothing properties of such maps, enable the interpolation of different norms on spaces of linear operators, and facilitate stability estimates written in terms of NtD maps. \Cref{sec:extend_kernel} identifies NtD maps with square integrable kernel functions, which in turn determines the sets $U$ and $\cH_1$. For machine learning architectures, such kernel functions are a more convenient form of input data than are linear operators. Last, \cref{sec:extend_main} deploys the technical results from the previous two subsections alongside \cref{thm:hilbert_extension_ref} to deliver a stable extension of the EIT inverse map.

\subsection{Stability of the inverse map}\label{sec:extend_stability}
This subsection produces a stability theorem for Calder\'on's problem with conductivities in the set $\cX_d(R)$ from~\eqref{eqn:conductivity_set}. The case $d\geq 3$ is more standard in the literature and typically delivers logarithmic stability in $L^\infty(\Omega)$. In contrast, the $d=2$ case for the BV conductivities in $\cX_2(R)$ is more challenging. We establish a $L^2(\Omega)$ continuity result when $d=2$ that is a consequence of \cite[Thm.~1.6, pp.~5661--5662]{faraco2018characterization}. It relies on the notion of integral modulus of continuity. To this end, for any $p\in [1,\infty)$, the integral $p$-modulus of continuity of any $v\colon\R^d\to\R$ is the function $\omega^{(p)}(v)\colon \R_{\geq 0}\to\R_{\geq 0}$ defined for $t\geq 0$ by
\begin{align}\label{eqn:modulus_sup}
    \omega^{(p)}(v)(t)\defeq \sup_{\abs{y}\leq t}\norm{\tau_y v - v}_{L^p(\R^d)}\,.
\end{align}
In \eqref{eqn:modulus_sup}, $\tau_y$ is the translation-by-$y$ operator $(\tau_yv)(x)\defeq v(x+y)$ for fixed $y\in\R^d\setminus\{0\}$ and for any $x\in\R^d$. We often invoke \eqref{eqn:modulus_sup} with $v$ equal to the continuous extension $\widetilde{\gamma}\colon \R^d\to\R_{\geq 0}$ of $\gamma\in\cX_d(R)$ defined by $\widetilde{\gamma} = \gamma$ on $\Omega$ and $\widetilde{\gamma}\equiv 1$ on $\R^d\setminus\Omega$.

We are now in a position to state and prove the $L^2(\Omega)$ stability theorem.
\begin{theorem}[$L^2$ stability of the Calder\'on problem]\label{thm:stability}
    Let $d\geq 2$. Suppose that $\al\in(0,1)$ and $R>0$ are as in \eqref{eqn:conductivity_set}. Then there exist constants $C>0$, $\rho>0$, and $t_0\in (0,1)$ depending on $d$, $\al$, $M$, $m$, $R$, $\Omega$, and $\Omega'$ such that for any $\gamma\in\cX_d(R)$ and any $\gamma_0\in\cX_d(R)$ with $\cX_d(R)$ as in \eqref{eqn:conductivity_set}, the DtN maps $\dtn$ and $\dtnz$ satisfy
    \begin{align}\label{eqn:stability_log}
        \norm{\gamma-\gamma_0}_{L^2(\Omega)} \leq C \omega\Bigl(\norm{\Lambda_{\gamma} - \Lambda_{\gamma_0}}_{\sL\left(H^{1/2}(\pOmega)/\C;H^{-1/2}(\pOmega)\right)}\Bigr)\,,
    \end{align}
    where the modulus of continuity $\omega\colon \R_{>0}\to\R_{>0}$ is given by
    \begin{align}\label{eqn:modulus_log}
        t\mapsto \omega(t)\defeq \log\left(\frac{1}{\min(t,t_0)}\right)^{-\rho}\,. 
    \end{align}
\end{theorem}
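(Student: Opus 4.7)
The plan is to treat the two dimensional regimes $d=2$ and $d\geq 3$ separately, invoke the appropriate classical logarithmic stability theorem for each, and then package both resulting bounds into the single modulus $\omega$ of the form~\eqref{eqn:modulus_log} by adjusting $C$, $\rho$, and $t_0$.

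For $d\geq 3$, the admissible set $\cX_{C^{1,\al}}(R)$ consists of $C^{1,\al}$ conductivities with a uniform norm bound, and every such $\gamma$ satisfies $\gamma\equiv 1$ near $\pOmega$. This is precisely the setting of Caro's $L^\infty$ logarithmic stability estimate \cite{caro2013stability}, which supplies constants $C_d>0$ and $\rho_d>0$ (depending on $d$, $\al$, $M$, $m$, $R$, $\Omega$, $\Omega'$) such that
\begin{equation*}
\norm{\gamma-\gamma_0}_{L^\infty(\Omega)}\leq C_d \log\bigl(1/\norm{\dtn-\dtnz}_{\sL(H^{1/2}(\pOmega)/\C;H^{-1/2}(\pOmega))}\bigr)^{-\rho_d}
\end{equation*}
whenever the DtN operator norm difference is sufficiently small. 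Since $\Omega$ is bounded, $\norm{\slot}_{L^2(\Omega)}\leq |\Omega|^{1/2}\norm{\slot}_{L^\infty(\Omega)}$ immediately converts this into an $L^2$ estimate with the same logarithmic rate (up to an inflated prefactor).

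For $d=2$, the strategy is to apply \cite[Thm.~1.6, pp.~5661--5662]{faraco2018characterization} to the extensions $\widetilde{\gamma},\widetilde{\gamma}_0\colon\R^2\to\R_{>0}$. This theorem controls $\norm{\gamma-\gamma_0}_{L^2(\Omega)}$ by the DtN operator norm difference together with terms depending on the integral $2$-moduli of continuity $\omega^{(2)}(\widetilde{\gamma})$ and $\omega^{(2)}(\widetilde{\gamma}_0)$. Since $\gamma\equiv 1$ on $\Omega\setminus\Omega'$ and $\widetilde{\gamma}\equiv 1$ off $\Omega$, the extension is continuous across $\pOmega$ and retains the bound $V(\widetilde{\gamma};\R^2)\leq C R$. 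The classical translation inequality for BV functions then gives $\omega^{(1)}(\widetilde{\gamma})(t)\leq C R t$, and interpolation via the pointwise bound $|\tau_y\widetilde{\gamma}-\widetilde{\gamma}|^2\leq 2M\,|\tau_y\widetilde{\gamma}-\widetilde{\gamma}|$ upgrades this to $\omega^{(2)}(\widetilde{\gamma})(t)\leq C'\,t^{1/2}$, uniformly over $\cX_2(R)$. Feeding this polynomial control of the moduli into the Faraco--Lindberg--P\"aiv\"arinta bound collapses to a uniform logarithmic $L^2$ estimate over $\cX_2(R)\times\cX_2(R)$.

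To finish, in both regimes one has an estimate $\norm{\gamma-\gamma_0}_{L^2(\Omega)}\leq C^*\log(1/t)^{-\rho^*}$ valid for sufficiently small $t\defeq\norm{\dtn-\dtnz}_{\sL(H^{1/2}(\pOmega)/\C;H^{-1/2}(\pOmega))}$. By decreasing the exponent slightly one absorbs $C^*$ at the cost of restricting to $t\leq t_0$ for some small $t_0\in(0,1)$; for $t>t_0$, the a priori diameter bound $\norm{\gamma-\gamma_0}_{L^2(\Omega)}\leq 2M|\Omega|^{1/2}$ combined with the plateau value $\log(1/t_0)^{-\rho}$ closes the remaining range. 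Concavity of $\omega$ on $\R_{>0}$ follows from a direct computation of the second derivative on $(0,t_0]$, which is negative precisely when $t_0\leq e^{-(\rho+1)}$, and is automatic on the constant plateau $[t_0,\infty)$. The main obstacle is the $d=2$ branch: one must unpack \cite[Thm.~1.6]{faraco2018characterization} carefully to confirm that the polynomial bound on $\omega^{(2)}(\widetilde{\gamma})$ translates into the desired logarithmic $L^2$ rate with constants uniform over $\cX_2(R)$, and to verify that the extensions $\widetilde{\gamma}$ satisfy its hypotheses.
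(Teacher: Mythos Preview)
Your overall strategy matches the paper's proof: split into $d=2$ (via \cite[Thm.~1.6]{faraco2018characterization}) and $d\geq 3$ (via \cite{caro2013stability}), control the integral modulus of continuity of $\widetilde{\gamma}$ using the BV translation inequality interpolated against the uniform $L^\infty$ bound, and then unify the two regimes by adjusting $\rho$ and $t_0$.

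There is one technical inaccuracy in the $d=2$ branch. The Faraco--Lindberg--P\"aiv\"arinta theorem does not take the $2$-modulus $\omega^{(2)}(\widetilde{\gamma})$ as input; it requires a uniform bound on $\omega^{(p)}(\widetilde{\gamma})$ for some fixed $p>2B$, where $B\defeq\max\{1,M,m^{-1}\}\geq 1$ (this $p$ arises from the higher-integrability theory for Beltrami equations). Since $2B\geq 2$, your choice $p=2$ does not satisfy the hypothesis. The fix is immediate and uses exactly your interpolation idea: for any $p>2B$,
\[
\norm{\tau_y\widetilde{\gamma}-\widetilde{\gamma}}_{L^p(\R^2)}\leq \norm{\tau_y\widetilde{\gamma}-\widetilde{\gamma}}_{L^\infty}^{(p-1)/p}\norm{\tau_y\widetilde{\gamma}-\widetilde{\gamma}}_{L^1}^{1/p}\leq (2B)^{(p-1)/p}(R\abs{y})^{1/p},
\]
so $\omega^{(p)}(\widetilde{\gamma})(t)\leq C_{B,R}\,t^{1/p}$ uniformly over $\cX_2(R)$, which is what the cited theorem needs. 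The paper also records (via a separate lemma) that in fact $V(\widetilde{\gamma};\R^2)=V(\gamma;\Omega)$ exactly, not just $\leq CR$; your weaker claim suffices for the bound, but note that the equality relies on $\gamma\equiv 1$ on $\Omega\setminus\Omega'$ so that no variation is created across $\pOmega$.
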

\begin{proof}
    First, $\dtn\onebm =\dtnz\onebm =0$ for any $(\gamma,\gamma_0)\in\Gamma\times\Gamma$ implies that
    \begin{align*}
        \norm{\Lambda_{\gamma} - \Lambda_{\gamma_0}}_{\sL\left(H^{1/2}(\pOmega)/\C;H^{-1/2}(\pOmega)\right)} =  \norm{\Lambda_{\gamma} - \Lambda_{\gamma_0}}_{\sL\left(H^{1/2}(\pOmega);H^{-1/2}(\pOmega)\right)}\,.
    \end{align*}
    Thus, it suffices to develop the stability results for the rightmost operator norm. We denote this norm by $\norm{\slot}$ in the remainder of the proof. 
    
    We begin with the $d=2$ case. Let $B \defeq \max\{1, M,m^{-1}\}\geq 1$. Notice that $\gamma\in\cX_2(R)$ implies $B^{-1}\leq\widetilde{\gamma}(x)\leq B$ for a.e. $x\in\R^2$. Fix $p=p_B > 2B\geq 2$ and $\gamma\in\cX_2(R)$. 
    The asserted result will follow by \cite[Thm.~1.6, pp.~5661--5662]{faraco2018characterization} if there exists a so-called uniform modulus of continuity $\omega_0$ such that $\sup_{\gamma\in\cX_2(R)} \omega^{(p)}(\widetilde{\gamma})(t)\leq \omega_0(t)$ for every $t\geq 0$. To show this, we invoke properties of BV functions. For any $y\in\R^2$, by H\"older's inequality and the triangle inequality we compute
    \begin{align*}
        \norm{\tau_y \widetilde{\gamma} - \widetilde{\gamma}}_{L^{p}(\R^2)}&=\biggl(\int_{\R^2}\abs{\widetilde{\gamma}(x+y)-\widetilde{\gamma}(x)}^{p-1} \abs{\widetilde{\gamma}(x+y)-\widetilde{\gamma}(x)} \dd{x}\biggr)^{1/p}\\
        &\leq \norm[\big]{\tau_y \widetilde{\gamma} - \widetilde{\gamma}}_{L^{\infty}(\R^2)}^{(p-1)/p}\norm[\big]{\tau_y \widetilde{\gamma} - \widetilde{\gamma}}_{L^{1}(\R^2)}^{1/p}\\
        &\leq (2B)^{(p-1)/p}\norm[\big]{\tau_y \widetilde{\gamma} - \widetilde{\gamma}}_{L^{1}(\R^2)}^{1/p}\,.
    \end{align*}
    By \cref{lem:tv_identity} and the definitions of $\cX_2(R)$, BV, and total variation, it holds that $V(\widetilde{\gamma};\R^2)=V(\gamma;\Omega)<\infty$ and $\widetilde{\gamma}\in L^1_{\mathrm{loc}}(\R^2)$ for any $\gamma\in\cX_2(R)$. Application of \cite[Exercise~14.3(ii), p. 460]{leoni2017first}, \cite[Exercise~14.3(i), p. 460]{leoni2017first}, and \cite[Lemma~14.37, p.~483]{leoni2017first}, in that order, delivers the estimate
    \begin{align*}
        \norm{\tau_y \widetilde{\gamma} - \widetilde{\gamma}}_{L^{1}(\R^2)}\leq  V(\widetilde{\gamma};\R^2)\abs{y}= V(\gamma;\Omega) \abs{y}\leq R \abs{y}
    \end{align*}
    on $\cX_2(R)$. For every $t\geq 0$, we deduce that
    \begin{align*}
        \sup_{\gamma\in\cX_2(R)} \omega^{(p)}(\widetilde{\gamma})(t)=\sup_{\gamma\in\cX_2(R)} \sup_{\abs{y}\leq t}\norm{\tau_y \widetilde{\gamma} - \widetilde{\gamma}}_{L^{p}(\R^2)}\leq \bigl((2B)^{(p-1)/p}R^{1/p}\bigr) t^{1/p}\,.
    \end{align*}
    Thus, with $C_{B,R}$ the rightmost constant in the preceding display, 
    the map $\omega_0\colon t\mapsto C_{B,R}\, t^{1/p_B}$ is a valid uniform modulus of continuity. In particular, $\omega_0$ is continuous on $\R_{>0}$ and hence upper semi-continuous. Then \cite[p.~5662]{faraco2018characterization} implies that \eqref{eqn:stability_log} is valid with modulus of continuity $\omega$ in \eqref{eqn:modulus_log} for some $\rho=\rho_2$ and $t_0=t_0^{(2)}$.

    Now let $d\geq 3$. We exploit the stronger \emph{a priori} bounds on the conductivities in the set $\cX_d(R)$. Define $b=\max\{1, m^{-1}, R\}$. For $\gamma\in\cX_d(R)$, we have $\gamma(x)\geq m^{-1}\geq b^{-1}$ for all $x\in \Omega$ and $\norm{\gamma}_{C^{1,\al}}\leq R\leq b$. Then \cite[Thm.~1.1, p.~470]{caro2013stability} delivers the stability bound $\norm{\gamma-\gamma_0}_{L^\infty(\Omega)} \leq C \omega'(\norm{\dtn - \dtnz})$ on $\cX_d(R)$, where $\omega'(t)=(\log(\min(t, t_0^{(d)})^{-1}))^{-\rho_d}$ for some $\rho_d\in(0,1)$ depending only on $d$ and $\al$. Here, the value of $t_0^{(d)}\in(0,1)$ is implied by \cite[pp.~489--490]{caro2013stability}. To complete the proof, we use the continuous embedding $L^\infty\hookrightarrow L^2$, enlarge $C$, and choose $\rho\defeq \min(\rho_2,\rho_d)$ and $t_0\defeq \max(t_0^{(2)}, t_0^{(d)})$ to ensure that \eqref{eqn:stability_log} and \eqref{eqn:modulus_log} are valid for all $d\geq 2$.
\end{proof}

The stability estimate \eqref{eqn:stability_log} from \cref{thm:stability} remains valid with the $L^p(\Omega)$ norm replacing the $L^2(\Omega)$ norm on the left-hand side if $p\leq 2$. This follows by continuous embedding. By the uniform $L^\infty$ bounds on the set $\Gamma$, interpolating the $L^\infty$ and $L^2$ norms shows that $L^p(\Omega)$ stability holds for $p>2$ if $\omega$ is replaced by $\omega^{2/p}$.
The following remark comments on other stronger norms in the $d\geq 3$ case.
\begin{remark}[stronger metrics on the set of conductivities]
    It is possible to obtain a result similar to \cref{thm:stability} in the $d\geq 3$ setting by applying \cite[Thm.~1.2, p.~4]{choulli2023comments}. This theorem proves logarithmic stability with the $L^2(\Omega)$ norm in \eqref{eqn:stability_log} replaced by the $H^1(\Omega)$ norm whenever the conductivities are uniformly bounded in the Sobolev space $W^{2,p}(\Omega)$ with $p>d$.  Although the universal approximation theorems we later invoke remain valid for Hilbert Sobolev spaces such as $H^1(\Omega)$, we choose to work in the weaker setting of \cref{thm:stability} in the $d\geq 3$ case to provide a unified presentation of the main results in all dimensions $d\geq 2$.
\end{remark}

\subsection{Norm estimates for Neumann-to-Dirichlet maps}\label{sec:extend_bounds}
The previous stability result in \cref{thm:stability} is formulated in terms of DtN maps. However, in realistic EIT current and voltage hardware setups, it is more convenient to work with NtD maps. This subsection develops useful technical machinery for NtD maps. Our first lemma controls differences of DtN maps by differences of NtD maps.
\begin{lemma}[NtD to DtN]\label{lem:ntd_to_dtn}
    There exists a constant $C>0$ depending on $d$, $\Omega$, $m$, and $M$ such that for any $\gamma\in\Gamma$ and $\gamma_0\in \Gamma$ with $\Gamma$ as in \eqref{eqn:conductivity_set_prelim}, it holds that
    \begin{align}\label{eqn:ntd_to_dtn}
         \norm[\big]{\dtn - \dtnz}_{\sL(H^{1/2}(\pOmega)/\C;H^{-1/2}(\pOmega))} \leq C \norm[\big]{\ntd-\ntdz}_{\sL(H^{-1/2}_\diamond(\pOmega);H_{\phantom{\diamond}}^{1/2}(\pOmega)/\C)}\,.
    \end{align}
\end{lemma}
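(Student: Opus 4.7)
The plan is to reduce the desired DtN-norm bound to the NtD-norm bound by exploiting the fact that $\ntd$ and $\dtn$ are mutual inverses on the zero-mean/quotient Hilbert spaces that appear in the excerpt's setup. The clean algebraic identity underlying the argument is
\begin{align*}
  \dtn - \dtnz \;=\; \dtn\,\ntdz\,\dtnz \,-\, \dtn\,\ntd\,\dtnz \;=\; \dtn\,(\ntdz - \ntd)\,\dtnz,
\end{align*}
which is valid between $H^{1/2}(\pOmega)/\C$ and $H^{-1/2}_\diamond(\pOmega)$ because $\ntd\,\dtn = \mathrm{Id}_{H^{1/2}(\pOmega)/\C}$ and $\dtnz\,\ntdz = \mathrm{Id}_{H^{-1/2}_\diamond(\pOmega)}$. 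Taking operator norms and noting that $H^{-1/2}_\diamond(\pOmega) \hookrightarrow H^{-1/2}(\pOmega)$ isometrically would already give \eqref{eqn:ntd_to_dtn} once we establish a uniform bound on $\norm{\dtn}$ over $\gamma\in\Gamma$.

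The bulk of the work therefore lies in showing that there is a constant $C=C(d,\Omega,m,M)>0$ with
\begin{align*}
  \sup_{\gamma\in\Gamma}\,\norm{\dtn}_{\sL(H^{1/2}(\pOmega)/\C;\,H^{-1/2}_\diamond(\pOmega))} \;\leq\; C.
\end{align*}
I would prove this by the standard variational route. Fix $g\in H^{1/2}(\pOmega)/\C$ with zero-mean representative $g_0$, and pick a continuous right inverse of the trace to produce an extension $v_g\in H^1(\Omega)$ of $g_0$ with $\norm{v_g}_{H^1(\Omega)}\lesssim \norm{g}_{H^{1/2}(\pOmega)/\C}$. Let $u=u_{\gamma,g}\in H^1(\Omega)/\C$ solve the Dirichlet problem associated with $\gamma$ and boundary data $g$; uniform ellipticity $m\le\gamma\le M$ and the Lax--Milgram / Poincar\'e--Wirtinger inequality give $\norm{\nabla u}_{L^2(\Omega)} \lesssim (M/m)\norm{\nabla v_g}_{L^2(\Omega)} \lesssim \norm{g}_{H^{1/2}(\pOmega)/\C}$, with implicit constants depending only on $d,\Omega,m,M$. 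Pairing $\dtn g$ against an arbitrary test function $\phi\in H^{1/2}(\pOmega)$, extending $\phi$ to $w_\phi\in H^1(\Omega)$ with $\norm{w_\phi}_{H^1(\Omega)}\lesssim \norm{\phi}_{H^{1/2}(\pOmega)}$, and invoking the weak formulation,
\begin{align*}
  \bigl|\ip{\dtn g}{\phi}\bigr| \;=\; \Bigl|\int_\Omega \gamma\,\nabla u\cdot\nabla w_\phi\,dx\Bigr| \;\leq\; M\,\norm{\nabla u}_{L^2}\norm{\nabla w_\phi}_{L^2} \;\lesssim\; \norm{g}_{H^{1/2}(\pOmega)/\C}\,\norm{\phi}_{H^{1/2}(\pOmega)},
\end{align*}
yields the desired uniform norm bound after taking the supremum over $\phi$. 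The only care needed is to check that the quotient-vs-zero-mean identifications do not lose constants, which follows from the equivalence $\norm{\,[h]\,}_{H^s(\pOmega)/\C}=\norm{h_0}_{H^s(\pOmega)}$ recalled in \cref{sec:prelim_func}.

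With these two ingredients in hand, combining the operator identity with the uniform bound gives
\begin{align*}
  \norm{\dtn-\dtnz}_{\sL(H^{1/2}(\pOmega)/\C;\,H^{-1/2}(\pOmega))}
  \;\leq\; \norm{\dtn}\,\norm{\dtnz}\,\norm{\ntd-\ntdz}_{\sL(H^{-1/2}_\diamond(\pOmega);\,H^{1/2}(\pOmega)/\C)},
\end{align*}
and the product $\norm{\dtn}\norm{\dtnz}\leq C^2$ absorbs into a single constant depending only on $d,\Omega,m,M$, which is precisely \eqref{eqn:ntd_to_dtn}. The main (minor) obstacle is bookkeeping across the zero-mean subspace and the quotient space: one must be careful that the identity $\dtn-\dtnz=\dtn(\ntdz-\ntd)\dtnz$ really holds between the spaces in which the final norm estimate is formulated, and that the uniform bound on $\dtn$ survives the $H^{-1/2}_\diamond\hookrightarrow H^{-1/2}$ embedding without dimensional loss. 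Everything else is standard elliptic theory.
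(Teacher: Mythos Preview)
Your proposal is correct and follows essentially the same approach as the paper: both use the algebraic identity $\dtn - \dtnz = \dtn(\ntdz - \ntd)\dtnz$ and then invoke a uniform bound $\sup_{\gamma\in\Gamma}\norm{\dtn}\leq c$ to absorb the outer factors into a constant $C=c^2$. The paper isolates the uniform DtN bound as a separate lemma (proved via the same variational argument you sketch), whereas you outline it inline, but the content is identical.
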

\begin{proof}
    We use the fact that $\dtn\colon H^{1/2}(\pOmega)/\C\to H_{\diamond}^{-1/2}(\pOmega)$ and $\ntd\colon H_{\diamond}^{-1/2}(\pOmega)\to H^{1/2}(\pOmega)/\C$ are continuous inverses of each other; see, e.g., \cite[Sec.~12.3.2, pp.~164--165]{mueller2012linear} or \cite[Lemma~20, p.~197]{abraham2019statistical}.
    For any $\gamma\in\Gamma$, $\gamma_0\in\Gamma$, and distinguished representative $f$ of the $\C$-equivalence class $[f] \in H^{1/2}(\pOmega)/\C$, observe that
    \begin{align*}
        \dtn(\ntdz - \ntd)\dtnz f &= \dtn (\ntdz \dtnz f - \ntd\dtnz f)\\
        &=\dtn f - (\dtn\ntd)\dtnz f\\
        &=(\dtn - \dtnz)f\,.
    \end{align*}
    This implies the operator norm estimates
    \begin{align*}
        &\norm{\Lambda_{\gamma} - \Lambda_{\gamma_0}}_{\sL(H^{1/2}/\C;H^{-1/2})} = \norm{\dtn(\ntdz - \ntd)\dtnz}_{\sL(H^{1/2}/\C;H^{-1/2})}\\
        &\qquad\quad\quad\leq \norm{\dtn(\ntdz - \ntd)}_{\sL({H}_\diamond^{-1/2};H^{-1/2})}\norm{\dtnz}_{\sL(H^{1/2}/\C;H^{-1/2})}\\
        &\qquad\quad\quad\leq \norm{\dtn}_{\sL(H^{1/2}/\C;H^{-1/2})}\norm{\dtnz}_{\sL(H^{1/2}/\C;H^{-1/2})}\norm{\ntd - \ntdz}_{\sL({H}_\diamond^{-1/2};H^{1/2}/\C)}.
    \end{align*}
    By \Cref{lem:dtn_uniform}, there exists $c>0$ such that $\sup_{\gamma\in\Gamma} \norm{\dtn}_{\sL(H^{1/2}/\C;H^{-1/2})}\leq c$. Taking $C\defeq c^2$ completes the proof.
\end{proof}

The next lemma bounds the operator norms of NtD maps acting between boundary Sobolev spaces of arbitrarily small or large smoothness exponents, uniformly over the set $\Gamma'$ from \eqref{eqn:conductivity_set_prelim}. Such estimates are enabled by harmonic regularity in a neighborhood of the boundary for conductivities $\gamma\in\Gamma'$ because $\gamma\equiv 1$ on such a neighborhood. 
\begin{lemma}[uniform bounds on NtD maps]\label{lem:ntd_unif_op}
    For any $s\in\R$ and any $t\in\R$, there exist $C_s>0$ and $C_{s,t}'>0$ depending on $s$ and $t$ such that
    \begin{align}\label{eqn:ntd_unif_op}
        \begin{split}
            \sup_{\gamma\in\Gamma'}\norm{\ntd}_{\sL({H}_\diamond^s(\pOmega);H^{s+1}(\pOmega)/\C)}&\leq C_s \qa\\
            \sup_{\gamma\in\Gamma'}\norm{\ntd - \mathscr{R}_{\onebm}}_{\sL({H}_\diamond^s(\pOmega);H^{t}(\pOmega)/\C)}&\leq C_{s,t}'\,.
        \end{split}
    \end{align}
\end{lemma}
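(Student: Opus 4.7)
The plan is to establish the two inequalities in \eqref{eqn:ntd_unif_op} by first proving the second (difference) estimate and then deducing the first as an immediate consequence. The hypothesis $\gamma\in\Gamma'$---meaning $\gamma\equiv 1$ on the collar $V\defeq\Omega\setminus\overline{\Omega'}$---will do essential work throughout: it makes the difference $u_{\gamma,g}-u_{\onebm,g}$ harmonic with homogeneous Neumann boundary data in a neighborhood of $\pOmega$, enabling elliptic regularity estimates whose constants are \emph{independent} of $\gamma$.

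I would first handle the endpoint case $s=-1/2$. Fixing $g\in H^{-1/2}_\diamond(\pOmega)$ and setting $w\defeq u_{\gamma,g}-u_{\onebm,g}\in H^1(\Omega)/\C$, a direct computation using the weak formulation of \eqref{eqn:elliptic} together with $\gamma\equiv 1$ on both $V$ and $\pOmega$ shows that $w$ is harmonic on $V$ and satisfies $\partial_n w|_{\pOmega}=0$, while $w|_{\pOmega}=(\ntd-\mathscr{R}_\onebm)g$ by construction. A standard energy estimate using the uniform ellipticity lower bound $\gamma\ge m$ on $\Gamma'$ gives $\|w\|_{H^1(\Omega)/\C}\le C\|g\|_{H^{-1/2}(\pOmega)}$ with $C$ independent of $\gamma\in\Gamma'$. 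Next, fixing $\Omega''$ with $\overline{\Omega'}\Subset\Omega''\Subset\Omega$ and setting $V'\defeq\Omega\setminus\overline{\Omega''}$ (so $\overline{V'}\subset V\cup\pOmega$), standard boundary regularity for the Neumann Laplacian on $V'$---whose constants depend only on $V,V',\pOmega$ and \emph{not} on $\gamma$, since the PDE solved by $w$ on $V'$ involves neither $\gamma$ nor any forcing---would give $w\in C^\infty(\overline{V'})$ together with $\|w\|_{H^k(V')}\le c_k\|w\|_{H^1(V)}$ for every $k\in\N$. Composing with the trace theorem then delivers, for every $t\in\R$,
\begin{equation*}
    \|(\ntd-\mathscr{R}_\onebm)g\|_{H^t(\pOmega)/\C}\le \tilde{c}_t\,\|g\|_{H^{-1/2}(\pOmega)}
\end{equation*}
uniformly over $\gamma\in\Gamma'$.

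To extend this endpoint estimate to arbitrary $s\in\R$, I would use the formal self-adjointness of $T\defeq\ntd-\mathscr{R}_\onebm$ in the $L^2(\pOmega)$-pairing---a consequence of Green's identity applied to both $\ntd$ and $\mathscr{R}_\onebm$. Combined with the $L^2$-duality between $H^\sigma_\diamond(\pOmega)$ and $H^{-\sigma}(\pOmega)/\C$, the preceding estimate dualizes into the companion family $\|Tg\|_{H^{1/2}(\pOmega)/\C}\le c'_s\|g\|_{H^s(\pOmega)}$ for every $s\in\R$, uniformly in $\gamma\in\Gamma'$. Complex interpolation on the Sobolev scale defined by \eqref{eqn:defn_boundary_sobolev_spectral}---which, via the Laplace--Beltrami eigenbasis, reduces to weighted $\ell^2$ interpolation---between the two endpoint families $(s=-1/2,\ t\ \text{arbitrary})$ and $(s\ \text{arbitrary},\ t=1/2)$ then yields the uniform bound $T\colon H^s_\diamond(\pOmega)\to H^t(\pOmega)/\C$ for every $(s,t)\in\R^2$, establishing the second inequality in \eqref{eqn:ntd_unif_op}. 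The first inequality then follows by writing $\ntd=T+\mathscr{R}_\onebm$ and invoking the classical fact that $\mathscr{R}_\onebm$ is a classical pseudodifferential operator of order $-1$ on the closed manifold $\pOmega$, so $\|\mathscr{R}_\onebm\|_{\sL(H^s_\diamond(\pOmega);H^{s+1}(\pOmega)/\C)}<\infty$ independently of $\gamma$.

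The main obstacle I anticipate is the passage from the endpoint $s=-1/2$ to arbitrary $s\in\R$. For $s<-1/2$, the operator $\ntd$ is no longer directly defined by the weak formulation of \eqref{eqn:elliptic}, so some bookkeeping is required to identify the dualized/interpolated operator with the self-adjoint extension of the classical $\ntd$. Uniformity of the interpolation constants in $\gamma\in\Gamma'$ will follow once the two endpoint families are shown to be uniform, since complex interpolation constants depend only on the endpoint operator norms.
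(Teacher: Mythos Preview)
Your approach is correct and establishes both inequalities, but it differs from the paper's proof in a meaningful way worth noting.

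The paper avoids duality and interpolation entirely. Instead, it proves a preparatory lemma (\cref{lem:lax_milgram_general}) showing that for \emph{every} $s\in\R$, the difference $w=u_{\gamma,g}-u_{\onebm,g}$ satisfies $\|w\|_{H^1(\Omega)/\C}\le C_s\|g\|_{H^s(\pOmega)}$ uniformly in $\gamma\in\Gamma'$. The trick there is to observe that the weak formulation for $w$ has right-hand side $\int_{\Omega}(1-\gamma)\nabla u_{\onebm,g}\cdot\overline{\nabla v}$, which is supported in $\Omega'$ because $\gamma\equiv 1$ on $\Omega\setminus\Omega'$. Interior harmonic regularity of $u_{\onebm,g}$ then lets one control this by $\|u_{\onebm,g}\|_{H^1(\Omega')}\le C\|u_{\onebm,g}\|_{H^{s+3/2}(\Omega)}\le C'\|g\|_{H^s}$ for arbitrary $s$. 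From this $H^1$ bound on $w$, the paper bootstraps to the $H^t$ trace on $\pOmega$ via a nested-domain construction (intermediate sets $D$, $U$, $U_0$), trace theorems, and interior harmonic regularity---essentially the same mechanism you invoke, but applied for all $s$ at once rather than only at $s=-1/2$.

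Your route---endpoint $s=-1/2$ plus self-adjointness and interpolation---is more abstract and arguably cleaner, since it replaces the Lax--Milgram-with-low-regularity-forcing argument by functional-analytic bookkeeping. The price is exactly the obstacle you flag: for $s<-1/2$ one must identify the interpolated/dualized operator with $\ntd$, which the paper sidesteps by always working with the $H^1$ weak formulation and pushing the low regularity onto $u_{\onebm,g}$ only in the interior. Both arguments deduce the first inequality from the second in the same way, by adding back $\mathscr{R}_{\onebm}$.
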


\Cref{app:proofs} provides the proof of \cref{lem:ntd_unif_op}. The arbitrarily high order smoothing properties \eqref{eqn:ntd_unif_op} of shifted NtD maps plays an important role in our quantitative extension theory. In particular, it helps us obtain the following H\"older-type continuity result for a particular embedding operator acting between spaces of linear operators.
\begin{lemma}[NtD inverse inequality: Hilbert--Schmidt norm to operator norm]\label{lem:hs_to_op}
    For any real numbers $s$ and $t$, there exists $C>0$ depending on $d$, $s$, $t$, $\Omega$, $\Omega'$, $m$, and $M$ such that for all $\gamma$ and $\gamma_0$ belonging to $\Gamma'$, it holds that
    \begin{align}\label{eqn:hs_to_op}
        \norm[\big]{\ntd-\sR_{\gamma_0}}_{\sL(H^{-1/2}_\diamond(\pOmega);H_{\phantom{\diamond}}^{1/2}(\pOmega)/\C)}\leq C\norm[\big]{\ntd-\sR_{\gamma_0}}^{1/2}_{\HS(H^{s}_\diamond(\pOmega);H^{t}(\pOmega)/\C)}\,.
    \end{align}
\end{lemma}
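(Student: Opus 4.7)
The plan is to combine the uniform operator-norm estimates of \cref{lem:ntd_unif_op} with complex interpolation of Hilbert spaces at parameter $\theta = 1/2$. This trades one power of an operator norm that is controlled uniformly for the square root of another operator norm, which in turn is dominated by the Hilbert--Schmidt norm via the elementary inequality $\norm{\slot}_{\sL}\leq \norm{\slot}_{\HS}$.

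Write $T\defeq \ntd - \sR_{\gamma_0}$. First, decomposing $T = (\ntd - \sR_{\onebm}) - (\sR_{\gamma_0} - \sR_{\onebm})$, applying the triangle inequality, and invoking the second bound in \eqref{eqn:ntd_unif_op} for each summand (both $\gamma$ and $\gamma_0$ lie in $\Gamma'$) delivers the uniform estimate
\begin{align*}
\norm{T}_{\sL(H^{s'}_\diamond(\pOmega); H^{t'}(\pOmega)/\C)} \leq 2 C'_{s', t'}
\end{align*}
for every pair of real exponents $s', t'$. Next, given the Hilbert--Schmidt exponents $s, t$ from the statement, set the auxiliary exponents $s_0 \defeq -1 - s$ and $t_0 \defeq 1 - t$, so that the midpoints $(s_0 + s)/2 = -1/2$ and $(t_0 + t)/2 = 1/2$ coincide with the target operator-norm exponents. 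The Riesz--Thorin (equivalently, complex) interpolation theorem at $\theta = 1/2$ then yields
\begin{align*}
\norm{T}_{\sL(H^{-1/2}_\diamond; H^{1/2}/\C)} \leq \norm{T}_{\sL(H^{s_0}_\diamond; H^{t_0}/\C)}^{1/2} \, \norm{T}_{\sL(H^{s}_\diamond; H^t/\C)}^{1/2}.
\end{align*}
Bounding the first factor by $\sqrt{2 C'_{s_0, t_0}}$ using the uniform estimate and the second factor by $\norm{T}_{\HS(H^s_\diamond(\pOmega); H^t(\pOmega)/\C)}^{1/2}$ yields \eqref{eqn:hs_to_op} with the explicit constant $C \defeq \sqrt{2 C'_{-1-s, 1-t}}$, which depends only on $d$, $s$, $t$, $\Omega$, $\Omega'$, $m$, and $M$ through \cref{lem:ntd_unif_op}.

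The only technical point I would need to justify is that complex interpolation of the zero-mean Sobolev scale $(H^s_\diamond(\pOmega))_{s \in \R}$ and of the quotient scale $(H^s(\pOmega)/\C)_{s \in \R}$ reproduces the expected intermediate spaces $H^{(1-\theta)s_0 + \theta s_1}_\diamond(\pOmega)$ and $H^{(1-\theta) t_0 + \theta t_1}(\pOmega)/\C$. This follows from the spectral construction recalled after \eqref{eqn:defn_boundary_sobolev_spectral}: both scales are generated by fractional powers of the strictly positive self-adjoint operator $1 - \Delta_{\pOmega}$ restricted to the $L^2$-orthogonal complement of the constants, together with the natural isometric identification $H^s(\pOmega)/\C \cong H^s_\diamond(\pOmega)$. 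Once this identification is in place, the interpolation formula reduces to a standard spectral identity and the proof is essentially a one-line application of interpolation combined with the uniform boundary regularity of NtD differences already established in \cref{lem:ntd_unif_op}.
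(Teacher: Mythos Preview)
Your proof is correct and takes a genuinely different route from the paper. The paper argues by explicit low-rank truncation: it bounds $\norm{\ntd-\ntdz}_{\sL(H^{-1/2}_\diamond;H^{1/2}/\C)}$ by the Hilbert--Schmidt norm in the same spaces, inserts the finite-rank projections $P_J$ from \eqref{eqn:projection_operator}, controls the tail $\norm{\tntd-P_J\tntd}$ via \cref{lem:op_to_hs_rn} and \cref{lem:ntd_unif_op} to get $C J^{-\varrho}$, controls the projected difference via the finite-dimensional norm equivalence \cref{lem:ntd_norm_equiv} to get $C J^{p(s,t)/(d-1)}\norm{T}_{\HS(H^s_\diamond;H^t/\C)}$, and balances the two by optimizing over $J$.

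Your argument bypasses all of this machinery by applying complex interpolation directly to the Hilbert scale $\{H^s_\diamond(\pOmega)\}_{s\in\R}$, using only the uniform bound from \cref{lem:ntd_unif_op}. The identification $H^t(\pOmega)/\C\cong H^t_\diamond(\pOmega)$ and the interpolation identity $[H^{s_0}_\diamond,H^{s_1}_\diamond]_\theta=H^{(1-\theta)s_0+\theta s_1}_\diamond$ are both justified by the spectral definition \eqref{eqn:defn_boundary_sobolev_spectral}, so the argument is sound. Your approach is shorter and more conceptual; the paper's approach is more explicit and self-contained (it does not invoke abstract interpolation theory, and the projection technique would extend more readily if one wanted exponents other than $1/2$ or needed to work outside a Hilbert-scale setting). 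A minor terminological point: ``Riesz--Thorin'' is usually reserved for $L^p$ interpolation; what you are using is complex (Calder\'on) interpolation of Hilbert scales, which in this spectral setting reduces to an elementary inequality for sequences.
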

\begin{proof}
    The following argument mimics the proof of \cite[Lemma~5, p.~177]{abraham2019statistical} for a similar result applicable to DtN maps instead of NtD maps. Let $\gamma\in\Gamma'$ and $\gamma_0\in\Gamma'$ be arbitrary. We make use of low-rank approximations and the extreme smoothing properties of the shifted NtD maps $\tntd\defeq \ntd-\sR_{\onebm}$ and $\tntdz\defeq \ntdz - \sR_{\onebm}$.
    Write $\norm{\slot}$ for the $\HS(H_\diamond^{-1/2}(\pOmega);H^{1/2}(\pOmega)/\C)$ operator norm. To begin, we project onto finite-dimensional subspaces via $P_J=\Pi_{JJ}$ from \eqref{eqn:projection_operator}. For any $J$, this leads to
    \begin{align*}
        \norm{\ntd-\sR_{\gamma_0}}_{\sL(H_\diamond^{-1/2};H_{\phantom{\diamond}}^{1/2}/\C)}&\leq \norm{\ntd-{\sR}_{\gamma_0}}\\
        &=\norm{{\tntd} - {\tntdz}}\\
        &\leq \norm{{\tntd} - P_{J}\tntd} + \norm{P_{J}\tntd - P_{J}\tntdz}  + \norm{P_{J}\tntdz - \tntdz}\\
        &\leq 2\sup_{\gamma'\in\Gamma'}\norm{\widetilde{\sR}_{\gamma'} - P_{J}\widetilde{\sR}_{\gamma'}}  + \norm{P_{J}\tntd - P_{J}\tntdz}\,.
    \end{align*}
    
    Next, we control the truncation error represented by the first term in the last line of the preceding display. For any $\varrho \geq 0$, \cref{lem:op_to_hs_rn} (applied with $K=J$, $r=-1/2$, $s=1/2$, and $q=-p=\varrho(d-1) + 1/2$) delivers the bound
    \begin{align*}
        \norm{{\tntd} - P_{J}\tntd}\leq C \norm{\tntd}_{\sL(H^{-(\varrho+1)(d-1)- 1/2}_\diamond;H_{\phantom{\diamond}}^{\varrho(d-1) + 1/2}/\C)} J^{-\varrho}\leq C_{\varrho,d} J^{-\varrho}\,.
    \end{align*}
    The final inequality in the last display is due to \cref{lem:ntd_unif_op} (applied with $s=-(\varrho+1)(d-1)-1/2$ and $t=\varrho(d-1) + 1/2$). The constant $C_{\varrho,d}$ depends on $\varrho$ and $d$ but does not depend on $\gamma$. Thus, $C_{\varrho,d} J^{-\varrho}$ also upper bounds the supremum.
    
    Now fix real numbers $s$ and $t$. To control the second term in the error decomposition, we apply \cref{lem:ntd_norm_equiv} with $K=J$, $r=-1/2$, $s=1/2$, $p=s$, and $q=t$. This finite-dimensional norm equivalence result gives
    \begin{align*}
        \norm{P_J\tntd - P_J\tntdz}&\leq C J^{\frac{p(s,t)}{d-1}}\norm{\tntd -\tntdz}_{\HS(H^{s}_\diamond;H^{t}/\C)}\,,
    \end{align*}
    where $p(s,t)\defeq (s+1/2)_+ + (1/2-t)_+\geq 0$ and $a\mapsto a_+\defeq\max(0,a)$. 
    
    Finally, we balance the preceding two upper bounds with a judicious choice of truncation level $J$. For another constant $c_{\varrho,d}>0$, it holds that
    \begin{align*}
        x \defeq \bigl(c_{\varrho,d} \norm{\tntd -\tntdz}_{\HS(H^{s}_\diamond;H^{t}/\C)}\bigr)^{-\frac{d-1}{\varrho(d-1) + p(s,t)}}
    \end{align*}
    solves the equation $C_{\varrho,d}x^{-\varrho} = C x^{p(s,t)/(d-1)}\norm{\tntd -\tntdz}_{\HS(H^{s}_\diamond;H^{t}/\C)}$. It further holds that $x\geq {c_\varrho,d}'$ for a constant $c_{\varrho,d}'>0$. That is, $x$ is bounded away from zero uniformly in $\gamma$ and $\gamma_0$ because $\sup_{(\gamma,\gamma_0)\in\Gamma'\times\Gamma'}\norm{\tntd -\tntdz}_{\HS(H^{s}_\diamond;H^{t}/\C)}\leq C$ by the triangle inequality and \cref{lem:op_to_hs_rn,lem:ntd_unif_op}. Since $J$ was arbitrary, choosing the truncation level $J\defeq \ceil{x} \in \N$ implies that
    \begin{align*}
        \norm{\ntd-\sR_{\gamma_0}}_{\sL(H_\diamond^{-1/2};H_{\phantom{\diamond}}^{1/2}/\C)}&\leq C \norm[\big]{\tntd -\tntdz}_{\HS(H^{s}_\diamond;H^{t}/\C)}^{\frac{\varrho(d-1)}{\varrho(d-1) + p(s,t)}}\\
        &= C \norm[\big]{\ntd -\ntdz}_{\HS(H^{s}_\diamond;H^{t}/\C)}^{\frac{\varrho(d-1)}{\varrho(d-1) + p(s,t)}}\,.
    \end{align*}
    Setting $\varrho\defeq p(s,t)/(d-1)$ if $p(s,t)>0$ and $\varrho>0$ otherwise completes the proof.
\end{proof}
\Cref{lem:hs_to_op} is a kind of ``inverse inequality'' in our application of interest. We will be interested in smoothness exponents $t=0$ and $s$ close to zero. However, for general operators, the inequality only holds in the \emph{opposite direction} (without the square root) for certain values of non-zero $s$ and $t$. We are able to obtain \eqref{eqn:hs_to_op} for all values of $s$ and $t$ because shifted NtD maps are arbitrarily smoothing, as shown in \eqref{eqn:ntd_unif_op}.

\subsection{From Neumann-to-Dirichlet maps to kernel functions}\label{sec:extend_kernel}
Another advantage of working with NtD maps $\ntd$ instead of DtN maps $\dtn$ is that, viewed as mappings from $L^2_\diamond(\pOmega)$ into $L^2_\diamond(\pOmega)$, NtD maps are self-adjoint \emph{compact} linear operators. Thus, they are amenable to numerical discretization, e.g., finite element methods or finite spectral measurements. In two dimensions in particular, the NtD map $\ntd\in \sL(L^2_\diamond(\pOmega))$ is actually a Hilbert--Schmidt operator if $\gamma\in\Gamma$ \cite[Lemma~A.1, p.~1948]{garde2022series}. This fact is useful because every Hilbert--Schmidt operator on an $L^2$ space has an integral kernel \cite[Prop.~2.8.6, p.~74]{arveson2002short}. Thus, we may identify the linear operator $\ntd$ with its kernel function. Such an identification is natural for the purpose of learning the solution map of Calder\'on's problem because neural operators (in general) map functions to functions instead of operators to functions \cite{kovachki2021neural}.

However, if $d>2$, then $\ntd$ is no longer a Hilbert--Schmidt operator on $L^2$ \cite[p.~1948]{garde2022series}. Although there is work concerning neural operators that directly receive linear operators like $\ntd$ as input, these require specialized approximation architectures that are not yet fully understood \cite{castro2024calderon,de2019deep,abhishek2024solving}.
In the present paper, we nonetheless seek an identification between NtD maps and kernel functions in the higher-dimensional setting. To do so, we compose the NtD map with a $d$-dependent but otherwise fixed compact operator chosen such that their composition is Hilbert--Schmidt on $L^2$; similar ideas are carried out in \cite[Sec.~4.3.2]{kratsios2023transfer}. In the rest of this section, for $t\in\R$, we view $\ntd$ as a mapping into $H^t_\diamond(\pOmega;\C)$ instead of into the quotient space $H^{t}(\pOmega)/\C$. We do this by working with the mean-zero representative $f\in H_\diamond^{t}(\pOmega;\C)$ of any equivalence class $[f]\in H^{t}(\pOmega)/\C$ because their respective norms agree \cite[Appendix~A, Remark (iii)]{abraham2019statistical}. Thus, operator norms and Hilbert--Schmidt norms of the NtD maps viewed as acting on mean-zero spaces and on quotient spaces, respectively, also agree.

Recall the nondecreasing sequence of Laplace--Beltrami eigenvalues $\{\lambda_j\}_{j\in\Z_{\geq 0}}\subset\R_{\geq 0}$ and corresponding $L^2(\pOmega)$-orthonormal eigenfunctions $\{\varphi_j\}_{j\in\Z_{\geq 0}}$ from \cref{sec:prelim_func}. Define compact self-adjoint operator $\cC\colon L^2_\diamond(\pOmega;\C)\to L^2_\diamond(\pOmega;\C)$ by
\begin{align}\label{eqn:inverse_shifted_neumann_lap}
    \cC\defeq \sum_{j=1}^\infty(1+\lambda_j)^{-1}\varphi_j\otimes_{L^2(\pOmega;\C)}\varphi_j\,.
\end{align}
Since $\{(\lambda_j,\varphi_j)\}$ are real, $\cC$ maps real-valued functions to real-valued functions.
For $s>0$, we define powers $\cC^s$  of $\cC$ by functional calculus. The next lemma shows that the composition $\ntd \cC^s$ is Hilbert--Schmidt for an appropriate choice of $s$.
\begin{lemma}[NtD maps and Hilbert--Schmidt integral operators]\label{lem:ntd_hs_any_dim}
    Let $\tau>0$ and $d\geq 2$. Define $r_d\defeq \frac{d-1}{2}-1+\tau$. If $\gamma\in \Gamma'$, then $\ntd \cC^{r_d/2}\in\HS(L^2_\diamond(\pOmega))$.
\end{lemma}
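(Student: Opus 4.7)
The plan is to directly compute the Hilbert--Schmidt norm of $\ntd \cC^{r_d/2}$ in the Laplace--Beltrami eigenbasis and reduce convergence to Weyl's eigenvalue asymptotics on $\pOmega$. Since $\{\varphi_j\}_{j\geq 1}$ is an $L^2_\diamond(\pOmega)$-orthonormal basis on which $\cC^{r_d/2}$ acts diagonally as $\varphi_j\mapsto (1+\lambda_j)^{-r_d/2}\varphi_j$ by the spectral formula \eqref{eqn:inverse_shifted_neumann_lap}, I expand
\begin{align*}
\norm{\ntd \cC^{r_d/2}}_{\HS(L^2_\diamond(\pOmega))}^2 = \sum_{j=1}^\infty (1+\lambda_j)^{-r_d}\norm{\ntd \varphi_j}_{L^2(\pOmega)}^2\,.
\end{align*}

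To control the summands uniformly in $\gamma\in\Gamma'$, I would invoke \cref{lem:ntd_unif_op} with $s=-1$, which provides the uniform bound $\norm{\ntd}_{\sL(H^{-1}_\diamond(\pOmega);H^{0}(\pOmega)/\C)}\leq C_{-1}$. Combining this with $\norm{\varphi_j}_{H^{-1}(\pOmega)} = (1+\lambda_j)^{-1/2}$ (a direct consequence of the spectral definition \eqref{eqn:defn_boundary_sobolev_spectral}) and the identification of the quotient norm with the $L^2_\diamond(\pOmega)$ norm of its mean-zero representative as discussed immediately preceding the lemma, I obtain $\norm{\ntd \varphi_j}_{L^2(\pOmega)}\leq C_{-1}(1+\lambda_j)^{-1/2}$. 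Substituting yields
\begin{align*}
\norm{\ntd \cC^{r_d/2}}_{\HS(L^2_\diamond(\pOmega))}^2 \leq C_{-1}^2\sum_{j=1}^\infty (1+\lambda_j)^{-r_d-1}\,.
\end{align*}

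The final step is to invoke Weyl's asymptotic law $\lambda_j\asymp j^{2/(d-1)}$ for the Laplace--Beltrami operator on the compact $(d-1)$-dimensional manifold $\pOmega$. The series then converges if and only if $2(r_d+1)/(d-1)>1$, which, upon substituting $r_d=(d-1)/2-1+\tau$, reduces exactly to $\tau>0$---our standing hypothesis. This finishes the argument.

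The main point requiring some care is that $\cC^{r_d/2}$ may fail to be a bounded operator on $L^2_\diamond(\pOmega)$ when $r_d<0$ (which happens for $d=2$ with small $\tau$). However, its action on the eigenbasis via $\varphi_j\mapsto (1+\lambda_j)^{-r_d/2}\varphi_j$ remains well-defined by functional calculus, so the eigenbasis computation of the Hilbert--Schmidt norm stays valid regardless of the sign of $r_d$; equivalently, one may view $\cC^{r_d/2}$ as an isometric isomorphism from $L^2_\diamond(\pOmega)$ onto $H^{r_d}_\diamond(\pOmega)$ and read the argument as the factorization $L^2_\diamond \xrightarrow{\cC^{r_d/2}} H^{r_d}_\diamond \xrightarrow{\ntd} H^{r_d+1}/\C \hookrightarrow L^2_\diamond$, the last inclusion being Hilbert--Schmidt precisely when $r_d+1>(d-1)/2$.
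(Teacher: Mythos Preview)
Your proof is correct and takes essentially the same approach as the paper: both expand the Hilbert--Schmidt norm in the Laplace--Beltrami eigenbasis, apply \cref{lem:ntd_unif_op} with $s=-1$ to bound $\norm{\ntd\varphi_j}_{L^2}\lesssim (1+\lambda_j)^{-1/2}$, and reduce to the Weyl-law convergence condition $r_d+1>(d-1)/2$. Your additional remark on the case $r_d<0$ (arising when $d=2$ and $\tau<1/2$) and the alternative factorization $L^2_\diamond\to H^{r_d}_\diamond\to H^{r_d+1}/\C\hookrightarrow L^2_\diamond$ is a nice clarification that the paper's proof does not make explicit.
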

\begin{proof}
    The squared $\HS(L^2_\diamond(\pOmega))$ norm of $\ntd \cC^{r_d/2}$ satisfies
    \begin{align*}
        \sum_{j=1}^{\infty}\norm[\big]{\ntd \cC^{r_d/2}\varphi_j}^2_{L^2}&\leq \norm{\ntd}_{\sL(H^{-1}_\diamond;L^2)}^2\sum_{j=1}^{\infty}\norm[\big]{\cC^{r_d/2}\varphi_j}^2_{H^{-1}}\\
        &=\norm{\ntd}_{\sL(H^{-1}_\diamond;L^2/\C)}^2\sum_{j=1}^{\infty}\norm[\bigg]{\frac{(1+\lambda_j)^{-r_d/2}}{\sqrt{1+\lambda_j}} \sqrt{1+\lambda_j}\varphi_j}^2_{H^{-1}}\\
          &=\norm{\ntd}_{\sL(H^{-1}_\diamond;L^2/\C)}^2\sum_{j=1}^{\infty}(1+\lambda_j)^{-1-r_d}\norm[\big]{\varphi_j^{(-1)}}^2_{H^{-1}}\,.
    \end{align*}
    The $\sL(H^{-1}_\diamond(\pOmega);L^2(\pOmega)/\C)$ operator norm of $\ntd$ is finite for any $d\geq 2$ by \cref{lem:ntd_unif_op}. Since $\norm{\varphi_j^{(-1)}}^2_{H^{-1}}=1$ for all $j$, Weyl's law \cite[Corollary~16, p.~194]{abraham2019statistical} shows that the series in the last line of the preceding display is bounded above and below by $\sum_{j=1}^{\infty}(1+j^{\frac{2}{d-1}})^{-1-r_d}$. This sum is finite if and only if $r_d>-1+(d-1)/2$. The assertion follows.
\end{proof}

\begin{remark}[weighted Hilbert--Schmidt spaces]
    It is known that the $\HS(L^2_\diamond(\pOmega))$ norm of $\ntd \cC^{r_d/2}$ is equal to the $\HS(H^{r_d}_\diamond(\pOmega);L^2_\diamond(\pOmega))$ norm of $\ntd$, which is a weighted Hilbert--Schmidt norm with information-theoretic relevance in Bayesian formulations of EIT~\cite{abraham2019statistical}. Additionally, these norms also equal the $L^2_\mu(L^2_\diamond(\pOmega))$ norm of $\ntd$ for any centered probability measure $\mu\in\sP(L^2_\diamond(\pOmega))$ with Mat\'ern-like covariance operator $\cC^{r_d}$ \cite[Sec.~2.2.1]{de2021convergence}. This viewpoint is especially relevant when the NtD map is regressed from paired data under squared loss, as is common in operator learning \cite{kovachki2024operator,boulle2024mathematical}.
\end{remark}

Since every Hilbert--Schmidt operator on $L^2$ is a kernel integral operator \cite[Prop.~2.8.6, p.~74]{arveson2002short}, \cref{lem:ntd_hs_any_dim} allows us to make the following definition.

\begin{definition}[integral kernel of smoothed NtD map]\label{def:kernel_integral}
    Let $\tau>0$ and $r_d\defeq \frac{d-1}{2}-1+\tau$. For any $\gamma\in\Gamma'$ and NtD map $\ntd$, let $\kappa_\gamma\in L^2(\pOmega\times\pOmega;\R)$ be the unique, real, integral (Schwartz) kernel of $\ntd \cC^{r_d/2}\in\HS(L^2_\diamond(\pOmega))$. That is, $\kappa_\gamma\colon\pOmega\times\pOmega\to\R $ is such that
    \begin{align}\label{eqn:ntd_action_as_integral}
        \ntd \cC^{r_d/2} g =\int_{\pOmega} \kappa_\gamma(\slot, y)g(y)\sigma(dy)
    \end{align}
    for all $g\in L^2_\diamond(\pOmega)$.
\end{definition}

The fact that $\kappa_\gamma$ is real-valued follows because both $\ntd$ and $\cC$ map real-valued functions to real-valued functions. Two physically relevant settings are when $d=2$ or $d=3$. When $d=2$, we can take $r_2=0$ (with the choice $\tau=1/2$) because $\ntd$ itself is a Hilbert--Schmidt operator with symmetric kernel $\kappa_\gamma$ in this case. If $d=3$, then we can take $r_3=\tau>0$ arbitrarily small. However, the kernel $\kappa_\gamma$ is not necessarily symmetric. A more convenient choice of $r_3$ for numerical acquisition of boundary data is $r_3=\tau=2$, especially when the boundary $\pOmega$ or eigendecomposition of $\lap_{\pOmega}$ are not known precisely. Specifically, let $A\defeq -\Delta_{\pOmega} + \Id_{L^2(\pOmega)}$ be the unbounded self-adjoint operator on $L^2_\diamond(\pOmega)$ with domain $\domain(A)=\{g\in L^2_\diamond(\pOmega)\,|\, \sum_{j=1}^\infty (1+\lambda_j)^2\ip{\varphi_j}{g}_{L^2}^2<\infty\}$. We can view $\cC=A^{-1}$. So, application of $\cC^{r_3/2}=\cC$ to a function $h$ requires solving the linear second order elliptic PDE $Ag=h$ for $g=\cC h$. Then $g$ becomes the boundary current input in a practical EIT imaging system.

\begin{remark}[integral kernels of shifted NtD maps]
    Theoretically, we could have worked with shifted NtD maps $\ntd-\sR_{\onebm}$. These shifted operators belong to $\HS(L^2_\diamond(\pOmega))$ in any dimension $d\geq 2$ by \cref{lem:ntd_unif_op} and the proof of \cref{lem:ntd_hs_any_dim}. Although mathematically convenient, it is known that this relative continuum measurement model is not clinically relevant in practice. Indeed, having access to the background NtD map $\sR_{\onebm}$ is not a realistic assumption \cite[Chp.~12.8.5, p.~182]{mueller2012linear}, and there is work that attempts to circumvent relative or difference EIT measurements \cite{garde2021mimicking}. We emphasize that the availability of relative NtD boundary data \emph{is not assumed} in our work.
\end{remark}

\subsection{Extending beyond the range of the forward map}\label{sec:extend_main}
To complete the extension argument using \cref{thm:hilbert_extension_ref}, it remains to specify Hilbert spaces $\cH_1$ and $\cH_2$ such that the inversion operator has a controlled concave modulus of continuity when viewed as a map between subsets of these two spaces. By virtue of \cref{thm:stability}, it is natural to take $\cH_2\equiv L^2(\Omega)$. However, the operator norm on the right-hand side of \eqref{eqn:stability_log} is non-Hilbertian. Thus, the choice of $\cH_1$ is not yet clear. The results in \cref{sec:extend_bounds} remedy this issue by bounding the operator norm by fractional powers of certain Hilbert--Schmidt norms, which are Hilbertian. Although the Hilbert space of Hilbert--Schmidt operators is sufficient from the perspective of verifying the hypotheses of \cref{thm:hilbert_extension_ref}, this space is still unsatisfactory from the machine learning perspective because linear operators are a nonstandard data type for most deep learning architectures (the exceptions being those in \cite{castro2024calderon,de2019deep,abhishek2024solving}; see also \cite[Sec.~4.2--4.3]{nelsen2025operator}). Similar to \cite{fan2020solving}, we apply the ideas from \cref{sec:extend_kernel} to identify NtD operators with certain square-integrable kernel functions. Consequently, we choose $\cH_1\equiv L^2(\pOmega\times\pOmega)$.

With the real Hilbert spaces $\cH_1$ and $\cH_2$ now fixed, we define the nonlinear EIT inversion operator with domain
\begin{align}\label{eqn:domain_of_inverse}
    \domain(\Psi^\star)\defeq \set{\kappa_\gamma}{\gamma\in\cX_d(R)} \subseteq L^2(\pOmega\times\pOmega)
\end{align}
by the expression
\begin{align}\label{eqn:inverse_map_final_def}
\begin{split}
    \Psi^\star\colon \domain(\Psi^\star)&\to L^2(\Omega)\,,\\ 
    \kappa_\gamma&\mapsto\gamma\,,
\end{split}
\end{align}
where $\kappa_\gamma$ is the integral kernel corresponding to a smoothed NtD map from \cref{def:kernel_integral}.
The domain $\domain(\Psi^\star)$ \eqref{eqn:domain_of_inverse} of the inverse map \eqref{eqn:inverse_map_final_def} is taken to be equal to the range of the forward map $\gamma\mapsto\kappa_\gamma$. This complicated set plays the role of $U$ in \cref{thm:hilbert_extension_ref}.
To complete the procedure outlined in the preceding discussion, we must bound the modulus of continuity of $\Psi^\star$. This requires an additional result.

\begin{lemma}[norm of operator composition]\label{lem:change_of_norm}
    Let $\cC$ be as in \eqref{eqn:inverse_shifted_neumann_lap}.
    For any real numbers $s$ and $t$, any $T\in\HS(H^s_\diamond(\pOmega);H^t_\diamond(\pOmega))$, and any $r\geq 0$, it holds that
    \begin{align}
        \norm[\big]{T}_{\HS(H^s_\diamond(\pOmega);H^t_\diamond(\pOmega))} = \norm[\big]{T\cC^{r/2}}_{\HS(H^{s-r}_\diamond(\pOmega);H^t_\diamond(\pOmega))}\,.
    \end{align}
\end{lemma}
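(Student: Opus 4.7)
The plan is to exploit the spectral definitions of $H^s_\diamond(\pOmega)$ and $\cC^{r/2}$ in the Laplace--Beltrami eigenbasis $\{\varphi_j\}_{j\geq 1}$, and to observe that $\cC^{r/2}$ is in fact a unitary isomorphism from $H^{s-r}_\diamond(\pOmega)$ onto $H^s_\diamond(\pOmega)$. Once this is established, the asserted equality of Hilbert--Schmidt norms follows immediately because HS norms are invariant under pre-composition with unitaries.

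Concretely, I would first recall from \cref{sec:prelim_func} that, for any real $\sigma$, the family $\{\varphi_j^{(\sigma)}=(1+\lambda_j)^{-\sigma/2}\varphi_j\}_{j\geq 1}$ is an orthonormal basis of $H^\sigma_\diamond(\pOmega)$. Since
\begin{align*}
    \cC^{r/2}=\sum_{j=1}^\infty (1+\lambda_j)^{-r/2}\,\varphi_j\otimes_{L^2(\pOmega)}\varphi_j
\end{align*}
by functional calculus applied to \eqref{eqn:inverse_shifted_neumann_lap}, direct computation gives
\begin{align*}
    \cC^{r/2}\varphi_j^{(s-r)}=(1+\lambda_j)^{-r/2}(1+\lambda_j)^{-(s-r)/2}\varphi_j=(1+\lambda_j)^{-s/2}\varphi_j=\varphi_j^{(s)}
\end{align*}
for every $j\geq 1$. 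Thus $\cC^{r/2}$ carries the orthonormal basis of $H^{s-r}_\diamond(\pOmega)$ bijectively onto the orthonormal basis of $H^s_\diamond(\pOmega)$, so it is a unitary operator between these two Hilbert spaces.

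With this identification in hand, I would conclude by writing the HS norm as a sum over the appropriate basis:
\begin{align*}
    \norm[\big]{T\cC^{r/2}}^2_{\HS(H^{s-r}_\diamond;H^t_\diamond)}
    =\sum_{j=1}^{\infty}\norm[\big]{T\cC^{r/2}\varphi_j^{(s-r)}}^2_{H^t(\pOmega)}
    =\sum_{j=1}^{\infty}\norm[\big]{T\varphi_j^{(s)}}^2_{H^t(\pOmega)}
    =\norm[\big]{T}^2_{\HS(H^s_\diamond;H^t_\diamond)},
\end{align*}
and take square roots. In particular, one side is finite if and only if the other is, so the equality automatically lifts from $T\in\HS(H^s_\diamond;H^t_\diamond)$ to $T\cC^{r/2}\in\HS(H^{s-r}_\diamond;H^t_\diamond)$.

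There is no real obstacle here: the only subtle point to check is that $\cC^{r/2}$ is well-defined as an isometric isomorphism across the shifted scale of Sobolev spaces for $r\geq 0$, which is handled by the spectral calculation above (and which also justifies interchanging sums without convergence issues, since all terms are nonnegative). The hypothesis $r\geq 0$ is used only to ensure that the multiplier $(1+\lambda_j)^{-r/2}$ is bounded, so that the spectral definition of $\cC^{r/2}$ is unambiguous on $L^2_\diamond(\pOmega)$; the proof itself treats positive and zero $r$ uniformly.
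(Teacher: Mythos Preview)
Your proof is correct and follows essentially the same approach as the paper: both expand the Hilbert--Schmidt norm in the Laplace--Beltrami eigenbasis and use the spectral identity $\cC^{r/2}\varphi_j^{(s-r)}=\varphi_j^{(s)}$. Your framing of $\cC^{r/2}$ as a unitary $H^{s-r}_\diamond\to H^s_\diamond$ is a clean way to package this, but the underlying computation is identical to the paper's.
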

\begin{proof}
    By the spectral decomposition of $\cC$ from \eqref{eqn:inverse_shifted_neumann_lap}, it holds that
    \begin{align*}
        \norm[\big]{T}^2_{\HS(H^s_\diamond;H^t_\diamond)}=\sum_{j=1}^\infty \norm[\big]{T\varphi_j^{(s)}}_{H^t}^2&=\sum_{j=1}^{\infty} (1+\lambda_j)^r(1+\lambda_j)^{-r}\norm[\big]{T(1+\lambda_j)^{-s/2}\varphi_j}_{H^t}^2\\
        &=\sum_{j=1}^{\infty} \norm[\big]{T\cC^{-r/2}(1+\lambda_j)^{-(s-r)/2}\varphi_j}_{H^t}^2\\
        &=\sum_{j=1}^{\infty}\norm[\big]{T\cC^{-r/2}\varphi_j^{(s-r)}}_{H^t}^2\,.
    \end{align*}
    This is the asserted result because $\{\varphi^{(s-r)}_j\}_{j=1}^\infty$ is an orthonormal basis of $H^{s-r}_\diamond$.
\end{proof}

Application of \cref{lem:change_of_norm} and the supporting results from \cref{sec:extend_stability,sec:extend_bounds,sec:extend_kernel} lead to the following logarithmic stability estimate for $\Psi^\star$.
\begin{proposition}[$L^2$ stability of $\Psi^\star$]\label{prop:stability_kernel}
    If $d\geq 2$ and $R>0$, then there exists a constant $C^\star>0$ such that for any $\gamma\in\cX_d(R)$ and any $\gamma_0\in\cX_d(R)$, it holds that
    \begin{align}\label{eqn:stability_kernel_in}
        \norm{\Psi^\star(\kappa_\gamma)-\Psi^\star(\kappa_{\gamma_0})}_{L^2(\Omega)}\leq C^\star \omega^\star\bigl(\norm{\kappa_{\gamma}-\kappa_{\gamma_0}}_{L^2(\partial\Omega\times\pOmega)}\bigr)\,,
    \end{align}
    where $t\mapsto \omega^\star(t)\defeq \omega(\sqrt{t})$ and $\omega$ is as in \eqref{eqn:modulus_log}.
\end{proposition}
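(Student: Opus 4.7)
The plan is to chain together the estimates developed in \cref{sec:extend_stability,sec:extend_bounds,sec:extend_kernel}. By the definition of $\Psi^\star$ in \eqref{eqn:inverse_map_final_def}, the left-hand side of \eqref{eqn:stability_kernel_in} equals $\norm{\gamma-\gamma_0}_{L^2(\Omega)}$. First I would apply \cref{thm:stability} to bound this by $C\omega\bigl(\norm{\dtn-\dtnz}_{\sL(H^{1/2}/\C;H^{-1/2})}\bigr)$, and then use the monotonicity of $\omega$ from \eqref{eqn:modulus_log} together with \cref{lem:ntd_to_dtn} to replace the DtN operator norm by the NtD operator norm $\norm{\ntd-\ntdz}_{\sL(H^{-1/2}_\diamond;H^{1/2}/\C)}$ inside the argument of $\omega$, up to a multiplicative constant.

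The crucial next step is \cref{lem:hs_to_op}, applied with $s=r_d$ and $t=0$ for the exponent $r_d$ from \cref{def:kernel_integral}. This replaces the NtD operator norm by $\norm{\ntd-\ntdz}_{\HS(H^{r_d}_\diamond;L^2/\C)}^{1/2}$ and introduces the square root responsible for $\omega^\star(t)=\omega(\sqrt{t})$. To convert this Hilbert--Schmidt norm into the target $L^2(\pOmega\times\pOmega)$ kernel norm, I would apply \cref{lem:change_of_norm} with $s=r=r_d$ and $t=0$, which yields $\norm{\ntd-\ntdz}_{\HS(H^{r_d}_\diamond;L^2_\diamond)} = \norm{(\ntd-\ntdz)\cC^{r_d/2}}_{\HS(L^2_\diamond(\pOmega))}$. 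By \cref{def:kernel_integral} and the standard $\HS$--to--$L^2$-kernel isometry (together with the identification of $L^2/\C$ norms with mean-zero representatives), the right-hand side equals $\norm{\kappa_\gamma-\kappa_{\gamma_0}}_{L^2(\pOmega\times\pOmega)}$, as desired.

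Chaining these estimates yields a bound of the form $\norm{\gamma-\gamma_0}_{L^2(\Omega)} \leq C\omega\bigl(C'\norm{\kappa_\gamma-\kappa_{\gamma_0}}_{L^2(\pOmega\times\pOmega)}^{1/2}\bigr)$ for some $C,C'>0$. To match the target form $C^\star\omega^\star(\norm{\kappa_\gamma-\kappa_{\gamma_0}}_{L^2})$, the remaining step is to absorb $C'$ into the outer prefactor. For the logarithmic modulus \eqref{eqn:modulus_log}, I would use a quasi-scaling bound $\omega(C't)\leq C''\omega(t)$ valid for some $C''>0$: in the small-$t$ regime this follows from $\log(1/(C't))=\log(1/t)-\log C'$ and the boundedness of $\log C'/\log(1/t)$, while in the cutoff regime $t\geq t_0$ the function $\omega$ is constant and the resulting bounded ratio is absorbed directly into $C^\star$.

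The main obstacle is this last constant-absorption step, since $\omega$ has a piecewise definition with a cutoff at $t_0$. A careful case analysis distinguishing $C'\sqrt{s}\leq t_0$ from $C'\sqrt{s}>t_0$ (where $s$ denotes the squared kernel norm on the right-hand side) is needed, possibly together with shrinking $t_0$ to accommodate the multiplicative constant. Everything else reduces to routine bookkeeping that stitches together results already established in this section.
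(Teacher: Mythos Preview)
Your proposal is correct and follows essentially the same chain of estimates as the paper's own proof: \cref{thm:stability} $\to$ \cref{lem:ntd_to_dtn} $\to$ \cref{lem:hs_to_op} (with $s=r_d$, $t=0$) $\to$ \cref{lem:change_of_norm} $\to$ the $\HS$--$L^2$ kernel isometry. In fact you are more careful than the paper on one point: the paper simply lets ``$C$ change from line to line'' and writes $\omega(C'\cdot) \leq C\,\omega(\cdot)$ without comment, whereas you correctly flag this quasi-scaling as requiring a short argument. Your justification via the explicit form \eqref{eqn:modulus_log} is fine; alternatively, once one knows $\omega$ (or $\omega^\star$) is concave with $\omega(0^+)=0$ --- a fact the paper invokes later in the proof of \cref{thm:extension_eit_kernel}, noting $t_0$ can be shrunk if needed --- the bound $\omega(C't)\le\max(C',1)\,\omega(t)$ is immediate from subadditivity.
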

\begin{proof}
    In the following, we let $C>0$ be a constant that is allowed to change from line to line. Let $\gamma$ and $\gamma_0$ belong to $\cX_d(R)$. Let $r_d$ be as in \cref{lem:ntd_hs_any_dim}. Since $\Psi^\star(\kappa_\gamma)=\gamma$ by definition \eqref{eqn:inverse_map_final_def} and moduli of continuity are increasing functions, successive application of \cref{thm:stability}, \cref{lem:ntd_to_dtn}, and \cref{lem:hs_to_op} (with $s=r_d$ and $t=0$) yield the inequalities
    \begin{align*}
        \norm{\Psi^\star(\kappa_\gamma)-\Psi^\star(\kappa_{\gamma_0})}_{L^2(\Omega)}&=\norm{\gamma-\gamma_0}_{L^2(\Omega)}\\
        &\leq C\omega\bigl(\norm{\dtn-\dtnz}_{\sL(H^{1/2}(\pOmega)/\C;H^{-1/2}(\pOmega))}\bigr)\\
        &\leq C\omega\bigl(\norm{\ntd-\ntdz}_{\sL(H^{-1/2}_\diamond(\pOmega);H_{\phantom{\diamond}}^{1/2}(\pOmega)/\C)}\bigr)\\
        &\leq C \omega\left(\norm{\ntd-\ntdz}^{1/2}_{\HS(H^{r_d}_\diamond(\pOmega);L^2(\pOmega)/\C)}\right)\,.
    \end{align*}
    Next, \cref{lem:change_of_norm}, \cref{lem:ntd_hs_any_dim}, and \cref{def:kernel_integral} imply that
    \begin{align*}
        \norm[\big]{\ntd-\ntdz}_{\HS(H^{r_d}_\diamond(\pOmega);L^2(\pOmega)/\C)}&=\norm[\big]{\ntd\cC^{r_d/2}-\ntdz\cC^{r_d/2}}_{\HS(L^2_\diamond(\pOmega))}\\
        &=\norm{\kappa_\gamma-\kappa_{\gamma_0}}_{L^2(\pOmega\times\pOmega)}\,.
    \end{align*}
    To achieve the last equality in the previous display, we used linearity and the fact that the mapping from a $L^2(\partial\Omega\times \partial\Omega)$ kernel function to the $\HS(L^2_\diamond(\partial\Omega)) $ integral operator with that same kernel function is an isometric isomorphism \cite[Prop.~2.8.6, p.~74]{arveson2002short}. Putting together the pieces, we deduce that
    \begin{align*}
        \norm{\Psi^\star(\kappa_\gamma)-\Psi^\star(\kappa_{\gamma_0})}_{L^2(\Omega)}\leq C\omega\bigl(\norm{\kappa_\gamma-\kappa_{\gamma_0}}_{L^2(\pOmega\times\pOmega)}^{1/2}\bigr)
    \end{align*}
    as asserted.
\end{proof}

With this stability result between Hilbert function spaces in hand, we are now in a position to apply the Benyamin--Lindenstrauss extension from \cref{thm:hilbert_extension_ref} to deliver the promised result. 

\begin{theorem}[extending outside of the range]\label{thm:extension_eit_kernel}
    Let $\Psi^\star\colon\domain(\Psi^\star)\subseteq L^2(\pOmega\times\pOmega)\to L^2(\Omega)$ be as in \eqref{eqn:domain_of_inverse} and \eqref{eqn:inverse_map_final_def}. Let $C^\star$ and $\omega^\star$ be as in \cref{prop:stability_kernel}. There exists $\widetilde{\Psi}^\star\colon L^2(\pOmega\times\pOmega)\to L^2(\Omega)$ such that $\widetilde{\Psi}^\star(\kappa)=\Psi^\star(\kappa)$ for all $\kappa\in\domain(\Psi^\star)$, and
    \begin{align}\label{eqn:stability_kernel_out}
        \norm{\widetilde{\Psi}^\star(\kappa)-\widetilde{\Psi}^\star(\kappa_0)}_{L^2(\Omega)}\leq C^\star \omega^\star\bigl(\norm{\kappa-\kappa_0}_{L^2(\partial\Omega\times\pOmega)}\bigr)
    \end{align}
    for all $\kappa\in L^2(\pOmega\times\pOmega)$ and all $\kappa_0\in L^2(\pOmega\times\pOmega)$.
\end{theorem}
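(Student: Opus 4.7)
The plan is a direct invocation of the Benyamin--Lindenstrauss extension theorem (\cref{thm:hilbert_extension_ref}), with all nontrivial content already encapsulated in \cref{prop:stability_kernel}. I identify the ingredients required by \cref{thm:hilbert_extension_ref} as follows: the real Hilbert spaces $\cH_1 \defeq L^2(\pOmega \times \pOmega)$ and $\cH_2 \defeq L^2(\Omega)$, the subset $U \defeq \domain(\Psi^\star) \subseteq \cH_1$ from \eqref{eqn:domain_of_inverse}, the map $G \defeq \Psi^\star$ from \eqref{eqn:inverse_map_final_def}, and the proposed modulus $\widehat{\omega} \defeq C^\star \omega^\star$.

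Next I would verify the three hypotheses on the modulus. Nonnegativity of $\widehat{\omega}$ is immediate from the definition, and $\widehat{\omega}(t) \to 0$ as $t \downarrow 0$ follows because $-\log\sqrt{t} \to \infty$ in \eqref{eqn:modulus_log}. The uniform continuity of $G$ with modulus pointwise bounded by $\widehat{\omega}$ is precisely \eqref{eqn:stability_kernel_in}. The only non-automatic property, and the main obstacle, is concavity of $\widehat{\omega}$.

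To resolve concavity I would compute directly. On $(0, t_0^2]$ we have $\omega^\star(t) = 2^\rho (-\log t)^{-\rho}$, while $\omega^\star(t)$ is constant on $[t_0^2, \infty)$, so the only regime to check is small $t$. A second-derivative computation shows that $f(t) \defeq (-\log t)^{-\rho}$ satisfies $f''(t) \propto \log t + (\rho + 1)$, hence $f$ is concave on $(0, e^{-(\rho+1)}]$ and convex afterwards. Concavity of $\omega^\star$ on all of $(0, \infty)$ is therefore guaranteed by shrinking the constant $t_0 \in (0,1)$ delivered by \cref{thm:stability} (and hence \cref{prop:stability_kernel}) so that $t_0 \leq e^{-(\rho+1)/2}$. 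The stability bound \eqref{eqn:stability_kernel_in} remains valid for this smaller $t_0$ after possibly enlarging $C^\star$, so this shrinkage is harmless. Alternatively, one can replace $\omega^\star$ by its least concave majorant, which still satisfies the hypotheses of \cref{thm:hilbert_extension_ref} and differs from $\omega^\star$ only by constants.

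Finally, applying \cref{thm:hilbert_extension_ref} with the ingredients above yields a uniformly continuous extension $\widetilde{\Psi}^\star\colon L^2(\pOmega \times \pOmega) \to L^2(\Omega)$ that agrees with $\Psi^\star$ on $\domain(\Psi^\star)$ and whose modulus of continuity on all of $\cH_1$ is bounded above by $\widehat{\omega} = C^\star \omega^\star$. This is exactly the claimed estimate \eqref{eqn:stability_kernel_out}, completing the proof. The remaining verifications are bookkeeping; the genuine technical step is the concavity check, which is handled by the benign shrinkage of $t_0$.
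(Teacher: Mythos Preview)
Your proposal is correct and follows essentially the same route as the paper: identify $\cH_1$, $\cH_2$, $U$, $G$, and the modulus, then invoke \cref{thm:hilbert_extension_ref}, with the only nontrivial verification being concavity of $\omega^\star$, handled by shrinking $t_0$. You supply more detail on the concavity computation than the paper does (the paper simply asserts that examining the proof of \cref{thm:stability} shows $t_0$ can be taken small enough), but the argument is the same.
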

\begin{proof}
    The assertion follows by applying \cref{thm:hilbert_extension_ref} with $\cH_1=L^2(\pOmega\times\pOmega)$, $\cH_2=L^2(\Omega)$, $U=\domain(\Psi^\star)$, $G=\Psi^\star$, and $\omega=\omega^\star$. That $\omega^\star$ is a modulus of continuity for $\Psi^\star$ is due to \cref{prop:stability_kernel}. Moreover, $\omega^\star$ is clearly nonnegative. It is also concave if the threshold $t_0$ from \eqref{eqn:modulus_log} is constructed to be sufficiently small; examination of the proof of \cref{thm:stability} and the results used therein show that this is always possible.
\end{proof}

Even though $\widetilde{\Psi}^\star$ from \cref{thm:extension_eit_kernel} extends $\Psi^\star$, in general $\widetilde{\Psi}^\star$ only maps into $L^2(\Omega)$ instead of into the admissible set $\cX_d(R)$. The next remark shows that the use of a thresholding operation can upgrade the range of the extension to $\Gamma\subset L^\infty(\Omega)$.

\begin{remark}[clipping]
    Define the clipping function $T_\Gamma\colon\R\to [m,M]$ by $x\mapsto T(x)\defeq \max(m,\min(M,x))$. Since $m\leq M$, it holds that $T_\Gamma$ is $1$-Lipschitz continuous on $\R$. The corresponding Nemytskii operator $\mathsf{T}_\Gamma\colon L^2(\Omega)\to L^2(\Omega)$ given by $\mathsf{T}_\Gamma(f)(x)=T_\gamma(f(x))$ is also $1$-Lipschitz. In \cref{thm:extension_eit_kernel}, we could have replaced $\widetilde{\Psi}^\star$ with $\widetilde{\Psi}^\star_{\Gamma}\defeq \mathsf{T}_\Gamma\circ \widetilde{\Psi}^\star$ everywhere. Indeed, $\widetilde{\Psi}^\star_{\Gamma}$ extends $\Psi^\star$ because $\widetilde{\Psi}^\star_{\Gamma}(\kappa_\gamma)=\mathsf{T}_\Gamma(\Psi^\star(\kappa_\gamma))=\Psi^\star(\kappa_\gamma)$ for any $\gamma\in\cX_d(R)\subseteq\Gamma$. The continuity bound \eqref{eqn:stability_kernel_out} also remains valid because $\mathsf{T}_\Gamma$ is Lipschitz.
    Such a clipped extension is interesting because its range satisfies $\image(\widetilde{\Psi}^\star_{\Gamma})\subseteq \Gamma\subset L^\infty(\Omega)$ instead of merely $\image(\widetilde{\Psi}^\star)\subseteq L^2(\Omega)$. However, for the purposes of the present paper, we do not need to consider clipping the extension $\widetilde{\Psi}^\star$, but remark that doing so may prove useful in other contexts. For example, the elliptic operator in \eqref{eqn:elliptic} with $\gamma\in \image(\widetilde{\Psi}^\star_{\Gamma})$ is well-defined, while it may not be with $\gamma\in \image(\widetilde{\Psi}^\star)$.
\end{remark}

\Cref{thm:extension_eit_kernel} is particularly useful when $\kappa\in\domain(\Psi^\star)$ and $\kappa_0\not\in\domain(\Psi^\star)$, but $\kappa_0$ is instead a small perturbation away from $\domain(\Psi^\star)$, e.g., $\kappa_0$ represents a noisy continuum EIT measurement. Indeed, the stability estimate \eqref{eqn:stability_kernel_out} controls how far the two reconstructions deviate by the size of the data perturbation. More importantly, \cref{thm:extension_eit_kernel} sets the stage for neural operator approximations of the true inverse map $\Psi^\star$ by way of the extension $\widetilde{\Psi}^\star$. We now turn our attention to this task.

\section{Approximation of the inverse map}\label{sec:approx}
This section employs the extension $\widetilde{\Psi}^\star$ from \cref{sec:extend} to establish an approximation theorem for the solution operator $\Psi^\star$ of EIT. The error is measured in the topology of uniform convergence over compact sets. Toward showing that the relevant set of NtD integral kernel functions is compact in $L^2(\pOmega\times\pOmega)$, \cref{sec:approx_stability} develops H\"older stability estimates for the forward map of EIT. These estimates help deliver the required compactness results in \cref{sec:approx_compactness}.
Another challenge is that off-the-shelf neural operators from \cref{sec:prelim_neural} do not natively handle functions defined on manifolds such as the boundary product $\pOmega\times\pOmega$. \Cref{sec:approx_manifold} sidesteps this issue by developing local coordinate operators for the manifold. \Cref{sec:approx_main} defines a slightly generalized FNO that is compatible with the manifold structure and presents the main result of the paper: the existence of such an FNO that approximates $\Psi^\star$ arbitrarily well, albeit up to the noise level in the boundary data. Finally, \cref{sec:approx_discuss} discusses the implications of the theorem.

\subsection{Stability of the forward map}\label{sec:approx_stability}
The goal of this subsection is to produce a forward stability estimate for the map $\gamma\mapsto \kappa_\gamma$.
To begin, we need the following $L^q(\Omega)$ Lipschitz continuity result for the Neumann problem~\cite[Thm.~4.2 and Remark~4.5, p.~406]{rondi2008regularization}.
\begin{theorem}[forward Lipschitz stability in $L^q(\Omega)$: operator norm]\label{thm:forward_map_lip}
    There exists $Q>2$ depending only on $d$, $m$, and $M$ such that the following holds. For any $q'\in (2,Q)$, define $q\in(2,\infty)$ by the equation $1/q +1/q' +1/2=1$. Then for any $\gamma$ and $\gamma_0$ belonging to $\Gamma'$, it holds that
    \begin{align}
        \norm{\ntd-\ntdz}_{\sL(H_\diamond^{-1/2}(\pOmega);H_{\phantom{\diamond}}^{1/2}(\pOmega)/\C)}\leq C\norm{\gamma-\gamma_0}_{L^q(\Omega)}\,.
    \end{align}
    The constant $C>0$ depends on $d$, $q'$, $m$, $M$, $\Omega$, and $\Omega'$.
\end{theorem}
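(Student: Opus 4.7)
The plan is to derive the estimate via the standard energy method applied to the weak formulation, combined with a Meyers-type higher integrability result for the gradient of the Neumann solution. Fix $g \in H^{-1/2}_\diamond(\pOmega)$ and let $u=u_{\gamma,g}$ and $u_0 = u_{\gamma_0,g}$ denote the weak solutions of \eqref{eqn:elliptic} with conductivities $\gamma$ and $\gamma_0$ respectively. Subtracting the two weak formulations and rewriting yields the identity
\begin{align*}
    \int_\Omega \gamma \nabla (u-u_0) \cdot \nabla \phi \dd{x} = -\int_\Omega (\gamma-\gamma_0)\nabla u_0 \cdot \nabla \phi \dd{x} \qfa \phi \in H^1(\Omega)\,.
\end{align*}
Testing with $\phi = u-u_0$ and using the uniform ellipticity $\gamma \geq m$ together with H\"older's inequality with exponents $(q,q',2)$ satisfying $1/q + 1/q' + 1/2 = 1$ gives
\begin{align*}
    m \norm{\nabla(u-u_0)}_{L^2(\Omega)}^2 \leq \norm{\gamma-\gamma_0}_{L^q(\Omega)}\norm{\nabla u_0}_{L^{q'}(\Omega)}\norm{\nabla(u-u_0)}_{L^2(\Omega)}\,,
\end{align*}
so $\norm{\nabla(u-u_0)}_{L^2(\Omega)} \lesssim \norm{\gamma-\gamma_0}_{L^q(\Omega)}\norm{\nabla u_0}_{L^{q'}(\Omega)}$.

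The next and most substantial step is to invoke the higher integrability of $\nabla u_0$: by the Meyers-type regularity used by Rondi \cite{rondi2008regularization}, there exists $Q>2$ depending only on $d$, $m$, and $M$ such that for all $q' \in (2,Q)$ one has $\norm{\nabla u_0}_{L^{q'}(\Omega)} \leq C \norm{g}_{H^{-1/2}(\pOmega)}$ uniformly in $\gamma_0 \in \Gamma'$. This is the content of Rondi's Thm.~4.2 and the adjustment in Remark~4.5. Here the condition $\gamma_0 \in \Gamma'$, meaning $\gamma_0 \equiv 1$ on $\Omega \setminus \Omega'$, is crucial to avoid losing integrability at $\pOmega$: the solution is automatically harmonic in the neighborhood $\Omega \setminus \overline{\Omega'}$ of the boundary, so interior Meyers estimates on $\Omega'$ combined with classical elliptic regularity (e.g., smoothness of solutions of the Laplace equation up to the smooth boundary $\pOmega$) yield global $L^{q'}$ control. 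This is the main obstacle, since it requires a careful combination of interior higher integrability, boundary elliptic regularity, and dependence of the constants only on the parameters listed in the theorem statement.

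To conclude, I apply the trace theorem and the quotient Poincar\'e inequality, which together give $\norm{(u-u_0)|_{\pOmega}}_{H^{1/2}(\pOmega)/\C} \lesssim \norm{u-u_0}_{H^1(\Omega)/\C} \lesssim \norm{\nabla(u-u_0)}_{L^2(\Omega)}$. Combining with the preceding steps yields
\begin{align*}
    \norm{(\ntd-\ntdz) g}_{H^{1/2}(\pOmega)/\C} \leq C \norm{\gamma-\gamma_0}_{L^q(\Omega)} \norm{g}_{H^{-1/2}(\pOmega)}\,.
\end{align*}
Taking the supremum over $g\in H^{-1/2}_\diamond(\pOmega)$ with $\norm{g}_{H^{-1/2}}\leq 1$ and replacing the target space norm with the quotient norm (they agree on mean-zero representatives) delivers the claimed operator norm bound, with constant $C$ depending on the parameters listed in the theorem through the Meyers exponent and the hidden constants in the boundary regularity step.
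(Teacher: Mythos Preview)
The paper does not supply its own proof of this theorem; it is stated as a quoted result from \cite[Thm.~4.2 and Remark~4.5, p.~406]{rondi2008regularization}. Your sketch correctly reconstructs the standard argument behind that citation: subtract the weak formulations, test with the difference, apply H\"older with exponents $(q,q',2)$, invoke Meyers-type higher integrability $\norm{\nabla u_0}_{L^{q'}(\Omega)}\lesssim \norm{g}_{H^{-1/2}(\pOmega)}$ (which is precisely where the $\Gamma'$ hypothesis and Rondi's Remark~4.5 enter), and finish with the trace theorem and Poincar\'e--Wirtinger inequality. This is exactly the route the cited reference takes, so your proposal matches the intended proof.
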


To translate the preceding theorem for $\ntd$ into one applicable to $\kappa_\gamma$, we must interpolate the operator norm using Hilbert--Schmidt operator truncations as in \cref{lem:hs_to_op} from \cref{sec:extend_bounds}.
The following lemma is another such ``inverse inequality'' that is valid due to the smoothing properties of shifted NtD maps as shown in \cref{lem:ntd_unif_op}.
\begin{lemma}[NtD inverse inequality: operator norm to Hilbert--Schmidt norm]\label{lem:op_to_hs}
    For any real numbers $s$ and $t$, there exists $C>0$ depending on $d$, $s$, $t$, $\Omega$, $\Omega'$, $m$, and $M$ such that for all $\gamma$ and $\gamma_0$ belonging to $\Gamma'$, it holds that
    \begin{align}\label{eqn:op_to_hs}
        \norm{\ntd-\sR_{\gamma_0}}_{\HS(H^s_\diamond(\pOmega);H^t(\pOmega)/\C)}
        \leq 
        C\norm{\ntd-\sR_{\gamma_0}}^{1/2}_{\sL(H^{-1/2}_\diamond(\pOmega);H_{\phantom{\diamond}}^{1/2}(\pOmega)/\C)}\,.
    \end{align}
\end{lemma}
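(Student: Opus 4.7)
The plan is to run the proof of \cref{lem:hs_to_op} in reverse. Set $T \defeq \ntd - \sR_{\gamma_0} = \tntd - \tntdz$ and write $\|\cdot\|$ for the $\HS(H^s_\diamond(\pOmega);H^t(\pOmega)/\C)$ norm. Insert the finite-rank projection $P_J$ from \eqref{eqn:projection_operator} and split
\[
\|T\| \;\leq\; \|T - P_J T\| + \|P_J T\|\,,
\]
aiming to control the first summand by the smoothing properties of shifted NtD maps, the second summand by a finite-dimensional norm-equivalence argument, and then to balance in $J$.

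For the truncation piece, I would use the identity $P_J T = Q_J T Q_J$, where $Q_J$ is the $L^2$-orthogonal projection onto $\Span\{\varphi_1,\ldots,\varphi_J\}$, and write $T - P_J T = T(I - Q_J) + (I - Q_J) T Q_J$. A spectral computation based on the Weyl-type inequality $(1+\lambda_{J+1})^{-\theta} \lesssim J^{-2\theta/(d-1)}$ (valid for $\theta \geq 0$) produces, for any $\varrho > 0$,
\[
\|T - P_J T\| \;\leq\; C J^{-\varrho}\Bigl(\|T\|_{\HS(H^{s-\varrho(d-1)}_\diamond;H^t/\C)} + \|T\|_{\HS(H^s_\diamond;H^{t+\varrho(d-1)}/\C)}\Bigr)\,.
\]
The shifted HS norms on the right are uniformly bounded over $\Gamma'\times\Gamma'$: combine the operator-norm bounds of \cref{lem:ntd_unif_op} (valid between arbitrary boundary Sobolev scales) with the Weyl summability $\sum_k(1+\lambda_k)^{-\beta}<\infty$ for $\beta > (d-1)/2$, which upgrades uniform operator bounds to uniform HS bounds at the cost of an extra $(d-1)/2 + \epsilon$ units of source smoothness. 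This yields $\|T - P_J T\| \leq C_\varrho J^{-\varrho}$ with $C_\varrho$ independent of $\gamma$ and $\gamma_0$.

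For the finite-rank piece, I would invoke \cref{lem:ntd_norm_equiv} (in the direction opposite to its use in \cref{lem:hs_to_op}) to obtain
\[
\|P_J T\| \;\leq\; C J^{\mu/(d-1)} \|T\|_{\sL(H^{-1/2}_\diamond;H^{1/2}/\C)}
\]
for some explicit exponent $\mu = \mu(s,t,d) \geq 0$ that absorbs two contributions: the $\sqrt{J}$ cost of converting a rank-$J$ operator norm into a Hilbert--Schmidt norm, and the Weyl-type factors produced when transporting source and target between the Sobolev pairs $(-1/2,1/2)$ and $(s,t)$.

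Balancing the two inequalities by choosing $J \defeq \lceil \|T\|_{\sL(H^{-1/2}_\diamond;H^{1/2}/\C)}^{-(d-1)/(\varrho(d-1)+\mu)} \rceil$ produces the exponent $\varrho(d-1)/(\varrho(d-1)+\mu)$ on $\|T\|_{\sL}$. Because the arbitrary smoothing of $\tntd$ makes $\varrho$ a free parameter, selecting $\varrho = \mu/(d-1)$ (or any positive $\varrho$ when $\mu=0$) yields exactly the claimed exponent $1/2$. The main obstacle will be the careful bookkeeping of $\mu$ through \cref{lem:ntd_norm_equiv} — in particular, ensuring that the polynomial-in-$J$ factor it supplies matches the one dictated by $(s,t)$ — since getting this wrong would ruin the final exponent. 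A minor technicality is the constraint $J \geq 1$: this is vacuous when $\|T\|_{\sL}$ is small, while the complementary regime $\|T\|_{\sL} \gtrsim 1$ is closed by the uniform HS bound on $T$ (again from \cref{lem:ntd_unif_op}) via $\|T\| \lesssim 1 \lesssim \|T\|_{\sL}^{1/2}$.
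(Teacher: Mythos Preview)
Your proposal is correct and follows essentially the same strategy as the paper: split via $P_J$, control the tail by the arbitrary smoothing from \cref{lem:ntd_unif_op}, control the finite-rank piece by a polynomial-in-$J$ norm comparison, and balance. The only cosmetic differences are that the paper packages your spectral tail computation as a direct appeal to \cref{lem:op_to_hs_rn}, and it carries out the finite-rank step in two explicit stages (first \cref{lem:ntd_norm_equiv} to shift the Sobolev indices to $((d-1)-1/2,\,1/2)$, then \cref{lem:op_to_hs_rn} to pass from that HS norm to the $\sL(H^{-1/2}_\diamond;H^{1/2}/\C)$ norm), whereas you compress these into a single exponent $\mu$; your parenthetical about the ``$\sqrt{J}$ cost'' plus the Sobolev transport is exactly this two-stage content.
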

The proof of \cref{lem:op_to_hs} is in \cref{app:proofs}. As a consequence, we obtain our desired forward continuity result.

\begin{corollary}[forward H\"older stability in $L^p(\Omega)$: kernel norm]\label{cor:forward_stability_kernel}
    There exists $q>2$ depending only on $d$, $m$, and $M$ such that for any $p\in[1,\infty)$ and any $\gamma$ and $\gamma_0$ belonging to $\Gamma'$, it holds that
	\begin{align}\label{eqn:forward_stability_kernel}
		\norm[\big]{\kappa_\gamma-\kappa_{\gamma_0}}_{L^2(\partial\Omega\times\partial\Omega)}\leq C\norm[\big]{\gamma-\gamma_0}_{L^p(\Omega)}^{\frac12\min\left(1, \frac{p}{q}\right)}\,.
	\end{align}
	The constant $C>0$ depends on $d$, $p$, $q$, $m$, $M$, $\Omega$, and $\Omega'$.
\end{corollary}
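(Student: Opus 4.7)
The plan is to chain together the results already developed in the paper: pass from the $L^2(\pOmega\times\pOmega)$ kernel norm to a Hilbert--Schmidt norm of the smoothed NtD maps, then to the operator norm via the inverse inequality \cref{lem:op_to_hs}, then to an $L^q(\Omega)$ norm via the forward Lipschitz stability \cref{thm:forward_map_lip}, and finally interpolate in $p$.

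First, I would identify the left-hand side of \eqref{eqn:forward_stability_kernel} with a Hilbert--Schmidt norm. By the isometric isomorphism between $L^2(\pOmega\times\pOmega)$ and $\HS(L^2_\diamond(\pOmega))$ used in the proof of \cref{prop:stability_kernel}, together with \cref{def:kernel_integral} and the linearity
\begin{align*}
\norm{\kappa_\gamma-\kappa_{\gamma_0}}_{L^2(\pOmega\times\pOmega)}
=\norm[\big]{(\ntd-\ntdz)\cC^{r_d/2}}_{\HS(L^2_\diamond(\pOmega))}\,,
\end{align*}
an application of \cref{lem:change_of_norm} with $s=r_d$, $t=0$, $r=r_d$ then yields equality with $\norm{\ntd-\ntdz}_{\HS(H^{r_d}_\diamond(\pOmega);L^2(\pOmega)/\C)}$. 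Invoking \cref{lem:op_to_hs} with $s=r_d$ and $t=0$ gives an upper bound of the form
\begin{align*}
\norm{\kappa_\gamma-\kappa_{\gamma_0}}_{L^2(\pOmega\times\pOmega)}
\leq C\, \norm{\ntd-\ntdz}_{\sL(H^{-1/2}_\diamond(\pOmega);H^{1/2}(\pOmega)/\C)}^{1/2}\,.
\end{align*}
Finally, applying \cref{thm:forward_map_lip} at some exponent $q>2$ depending only on $d$, $m$, $M$ produces the intermediate estimate
\begin{align*}
\norm{\kappa_\gamma-\kappa_{\gamma_0}}_{L^2(\pOmega\times\pOmega)}\leq C\,\norm{\gamma-\gamma_0}_{L^q(\Omega)}^{1/2}\,.
\end{align*}

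It remains to interpolate in the exponent of the spatial norm. If $p\geq q$, boundedness of $\Omega$ and the continuous embedding $L^p(\Omega)\embeds L^q(\Omega)$ immediately yield the desired bound with exponent $\tfrac12=\tfrac12\min(1,p/q)$. If $p<q$, I would interpolate $L^q$ between $L^p$ and $L^\infty$: since $\gamma,\gamma_0\in\Gamma'\subseteq\Gamma$ satisfy $\norm{\gamma-\gamma_0}_{L^\infty(\Omega)}\leq 2M$, the standard log-convexity inequality
\begin{align*}
\norm{\gamma-\gamma_0}_{L^q(\Omega)}\leq \norm{\gamma-\gamma_0}_{L^p(\Omega)}^{p/q}\,\norm{\gamma-\gamma_0}_{L^\infty(\Omega)}^{1-p/q}\leq C\,\norm{\gamma-\gamma_0}_{L^p(\Omega)}^{p/q}
\end{align*}
gives the bound with exponent $\tfrac12(p/q)=\tfrac12\min(1,p/q)$. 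Combining both cases completes the proof.

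No single step is a real obstacle since every ingredient has already been established earlier in the paper; the mild conceptual point is recognizing that the $p$-dependence in \eqref{eqn:forward_stability_kernel} is purely a Lebesgue interpolation artifact, with the underlying H\"older exponent $1/2$ coming from the operator-norm-to-Hilbert--Schmidt-norm trade-off in \cref{lem:op_to_hs} (which in turn relies on the arbitrarily high smoothing of shifted NtD maps via \cref{lem:ntd_unif_op}).
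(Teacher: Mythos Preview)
Your proposal is correct and follows essentially the same approach as the paper: chain the kernel--Hilbert--Schmidt isometry with \cref{lem:change_of_norm}, then \cref{lem:op_to_hs}, then \cref{thm:forward_map_lip}, and finish by interpolating $L^q$ between $L^p$ and $L^\infty$ using the uniform $L^\infty$ bound on $\Gamma'$. The only cosmetic difference is the order in which the paper presents the steps; the substance and the ingredients are identical.
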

\begin{proof}
    Application of \cref{thm:forward_map_lip} and \cref{lem:op_to_hs} delivers a $q>2$ such that
    \begin{align*}
        \norm{\ntd-\sR_{\gamma_0}}_{\HS(H^s_\diamond(\pOmega);H^t(\pOmega)/\C)}
            \leq 
            C\norm{\gamma-\gamma_0}^{1/2}_{L^q(\Omega)}\,.
    \end{align*}
    By \cref{lem:change_of_norm} and \cref{lem:ntd_hs_any_dim}, choosing $s=r_d$ and $t=0$ (or $s=r_d=0$ if $d=2)$ in the previous display gives
    \begin{align*}
        \norm{\ntd-\sR_{\gamma_0}}_{\HS(H^{r_d}_\diamond;L^2/\C)}
        =
        \norm{\ntd \cC^{r_d/2}-\ntdz \cC^{r_d/2}}_{\HS(L_\diamond^2)}=\norm{\kappa_\gamma-\kappa_{\gamma_0}}_{L^2(\partial\Omega\times\partial\Omega)}\,.
    \end{align*}
    If $p\geq q$, the asserted result \eqref{eqn:forward_stability_kernel} follows by monotonicity of $L^p$ norms. Otherwise, $p<q$ and we use the topological equivalence of the $L^p(\Omega)$ distances on $\Gamma\supseteq\Gamma'$. 
    In this case, for any $\gamma$ and $\gamma_0$ in $\Gamma'$, it holds that
    \begin{align*}
        \norm{\gamma-\gamma_0}_{L^q(\Omega)}&=\biggl(\int_\Omega\abs{\gamma(x)-\gamma_0(x)}^{q-p}\abs{\gamma(x)-\gamma_0(x)}^{p}\dd{x}\biggr)^{1/q}\\
        &\leq (2M)^{1-p/q}\norm{\gamma-\gamma_0}_{L^p(\Omega)}^{p/q}\,.
    \end{align*}
    Putting together the pieces completes the proof.
\end{proof}

In summary, \cref{cor:forward_stability_kernel} establishes that for any $p\in [1,\infty)$, the forward map 
\begin{align}\label{eqn:forward_map_kernel_bigger}
\begin{split}
    F\colon (\Gamma',\sfd_p)&\to L^2(\pOmega\times\pOmega)\,,\\
    \gamma&\mapsto F(\gamma)\defeq \kappa_\gamma\,,
\end{split}
\end{align}
is H\"older continuous. In particular, $F$ is continuous, which is all we will need. This continuity remains valid for the restriction
\begin{align}\label{eqn:forward_map_kernel}
    \sfF \defeq F|_{\cX_d(R)}\colon  (\cX_d(R),\sfd_p)\to L^2(\pOmega\times\pOmega)
\end{align}
of $F$ to the admissible set of conductivities $\cX_d(R)\subseteq \Gamma'$. We now turn our attention to the compactness of this set.

\subsection{Compactness of the admissible conductivities}\label{sec:approx_compactness}
Our next aim is to establish that $\domain(\Psi^\star)$ is compact in the $L^2(\pOmega\times\pOmega)$ topology because this will be the input space for our neural operator approximations. By the definition $\eqref{eqn:domain_of_inverse}$ of the domain $\domain(\Psi^\star)$, we see that $\domain(\Psi^\star)$ is equal to the range $\image(\sfF)$ of the continuous forward map $\sfF$ from \eqref{eqn:forward_map_kernel}. Therefore, the compactness of $\domain(\Psi^\star)=\image(\sfF)=\sfF(\cX_d(R))$ will follow if we can show that the set $\cX_d(R)$ of admissible conductivities is compact in $L^p(\Omega)$. We need the following lemma.

\begin{lemma}[the admissible conductivity set is closed]\label{lem:closed_set}
    For any $p\in[1,\infty)$ and any $d\geq 2$, it holds that $\cX_d(R)$ is closed in $L^p(\Omega)$.
\end{lemma}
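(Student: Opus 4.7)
The strategy is standard: take an arbitrary sequence $\{\gamma_n\}_{n\in\N}\subset\cX_d(R)$ converging in $L^p(\Omega)$ to some $\gamma_\infty\in L^p(\Omega)$, and verify that $\gamma_\infty$ satisfies each of the defining properties of $\cX_d(R)$. The verifications split naturally according to the three ingredients: (i) membership in $\Gamma$ (the $L^\infty$ two-sided bounds), (ii) the support-like condition $\gamma=\onebm$ a.e.\ on $\Omega\setminus\Omega'$, and (iii) the norm bound by $R$ in the appropriate space.

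First, pass to a subsequence $\{\gamma_{n_k}\}$ that converges to $\gamma_\infty$ pointwise almost everywhere on $\Omega$. Then (i) follows immediately: the inequalities $m\leq \gamma_{n_k}(x)\leq M$ hold for a.e.\ $x$ simultaneously (on the intersection of the null sets, which is still of full measure), so the same holds for the limit, giving $\gamma_\infty\in\Gamma$. For (ii), the same a.e.\ convergence gives $\gamma_\infty(x)=1$ for a.e.\ $x\in\Omega\setminus\Omega'$, hence $\gamma_\infty\in\Gamma'$.

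For (iii), the two cases of \eqref{eqn:conductivity_set} are handled separately. When $d=2$, note that the uniform $L^\infty$ bound together with $L^p$ convergence on the bounded domain $\Omega$ implies $\gamma_n\to\gamma_\infty$ in $L^1(\Omega)$; this uses only $\|\gamma_n-\gamma_\infty\|_{L^1}\leq |\Omega|^{1-1/p}\|\gamma_n-\gamma_\infty\|_{L^p}$. The total variation $V(\slot;\Omega)$ is lower semicontinuous with respect to $L^1_{\mathrm{loc}}(\Omega)$ convergence (a classical fact, see e.g.\ \cite[Thm.~14.7]{leoni2017first}), so
\begin{align*}
V(\gamma_\infty;\Omega)\leq \liminf_{n\to\infty} V(\gamma_n;\Omega)\leq R\qa \norm{\gamma_\infty}_{L^1(\Omega)}\leq \liminf_{n\to\infty}\norm{\gamma_n}_{L^1(\Omega)}\leq R\,,
\end{align*}
which yields $\norm{\gamma_\infty}_{\BV}\leq R$. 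When $d\geq 3$, invoke the compact embedding $C^{1,\al}(\overline{\Omega})\embeds C^1(\overline{\Omega})$ (Arzel\`a--Ascoli applied to the derivatives, using uniform $\al$-H\"older bounds on $\partial^\beta\gamma_n$). Uniform boundedness in $C^{1,\al}$ yields a further subsequence converging in $C^1(\overline{\Omega})$ to some $\widetilde{\gamma}\in C^1(\overline{\Omega})$; this subsequence also converges in $L^p$, so $\widetilde{\gamma}=\gamma_\infty$ a.e., and continuity upgrades this to equality everywhere on $\overline{\Omega}$. Finally, the $C^{1,\al}$ norm is lower semicontinuous under pointwise convergence (the supremum defining the H\"older seminorm and the $C^1$ norm both satisfy $\liminf$-inequalities), giving $\norm{\gamma_\infty}_{C^{1,\al}}\leq R$.

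There is no real obstacle, only bookkeeping: the only step requiring slight care is the $d=2$ case, where one must remember to upgrade $L^p$ convergence to $L^1$ convergence before invoking lower semicontinuity of the total variation, and must check both parts of the $\BV$ norm (the $L^1$ part and the variation part). In both cases, closedness follows from combining almost-everywhere convergence (for the pointwise bounds) with a lower semicontinuity statement for the seminorm that defines the Banach space in question.
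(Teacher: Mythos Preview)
Your proposal is correct and follows essentially the same approach as the paper: pass to an a.e.\ convergent subsequence to verify the pointwise bounds and the condition $\gamma=\onebm$ on $\Omega\setminus\Omega'$, then use lower semicontinuity of the total variation (after upgrading to $L^1$ convergence) in the $d=2$ case and a compact embedding of $C^{1,\al}$ together with lower semicontinuity of the H\"older seminorm in the $d\geq 3$ case. The only cosmetic differences are that the paper verifies the $\Gamma'$ condition via an integral inequality rather than a.e.\ convergence, and embeds into $C^{1,\beta}$ for $\beta<\al$ rather than directly into $C^1$; neither affects the argument.
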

\begin{proof}
    Clearly $\cX_d(R)\subseteq L^p(\Omega)$ by definition \eqref{eqn:conductivity_set}. To show that $\cX_d(R)$ is closed in $L^p(\Omega)$, let $\{\gamma_n\}_{n\in\N}\subseteq \cX_d(R)$ be a sequence such that $\gamma_n\to\gamma\in L^p(\Omega)$ in $L^p(\Omega)$ as $n\to\infty$. First, we show that $\gamma\in\Gamma$. Since $\Omega$ has finite Lebesgue measure, convergence in $L^p(\Omega)$ implies the existence of a subsequence $\{\gamma_{n_k}\}$ such that $\gamma_{n_k}(x)\to\gamma(x)$ as $k\to\infty$ for a.e. $x\in\Omega$. Thus,
    \begin{align*}
        \abs{\gamma(x)}\leq \abs{\gamma_{n_k}(x)} + \abs{\gamma(x)-\gamma_{n_k}(x)}
        \leq M + \abs{\gamma(x)-\gamma_{n_k}(x)}\,.
    \end{align*}
    Taking the limit superior shows that $\gamma(x)\leq\abs{\gamma(x)}\leq M$ for a.e. $x\in\Omega$. Thus, $\gamma\in L^\infty(\Omega)$. The strictly positive lower bound is similar because
    \begin{align*}
        \gamma(x)\geq \gamma_{n_k}(x) - \abs{\gamma_{n_k}(x)-\gamma(x)}
        \geq m - \abs{\gamma_{n_k}(x)-\gamma(x)}\,.
    \end{align*}
    Taking the limit inferior, we deduce that $\gamma\in\Gamma$.
    
    Next, we show that $\gamma\in \Gamma'$ is identically one in a neighborhood of the boundary. Let $E=\Omega\setminus\Omega'$, where $\overline{\Omega'}\subset\Omega$. By hypothesis, $\gamma_n=\onebm$ a.e. on $E$. Thus
    \begin{align*}
        \int_E\abs{1-\gamma(x)}^p\dd{x}=\int_E\abs{\gamma_n(x)-\gamma(x)}^p\dd{x}\leq\int_\Omega\abs{\gamma_n(x)-\gamma(x)}^p\dd{x}
    \end{align*}
    for any $n$. By taking the limit superior, it follows that $\int_E\abs{1-\gamma(x)}^p\dd{x}=0$. This implies that $\gamma=\onebm$ a.e. on $E$ as required. Hence, $\gamma\in\Gamma'$.

    To conclude that $\gamma\in\cX_d(R)$, we must prove that the $L^p$-limit $\gamma$ satisfies the \emph{a priori} bounds in the definition~\eqref{eqn:conductivity_set} of $\cX_d(R)$. Beginning with dimension $d=2$, we must establish the BV norm bound. Since $\sup_{n\in\N}\norm{\gamma_n}_{\BV}\leq R$, it holds that
    \begin{align*}
        \norm{\gamma}_{L^1(\Omega)}=\int_\Omega\lim_{k\to\infty}\abs{\gamma_{n_k}(x)}\dd{x}
        \leq \liminf_{k\to\infty}\int_\Omega\abs{\gamma_{n_k}(x)}\dd{x}
        = \liminf_{k\to\infty}\norm{\gamma_{n_k}}_{L^1(\Omega)}\leq R
    \end{align*}
    by Fatou's lemma. Next, the total variation functional $V(\slot;\Omega)\colon L^1_{\mathrm{loc}}(\Omega)\to\R_{\geq 0}\cup\{\infty\}$ is lower semi-continuous in $L^1_{\mathrm{loc}}(\Omega)$ \cite[Exercise~14.3~(iii), p.~460]{leoni2017first}. Since $p\geq 1$, the sequence $\{\gamma_n\}_{n\in\N}$ and limit $\gamma$ both belong to $L^1_{\mathrm{loc}}(\Omega)$. It follows that $V(\gamma;\Omega)\leq \liminf_{n\to\infty} V(\gamma_n;\Omega)\leq R$. Therefore, $\norm{\gamma}_{\BV}\leq R$ as desired.

    Now consider $d\geq 3$. We have $\sup_{n\in\N}\norm{\gamma_n}_{C^{1,\al}}\leq R$. Let $\beta<\al$. By the compactness of the embedding $C^{1,\al}(\overline{\Omega})\hookrightarrow C^{1,\beta}(\overline{\Omega})$ \cite[Thm.~1.34, pp.~11--12]{adams2003sobolev}, there exists a subsequence $\{\gamma_{n_k}\}_{k\in\N}$ and a limit $\gamma'\in C^{1,\beta}(\overline{\Omega})$ such that $\lim_{k\to\infty}\norm{\gamma_{n_k}-\gamma'}_{C^{1,\beta}}=0$. Thus, it also holds that
    \begin{align*}
        \norm{\gamma_{n_k}-\gamma'}_{L^p}\lesssim \norm{\gamma_{n_k}-\gamma'}_{L^\infty}=\sup_{x\in\Omega}\abs{\gamma_{n_k}(x)-\gamma'(x)}\leq \norm{\gamma_{n_k}-\gamma'}_{C^{1,\beta}}\to 0
    \end{align*}
    as $k\to\infty$. By uniqueness of limits, we deduce that $\gamma=\gamma'$ a.e. on $\Omega$ and $\gamma=\gamma'$ in $L^p$. Identifying $\gamma$ with its continuous representative $\gamma'$, in particular it holds that $\gamma_{n_k}\to\gamma$ as $k\to\infty$ in $C^1(\overline{\Omega})$. Thus
    \begin{align*}
        \norm{\gamma}_{C^1}\leq \limsup_{k\to\infty}\norm{\gamma_{n_k}}_{C^1} + \limsup_{k\to\infty}\norm{\gamma - \gamma_{n_k}}_{C^1}\leq R\,.
    \end{align*}
    Finally, let $\nu$ be a multi-index with $\abs{\nu}=1$ and let $(x,x')\in\Omega\times\Omega$. Then
    \begin{align*}
        \abs[\big]{(\partial^\nu\gamma)(x)-(\partial^\nu\gamma)(x')}=\limsup_{k\to\infty}\abs[\big]{(\partial^\nu\gamma_{n_k})(x)-(\partial^\nu\gamma_{n_k})(x')}\leq R\, \abs{x-x'}^\al
    \end{align*}
    because $\{\gamma_{n_k}\}_{k\in\N}\subseteq \cX_d(R)$. Since $\nu$, $x$, and $x'$ are arbitrary, the assertion $\norm{\gamma}_{C^{1,\al}}\leq R $ follows. This completes the proof.
\end{proof}

With the help of \cref{lem:closed_set}, we can now prove that $\cX_d(R)\subseteq L^p(\Omega)$ is compact.
\begin{lemma}[compactness of admissible conductivity set]\label{lem:compact_set}
    For any $1\leq p< \infty$ and any $d\geq 2$, the set $\cX_d(R)$ is compact in $L^p(\Omega)$.
\end{lemma}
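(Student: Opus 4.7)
The plan is to combine \cref{lem:closed_set}, which already establishes closedness of $\cX_d(R)$ in $L^p(\Omega)$, with a dimension-dependent precompactness ingredient. Since $L^p(\Omega)$ is a complete metric space, it suffices to show that every sequence $\{\gamma_n\}_{n\in\N}\subseteq\cX_d(R)$ admits a subsequence that converges in $L^p(\Omega)$; the limit will then automatically lie in $\cX_d(R)$ by \cref{lem:closed_set}. Thus the whole task reduces to verifying \emph{relative} compactness of $\cX_d(R)$ in $L^p(\Omega)$ for each $d\geq 2$.

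In the two-dimensional case, I would invoke the classical compact embedding $\BV(\Omega)\embeds L^1(\Omega)$ valid on bounded domains with Lipschitz boundary (see, e.g., \cite{leoni2017first}). The uniform bound $\sup_n\norm{\gamma_n}_{\BV}\leq R$ then yields, after passing to a subsequence, $L^1$-convergence $\gamma_n\to\gamma_\infty$ together with pointwise a.e.\ convergence on $\Omega$. Combined with the uniform pointwise bounds $m\leq\gamma_n\leq M$ from the definition of $\Gamma$, this gives a uniform $L^\infty$ majorant $\norm{\gamma_n-\gamma_\infty}_{L^\infty(\Omega)}\leq 2M$, and the elementary interpolation
\[
\norm{\gamma_n-\gamma_\infty}_{L^p(\Omega)}^p \leq (2M)^{p-1}\norm{\gamma_n-\gamma_\infty}_{L^1(\Omega)}
\]
upgrades $L^1$-convergence to $L^p$-convergence for every $p\in[1,\infty)$.

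For $d\geq 3$ the plan is even simpler and essentially recycles an argument already deployed inside the proof of \cref{lem:closed_set}. The bound $\sup_n\norm{\gamma_n}_{C^{1,\al}}\leq R$ together with the compact embedding $C^{1,\al}(\overline{\Omega})\embeds C^1(\overline{\Omega})$ \cite[Thm.~1.34]{adams2003sobolev} produces a subsequence converging in $C^1(\overline{\Omega})$, hence uniformly, on the bounded set $\Omega$; uniform convergence is trivially stronger than $L^p$-convergence for every $p\in[1,\infty)$.

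I do not anticipate a serious obstacle: both precompactness ingredients are textbook. The only minor care needed is to confirm that the BV compact embedding applies to our domain, which is fine because $\pOmega$ is $C^\infty$-smooth (hence Lipschitz); the H\"older compact embedding was already used once in the excerpt, so the $d\geq 3$ case is essentially verbatim.
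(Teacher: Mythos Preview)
Your proposal is correct. The $d\geq 3$ argument is essentially identical to the paper's (the paper embeds into $C^{1,\beta}$ for $\beta<\al$ rather than $C^1$, but this is cosmetic). The $d=2$ argument, however, differs genuinely: you invoke the off-the-shelf compact embedding $\BV(\Omega)\hookrightarrow L^1(\Omega)$ and then interpolate with the uniform $L^\infty$ bound to upgrade to $L^p$, whereas the paper works directly with the Fr\'echet--Kolmogorov--Riesz criterion on $\R^2$, using the extension $\widetilde{\gamma}-\onebm$ and the translation estimate $\norm{\tau_h\widetilde{\gamma}-\widetilde{\gamma}}_{L^1(\R^2)}\leq V(\gamma;\Omega)\,\abs{h}$ (via \cref{lem:tv_identity}) to verify equicontinuity by hand. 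Your route is shorter and avoids the extension machinery and the total-variation identity; the paper's route is more self-contained and reuses the same translation estimate already needed in the proof of \cref{thm:stability}, so it keeps the toolkit uniform across the two places where $\BV$ structure is exploited.
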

\begin{proof}
Since $\cX_d(R)$ is closed in the $L^p(\Omega)$ topology by \cref{lem:closed_set}, compactness will follow by establishing relative compactness (i.e., precompactness) in $L^p(\Omega)$. 

First let $d=2$. In this case, our argument is similar to that in \cite[Appendix C]{bhattacharya2023learning}. The proof uses the Frech\'et--Kolmogorov--Riesz (FKR) compactness theorem \cite[Thm.~4.26, Sec.~4.5, p.~111]{brezis2011functional}. This theorem requires $L^p(\R^d)$ boundedness and $L^p(\R^d)$ equicontinuity estimates on the whole of $\R^d$. To this end, define the set $\widetilde{\cX}\defeq \set{\widetilde{\gamma}-\onebm \colon\R^2\to\R}{\gamma\in\cX_2(R)}$, where $\widetilde{\gamma}\colon\R^2\to\R$ is the continuous extension of $\gamma$ by one outside of $\Omega$. For the boundedness over $\widetilde{\cX}$, notice that
\begin{align*}
    \norm{\widetilde{\gamma}-\onebm}_{L^p(\R^d)}=\norm{\gamma-\onebm}_{L^p(\Omega)}\leq  (M+1)\abs{\Omega}^{1/p}
\end{align*}
because $\widetilde{\gamma}-\onebm\equiv 0$ outside of $\Omega$. For equicontinuity on average, we first recall \eqref{eqn:modulus_sup} and the translation-by-$y$ operator $\tau_y$ defined by $(\tau_y f)(x)= f(x+y)$ for fixed $y\in\R^d\setminus\{0\}$ and for any $x\in\R^d$. For any $h\in\R^d$, we compute that
\begin{align*}
    \norm{\tau_h(\widetilde{\gamma}-\onebm) - (\widetilde{\gamma}-\onebm)}_{L^\infty(\R^d)}=\norm{\tau_h\widetilde{\gamma}-\widetilde{\gamma}}_{L^\infty(\R^d)}
    \leq 2\norm{\widetilde{\gamma}}_{L^\infty(\R^d)}
    \leq 2 M\,.
\end{align*}
Also, by an argument similar to the one used in the proof of \cref{thm:stability},
\begin{align*}
    \norm{\tau_h(\widetilde{\gamma}-\onebm) - (\widetilde{\gamma}-\onebm)}_{L^1(\R^d)}=\norm{\tau_h\widetilde{\gamma}-\widetilde{\gamma}}_{L^1(\R^d)}
    \leq V(\widetilde{\gamma};\R^d)\abs{h}
    = V(\gamma;\Omega)\abs{h}\leq R\abs{h}\,.
\end{align*}
We again invoked \cref{lem:tv_identity}. By interpolating the $L^1$ and $L^\infty$ norms, it follows that
\begin{align*}
    \norm{\tau_h(\widetilde{\gamma}-\onebm) - (\widetilde{\gamma}-\onebm)}_{L^p(\R^d)}&\leq \norm{\tau_h\widetilde{\gamma} - \widetilde{\gamma}}_{L^\infty(\R^d)}^{1-1/p}\norm{\tau_h\widetilde{\gamma} - \widetilde{\gamma}}_{L^1(\R^d)}^{1/p}\\
    &\leq (2 M)^{1-1/p}R^{1/p}\abs{h}^{1/p}\,,
\end{align*}
which tends to zero uniformly over $\widetilde{\cX}$ as $\abs{h}\to0$. Then, the FKR compactness theorem asserts that $\widetilde{\cX}|_{\Omega}\defeq \set{\gamma-\onebm\colon \Omega\to \R}{\gamma\in\cX_2(R)}$ is relatively compact in $L^p(\Omega)$. From this fact, it is easy to see that $\cX_2(R)$ itself is relatively compact in $L^p(\Omega)$.

Now let $d\geq 3$. Let $\{\gamma_n\}\subseteq\cX_d(R)$ be a sequence. Since $\sup_{n\in\N}\norm{\gamma_n}_{C^{1,\al}}\leq R$, the compactness of the embedding $C^{1,\al}(\overline{\Omega})\hookrightarrow C^{1,\beta}(\overline{\Omega})$ for $\beta<\al$ \cite[Thm.~1.34, pp.~11--12]{adams2003sobolev} delivers the existence of a subsequence $\{\gamma_{n_k}\}_{k\in\N}$ and a limit $\gamma'\in C^{1,\beta}(\overline{\Omega})$ such that
\begin{align*}
    \norm{\gamma_{n_k}-\gamma'}_{L^p}\lesssim \norm{\gamma_{n_k}-\gamma'}_{L^\infty}=\sup_{x\in\Omega}\abs{\gamma_{n_k}(x)-\gamma'(x)}\leq \norm{\gamma_{n_k}-\gamma'}_{C^{1,\beta}}\to 0
\end{align*}
as $k\to\infty$. This proves that $\cX_d(R)$ is relatively compact in $L^p(\Omega)$ for $d\geq 3$.
\end{proof}

By combining \cref{cor:forward_stability_kernel} with \cref{lem:compact_set}, we obtain our desired compactness result in the space of kernel functions.
\begin{corollary}[compactness of integral kernels]\label{cor:compact_set_kernel}
    The set $\domain(\Psi^\star)\subseteq L^2(\pOmega\times\pOmega)$ is compact in $L^2(\pOmega\times\pOmega)$.
\end{corollary}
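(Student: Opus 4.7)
The plan is to exploit the fact that $\domain(\Psi^\star)$ is by construction the image of $\cX_d(R)$ under the restricted forward map $\sfF$, and then invoke the general topological fact that continuous images of compact sets are compact. All the real work has already been done upstream in \cref{cor:forward_stability_kernel} and \cref{lem:compact_set}, so this result should essentially fall out by assembling those pieces.

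First I would rewrite the target set in the form most convenient for the argument. By definition \eqref{eqn:domain_of_inverse}, $\domain(\Psi^\star)=\{\kappa_\gamma : \gamma\in\cX_d(R)\}$, and by the definition \eqref{eqn:forward_map_kernel} of $\sfF$ together with \eqref{eqn:forward_map_kernel_bigger}, this is precisely $\sfF(\cX_d(R))$, i.e., the image of the admissible conductivity set under the forward kernel map. So the claim reduces to showing that $\sfF(\cX_d(R))$ is compact in $L^2(\pOmega\times\pOmega)$.

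Next I would fix any $p\in[1,\infty)$ (say $p=2$) and note two ingredients. First, \cref{lem:compact_set} guarantees that $\cX_d(R)$ is compact in $L^p(\Omega)$, hence compact as a metric space when equipped with the metric $\sfd_p$ from \eqref{eqn:metric_distance}. Second, \cref{cor:forward_stability_kernel} shows that $\sfF\colon(\cX_d(R),\sfd_p)\to L^2(\pOmega\times\pOmega)$ is H\"older continuous, hence in particular continuous. Since the continuous image of a compact set is compact, $\sfF(\cX_d(R))=\domain(\Psi^\star)$ is compact in $L^2(\pOmega\times\pOmega)$, as required.

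There is no serious obstacle here; the proof is a two-line application of standard topology once the two supporting results are in place. If anything, the only thing to be careful about is making sure the topologies match up: \cref{cor:forward_stability_kernel} is stated for conductivities in $\Gamma'$ with the $L^p(\Omega)$ distance as the input metric, while \cref{lem:compact_set} gives compactness of $\cX_d(R)\subseteq\Gamma'$ in that same $L^p(\Omega)$ topology, so the restriction $\sfF=F|_{\cX_d(R)}$ inherits continuity with respect to exactly the right metric. This confirms that no additional regularity or uniformity argument is needed.
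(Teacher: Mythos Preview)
Your proposal is correct and matches the paper's own argument essentially line for line: the paper likewise observes that $\domain(\Psi^\star)=\sfF(\cX_d(R))$ and concludes compactness from the continuity of $\sfF$ (\cref{cor:forward_stability_kernel}) together with the compactness of $\cX_d(R)$ in $L^p(\Omega)$ (\cref{lem:compact_set}).
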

\Cref{cor:compact_set_kernel} plays an important role in the neural operator approximation theory to follow.

\subsection{Local representation of the boundary manifold}\label{sec:approx_manifold}
The neural operators as described in \cref{sec:prelim_neural} map between spaces of functions defined over \emph{bounded domains}. However, in the context of EIT, inputs to the inverse map \eqref{eqn:inverse_map_final_def} are functions defined on $\pOmega\times\pOmega$. The set $\pOmega\times\pOmega$ is not a domain, but is instead a \emph{compact manifold} possibly representing complicated geometry. This fact requires additional care. Using basic properties of manifolds \cite{tu2011manifold}, the goal of this subsection is to construct an injective linear transformation from functions on the manifold to functions on a latent, nonphysical domain.

To this end, recall that $\Omega\subset\R^d$ is the bounded physical domain with $C^\infty$-smooth boundary. It follows that $\pOmega\subset\R^d $ is a $C^\infty$-smooth $(d-1)$-dimensional manifold without boundary that is compact in the subspace topology of $\R^d$ \cite{tu2011manifold}. Let $\{(U_j,\phi_j)\}_{j=1}^J$ be a finite atlas of charts for $\pOmega$. The coordinate neighborhoods $U_j\subset \pOmega$ are open sets with the property that $\cup_{j=1}^J U_j=\pOmega$. Let the diffeomorphic coordinate maps $\phi_j\colon U_j\to\phi_j(U_j)\subset\R^{d-1}$ be chosen such that the coordinate patches $W_j\defeq \phi_j(U_j)$ satisfy $W_j\subseteq \mathfrak{D}$ for all $j$, where $\mathfrak{D}$ is a latent Lipschitz domain with the property that $\overline{\mathfrak{D}}\subset (0,1)^{d-1}$.
Additionally, we require the following assumption.
\begin{assumption}[bi-Lipschitz coordinate maps]\label{ass:bilipschitz}
    The domain $\Omega$ is such that the boundary manifold $\pOmega$ admits bi-Lipschitz coordinate maps $\{\phi_j\}_{j=1}^J$.
\end{assumption}

We next operate on the $C^\infty$-smooth $(2d-2)$-dimensional compact manifold $\pOmega\times\pOmega\subset\R^d\times\R^d $ \cite[Prop.~5.18, p.~56]{tu2011manifold}.
For every $j$ and $k$, define the diffeomorphism
\begin{align}\label{eqn:coord_backward}
    \begin{split}
        \Phi_{jk}^{-1}\colon W_j\times W_k&\to U_j\times U_k\\
        (x,y)&\mapsto \bigl(\phi_j^{-1}(x),\phi_k^{-1}(y)\bigr)
    \end{split}
\end{align}
and its associated pullback linear operator
\begin{align}\label{eqn:coord_backward_pull}
\begin{split}
    (\Phi_{jk}^{-1})^*\colon L^2(\pOmega\times\pOmega) &\to L^2(W_j\times W_k)\\
    h&\mapsto h\circ \Phi_{jk}^{-1}\,.
\end{split}
\end{align}
The linear operator $(\Phi_{jk}^{-1})^*$ is continuous by the change of variables integration formula and the bi-Lipschitz property of the coordinate maps from \cref{ass:bilipschitz}. For any $h$, we also define the zero-extended continuous pullback operator $(\widetilde{\Phi}_{jk}^{-1})^*\colon L^2(\pOmega\times\pOmega) \to L^2(\mathfrak{D}\times \mathfrak{D})$ by
\begin{align}\label{eqn:pull_inv_zero}
    (x,y)\mapsto \bigl((\widetilde{\Phi}_{jk}^{-1})^* h\bigr)(x,y)\defeq
    \begin{cases}
        \bigl(({\Phi}_{jk}^{-1})^* h\bigr)(x,y)\, , & \text{if }\, (x,y)\in W_j\times W_k\,,\\
        0\, , & \text{otherwise}\,.
    \end{cases}
\end{align}
Stacking together all $J^2$ pullback operators $\{(\widetilde{\Phi}_{jk}^{-1})^*\}_{j,k=1}^J$ defines the continuous global-to-local linear map
\begin{align}\label{eqn:global_to_local}
    \begin{split}
        {\sfG}\colon L^2(\pOmega\times\pOmega)&\to L^2(\mathfrak{D}\times \mathfrak{D})^{J\times J}\\
        h&\mapsto {\sfG} h\defeq \bigl\{(\widetilde{\Phi}_{jk}^{-1})^* h\bigr\}_{j,k=1}^J\,.
    \end{split}
\end{align}
We may identify $L^2(\mathfrak{D}\times \mathfrak{D})^{J\times J}$ with $L^2(\mathfrak{D}\times \mathfrak{D};\R^{J^2})$ via the natural isometric isomorphism.
The global-to-local map $\sfG$ allows us to move off of the manifold and work in $J^2$ Euclidean patches. In particular, standard neural operators are applicable on such patches.

It remains to define a suitable inverse of the global-to-local map in order to return to the manifold. We take an approach based on smooth partitions of unity. Standard results \cite[Prop.~13.6, p.~146]{tu2011manifold} show that there exists a partition of unity $\{\psi_j\}_{j=1}^J$ subordinate to $\{U_j\}_{j=1}^J$ such that $\psi_j\in C_c^\infty(U_j)$ for every $j$. The functions $\{\psi_j\}$ are nonnegative and sum to one pointwise.
Define the linear multiplication operators $\sfM_{jk}\colon L^2(\pOmega\times\pOmega) \to  L^2(\pOmega\times\pOmega)$ by
\begin{align}\label{eqn:multiplication}
    (\sfM_{jk} h)(x,y)\defeq \psi_j(x)\psi_k(y)h(x,y)
\end{align}
for every $h$, $x$, and $y$. Each $\sfM_{jk}h$ is supported in $U_j\times U_k$.
The multiplication operators are continuous because $\psi_j$ has compact support in $U_j$ for all $j$ and hence $\norm{\psi_j}_{L^\infty(U_j)}<\infty$.
Now define the diffeomorphism
\begin{align}
    \begin{split}
        \Phi_{jk}\colon U_j\times U_k&\to W_j\times W_k\\
        (x,y)&\mapsto \bigl(\phi_j(x),\phi_k(y)\bigr)
    \end{split}
\end{align}
and its associated continuous pullback linear operators
\begin{align}
    \begin{split}
        \Phi_{jk}^*\colon L^2(\mathfrak{D}\times\mathfrak{D}) &\to L^2(U_j\times U_k)\\
        h&\mapsto h\circ \Phi_{jk}
    \end{split}
\end{align}
and $\widetilde{\Phi}_{jk}^*\colon L^2(\mathfrak{D}\times\mathfrak{D})\to L^2(\pOmega\times\pOmega)$. The latter is given by
\begin{align}
    (x,y)\mapsto (\widetilde{\Phi}_{jk}^* h)(x,y)\defeq
    \begin{cases}
        ({\Phi}_{jk}^* h)(x,y)\, , & \text{if }\, (x,y)\in U_j\times U_k\,,\\
        0\, , & \text{otherwise}\,.
    \end{cases}
\end{align}
Finally, we define the linear local-to-global map
\begin{align}\label{eqn:local_to_global}
    \begin{split}
        {\sfL}\colon L^2(\mathfrak{D}\times \mathfrak{D})^{J\times J}&\to L^2(\pOmega\times\pOmega)\\
        \bigl\{f_{jk}\}_{j,k=1}^J&\mapsto  \sum_{j,k=1}^J \sfM_{jk}\widetilde{\Phi}_{jk}^* f_{jk}\,.
    \end{split}
\end{align}
The map $\sfL$ is continuous because it is the composition of continuous linear maps.
The following lemma shows that ${\sfL}$ is a left inverse of ${\sfG}$.
\begin{lemma}[left inverse]\label{lem:left_inv_pou}
Under \cref{ass:bilipschitz}, it holds that ${\sfL}{\sfG}=\Id_{L^2(\pOmega\times\pOmega)}$.
\end{lemma}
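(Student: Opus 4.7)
The plan is a direct pointwise computation, exploiting the fact that the zero-extended pullbacks $\widetilde{\Phi}_{jk}^*$ and $(\widetilde{\Phi}_{jk}^{-1})^*$ are mutually inverse on the coordinate patches $U_j\times U_k$ and $W_j\times W_k$, and that the multiplication operator $\sfM_{jk}$ already forces the relevant functions to be supported inside $U_j\times U_k$. Since all maps involved are continuous linear on $L^2$, it suffices to verify the identity $(\sfL\sfG h)(x,y)=h(x,y)$ at (almost) every $(x,y)\in\pOmega\times\pOmega$.

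First I would unpack the definitions \eqref{eqn:global_to_local} and \eqref{eqn:local_to_global} to write
\begin{align*}
    \sfL\sfG h \;=\; \sum_{j,k=1}^J \sfM_{jk}\,\widetilde{\Phi}_{jk}^{*}\,(\widetilde{\Phi}_{jk}^{-1})^{*} h\,.
\end{align*}
Fix $(j,k)$ and $(x,y)\in U_j\times U_k$. By the definition of $\widetilde{\Phi}_{jk}^*$ and the fact that $(\phi_j(x),\phi_k(y))\in W_j\times W_k$, one gets
\begin{align*}
    \bigl(\widetilde{\Phi}_{jk}^{*}\,(\widetilde{\Phi}_{jk}^{-1})^{*} h\bigr)(x,y)
    \;=\; \bigl((\widetilde{\Phi}_{jk}^{-1})^{*} h\bigr)\bigl(\phi_j(x),\phi_k(y)\bigr)
    \;=\; h\bigl(\Phi_{jk}^{-1}(\phi_j(x),\phi_k(y))\bigr)
    \;=\; h(x,y),
\end{align*}
where the last equality uses $\Phi_{jk}^{-1}\circ\Phi_{jk}=\Id_{U_j\times U_k}$.

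Next I would invoke the multiplication operator $\sfM_{jk}$ from \eqref{eqn:multiplication}. Because $\supp\psi_j\subseteq U_j$ and $\supp\psi_k\subseteq U_k$, the product $\psi_j(x)\psi_k(y)$ automatically vanishes outside $U_j\times U_k$, so the zero extension in $\widetilde{\Phi}_{jk}^*$ does not contribute. Hence on all of $\pOmega\times\pOmega$,
\begin{align*}
    \bigl(\sfM_{jk}\,\widetilde{\Phi}_{jk}^{*}\,(\widetilde{\Phi}_{jk}^{-1})^{*} h\bigr)(x,y)
    \;=\; \psi_j(x)\,\psi_k(y)\,h(x,y)\,.
\end{align*}

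The proof concludes by summing over $j,k$ and using the partition of unity property $\sum_{j=1}^J \psi_j\equiv 1$ on $\pOmega$, which gives
\begin{align*}
    (\sfL\sfG h)(x,y) \;=\; \sum_{j,k=1}^J \psi_j(x)\psi_k(y)\,h(x,y)
    \;=\; \Bigl(\sum_{j=1}^J \psi_j(x)\Bigr)\Bigl(\sum_{k=1}^J \psi_k(y)\Bigr) h(x,y)
    \;=\; h(x,y)\,.
\end{align*}
The computation is essentially routine; the only care required is in handling the zero extensions used to define $\widetilde{\Phi}_{jk}^{*}$ and $(\widetilde{\Phi}_{jk}^{-1})^{*}$, which is precisely where \cref{ass:bilipschitz} enters to guarantee that the relevant pullbacks are bounded $L^2$ operators and the pointwise identities extend to $L^2$-almost-everywhere identities.
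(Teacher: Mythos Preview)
Your proof is correct and follows essentially the same approach as the paper's own argument: unpack $\sfL\sfG h$ as a sum of $\sfM_{jk}\widetilde{\Phi}_{jk}^{*}(\widetilde{\Phi}_{jk}^{-1})^{*}h$, observe that on $U_j\times U_k$ the composed pullbacks return $h$, use the support of the $\psi_j$'s to kill the zero-extension ambiguity, and conclude via the partition-of-unity identity $\sum_{j,k}\psi_j\psi_k\equiv 1$. The only cosmetic difference is that the paper first restricts the sum to those $(j,k)$ with $(x,y)\in U_j\times U_k$ before computing the pullback, whereas you compute the pullback first and then invoke the support of $\psi_j\psi_k$; both orderings are equivalent.
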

\begin{proof}
    Let $h\in L^2(\pOmega\times\pOmega)$ and $(x,y)\in\pOmega\times\pOmega$. By definition,
    \begin{align*}
        ({\sfL}{\sfG}h)(x,y)&=\sum_{j,k=1}^J \bigl(\sfM_{jk}\widetilde{\Phi}_{jk}^*(\widetilde{\Phi}_{jk}^{-1})^* h\bigr)(x,y)\\
        &=\sum_{j,k=1}^J \psi_j(x)\psi_k(y)\bigl(\widetilde{\Phi}_{jk}^*(\widetilde{\Phi}_{jk}^{-1})^* h\bigr)(x,y)\\
        &=\sum_{(j,k)\in\set{(m,n)}{(x,y)\in U_m\times U_n}}
        \psi_j(x)\psi_k(y)\bigl((\widetilde{\Phi}_{jk}^{-1})^* h\bigr)\bigl(\phi_j(x),\phi_k(y)\bigr)\,.
    \end{align*}
    Also, \eqref{eqn:pull_inv_zero} implies that
    \begin{align*}
        \bigl((\widetilde{\Phi}_{jk}^{-1})^* h\bigr)\bigl(\phi_j(x),\phi_k(y)\bigr)&=\bigl(({\Phi}_{jk}^{-1})^* h\bigr)\bigl(\phi_j(x),\phi_k(y)\bigr)\\
        &=h(x,y)\,.
    \end{align*}
    We deduce that
    \begin{align*}
        ({\sfL}{\sfG}h)(x,y)=\sum_{(j,k)\in\set{(m,n)}{(x,y)\in U_m\times U_n}}
        \psi_j(x)\psi_k(y) h(x,y)=h(x,y)
    \end{align*}
    because $\{\sfM_{jk}\onebm\}_{j,k=1}^J$ is a partition of unity subordinate to $\{U_j\times U_k\}_{j,k=1}^J$ with $\sfM_{jk}\onebm\in C_c^\infty(U_j\times U_k)$. Since $h$, $x$, and $y$ are arbitrary, the assertion is proved.
\end{proof}

With the maps $\sfG$ and $\sfL$ in hand, we are now in a position to develop approximation theory for EIT.

\subsection{Main result on neural operator approximation}\label{sec:approx_main}
This subsection approximates the (extended) EIT inverse map $\widetilde{\Psi}^\star\colon L^2(\pOmega\times\pOmega)\to L^2(\Omega)$ from \cref{thm:extension_eit_kernel} with neural operators. As noted in \cref{sec:approx_manifold}, the standard neural operators defined in \cref{sec:prelim_neural} are only capable of processing functions defined on domains. However, $\pOmega\times\pOmega$ is a compact manifold. We therefore must generalize the definition of FNO to encompass input functions defined on manifolds.

To this end, let $\cM\subset\R^{d_\mathrm{a}}$ be a $d_{\mathrm{m}}$-dimensional compact manifold embedded in $\R^{d_\mathrm{a}}$. Suppose that $d_\mathrm{m}\geq d$, where $\Omega\subset\R^d$. Let $d_\mathrm{e}$, $d_\mathrm{c}$, $d_\mathrm{r}$, and $d_\mathrm{q}$ be natural numbers. We define a \emph{generalized FNO} $\Psi^{(\mathrm{FNO})}\colon L^2(\cM;\R^{d_{\mathrm{in}}})\to L^2(\Omega;\R^{d_{\mathrm{out}}})$ by
\begin{align}\label{eqn:fno_eit}
    \Psi^{(\mathrm{FNO})}\defeq \cQ\circ\cR\circ \cF\circ\cL_L\circ\cdots\circ\cL_1\circ\cE\circ\cS\,.
\end{align}
Comparing \eqref{eqn:fno_eit} to \eqref{eqn:fno_torus}, the layers $\cS$, $\{\cL_\ell\}_{\ell=1}^L$, and $\cQ$ agree with those of the standard neural operator. The first layer $\cS\colon L^2(\cM;\R^{d_{\mathrm{in}}})\to L^2(\cM;\R^{d_{\mathrm{e}}})$ is pointwise defined by a map $S\colon \R^{d_\mathrm{a}}\times \R^{d_{\mathrm{in}}}\to \R^{d_{\mathrm{e}}}$. This means
\begin{align}\label{eqn:pw_lift}
    \bigl(\cS (h)\bigr)(x)\defeq S\bigl(x,h(x)\bigr)
\end{align}
for every $h\in L^2(\cM;\R^{d_{\mathrm{in}}})$ and $x\in\cM$. Each nonlinear operator $\cL_\ell\colon L^2(\T^{d_\mathrm{m}};\R^{d_{\mathrm{c}}})\to L^2(\T^{d_\mathrm{m}};\R^{d_{\mathrm{c}}})$ is as in \eqref{eqn:no_layers} and \eqref{eqn:fno_layer}.
The final operator $\cQ\colon L^2(\Omega;\R^{d_{\mathrm{q}}})\to L^2(\Omega;\R^{d_{\mathrm{out}}})$ is also defined pointwise by
\begin{align}\label{eqn:pw_proj}
    \bigl(\cQ (h)\bigr)(x)\defeq Q\bigl(h(x)\bigr)
\end{align}
for some map $Q\colon \R^{d_{\mathrm{q}}}\to \R^{d_{\mathrm{out}}}$.

However, \eqref{eqn:fno_eit} also introduces $\cE$, $\cF$, and $\cR$.
The operator $\cE\colon L^2(\cM;\R^{d_{\mathrm{e}}})\to L^2(\T^{d_\mathrm{m}};\R^{d_{\mathrm{c}}})$ is continuous and linear. Its role is to transform functions on the manifold to functions on the torus (recall that $\T\simeq[0,1]_{\mathrm{per}}$). As a special case, one can build such an $\cE$ from the global-to-local map $\sfG$ from \eqref{eqn:global_to_local} in \cref{sec:approx_manifold}.
The role of the continuous and linear map $\cF\colon  L^2(\T^{d_\mathrm{m}};\R^{d_{\mathrm{c}}})\to L^2(\T^{d};\R^{d_{\mathrm{r}}})$ is to change the dimension of the input and output domain. To do this, we adopt the linear functional layer of \emph{Fourier neural functionals} \cite{huang2025operator}. For $h\in L^2(\T^{d_\mathrm{m}};\R^{d_{\mathrm{c}}})$ and $x=(x_1,\ldots, x_d)\in\T^d$, define $(\cF h)(x_1,\ldots, x_d)$ to be equal to
\begin{align}\label{eqn:functional_layer}
    \int_{\T^{d_\mathrm{m}-d}} \kappa(x_{d+1},\ldots, x_{d_\mathrm{m}})h(x_1,\ldots, x_d, x_{d+1},\ldots, x_{d_\mathrm{m}})\dd{x_{d+1}}\cdots \dd{x_{d_\mathrm{m}}}\,.
\end{align}
The periodic function $\kappa\colon\T^{d_\mathrm{m}-d}\to\R^{d_\mathrm{r}\times d_\mathrm{c}}$ parametrizes $\cF$. In turn, $\kappa$ is parametrized by its Fourier coefficients, enabling the fast computation of $h\mapsto \cF h$ as an inner product in Fourier space~\cite[Sec.~2.2, Eqn.~6, p.~7]{huang2025operator}.
Last, the continuous linear operator $\cR\colon L^2(\T^{d};\R^{d_{\mathrm{r}}})\to L^2(\Omega;\R^{d_{\mathrm{q}}})$ returns periodic functions to the output domain $\Omega$.

In the setting of Calder\'on's problem, let $\cM=\pOmega\times\pOmega$ be the manifold, $d_\mathrm{m}=2d-2$ be the manifold dimension, and $d_\mathrm{a}=2d$ be the ambient dimension. We need the following conditions on the physical domain and the FNO activation function.
\begin{assumption}[main assumptions for EIT approximation]\label{assump:main}
    Instate the setting of \cref{sec:approx_manifold}. The following hold true:
    \begin{enumerate}[label=(A-\Roman*),leftmargin=2.5\parindent,topsep=1.67ex,itemsep=0.5ex,partopsep=1ex,parsep=1ex]
        \item \sfit{(physical domain)}\label{item:ass_domain} Let $\Omega\subset\R^d$ be a bounded domain with $C^\infty$-smooth boundary such that $\overline{\Omega}\subset (0,1)^d$ and \cref{ass:bilipschitz} holds.

        \item \sfit{(activation function)}\label{item:ass_act} Let $\varsigma \in C^\infty(\R)$ be a fixed globally Lipschitz and non-polynomial activation function. For all $\ell\in\N$, it holds that $\varsigma_\ell=\varsigma$ in \eqref{eqn:no_layers}.
    \end{enumerate}
\end{assumption}

The stability, extension, and compactness results from the previous sections lead to our main approximation theorem for EIT that we now state and prove.
\begin{theorem}[FNO approximation of EIT]\label{thm:fno_approx_main}
    Suppose that \cref{assump:main} holds. Let $C^\star$ and $\omega^\star$ be as in \cref{prop:stability_kernel}. Let $d\geq 2$ and $R>0$. For any compact set  $K \subset L^2(\pOmega\times\pOmega)$ and any $\delta>0$, the following holds. Let $K_\delta\defeq{\{\eta\in K\colon \norm{\eta}_{L^2(\pOmega\times\pOmega)}\leq \delta\}}$. For any $\ep>0$, there exists a generalized FNO $\Psi \colon L^2(\pOmega\times\pOmega)\to L^2(\Omega)$ of the form \eqref{eqn:fno_eit} such that
    \begin{align}\label{eqn:fno_approx_main}
        \sup_{(\gamma,\eta)\in \cX_d(R)\times K_\delta} \norm{\gamma - \Psi(\kappa_\gamma + \eta)}_{L^2(\Omega)} \leq  \ep + C^\star\omega^\star(\delta)\,.
    \end{align}
\end{theorem}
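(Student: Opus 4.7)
The plan is to reduce the theorem to a standard FNO universal approximation statement via a triangle inequality that isolates the noise from the model error, combined with the coordinate-chart apparatus of Section 4.3 to handle the manifold $\partial\Omega\times\partial\Omega$.

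First, I would introduce and bound the error splitting
\begin{align*}
    \norm{\gamma - \Psi(\kappa_\gamma + \eta)}_{L^2(\Omega)} \leq \norm[\big]{\widetilde{\Psi}^\star(\kappa_\gamma) - \widetilde{\Psi}^\star(\kappa_\gamma + \eta)}_{L^2(\Omega)} + \norm[\big]{\widetilde{\Psi}^\star(\kappa_\gamma + \eta) - \Psi(\kappa_\gamma + \eta)}_{L^2(\Omega)},
\end{align*}
using that $\widetilde{\Psi}^\star(\kappa_\gamma) = \Psi^\star(\kappa_\gamma) = \gamma$ on $\cX_d(R)$ by \cref{thm:extension_eit_kernel}. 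The first (stability) term is bounded by $C^\star \omega^\star(\norm{\eta}_{L^2}) \leq C^\star\omega^\star(\delta)$ using the continuity estimate \eqref{eqn:stability_kernel_out} together with the monotonicity of the concave modulus $\omega^\star$. This delivers the $C^\star\omega^\star(\delta)$ part of \eqref{eqn:fno_approx_main} once the approximation term is pushed below $\ep$.

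To bound the approximation term uniformly, I need to show that the input set $\mathcal{K} \defeq \{\kappa_\gamma + \eta : (\gamma,\eta)\in\cX_d(R)\times K_\delta\} = \domain(\Psi^\star) + K_\delta$ is compact in $L^2(\pOmega\times\pOmega)$. This follows because $\domain(\Psi^\star)$ is compact by \cref{cor:compact_set_kernel}, $K_\delta$ is a closed subset of the compact set $K$, and addition is a continuous map from the compact product $\domain(\Psi^\star)\times K_\delta$ into $L^2(\pOmega\times\pOmega)$. Since $\widetilde{\Psi}^\star$ is uniformly continuous on this compact set, the task is now the uniform approximation of a continuous map on a compact set of $L^2$-functions defined on a manifold.

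To handle the manifold input, I invoke \cref{lem:left_inv_pou} to write $\widetilde{\Psi}^\star = \widetilde{\Psi}^\star \circ \sfL \circ \sfG$. The linear operator $\sfG$ is continuous, so $\sfG(\mathcal{K})$ is a compact subset of $L^2(\mathfrak{D}\times\mathfrak{D})^{J\times J}$, and the ``latent'' target $\widetilde{\Psi}^\star\circ\sfL$ is continuous on this compact set. I then construct $\Psi$ to realize the factorization: the lifting/chart layer $\cS$ followed by $\cE$ is built so that $\cE\circ\cS$ implements $\sfG$ followed by zero-extension from $\mathfrak{D}\times\mathfrak{D}\subset(0,1)^{2d-2}$ to the torus $\T^{2d-2}$ (using \cref{assump:main}\ref{item:ass_domain} and \cref{ass:bilipschitz} so that the patches embed into $(0,1)^{2d-2}$). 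The hidden FNO layers $\cL_1,\ldots,\cL_L$ then approximate the lifted latent map on this periodic domain by the standard FNO universal approximation theorem \cite{kovachki2021universal,lanthaler2023nonlocality} with the non-polynomial smooth activation of \cref{assump:main}\ref{item:ass_act}. The spatial-dimension reduction from $2d-2$ to $d$ is carried by the Fourier functional layer $\cF$ from \eqref{eqn:functional_layer}; by absorbing the integral in $\cF$ into the latent map being approximated, I may equivalently apply universal approximation to the composite $\cF\circ(\text{latent map on }\T^{2d-2})$ landing in $L^2(\T^d)$. The restriction operator $\cR$ then returns the output to $\Omega\subset(0,1)^d$, and the pointwise $\cQ$ matches the output dimension.

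The main obstacle is step (iii): assembling the generalized FNO from its components and reconciling the architecture in \eqref{eqn:fno_eit} with existing universal approximation theorems, which are typically stated for FNOs with fixed input/output spatial domain equal to a single torus. The cleanest resolution is to treat $\cE$, $\cF$, $\cR$ as fixed continuous linear operators (not trained) that realize the manifold-to-torus, dimension-reducing, and torus-to-$\Omega$ transitions, respectively; then the trainable parameters live only in $\cS$, the hidden layers $\{\cL_\ell\}$, and $\cQ$, which form a standard FNO on $\T^{2d-2}$. Universal approximation of $\widetilde{\Psi}^\star\circ\sfL$, pre-composed with the periodic embedding and post-composed with the continuous linear ``decoding'' $\cR$, yields the required FNO achieving approximation error at most $\ep$ uniformly over $\mathcal{K}$, completing the proof.
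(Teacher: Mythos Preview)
Your proposal follows essentially the same structure as the paper's proof: the triangle inequality splitting into a stability term (controlled by \cref{thm:extension_eit_kernel}) and an approximation term, the compactness argument for the perturbed input set $Z_\delta=\domain(\Psi^\star)+K_\delta$, and the use of $\sfL\sfG=\Id$ from \cref{lem:left_inv_pou} to reduce to approximating $\widetilde{\Psi}^\star\circ\sfL$ on the Euclidean patch space $\sfG(Z_\delta)$. The paper carries this out in exactly the order you propose.

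The one place where your argument is underspecified is the dimension mismatch you flag as the ``main obstacle.'' Your suggested fix---treat $\cF$ as a fixed linear post-processing layer and apply a standard $\T^{2d-2}\to\T^{2d-2}$ FNO theorem to the trainable core---is morally right but leaves open \emph{what} the core FNO is supposed to approximate: the target $\widetilde{\Psi}^\star\circ\sfL$ lands in $L^2(\Omega)\subset L^2(\T^d)$, not $L^2(\T^{2d-2})$, so you still need to specify a lift of the target to $L^2(\T^{2d-2})$ and control the error introduced by going back down. The paper resolves this concretely: it extends the target to $L^2(\T^d)$ via a continuous extension $\sfE$, then lifts to $L^2(\T^{2d-2})$ by making the output constant in the extra $d-2$ coordinates (calling this $\Upsilon^\star$), applies the FNO universal approximation theorem on $\T^{2d-2}$ to $\Upsilon^\star$, and then returns to $L^2(\T^d)$ via an averaging operator $\sfA$ over the extra coordinates. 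The point is that $\sfA\circ\Upsilon^\star=\sfE\circ\widetilde{\Psi}^\star\circ\sfL$ exactly, and both $\sfA$ and the restriction $\sfR$ have operator norm at most one, so the $\ep$ bound survives the descent. The averaging $\sfA$ is then realized by the functional layer $\cF$ with constant kernel. Your sentence about ``absorbing the integral in $\cF$ into the latent map'' does not achieve this, since it produces a target with mismatched domain and codomain dimensions; the constant-lift-then-average trick is what closes the loop.
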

\begin{proof}
    Let $\widetilde{\Psi}^\star$ be the extension of $\Psi^\star$ from \cref{thm:extension_eit_kernel}. Define the set sum
    \begin{align*}
        Z_\delta\defeq \set{\kappa_0+\eta_0}{\kappa_0\in \domain(\Psi^\star)\,\text{ and }\, \eta_0\in K_\delta}\subseteq L^2(\pOmega\times\pOmega)\,.
    \end{align*}
    The set $\domain(\Psi^\star)$ from \eqref{eqn:domain_of_inverse} is compact in $L^2(\pOmega\times\pOmega)$ by \cref{cor:compact_set_kernel}. Also, $K_\delta$ is compact because it is a closed subset of the compact set $K\subset L^2(\pOmega\times\pOmega)$. It follows that the sum $Z_\delta$ is also compact in $L^2(\pOmega\times\pOmega)$ because it is the continuous image of the compact set $\domain(\Psi^\star)\times K_\delta$. Now fix $\ep>0$. We claim that there exists a generalized FNO $\Psi$ of the form \eqref{eqn:fno_eit} such that
    \begin{align}\label{eqn:fno_approx_claim}
        \sup_{\kappa\in Z_\delta}\,\norm{\widetilde{\Psi}^\star(\kappa)-\Psi(\kappa)}_{L^2(\Omega)}\leq\ep\,.
    \end{align}
    Under this claim, let $\gamma\in\cX_d(R)$ and $\eta\in K_\delta$ be arbitrary. By the definition of $\Psi^\star$ and its extension,
    \begin{align*}
        \norm{\gamma - \Psi(\kappa_\gamma + \eta)}_{L^2(\Omega)}=\norm{\Psi^\star(\kappa_\gamma) - \Psi(\kappa_\gamma + \eta)}_{L^2(\Omega)}=\norm{\widetilde{\Psi}^\star(\kappa_\gamma) - \Psi(\kappa_\gamma + \eta)}_{L^2(\Omega)}\,.
    \end{align*}
    By the triangle inequality, the uniform continuity estimate \eqref{eqn:stability_kernel_out}, and the claim~\eqref{eqn:fno_approx_claim},
    \begin{align*}
        \norm{\widetilde{\Psi}^\star(\kappa_\gamma) - \Psi(\kappa_\gamma + \eta)}_{L^2(\Omega)}
        &\leq \norm{\widetilde{\Psi}^\star(\kappa_\gamma) - \widetilde{\Psi}^\star(\kappa_\gamma+\eta)}_{L^2(\Omega)}\\
        &\qquad\qquad\qquad +\norm{\widetilde{\Psi}^\star(\kappa_\gamma+\eta) - \Psi(\kappa_\gamma + \eta)}_{L^2(\Omega)}\\
        &\leq C^\star \omega^\star\bigl(\norm{\eta}_{L^2(\pOmega\times\pOmega)}\bigr) + 
        \sup_{\kappa\in Z_\delta}\,\norm{\widetilde{\Psi}^\star(\kappa) - \Psi(\kappa)}_{L^2(\Omega)}\\
        &\leq C^\star\omega^\star(\delta) + \ep\,.
    \end{align*}
    This implies the assertion \eqref{eqn:fno_approx_main}. It remains to prove the claim \eqref{eqn:fno_approx_claim}.
    
    Recall the global-to-local map $\sfG$ from \eqref{eqn:global_to_local} and the local-to-global map $\sfL$ from \eqref{eqn:local_to_global}. Write
    \begin{align*}
        \Psi\defeq \Psi_0\circ\sfG\quad \text{for some}\quad\Psi_0\colon L^2(\mathfrak{D}\times \mathfrak{D};\R^{J^2})\to L^2(\Omega)
    \end{align*}
    to be determined.
    By \cref{lem:left_inv_pou},
    \begin{align}\label{eqn:fno_approx_claim_h}
        \begin{split}
            \sup_{\kappa\in Z_\delta}\,\norm{\widetilde{\Psi}^\star(\kappa) - \Psi(\kappa)}_{L^2(\Omega)}&=\sup_{\kappa\in Z_\delta}\,\norm{(\widetilde{\Psi}^\star\circ \sfL)(\sfG \kappa)-\Psi_0(\sfG \kappa)}_{L^2(\Omega)}\\
            &= \sup_{h\in \sfG(Z_\delta)}\norm{(\widetilde{\Psi}^\star\circ \sfL)(h)-\Psi_0(h)}_{L^2(\Omega)}\,.
        \end{split}
    \end{align}
    Since $Z_\delta$ is compact in $L^2(\pOmega\times\pOmega)$, the set $\sfG(Z_\delta)$ is compact in $L^2(\mathfrak{D}\times \mathfrak{D};\R^{J^2})$ by the continuity of $\sfG$. To find $\Psi_0$ that uniformly approximates $\widetilde{\Psi}^\star\circ \sfL$ over $\sfG(Z_\delta)$, we adapt ideas from the proof of \cite[Thm.~3.2, pp.~34--35]{huang2025operator}. First, we associate to $\widetilde{\Psi}^\star\circ \sfL$ another operator $\Upsilon^\star$ that is more compatible with existing FNO approximation theorems requiring matching domain dimensions \cite[Sec.~2]{kovachki2021universal}. Second, we relate $\Upsilon^\star$ back to $\widetilde{\Psi}^\star\circ \sfL$ by introducing a linear functional averaging layer. This allows us to bound the display \eqref{eqn:fno_approx_claim_h}. Last, we demonstrate that our approach indeed leads to an approximation of the form \eqref{eqn:fno_eit}.

    \subparagraph*{\emph{\textbf{Step 1.}}}
    Since $\overline{\Omega}\subset(0,1)^d$ and $\T^d\simeq[0,1]^d_{\mathrm{per}}$, the linear restriction operator 
    \begin{align*}
    \sfR \colon L^2(\T^d)\to L^2(\Omega)
    \end{align*}
    given by $u\mapsto u|_{\Omega}$ is well-defined and continuous with norm at most one. Let 
    \begin{align*}
        \sfE\colon L^2(\Omega)\to L^2(\T^d)
    \end{align*}
    be the continuous linear extension operator from \cite[Lemma~B.1, p.~34, applied with $d_u=d_y=1$ and $s=s'=0$]{huang2025operator}. It holds that $\sfR\sfE=\Id_{L^2(\Omega)}$.
    Define 
    \begin{align*}
        \Upsilon_0^\star\defeq \sfE\circ\widetilde{\Psi}^\star\circ \sfL\colon L^2(\mathfrak{D}\times\mathfrak{D};\R^{J^2})\to L^2(\T^d)
    \end{align*}
    so that $\widetilde{\Psi}^\star\circ \sfL= \sfR\circ \Upsilon_0^\star$. 
    Recall that $d\geq 2$.
    Define $\Upsilon^\star\colon L^2(\mathfrak{D}\times\mathfrak{D};\R^{J^2})\to L^2(\T^{2d-2})$ by
    \begin{align*}
        \bigl(\Upsilon^\star (h)\bigr)(x_1,x_2,\ldots, x_{2d-3}, x_{2d-2})\defeq \bigl(\Upsilon^\star_0 (h)\bigr)(x_1,x_2,\ldots, x_{d-1},x_d)
    \end{align*}
    for every $h$ and $x=(x_1,\ldots,x_{2d-2})$. The periodic function  $\Upsilon^\star (h)$ is constant in the last $d-2$ dimensions.
    Since $\widetilde{\Psi}^\star\colon L^2(\pOmega\times\pOmega)\to L^2(\Omega)$ is continuous by \eqref{eqn:stability_kernel_out}, and so are the linear maps $\sfE$ and $\sfL$, the operator $\Upsilon_0^\star$ is also continuous. Moreover,
    \begin{align*}
        \norm{\Upsilon^\star(h)-\Upsilon^\star(h')}^2_{L^2(\T^{2d-2})}=\norm{\Upsilon^\star_0(h)-\Upsilon^\star_0(h')}^2_{L^2(\T^{d})}
    \end{align*}
    for all $h$ and $h'$. We deduce that $\Upsilon^\star$ is continuous.
    Using \cref{assump:main} and the fact that (abusing notation) $\overline{\mathfrak{D}\times\mathfrak{D}}\subset (0,1)^{2d-2}$, the universal approximation theorem for FNOs with periodic outputs \cite[Lemma~B.1, p.~34, applied with $s=s'=0$, $d_u=J^2$, and $d_y=1$]{huang2025operator} shows that there exist a continuous linear extension operator 
    \begin{align*}
    \sfE'\colon L^2(\mathfrak{D}\times\mathfrak{D};\R^{J^2})\to L^2(\T^{2d-2};\R^{J^2})    
    \end{align*}
    and, for every $\ep>0$, an FNO
    \begin{align*}
    \Psi'\colon L^2(\T^{2d-2};\R^{J^2})\to L^2(\T^{2d-2})    
    \end{align*}
    of the form \eqref{eqn:fno_torus} such that
    \begin{align*}
        \sup_{h\in\sfG(Z_\delta)}\norm{\Upsilon^\star(h)-\Psi'(\sfE' h)}_{L^2(\T^{2d-2})}\leq \ep\,.
    \end{align*}

    \subparagraph*{\emph{\textbf{Step 2.}}}
    To relate the preceding periodic approximation result back to the claim \eqref{eqn:fno_approx_claim} and the display \eqref{eqn:fno_approx_claim_h}, define the linear operator $\sfA\colon L^2(\T^{2d-2})\to L^2(\T^d)$ by
    \begin{align*}
        u\mapsto \sfA u\defeq \int_{\T^{d-2}}u(\slot, x_{d+1},\ldots, x_{2d-2})\dd{x_{d+1}}\cdots \dd{x_{2d-2}}\,.
    \end{align*}
    This operator averages the last $d-2$ dimensions of the input function. Since $[0,1]^n$ has Lebesgue measure one for all $n$, Jensen's inequality shows that the map $\sfA$ has operator norm one (with supremum achieved at the constant function $\onebm$). 
    Observe that 
    \begin{align*}
        \Upsilon_0^\star=\sfA\circ \Upsilon^\star\,.
    \end{align*}
    By defining $\Psi_0\defeq \sfR\circ\sfA\circ\Psi'\circ\sfE'$, it holds that
    \begin{align*}
    \sup_{h\in \sfG(Z_\delta)}\norm{(\widetilde{\Psi}^\star\circ \sfL)(h)-\Psi_0(h)}_{L^2(\Omega)}&=\sup_{h\in \sfG(Z_\delta)}\norm{(\sfR\circ\sfA\circ\Upsilon^\star)(h)-\Psi_0(h)}_{L^2(\Omega)}\\
        &\leq \sup_{h\in \sfG(Z_\delta)}\norm{(\sfA\circ\Upsilon^\star)(h)-(\sfA\circ\Psi'\circ\sfE')(h)}_{L^2(\T^d)}\\
        &\leq \sup_{h\in \sfG(Z_\delta)}\norm{\Upsilon^\star(h)-(\Psi'\circ\sfE')(h)}_{L^2(\T^{2d-2})}\\
        &\leq \ep
    \end{align*}
    because both $\sfR$ and $\sfA$ have operator norm at most one. This delivers the display \eqref{eqn:fno_approx_claim}.

    \subparagraph*{\emph{\textbf{Step 3.}}}
    To complete the proof of the claim, we must show that our final approximation
    \begin{align*}
        \Psi\defeq \sfR\circ\sfA\circ\Psi'\circ\sfE'\circ\sfG
    \end{align*}
    is of the form \eqref{eqn:fno_eit} with $\cM=\pOmega\times\pOmega$, $d_\mathrm{a}=2d$, $d_\mathrm{m}=2d-2$, and $d_\mathrm{in}=d_\mathrm{out}=1$. By definition of $\Psi'$, \cite[Lemma~B.1, p.~34]{huang2025operator} asserts that there exist a number of layers $T$, channel dimension $d_\mathrm{c}$, matrix $S'\in\R^{d_\mathrm{c}\times J^2}$ determining operator $\cS'$ according to \eqref{eqn:pw_lift}, and matrix $Q'\in \R^{1\times d_\mathrm{c}}$ determining operator $\cQ'$ according to \eqref{eqn:pw_proj} such that
    \begin{align*}
        \Psi'=\cQ'\circ \cL_L\circ \cdots \circ \cL_1\circ \cS'\,,
    \end{align*}
    where the nonlinear operators $\{\cL_\ell\}_{\ell=1}^L$ map $L^2(\T^{2d-2};\R^{d_\mathrm{c}})$ into itself and are given in~\eqref{eqn:no_layers} and \eqref{eqn:fno_layer}. The preceding two displays show that we can write
    \begin{align*}
        \Psi= \cQ \circ\cR\circ \cF\circ\cL_L\circ\cdots\circ\cL_1\circ\cE\circ \cS
    \end{align*}
    in the form \eqref{eqn:fno_eit} by taking
    \begin{align*}
        S&\defeq 
        \begin{bmatrix}
            \,0\,&\,\cdots\,& \,0\,& \,1\,
        \end{bmatrix}
        \in\R^{1\times (2d+1)}\quad\text{so that}\quad \cS=\Id_{L^2(\pOmega\times\pOmega)}\,,\\
        \cE &\defeq \cS'\circ\sfE'\circ\sfG\in\sL\bigl(L^2(\pOmega\times\pOmega);L^2(\T^{2d-2};\R^{d_\mathrm{c}})\bigr)\,,\\
        \cF&\defeq\sfA\circ \cQ'\in \sL\bigl(L^2(\T^{2d-2};\R^{d_\mathrm{c}}); L^2(\T^d)\bigr)\,,\\
        \cR&\defeq\sfR\in \sL\bigl(L^2(\T^d); L^2(\Omega)\bigr)\,,\qa\\
        Q&\defeq 1\in\R^{1\times 1} \quad\text{so that}\quad \cQ=\Id_{L^2(\Omega)}\,.
    \end{align*}
    The requirement that $\cE$ be continuous holds because the global-to-local map $\sfG\in\sL(L^2(\pOmega\times\pOmega); L^2(\mathfrak{D}\times\mathfrak{D};\R^{J^2}))$, $\sfE'$, and $\cS'$ are all continuous linear operators. Moreover, $\cF$ is the composition of two continuous linear maps so it is itself continuous and linear. It may be identified,  via~\eqref{eqn:functional_layer}, with the constant kernel function $\kappa\colon \T^{d-2}\to\R^{1\times d_\mathrm{c}}$ given by $x\mapsto \kappa(x)=Q'$. Being constant, the function $\kappa$ is periodic and has a well-defined Fourier series.
    All together, these facts deliver the claimed form of the neural operator approximation.
\end{proof}

\subsection{Discussion}\label{sec:approx_discuss}
\Cref{thm:fno_approx_main} delivers the existence of a noise-robust FNO that uniformly reconstructs the unknown conductivity up to an error represented by a function of the noise level in the boundary data. The use of worst-case error in~\eqref{eqn:fno_approx_main} is natural in light of the compactness of the set $\cX_d(R)$ established in \cref{sec:approx_compactness}. However, alternative formulations in terms of average-case error with respect to a probability measure over the conductivity space are possible \cite[e.g., following Thm.~18]{kovachki2021universal} and may also be of interest~\cite{castro2024calderon}.
\Cref{thm:fno_approx_main} holds at the infinite-dimensional continuum level unlike results in related studies \cite{abhishek2024solving,pineda2023deep}, which either work with finite-dimensional conductivity sets, a finite number of electrodes, or a pre-discretization of the entire inverse problem. Moreover, the present theorem is valid for possibly infinite-dimensional sets of admissible conductivities with low regularity. 

The fundamental instability of the Calder\'on problem is not mitigated by our operator learning approach. In fact, we exploit problem-specific inverse stability estimates to control errors in the measurements. Consequently, it is unavoidable that to achieve accuracy $O(\ep)$ in \eqref{eqn:fno_approx_main}, it is sufficient for the magnitude $\delta$ of the data perturbations to be exponentially small, that is, $\delta=O(\exp(-\ep^{-1/\rho}))$. Of course, this assumes a worst-case analysis. In practical average-case operator learning computations based on empirical risk minimization, we might not observe this poor complexity.
Moreover, by restricting to finite-dimensional sets of admissible conductivities with appropriate regularity \cite{abhishek2024solving,pineda2023deep}, it should be possible using Lipschitz stability estimates \cite{alberti2022infinite} to achieve a better polynomial complexity of the form $\delta=O(\ep^{\varrho})$ for some $\varrho>0$. The tradeoff is that the constant corresponding to $C^\star$ in the error bound \eqref{eqn:fno_approx_main} will explode as the dimension increases~\cite{alberti2022infinite}.

We close \cref{sec:approx} by discussing more related work, alternative architectures, and noise distributions.

\subparagraph*{\emph{\textbf{Comparison to related work.}}}
Although \cref{thm:fno_approx_main} is formulated for kernel functions associated to NtD maps $\ntd$, it could be adapted to DtN maps $\dtn$ as well. This aligns with related work \cite{castro2024calderon,abhishek2024solving}. The extension program from \cref{sec:extend} would go through by similar arguments, following the ideas in~\cite{abraham2019statistical}. Regarding FNO approximation, one must smooth $\dtn$ as in \cref{sec:extend_kernel} \emph{in all dimensions} $d\geq 2$ or work with the smoothing difference operator $\ntd-\sR_{\onebm}$ in order to identify a square integrable input kernel function. Beyond FNOs, alternative neural operator architectures---such as those summarized in \cite[Sec.~4.2]{nelsen2025operator}---could directly process linear operator data $\ntd$ \cite{de2019deep,castro2024calderon} or distribution-based relaxations thereof~\cite{guerra2025learning,molinaro2023neural}. These methods are also suitable for use in our approximation framework.

The approximation result \eqref{eqn:fno_approx_main} is related to error bounds found in \cite{abhishek2024solving,castro2024calderon,pineda2023deep} for other operator learning architectures. As in the current paper, these works reformulate the inverse map to act between Hilbert spaces and extend the domain of the inverse in order to apply universal approximation theorems for neural operators. One approach along these lines is to approximate Calder\'on's inverse map on average with respect to a probability measure compacted supported on $\domain(\Psi^\star)$ using deep operator networks (DeepONets)~\cite[Thm.~4.4, p.~786]{castro2024calderon}. However, by only using a zero extension that ignores the continuity properties of $\Psi^\star$, this approach requires detailed information about $\domain(\Psi^\star)$ and cannot handle noisy boundary data.

A similar DeepONet approach is adopted by \cite{abhishek2024solving} for the complete electrode model of EIT. Unlike that in \cite{castro2024calderon} and $\cX_d(R)$ from the present paper, the admissible set of conductivities in \cite{abhishek2024solving} is taken to be a \emph{finite-dimensional} subset of $\Gamma'$. It is shown in \cite[Thm.~3.1, p.~7]{abhishek2024solving} that one can find a sufficiently large number of electrodes and a DeepONet with sufficiently many parameters to ensure that the worst case $L^2(\Omega)$ reconstruction error is arbitrarily small. Although \cite[Thm.~3.1, p.~7]{abhishek2024solving} does not track the effect of perturbations in the NtD map data, the approach taken in the present paper to prove \cref{thm:fno_approx_main} could be used to derive a noise-robust estimate akin to \eqref{eqn:fno_approx_main} in the setting of \cite{abhishek2024solving}. Here in particular, the finite-dimensionality of the conductivity set implies that the inverse map and its Benyamin--Lindenstrauss extension are Lipschitz continuous \cite{alberti2019calderon,alberti2022infinite,harrach2019uniqueness}, in contrast to the logarithmic stability shown in the current paper.

Work in \cite{pineda2023deep} takes this finite-dimensional Lipschitz analysis further by going beyond EIT to a broad class of nonlinear inverse problems. Building on~\cite{alberti2022infinite} by working with sufficiently many finite-dimensional measurements in addition to finite-dimensional parameter sets, the paper \cite{pineda2023deep} studies an effective finite-dimensional inverse problem that admits a Lipschitz continuous inverse map. The resulting analysis establishes that ReLU neural networks can approximate the finite-dimensional inverse map with average-case robustness to Gaussian noise perturbing the data \cite[Thm.~3.9]{pineda2023deep}. Moreover, empirical risk minimization realizes such Lipschitz neural networks \cite[Thm.~4.3]{pineda2023deep}. Although the approach avoids case-by-case analysis, this generality comes at the cost of obscuring ties to the underlying infinite-dimensional inverse problem of interest. The finite-dimensionalization procedure also links the architecture to the number of measurements and requires re-training the network whenever the dimension changes. This contrasts with the present work on EIT, which adopts an operator learning learning perspective, works in infinite dimensions throughout, and obtains worst-case robustness to noise.

We refer the reader to \cite[Sec.~4.3.1--4.3.2]{nelsen2025operator} for a more detailed comparison between the related works \cite{castro2024calderon,abhishek2024solving,pineda2023deep} and the present paper.

\subparagraph*{\emph{\textbf{Neural operators and boundary functions.}}}
The construction of the global-to-local and local-to-global linear operators $\sfG$ and $\sfL$, respectively, in \cref{sec:approx_manifold} is crucial to enable the application of standard universal approximation theorems for (Fourier) neural operators. Although the proof of \cref{thm:fno_approx_main} identifies a specific choice based on $\sfG$, the continuous linear map $\cE$ in the generalized FNO definition \eqref{eqn:fno_eit} is allowed to be general. Indeed, $\sfG$ may be difficult to access or work with in practical computations. More constrained parametrizations of $\cE$ that can be easily identified or learned from data are of interest. Examples include constant extensions to the whole domain $\Omega$ \cite{de2022cost}, handcrafted global parametrizations of $\pOmega$, e.g., polar coordinates if $\Omega$ is the unit disk and $d=2$ (see \cref{sec:numerics}), or learnable kernel integral operators \cite[Sec.~3.5, pp.~18--19]{wu2025learning}.
In the latter case, \cref{thm:fno_approx_main} would remain valid as long as we can use such boundary integral operators to uniformly approximate $\sfG$ over the set $Z_\delta$ defined in the proof of the theorem. Beyond EIT, these ideas could also provide a theoretical path for showing that various practical geometry-aware neural operators~\cite{li2023fourier,li2023geometry,li2025geometric,chen2024learning,wu2024neural} are universal approximators of operators defined on $L^2(\cM)$ for a manifold $\cM$, or even on smaller sets such as $H^s(\cM)$ or $C^\al(\cM)$; see \cite[p.~7]{lanthaler2023nonlocality} for similar remarks.

\subparagraph*{\emph{\textbf{Statistical noise models.}}}
\Cref{thm:fno_approx_main} only requires that the perturbation $\eta$ belong to a compact set. The result does not enforce a statistical description of the noise. However, modeling $\eta$ as a random variable is quite natural due to the imprecise nature of real EIT hardware systems \cite[Sec.~4]{garde2017convergence}. One way to do this while still satisfying the hypotheses of \cref{thm:fno_approx_main} is by using random series expansions. Consider the following example. Recalling the Laplace--Beltrami eigenfunctions $\{\varphi_j\}$ from \cref{sec:prelim_func}, define $\eta\in L^2(\pOmega\times\pOmega)$ in law by
\begin{align}\label{eqn:kle_2d_boundary}
    (x,x')\mapsto \eta(x,x')\defeq \sum_{i=1}^\infty\sum_{j=1}^\infty \sqrt{c_{ij}}\zeta_{ij}\varphi_i(x)\varphi_j(x')
\end{align}
in the mean square sense. The $\{\zeta_{ij}\}$ are independent and identically distributed (i.i.d.) bounded random variables with mean zero and variance one. We want to ensure that $\mu\defeq\Law(\eta)\in\sP(L^2(\pOmega\times\pOmega))$ has compact support. Fix positive $\delta$, $s$, and $t$. If $t>s+d-1$ and the non-increasing rearrangement of the doubly-indexed coefficients $\{c_{ij}\}$ into a singly-indexed sequence $\{c'_k\}$ satisfies $c'_k=O(\delta^2k^{-t/(d-1)})$ as $k\to\infty$, then we can show that there exists $C>0$ such that $\norm{\eta}_{H^s(\pOmega\times\pOmega)}\leq C$ and $\norm{\eta}_{L^2(\pOmega\times\pOmega)}\leq \delta$ almost surely. Since the closed $H^s$ ball of radius $C$ is compact in $L^2$, we deduce that \cref{thm:fno_approx_main} is valid with $K_\delta$ replaced by $\supp(\mu)$. High probability versions of \cref{thm:fno_approx_main} are also conceivable; these would be able to accommodate probability measures $\mu$ with unbounded support, such as (sub)Gaussian noise distributions \cite{pineda2023deep}.

\section{Numerical experiments}\label{sec:numerics}
The primary purpose of this section is to demonstrate the practical performance of the FNO in reconstructing challenging conductivities from noisy NtD kernels. Although \cref{thm:fno_approx_main} asserts the \emph{existence} of an FNO that stably approximates the Calder\'on inversion operator \eqref{eqn:inverse_map_final_def}, the theorem is not constructive. We actually \emph{find} high quality FNO approximations using standard empirical risk minimization over a noisy training dataset of size $N$. We pay particular attention to the test error of the trained FNOs as the noise level varies.

This section specializes to the $d=2$ setting of EIT (\cref{sec:prelim_eit}) in which $\Omega\defeq\mathbb{D}\subset\R^2$ is the unit disk. We identify the boundary $\partial\D\simeq\T\simeq [0,2\pi]_{\mathrm{per}}$. We take $\Omega'$ in \eqref{eqn:compact_support_set} to be an open ball of (possibly variable) radius less than one.
We consider three different random field models for the conductivities: binary, three phase, and lognormal-type. \Cref{app:numerics} contains details on their contruction. To summarize, the binary fields are obtained by thresholding a single Gaussian random field to two values, leading to a shape detection task; see \cref{fig:tile_shape}. The three phase inclusions are the result of thresholding two Gaussian random fields to three values; see \cref{fig:tile_three_phase}. The final model applies a smooth cutoff to a lognormal random field; see \cref{fig:tile_lognormal}.
We emphasize that the support of each of these three data-generating probability measures is genuinely infinite-dimensional and is not constructed from any finite-dimensional manifold. 

By construction, sampled conductivities belong to $\Gamma'$ in \eqref{eqn:conductivity_set_prelim} almost surely under all three distributions. Moreover, the binary and three phase fields belong to $\BV(\D)$ almost surely. However, for large enough $R$, we can only expect samples to belong to $\cX_2(R)=\cX_{\BV}(R)$ in \eqref{eqn:conductivity_set} with high probability. Unless we condition on this high probability event, \cref{thm:fno_approx_main} does not directly apply to the support of our data-generating distributions. Precise probabilistic statements require estimating moments of the perimeter of excursion sets \cite{adler2007random} and are beyond the scope of this paper.

For sample size $N$ and relative noise level $\delta$, we generate paired i.i.d. samples $\{(\kappa_{\gamma_n}^{(\delta)},\gamma_n)\}_{n=1}^N$ under the forward map \eqref{eqn:forward_map_kernel_bigger} and the noise model
\begin{align}\label{eqn:ntd_noise_model}
    \kappa_\gamma^{(\delta)}\defeq \kappa_\gamma + \delta\norm{\kappa_\gamma}_{L^2(\T^2;\R)}\xi\,.
\end{align}
In \eqref{eqn:ntd_noise_model}, $\kappa_\gamma^{(\delta)}$ is a perturbed version of the NtD kernel $\kappa_\gamma$ under $(100\times \delta)\%$ relative noise. The factor $\xi$ is either an independent Gaussian random field or a bounded random field; see \cref{app:numerics}. 
In the experiments, $\delta$ is allowed to vary between training and test input data, as shown in \cref{tab:rates_all}. We take $N=9500$ unless otherwise stated.

\begin{table}[tb]%
    \captionsetup{width=\textwidth,skip=10pt}
    \caption{Sample complexity for the three sets of FNO experiments. For each dataset, the table entries record the experimental convergence rate exponents $r$ in 
    $O(N^{-r})$ of the average relative $L^1(\D)$ test error 
    for either noisy or clean test data (rows) as the training data noise level (last four columns) increases. Higher values indicate faster convergence.}
    \label{tab:rates_all}
    \centering
    \renewcommand{\arraystretch}{1.2}
    \begin{tabular}{lccccc}
        \toprule
        & & \multicolumn{4}{c}{Training data noise level} \\
        \cmidrule(l){3-6}
        Dataset & Test data noise level & $0\%$ & $3\%$ & $10\%$ & $30\%$ \\
        \midrule
        \multirow{2}{*}{Shape detection} 
            & Same as train &  $0.231$ & $0.261$ & $0.293$ & $0.252$\\
            & $0\%$ &  $0.231$ & $0.254$ & $0.291$ & $0.321$\\
        \midrule
        \multirow{2}{*}{Three phase inclusions} 
            & Same as train &  $0.100$   & $0.095$ & $0.081$ & $0.058$ \\
            & $0\%$ & $0.100$  & $0.098$& $0.090$ & $0.070$ \\
        \midrule
        \multirow{2}{*}{Lognormal conductivities} 
            & Same as train &  $0.332$ & $0.432$ & $0.406$ & $0.360$\\
            & $0\%$ & $0.332$ & $0.445$ & $0.485$ & $0.431$ \\
        \bottomrule
    \end{tabular}
\end{table}

The FNO discretization and implementation is described in \cref{app:numerics}. To summarize, the architecture $\Psi_\theta^{(\mathrm{FNO})}$ is of the form \eqref{eqn:fno_torus} and \eqref{eqn:fno_layer} for a finite number trainable parameters $\theta\in\Theta$. We fix $L=2$ layers, $12$ Fourier modes per dimension, and channel width $d_\mathrm{c}=48$. To train FNOs, we approximately solve
\begin{align}\label{eqn:erm}
	\min_{\theta\in\Theta}\frac{1}{N}\sum_{n=1}^N\ell\Bigl(\gamma_n , \Psi_\theta^{(\mathrm{FNO})}\bigl(\kappa_{\gamma_n}^{(\delta)}\bigr)\Bigr)\,,\qw 	\ell(\gamma, \gamma')\defeq \frac{\norm{\gamma-\gamma'}_{L^1(\D)}}{\norm{\gamma}_{L^1(\D)}+\epsilon}
\end{align}
is a relative loss function and $\epsilon>0$ is a numerical stability hyperparameter. We emphasize the use of $L^1(\D)$ in the output space; the use of $L^2(\D)$ empirically delivered worse reconstructions that were substantially oversmoothed.
See \cite[Secs.~3.1 and 4.1.1]{nelsen2025operator} for more details about training neural operators. Reconstruction accuracy, i.e., test error, is mainly evaluated pointwise in relative $L^1(\D)$ loss for individual samples or in average relative $L^1(\D)$ loss over a test dataset. The test dataset is unseen during training but has the same distribution as the training data.

The main observations of this section are summarized as follows.
\begin{enumerate}[label=(O\arabic*),leftmargin=2.5\parindent,topsep=1.67ex,itemsep=0.5ex,partopsep=1ex,parsep=1ex]
    \item Trained FNO reconstructions are robust to large noise levels. Conversely, as the noise level tends to zero, the test error closely follows a logarithmic law as anticipated by \eqref{eqn:fno_approx_main} in \cref{thm:fno_approx_main}.

    \item The convergence rate of trained FNOs is algebraic in $N$ for all three datasets, as reported in \cref{tab:rates_all}. The rate varies depending on the dataset.

    \item FNOs can be trained with Gaussian noise---which does not satisfy \cref{thm:fno_approx_main}---and deployed with bounded noise, which does satisfy it.

    \item Training FNOs on highly noisy boundary data has a regularizing effect. These models can still be accurate when the test data has lower noise levels.

    \item Although the paper does not emphasize reconstruction speed, nevertheless the FNO can map $500$ NtD kernels to reconstructed conductivities on a $256\times 256$ resolution grid in under one second on a graphics processing unit (GPU).
\end{enumerate}

As a point of reference, \Cref{fig:compare_dbar} compares FNOs trained on $N$ pairs of noise-free data to the regularized D-bar direct inversion method~\cite{mueller2012linear,knudsen2009regularized,mueller2020d}. Here $\delta=10^{-2}$ for boundary data corresponding to the three piecewise constant conductivities depicted. This noise level is high for the D-bar method. As a result, D-bar solutions are oversmoothed and even fail to detect some of the inclusions. In contrast, when $N$ is sufficiently large, the trained FNOs reconstruct inclusion locations correctly, but still can get the conductivity values on the inclusions wrong, as in the bottom row. 

Although such comparisons are insightful, the focus of this section is to illuminate the robustness, accuracy, and complexity characteristics of a fixed method, namely, the FNO. To this end, the remainder of the section presents and describes numerical results in which the noise level $\delta$, sample size $N$, and type of conductivity are varied. \Cref{sec:numerics_shape} presents results for shape detection, \cref{sec:numerics_three} for three phase inclusion recovery, and \cref{sec:numerics_lognormal} for lognormal conductivity imaging.

\subsection{Shape detection}\label{sec:numerics_shape}
The binary conductivities in the shape detection dataset have a contrast ratio of $100$ (inclusion) to $1$ (background), which places the inverse problem in the highly nonlinear regime. Different interior structures within highly conducting or highly insulating regions can produce almost the same boundary data, making it difficult to reconstruct such hidden structure. \Cref{fig:tile_shape} shows an example of this phenomenon. The reconstructions in row three are obtained by inputting the top row of standardized NtD kernels with $2\%$ bounded noise into a single FNO trained on data samples corrupted by $3\%$ Gaussian noise. The true conductivity in row two, column four (a random sample from the test set) has a low conductivity ``hole'' within a high conductivity surrounding region. The FNO reconstruction in row three, column four misses the hole and fills it in with the high conductivity value, as expected.

\begin{figure}[tb]
	\centering
    \includegraphics[width=0.99\textwidth]{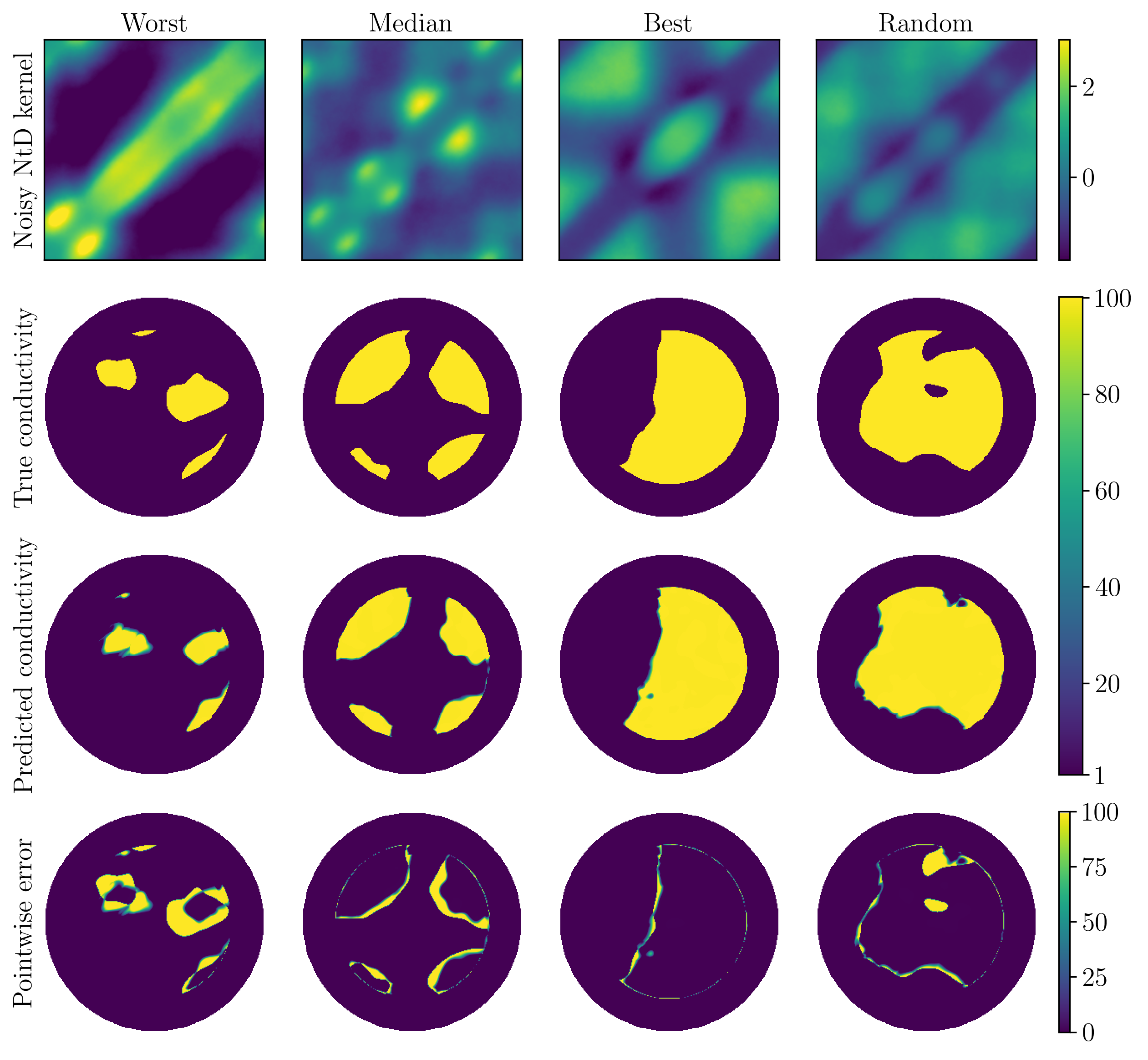}
    \caption{Sample reconstructions from the shape detection dataset.}
	\label{fig:tile_shape}
\end{figure}

Overall, the FNO succeeds greatly at shape reconstruction. The last row of \cref{fig:tile_shape} shows that most of the error is made at the boundary of the inclusions. The number of inclusions is usually predicted correctly, ranging from one to four or more. We now comment on accuracy quantitatively. The first three columns of \cref{fig:tile_shape} represent the test set samples with the worst, median, and best \emph{Dice scores} (larger is better), which is a similarity coefficient commonly used to assess image segmentation quality~\cite{zou2004statistical}. The test set average Dice score is $0.865$ and relative $L^1(\D)$ error is $0.264$. We also compute an average ``$L^0(\D)$ norm'' error of $0.197$, which is the area of the region where the true and predicted conductivities differ. These are excellent scores considering how challenging this discontinuous dataset is.

\begin{figure}[tb]
	\centering
	\begin{subfigure}[]{0.42\textwidth}
		\centering
		\includegraphics[width=\textwidth]{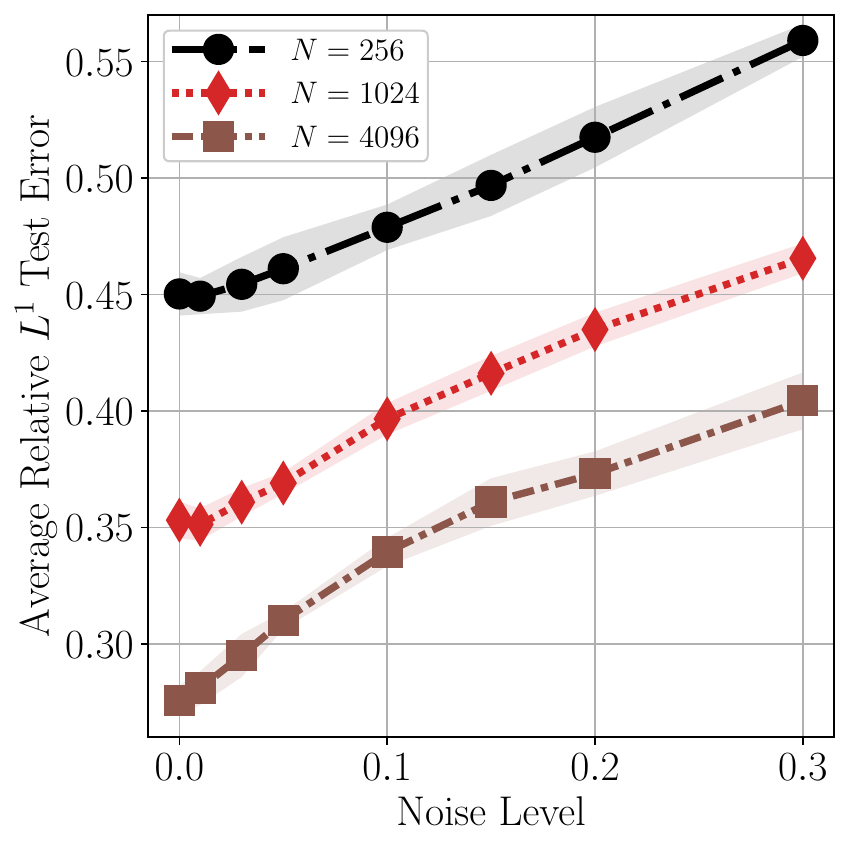}
		\caption{Robustness to large noise levels}
		\label{subfig:noise_sweep_two_phase_noisy}
	\end{subfigure}
	\begin{subfigure}[]{0.42\textwidth}
		\centering
		\includegraphics[width=\textwidth]{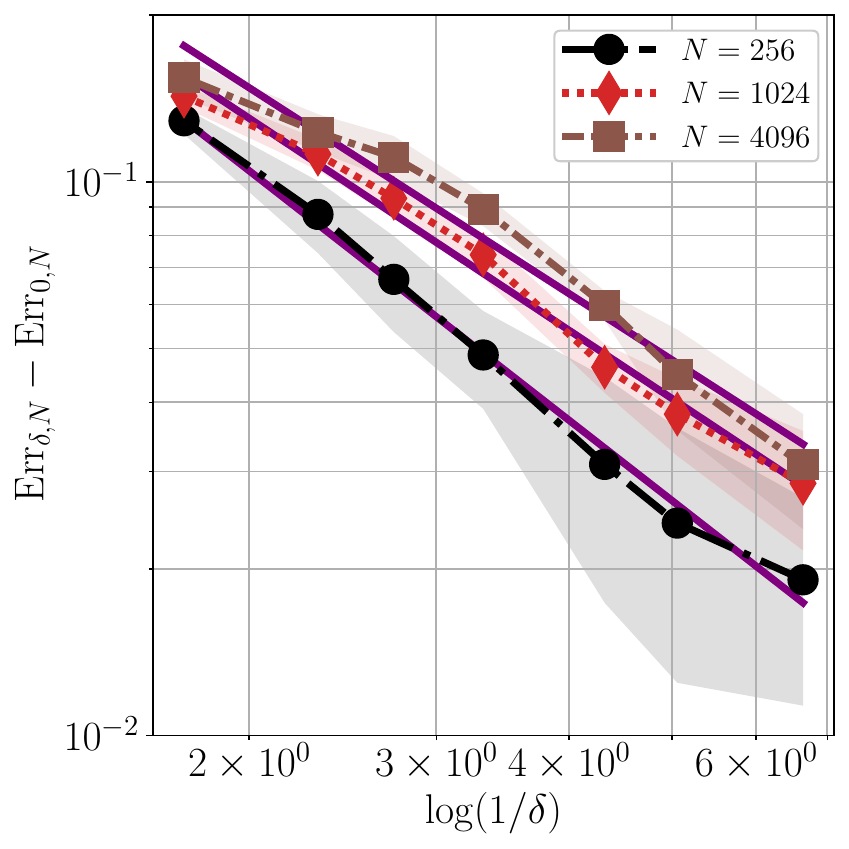}
		\caption{Fit to logarithmic modulus of continuity}
		\label{subfig:noise_log_two_phase_noisy}
	\end{subfigure}
    \begin{subfigure}[]{0.43\textwidth}
		\centering
		\includegraphics[width=\textwidth]{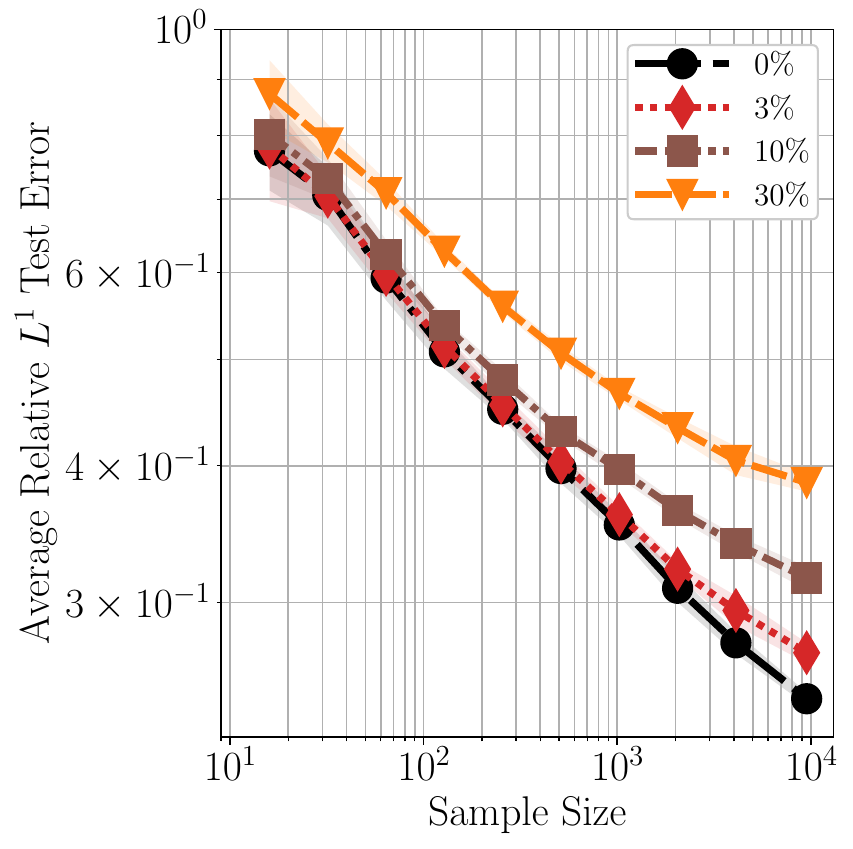}
		\caption{Algebraic sample complexity}
		\label{subfig:data_sweep_two_phase_noisy}
	\end{subfigure}
	\begin{subfigure}[]{0.42\textwidth}
		\centering
		\includegraphics[width=\textwidth]{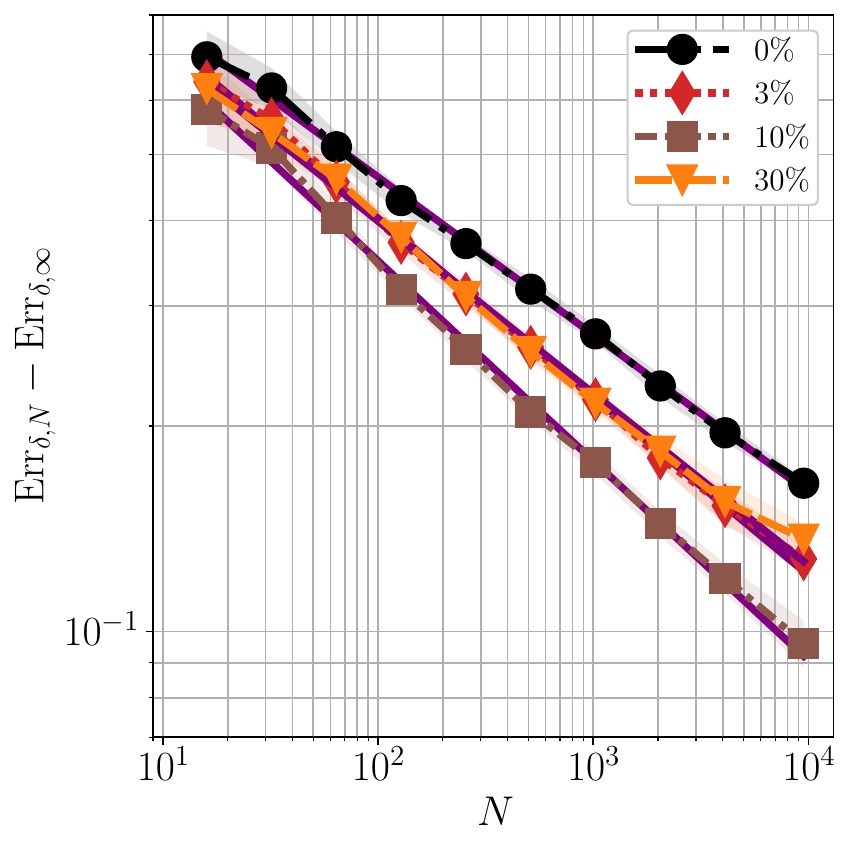}
		\caption{Fit of excess test error to power law}
		\label{subfig:data_power_two_phase_noisy}
	\end{subfigure}
    \caption{Shape detection noise robustness (top row) and sample complexity (bottom row) experiments. For training dataset size $N$, \cref{subfig:noise_sweep_two_phase_noisy} shows how the average relative $L^1(\D)$ test error $\mathrm{Err}_{\delta, N}$ increases sublinearly as the training and test data noise level $\delta$ increases. \Cref{subfig:noise_log_two_phase_noisy} plots the excess error versus $\log(1/\delta)$ to validate the logarithmic bound in \cref{thm:fno_approx_main}. \Cref{subfig:data_sweep_two_phase_noisy} shows how $\mathrm{Err}_{\delta, N}$ decreases algebraically with $N$. \Cref{subfig:data_power_two_phase_noisy} plots the excess error versus $N$ to obtain precise estimates of the convergence rate (\cref{tab:rates_all}). Purple lines are linear least squares fits on a logarithmic scale. Shaded bands denote two standard deviations from the mean (excess) error over five independent training runs.
    }
	\label{fig:two_phase_noisy}
\end{figure}

Next, the top row of \cref{fig:two_phase_noisy} suggests that FNOs trained on sample size $N$ are robust to increasing amounts of noise in the sense that the average test error only increases sublinearly with $\delta$ (\cref{subfig:noise_sweep_two_phase_noisy}). In \cref{subfig:noise_log_two_phase_noisy}, we also fit the difference between the test error and the noise-free test error to the logarithmic relationship implied by the right-hand side of \eqref{eqn:fno_approx_main} from \cref{thm:fno_approx_main}. Since the fit is approximately linear on a log-log scale, it suggests that the severe ill-posedness of EIT does indeed manifest numerically for FNOs. Last, we observe that increasing the training data noise level serves to greater regularize reconstructions by returning less detailed, larger lengthscale ``filled in'' inclusions (still with sharp boundaries), regardless of the test data noise level. See, e.g., \cref{fig:tile_shape_30noise20} in \cref{app:numerics}. These observations align with classical statistical intuition that training on noisy inputs is a form of regularization~\cite{bishop1995training}.

From the point of view of offline computational cost, understanding the sample complexity of learned inverse solvers is paramount. By taking the test error as a function of sample size $N$ (\cref{subfig:data_sweep_two_phase_noisy}) and subtracting a regressed $N$ independent bias, we obtain linear decay on a log-log scale in \cref{subfig:data_power_two_phase_noisy}. The slopes of these lines are the statistical rate of convergence exponents that we record in row one of \cref{tab:rates_all}. The rate is approximately $O(N^{-1/4})$---which is slower than the fast parametric $O(N^{-1/2})$ rate of estimation---and mostly independent of $\delta$ as expected. Similar rates also hold when only the training data is noisy and the test data is clean. The observed polynomial complexity further supports the observed success of FNOs for EIT shape detection.

We conclude the shape detection results by commenting on out-of-distribution performance. Since the dataset consists of conductivities with random shapes but fixed values on the shapes, reconstructing conductivities with new shapes is possible but different values or contrast ratios is challenging. In row two of \cref{fig:compare_dbar} and \cref{fig:heart_shape_noisy}, we provide an FNO trained on shape detection with a noisy NtD kernel corresponding to a deterministic heart and lungs phantom. It has the same contrast ratio as the dataset, but consists of perfect ellipses instead of inclusions with rough random boundaries. The FNO is still able to detect the inclusion locations with reasonable accuracy.

\subsection{Three phase inclusions}\label{sec:numerics_three}
The most difficult dataset considered in this paper corresponds to the three phase inclusions model, as implied by the small statistical convergence rate exponents of $0.1$ or less in the middle row of \cref{tab:rates_all}. Unlike in shape detection, this dataset features two random conductivity values on piecewise constant inclusions in addition to the background value of one. This leads to complicated triple junction patterns as shown in row two of \cref{fig:tile_three_phase}. As a result, the conductivity contrast ratio is a random variable with an upper bound of $100$. FNO training overfits almost immediately on this dataset, so the best model is usually found after just a few epochs. We report results using this model. Training on higher noise levels provides additional regularization that can improve reconstruction accuracy (\cref{subfig:tile_three_phase_noisy}). The noise robustness and sample complexity results in the top row of \cref{fig:three_phase} are analogous to those for shape detection.

\begin{figure}[tb]
	\centering
    \includegraphics[width=0.99\textwidth]{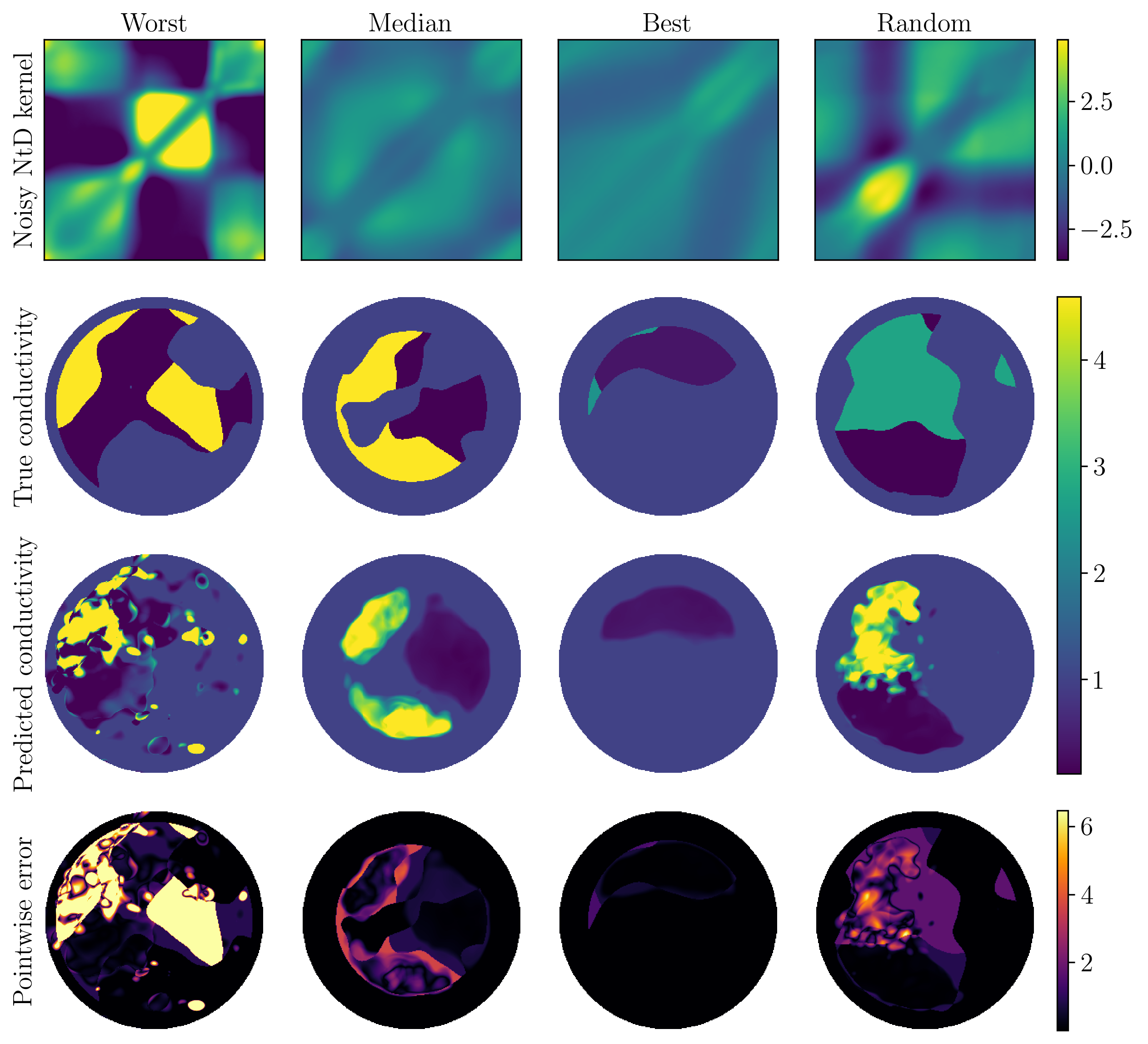}
    \caption{Sample reconstructions from the three phase inclusion dataset.
}
	\label{fig:tile_three_phase}
\end{figure}

\Cref{fig:tile_three_phase} is analogous to \cref{fig:tile_shape}, except now with $1\%$ bounded noise in the NtD kernels along the top row of the figure. The first three columns represent the samples achieving the maximum ($0.854$), median ($0.243$), and minimum ($0.030$) relative $L^1(\D)$ test errors over the $400$ sample test dataset. The average test error is $0.252$. The worst NtD kernel spans a larger range of function values than the others. The FNO is challenged by this outlier kernel and produces a very irregular output prediction. Although the FNO also has a hard time detecting the sharp transition between random phases, it is able to roughly identity the inclusion locations and values. However, the reconstruction boundaries are more blurred than those for shape detection.

\begin{figure}[tb]
	\centering
	\begin{subfigure}[]{0.33\textwidth}
		\centering
		\includegraphics[width=\textwidth]{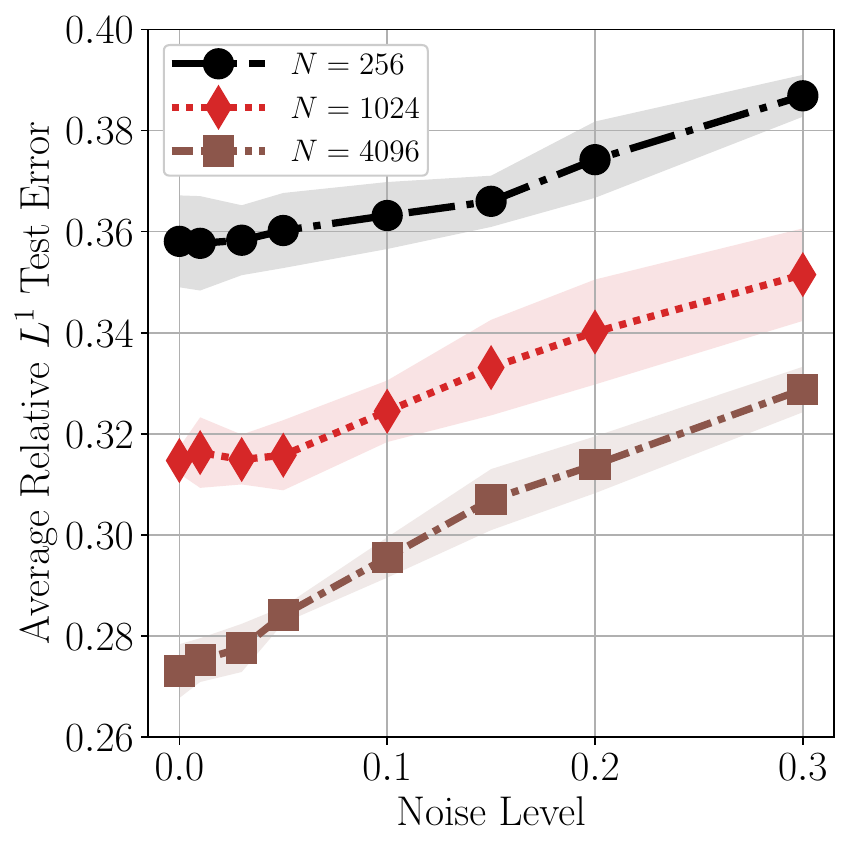}
        \caption{Robustness to large noise levels}
    	\label{subfig:noise_sweep_three_phase_noisy}
	\end{subfigure}
	\begin{subfigure}[]{0.33\textwidth}
		\centering
		\includegraphics[width=\textwidth]{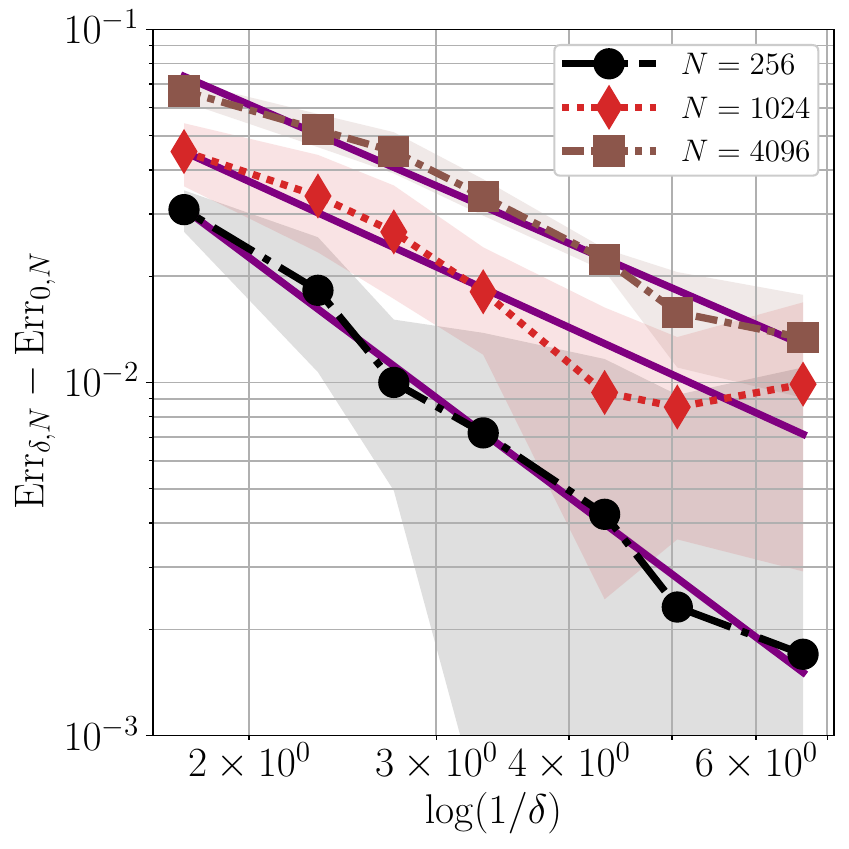}
        \caption{Fit to log modulus of continuity}	
		\label{subfig:noise_log_three_phase_noisy}
	\end{subfigure}
	\begin{subfigure}[]{0.32\textwidth}
		\centering
		\includegraphics[width=\textwidth]{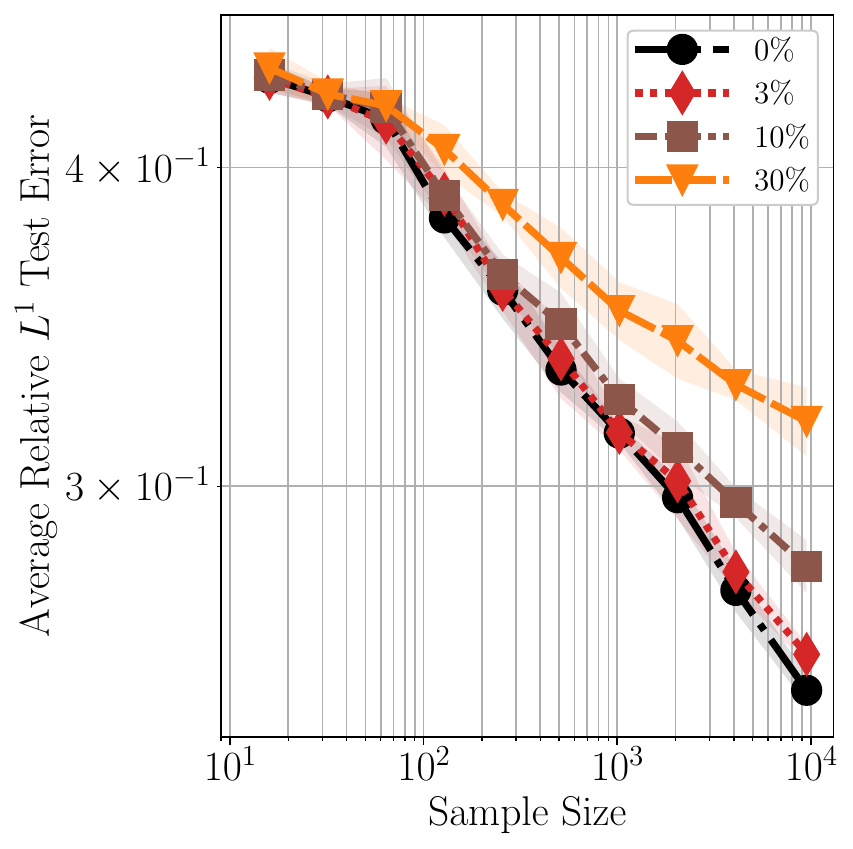}
		\caption{Sample complexity}
		\label{subfig:data_sweep_three_phase_noisy}
	\end{subfigure}
    \begin{subfigure}[]{0.99\textwidth}
    	\centering
        \includegraphics[width=\textwidth]{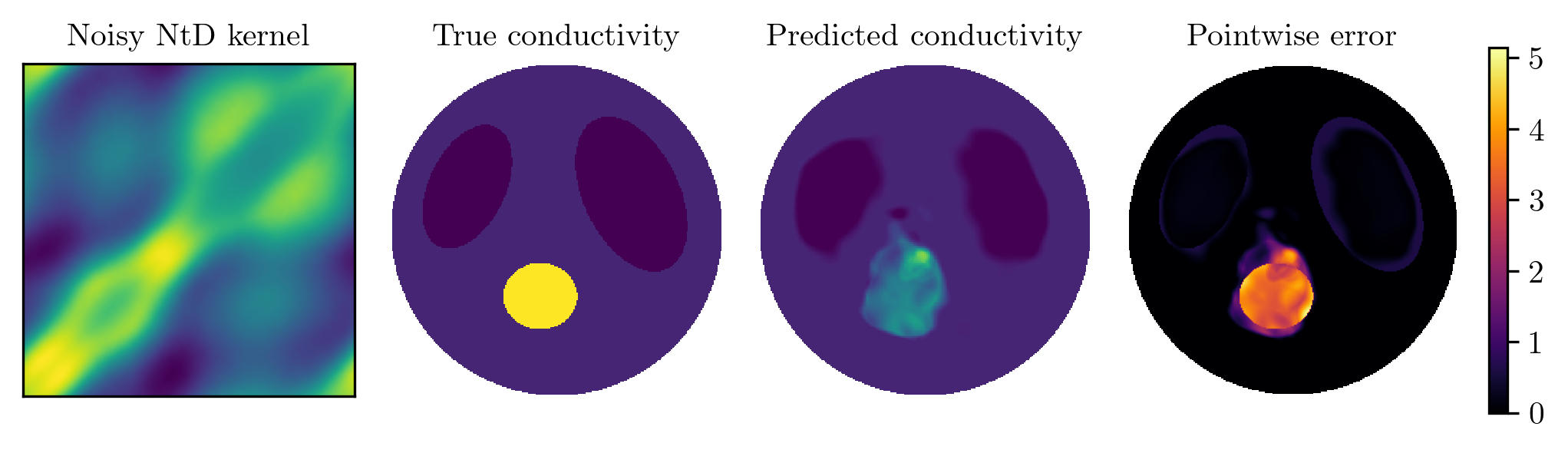}
        \caption{FNO reconstruction of a benchmark phantom from an NtD kernel with $1\%$ noise.
    }
    	\label{subfig:heart_three_phase}
    \end{subfigure}
    \caption{Three phase inclusion noise robustness and sample complexity (top row) and distribution shift (bottom row) experiments. \Cref{subfig:noise_sweep_three_phase_noisy} and \cref{subfig:noise_log_three_phase_noisy} are analogous to the top row of \cref{fig:two_phase_noisy}, and \cref{subfig:data_sweep_three_phase_noisy} is analogous to \cref{subfig:data_sweep_two_phase_noisy}. \Cref{subfig:heart_three_phase} plots the FNO reconstruction of an out-of-distribution heart and lungs phantom; the training data is noise-free.}
    \label{fig:three_phase}
\end{figure}

Although performance on the three phase inclusion dataset is worse than that on the other two datasets, trained FNOs perform well out-of-distribution. Indeed, the diversity of the three phase dataset now enables the FNO to handle input samples corresponding to new shapes \emph{and} varying contrast. In \cref{subfig:heart_three_phase} and the last row of \cref{fig:compare_dbar}, we illustrate this with a deterministic heart and lungs phantom with realistic conductivities values of $6.3$ for the heart and $0.4$ for the lungs~\cite[Table~12.1, p.~160]{mueller2012linear}. The FNO reconstructs the two lungs accurately and the location of the heart. However, the conductivity value of the heart is underpredicted by a factor of around two.

\subsection{Lognormal conductivities}\label{sec:numerics_lognormal}
The lognormal-type conductivity dataset is the easiest of the three. Conductivity samples are a small random perturbation away from the constant unit background (\cref{fig:tile_lognormal}, row two). The (random) contrast ratio is bounded and small. As a result, we observe higher sensitivity to noise than in the discontinuous conductivity datasets.

\begin{figure}[tb]
	\centering
    \includegraphics[width=0.99\textwidth]{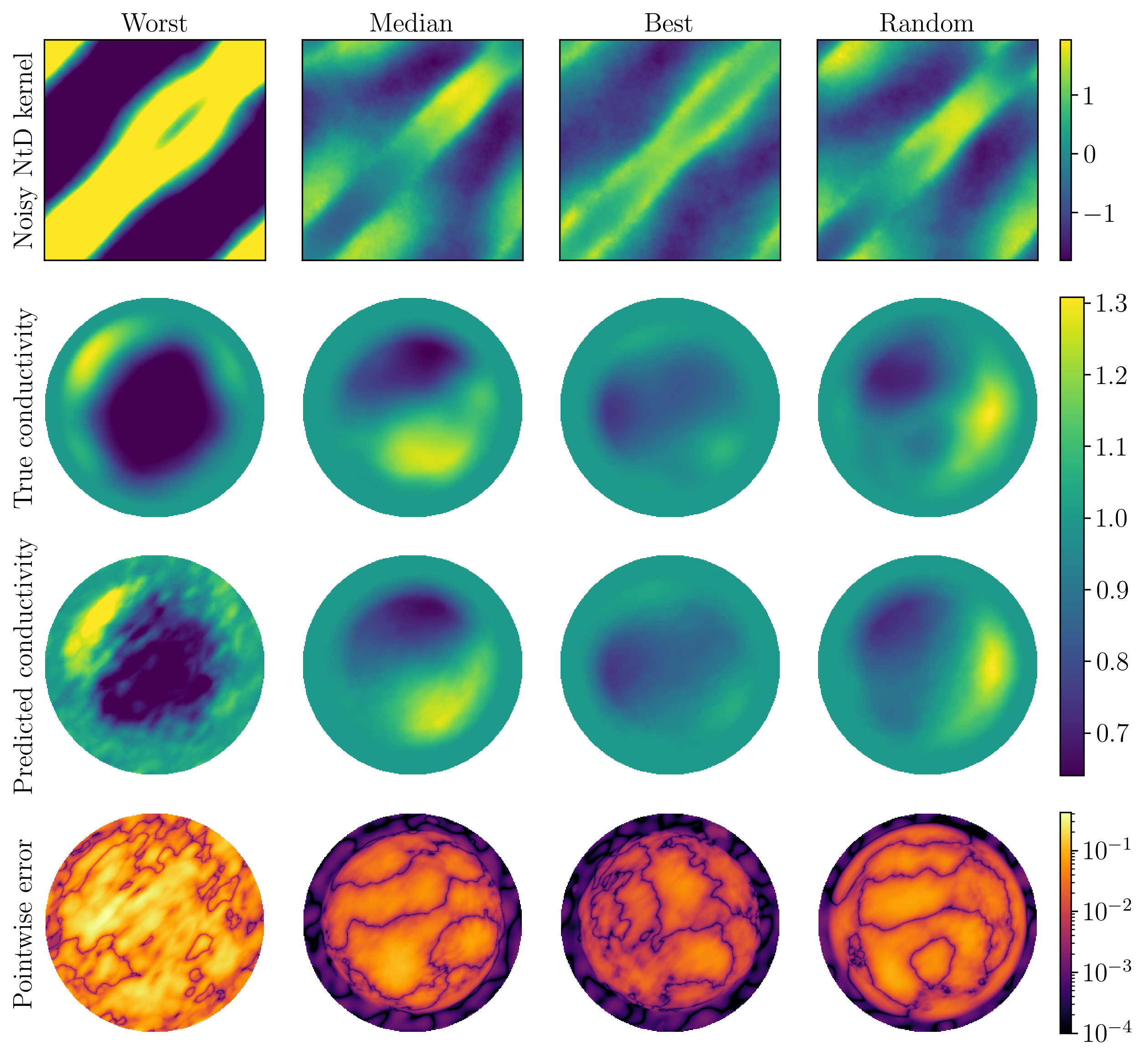}
    \caption{Sample reconstructions from the lognormal conductivity dataset.
}
	\label{fig:tile_lognormal}
\end{figure}

\Cref{fig:tile_lognormal} shows reconstructions corresponding to the maximum ($0.067$), median ($0.013$), minimum ($0.007$), and random ($0.014$) relative $L^1(\D)$ errors over a $400$ sample test set. The average test error is $0.014$. Here the training noise is $3\%$ and the testing noise is $1\%$. The noise is clearly visible in the top row of NtD kernels. As in \cref{sec:numerics_shape,sec:numerics_three}, the worst sample corresponds to an NtD kernel with large absolute function values. Trained FNOs are sensitive to this scale. Row three shows that FNO reconstructions are affected by high frequency artifacts that are likely due to the noise. Training on noisier data can dampen this behavior to some extent (\cref{fig:tile_lognormal_10noise6}). The pointwise errors in row four of \cref{fig:tile_lognormal} are lowest near the boundary of $\D$. There are also one-dimensional curves embedded in high error regions where the error along the curves is much smaller.

\begin{figure}[tb]
	\centering
	\begin{subfigure}[]{0.42\textwidth}
		\centering
		\includegraphics[width=\textwidth]{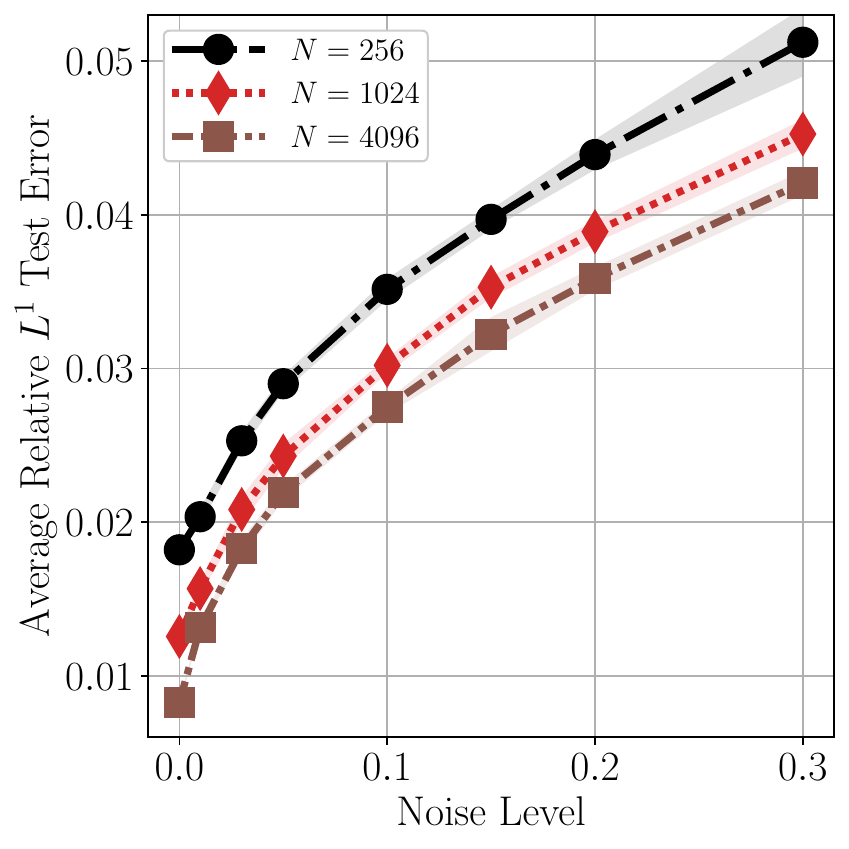}
		\caption{Robustness to large noise levels}
		\label{subfig:noise_sweep_lognormal_noisy}
	\end{subfigure}
	\begin{subfigure}[]{0.42\textwidth}
		\centering
		\includegraphics[width=\textwidth]{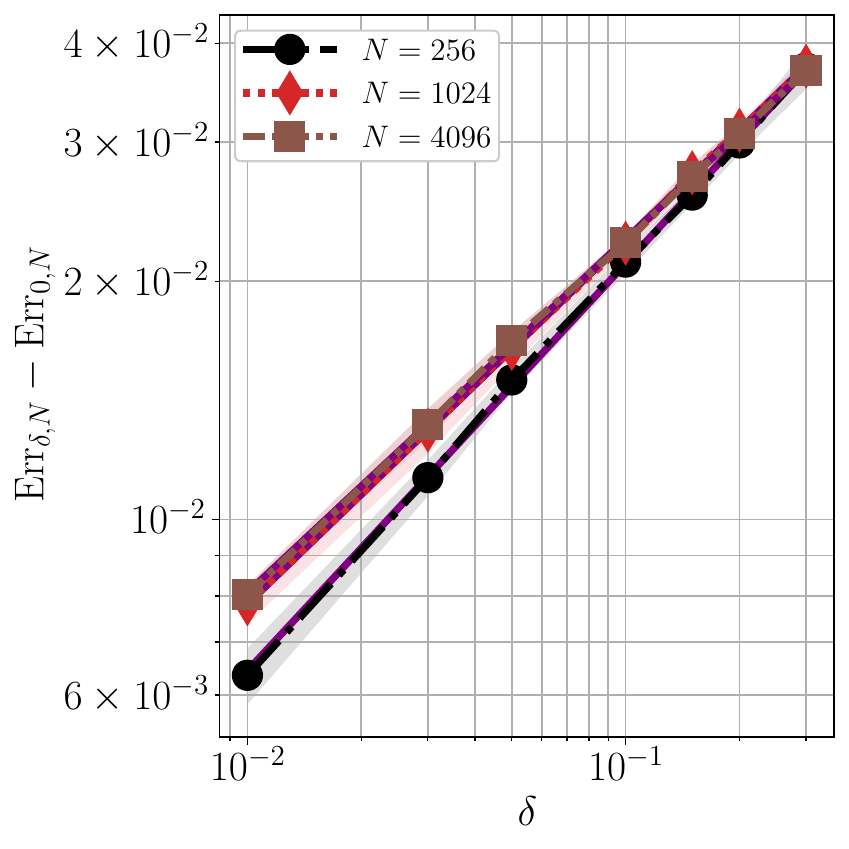}
		\caption{Fit to H\"older modulus of continuity}
		\label{subfig:noise_power_lognormal_noisy_scaled}
	\end{subfigure}
	\begin{subfigure}[]{0.42\textwidth}
		\centering
		\includegraphics[width=\textwidth]{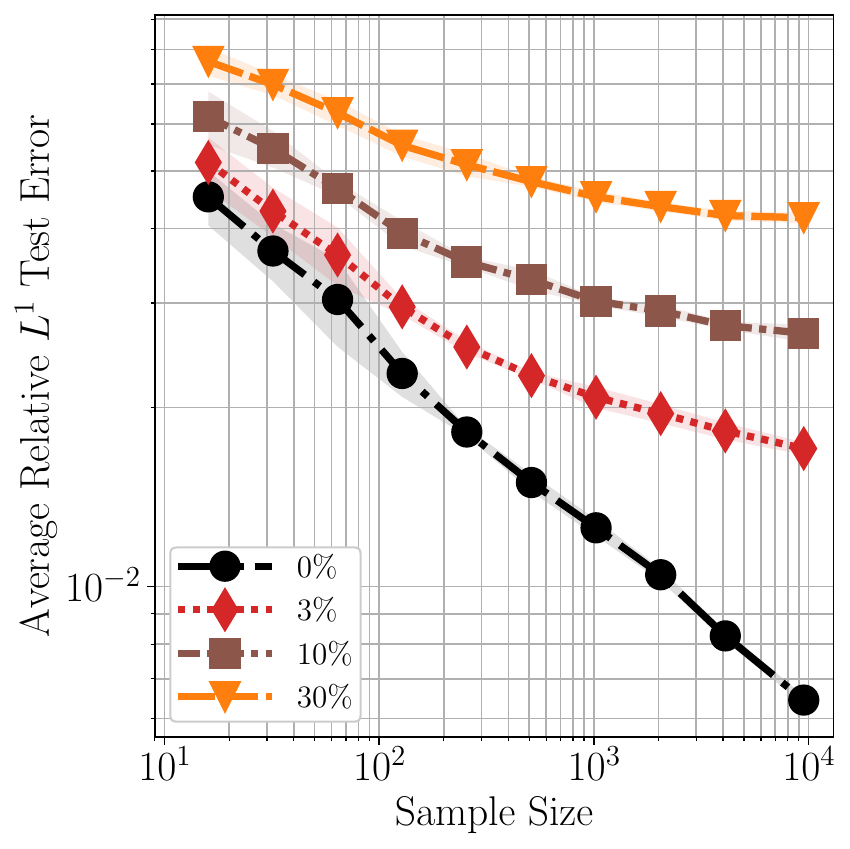}
		\caption{Algebraic sample complexity}
		\label{subfig:data_sweep_lognormal_noisy}
	\end{subfigure}
	\begin{subfigure}[]{0.42\textwidth}
		\centering
		\includegraphics[width=\textwidth]{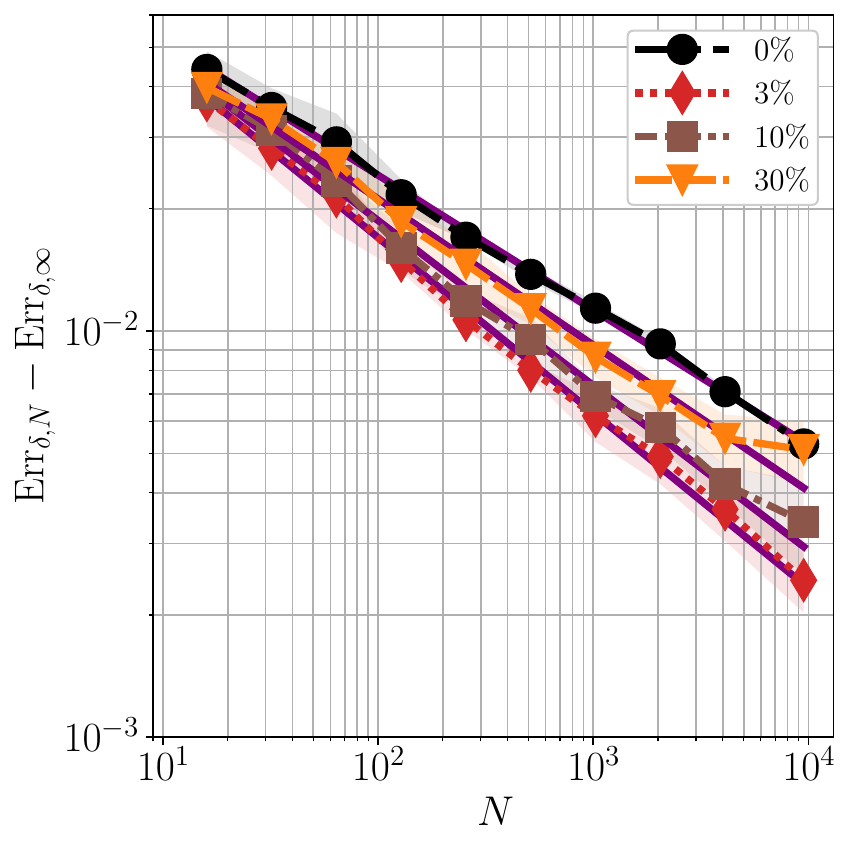}
		\caption{Fit of excess test error to power law}
		\label{subfig:data_power_lognormal_noisy}
	\end{subfigure}
    \caption{Lognormal conductivity noise robustness (top row) and sample complexity (bottom row) experiments. \Cref{subfig:noise_sweep_lognormal_noisy,subfig:noise_power_lognormal_noisy_scaled} are analogous to the top row of \cref{fig:two_phase_noisy}, except \cref{subfig:noise_power_lognormal_noisy_scaled} plots the excess error versus $\delta$ on a logarithmic scale. This strongly indicates a power law behavior in contrast to the logarithmic bound in \cref{thm:fno_approx_main}. \Cref{subfig:data_sweep_lognormal_noisy,subfig:data_power_lognormal_noisy} are analogous to the bottom row of \cref{fig:two_phase_noisy}.}
	\label{fig:lognormal_noisy}
\end{figure}

The sensitivity to noise is quantified in the top row of \cref{fig:lognormal_noisy} for the lognormal conductivity dataset. In the leftmost plot, the relative error grows substantially as $\delta$ increases, albeit still respecting sublinear growth. The actual relationship between relative error and $\delta$ is best explained by a power law, as shown in \cref{subfig:noise_power_lognormal_noisy_scaled}. For large enough $N$, the slope of the power law is independent of $N$. This suggests a H\"older-type modulus of continuity instead of the weaker logarithmic one from \cref{thm:fno_approx_main}. The sample complexity is also strongly influenced by noise. Nonasymptotically, \cref{subfig:data_sweep_lognormal_noisy} shows noise-induced saturation of the average relative test error as $N$ increases. This saturation is absent for clean train and test data (the black line with circle markers). \Cref{subfig:data_power_lognormal_noisy} subtracts out this constant bias component to isolate the statistical error and fill in the last row of \cref{tab:rates_all}. We observe near parametric rates of $O(N^{-0.45})$ for noise-free test data and noisy training data. With noisy test data, the rate exponent reduces slightly, but is still better than those corresponding to the shape detection and three phase inclusion datasets.

\section{Conclusion and outlook}\label{sec:conclusion}
This paper investigates Calder\'on's inverse conductivity problem within the framework of operator learning. The work directly approximates the inverse map that sends continuum boundary data to the interior electrical conductivity. The core of the theoretical analysis involves extending the domain of the inversion operator to a Hilbert space of integral kernel functions associated with Neumann-to-Dirichlet maps. This delivers a continuous extension that retains the logarithmic stability properties of the original inverse problem under minimal regularity assumptions on the conductivities. Such sets of admissible conductivities are compact in Lebesgue spaces. These facts yield the main result of the paper, which asserts the existence of a Fourier neural operator that uniformly approximates the inverse map with respect to admissible conductivities and noisy boundary measurements.

Numerical evidence suggests that an accurate Fourier neural operator can actually be found by computing an empirical risk minimizer corresponding to a finite training dataset of noisy samples. The trained models successfully reconstruct piecewise constant and lognormal conductivities, even in regimes that do not satisfy the theoretical assumptions. The numerical results also underscore the models' robustness to noise in the boundary data and their statistical capacity to generalize beyond the training distribution. 

In summary, the proposed framework is a principled integration of classical inverse problems theory with modern operator learning methods. Although focused on Calder\'on's problem, the methodology developed in this paper should remain valid for a wider class of nonlinear inverse problems whose solution maps can be stably extended to appropriate Hilbert spaces. This is the subject of the discussion in \cref{sec:conclusion_general}. The paper concludes in \cref{sec:conclusion_future} by identifying several directions for future research.

\subsection{Toward a general inverse map approximation framework}\label{sec:conclusion_general}
The results in \cref{sec:extend,sec:approx} leading to \cref{thm:fno_approx_main} suggest a more general framework for approximation of nonlinear inverse problem solution operators. This approach requires checking several technical conditions that must be verified on a case-by-case basis for the specific inverse problem under consideration, as done in \cref{sec:extend,sec:approx} for Calder\'on's problem. This subsection now lists these abstract conditions.

\begin{condition}[forward map, inverse map, embeddings, and compactness]\label{condition:general}
    \phantom{The}
	\begin{enumerate}[label=(\roman*),leftmargin=1.25\parindent,topsep=1.67ex,itemsep=0.5ex,partopsep=1ex,parsep=1ex]
		\item Let $X$ and $Z$ be Banach spaces and $B\subseteq X$ be a compact subset. There exists a ``forward map'' $\mathfrak{F}\colon B\subseteq X\to Z$ that is uniformly continuous.

        \emph{In Calder\'on's problem, $X=L^q(\Omega)$, $Z=\sL(H^{-1/2}_\diamond(\pOmega);H_{\phantom{\diamond}}^{1/2}(\pOmega)/\C)$, $B=\cX_d(R)$, and $\mathfrak{F}\colon \gamma\mapsto \ntd$ is H\"older continuous. See \cref{sec:prelim_eit,sec:approx_stability,sec:approx_compactness}.}
		
		\item Let $A\subseteq Z$ be a bounded subset and $Y$ be another Banach space. There exists a uniformly continuous ``inverse map'' $\cG \colon A\subseteq Z\to Y$ such that 
		\begin{align}
			\cG\bigl(\mathfrak{F}(x)\bigr)=x \qfa x\in B\,,
		\end{align}
		the range of $\mathfrak{F}$ satisfies $\mathfrak{F}(B)\subseteq A$, and $\cG$ has modulus of continuity $\omega_{\cG}$.

        \emph{In Calder\'on's problem, $\cG\colon \ntd \mapsto \gamma$, $A=\mathfrak{F}(B)$, and $Y\subseteq L^p(\Omega)$. The function $\omega_{\cG}$ is of logarithmic type. See \cref{sec:extend_stability,sec:extend_main}.}
		
		\item There exist separable Hilbert spaces $\cH_1\supseteq A$ and $\cH_2\supseteq \cG(A)$ and maps $E_1^{-1}\colon {A\subseteq Z}\to \cH_1$ and $E_2\colon {\cG(A)\subseteq Y}\to \cH_2$ such that the following holds. For some modulus of continuity $\omega_1'$, the map $E_1^{-1}$ satisfies
		\begin{align}
			\norm{E_1^{-1}(u)-E_1^{-1}(u_0)}_{\cH_1}\leq \omega_1'(\norm{u-u_0}_Z)
		\end{align}
        for all $u\in A$ and $u_0\in A$.
		For some modulus of continuity $\omega_2$, the map $E_2$ satisfies
		\begin{align}
			\norm{E_2(x)-E_2(x_0)}_{\cH_2}\leq \omega_2(\norm{x-x_0}_Y)
		\end{align}
        for all $x\in \cG(A)$ and $x_0\in \cG(A)$. 
        
		\emph{The modulus of continuity functions $\omega_1'$ and $\omega_2$ may be obtained using interpolation of Banach spaces or related arguments. In Calder\'on's problem, $\cH_1=\HS(H^s_\diamond(\pOmega);H^t(\pOmega)/\C)$ and $\cH_2=L^2(\Omega)$. Each map has action $x\mapsto x$. The modulus $\omega_1'$ can be taken to be $r\mapsto \sqrt{r}$. If $Y\subseteq L^2(\Omega)$, then $E_2$ can be taken to be the identity operator on $Y$ with modulus $\omega_2(r)=r$. See \cref{sec:extend_bounds,sec:approx_stability}.}
		
		\item There exist $E_1\colon {E_1^{-1}(A)\subseteq \cH_1} \to Z $ and  $E_2^{-1} \colon {E_2(\cG(A))\subseteq \cH_2} \to Y$ such that
		\begin{align}
			E_1\circ E_1^{-1}=\Id \qon A\qa E_2^{-1}\circ E_2=\Id\qon \cG(A)\,.
		\end{align}
		Furthermore, for some moduli of continuity $\omega_1$ and $\omega_2'$, it holds that
		\begin{align}
			\norm{E_1(v)-E_1(v_0)}_Z \leq \omega_1(\norm{v-v_0}_{\cH_1})   
		\end{align}
        for all $v$ and $v_0$ belonging to $E_1^{-1}(A)$, and
        \begin{align}
            \norm{E_2^{-1}(w)-E_2^{-1}(w_0)}_Y \leq \omega_2'(\norm{w-w_0}_{\cH_2}) 
        \end{align}
        for all $w$ and $w_0$ belonging to $E_2(\cG(A))$.
        
		\emph{In Calder\'on's problem, both maps are inclusion maps $x\mapsto x$.
		The modulus of continuity functions are $\omega_1\colon r\mapsto\sqrt{r}$ and $\omega_2'\colon r\mapsto r$. See \cref{sec:extend_bounds,sec:approx_stability}.
        }
	\end{enumerate}
\end{condition}

\begin{figure}[tb]
	\centering
    \input{figures/factorize.tex}
    \vspace{2mm}
	\caption{Factorization of an abstract inverse map $\cG$ according to \cref{condition:general} when viewed as a map $\Psi$ between suitable Hilbert spaces $\cH_1$ and $\cH_2$.}
	\label{fig:diagram}
\end{figure}

If the assertions of \cref{condition:general} are valid, then 
\begin{align}
	\Psi\defeq E_2\circ \cG\circ E_1
\end{align}
is a map $\Psi\colon E_1^{-1}(A)\subseteq \cH_1\to \cH_2$ between subsets of separable Hilbert spaces. Moreover, it holds that 
\begin{align}
	x=\cG\bigl(\mathfrak{F}(x)\bigr)= \bigl(E_2^{-1} \circ\Psi \circ E_1^{-1} \circ \mathfrak{F}\bigr) (x) \qfa x\in B
\end{align}
and so $\cG=E_2^{-1}\circ\Psi\circ E_1^{-1}$ on $\mathfrak{F}(B)$. \Cref{fig:diagram} illustrates the factorizations implied by the preceding two displays.

The map $\Psi$ has modulus of continuity $\omega_2\circ \omega_\cG\circ \omega_1$. By \cref{thm:hilbert_extension_ref}, $\Psi$ has an extension $\widetilde{\Psi}\colon \cH_1\to \cH_2$ with the same modulus of continuity. The extension $\widetilde{\Psi}$ may then be uniformly approximated by a neural operator over the compact subset $E_1^{-1}(\mathfrak{F}(B))$ of the Hilbert space $\cH_1$ up to noisy perturbations belonging to a compact subset of $\cH_1$. In the setting of EIT, these ingredients are accomplished in \cref{sec:approx_main}. The current paper further passes to kernel functions $\kappa_\gamma\in L^2(\pOmega\times\pOmega)$ for the purposes of applying FNOs, but this is not strictly necessary. Architectures that accommodate Hilbert--Schmidt operators (such as $\ntd$ when $d=2$) can be used directly \cite{de2019deep}.

One should note that the preceding discussion is not applicable to all inverse problems. For example, the required embedding estimates and stability results may not hold for inverse boundary value problems involving hyperbolic PDEs whose difference NtD maps do not exhibit exponential smoothing (cf.~\cref{lem:ntd_unif_op}); a different approach is needed to handle such situations.

\subsection{Future directions}\label{sec:conclusion_future}
A number of important directions remain open for future work. It is natural to specialize the analysis and experiments in this paper to finite-dimensional \cite{alberti2022infinite} or otherwise structured \cite{garde2025infinite} families of conductivities. Similarly, accommodating a finite number of measurements---possibly adopting a measure-centric perspective \cite{nelsen2025operator}---would bring the theory closer to EIT practice and interface with contemporary work on optimal experimental design \cite{bui2022bridging,guerra2025learning,huan2024optimal,jin2024optimal,jin2024continuous}. Another direction is to develop \emph{quantitative} approximation rates \cite{kratsios2024mixture} and statistical bounds for empirical risk minimizers \cite{de2019deep,kovachki2024data,lanthaler2023error} in the EIT setting. There is also a need for a comprehensive numerical comparison of neural operator inverse solvers against alternative operator learning, classical regularization, and hybrid reconstruction methods.
Last, the framework set forth in this paper offers a theoretical foundation that can support the data-driven solution of other nonlinear inverse problems beyond EIT, as anticipated in \cref{sec:conclusion_general}.

\backmatter

\bmhead*{Data and code availability.}
All code used to produce the data, numerical results, and figures in this paper is available at
\begin{center}
    \url{https://github.com/nickhnelsen/learning-eit}\,.
\end{center}

\bmhead*{Funding.}
M.V. de Hoop gratefully acknowledges the support of the Department of Energy BES, under grant DE-SC0020345, Oxy, the corporate members of the Geo-Mathematical Imaging Group at Rice University and the Simons Foundation under the MATH + X Program.
N.B.K. is grateful to the Nvidia Corporation for support through full-time employment. 
M.L. was partially supported by PDE-Inverse project of the European Research Council of the European Union, the FAME and Finnish Quantum flagships and the grant 336786 of the Research Council of Finland. Views and opinions expressed are those of the authors only and do not necessarily reflect those of the European Union or the other funding organizations. Neither the European Union nor the other funding organizations can be held responsible for them.
N.H.N. acknowledges support from a Klarman Fellowship through Cornell University's College of Arts \& Sciences, the U.S. National Science Foundation (NSF) under award DMS-2402036, the NSF Graduate Research Fellowship Program under award DGE-1745301, the
Amazon/Caltech AI4Science Fellowship, the Air Force Office of Scientific Research under MURI award FA9550-20-1-0358 (Machine Learning and Physics-Based Modeling and Simulation), and the Department of Defense Vannevar Bush Faculty Fellowship held by Andrew M. Stuart under Office of Naval Research award N00014-22-1-2790. 

\bmhead*{Acknowledgments.}
The authors are grateful to Richard Nickl and Andrew Stuart for many helpful discussions during the course of this project. A significant portion of this work was completed when N.B.K. and N.H.N. were with the Department of Computing and Mathematical Sciences at the California Institute of Technology. The computations presented in this paper were partially conducted on the Resnick High
Performance Computing Center, a facility supported by the Resnick Sustainability Institute at the California Institute of Technology.

\appendix

\begin{appendices}

\makeatletter
\def\@seccntformat#1{\@ifundefined{#1@cntformat}%
	{\csname the#1\endcsname\mysecspace}
	{\csname #1@cntformat\endcsname}}
\newcommand\section@cntformat{\appendixname\ \thesection.\mysecspace}       
\newcommand\subsection@cntformat{\thesubsection.\mysecspace} 				
\newcommand\subsubsection@cntformat{\thesubsubsection.\mysecspace} 				
\makeatother

\section{Remaining proofs}\label{app:proofs}
This appendix provides the remaining proofs of results in the main text as well as additional lemmas specific to the Calder\'on problem that support those proofs. While several of these results may be familiar to experts, we provide full proofs for completeness, especially because those regarding NtD maps are much less common in the literature than analogous statements for DtN maps.

First, we need a lemma showing that the total variation $V(\widetilde{\gamma};\R^d)$ of the unit extension $\widetilde{\gamma}$ in $\R^d$ is equal to the total variation $V(\gamma;\Omega)$ of the original $\gamma$ in $\Omega$. This result is used in the proofs of \cref{thm:stability,lem:compact_set}.
\begin{lemma}[total variation identity]\label{lem:tv_identity}
    If $\gamma\in\cX_{\BV}(R)$, then $V(\widetilde{\gamma};\R^d)=V(\gamma;\Omega)$.
\end{lemma}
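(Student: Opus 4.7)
The plan is to reduce the claim to the classical fact that zero extension preserves total variation for compactly supported BV functions. Since $\gamma\in\cX_{\BV}(R)\subset\Gamma'$, the definition of $\Gamma'$ gives $\gamma\equiv 1$ a.e.\ on $\Omega\setminus\Omega'$, and $\widetilde{\gamma}\equiv 1$ on $\R^d\setminus\Omega$ by construction. Consequently $f\defeq \widetilde{\gamma}-\onebm$ vanishes a.e.\ outside $\overline{\Omega'}$, so $f$ is (a.e.\ equal to) the zero extension of $\gamma-\onebm\in\BV(\Omega)$ from $\Omega$ to $\R^d$. Because the total variation functional is invariant under adding constants, since $\int_U c\,\nabla\cdot\phi\dd{x}=0$ for any $c\in\R$ and any $\phi\in C_c^\infty(U;\R^d)$, it suffices to prove $V(f;\R^d)=V(\gamma-\onebm;\Omega)$.

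For the bound $V(\gamma-\onebm;\Omega)\leq V(f;\R^d)$, each admissible test function $\phi\in C_c^\infty(\Omega;\R^d)$ with $\norm{\phi}_{L^\infty(\Omega;\R^d)}\leq 1$ extends by zero to some $\widetilde{\phi}\in C_c^\infty(\R^d;\R^d)$ with the same sup-norm bound, and the identity $\int_\Omega (\gamma-\onebm)\nabla\cdot\phi\dd{x}=\int_{\R^d} f\,\nabla\cdot\widetilde{\phi}\dd{x}$ holds because both $f$ and $\widetilde{\phi}$ vanish outside $\Omega$; the supremum over $\phi$ yields the claim. For the reverse bound, fix $\phi\in C_c^\infty(\R^d;\R^d)$ with $\norm{\phi}_{L^\infty(\R^d;\R^d)}\leq 1$ and choose a cutoff $\eta\in C_c^\infty(\Omega)$ with $0\leq\eta\leq 1$ such that $\eta\equiv 1$ on an open neighborhood of $\overline{\Omega'}$. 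Then $\eta\phi\in C_c^\infty(\Omega;\R^d)$ with $\norm{\eta\phi}_{L^\infty(\Omega;\R^d)}\leq 1$. Since $f$ is supported in $\overline{\Omega'}\subseteq\{\eta=1\}$, the product $f\,\nabla\eta\cdot\phi$ vanishes identically, so the product rule $\nabla\cdot(\eta\phi)=\eta\,\nabla\cdot\phi+\nabla\eta\cdot\phi$ gives
\begin{align*}
    \int_{\R^d} f\,\nabla\cdot\phi\dd{x}
    =\int_\Omega (\gamma-\onebm)\,\eta\,\nabla\cdot\phi\dd{x}
    =\int_\Omega (\gamma-\onebm)\,\nabla\cdot(\eta\phi)\dd{x}
    \leq V(\gamma-\onebm;\Omega).
\end{align*}
Taking the supremum over $\phi$ completes the proof.

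The main delicate point is the cutoff manipulation in the reverse bound: making the spurious term $\int f\,\nabla\eta\cdot\phi\dd{x}$ vanish requires that $f$ be zero on the support of $\nabla\eta$, and this is precisely where the compact inclusion $\overline{\Omega'}\subset\Omega$ built into the definition of $\Gamma'$ enters the argument. Everything else reduces to the elementary extension-by-zero manipulation after subtracting the constant $\onebm$.
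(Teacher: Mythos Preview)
Your proof is correct and follows essentially the same strategy as the paper: both arguments hinge on the same smooth cutoff construction (your $\eta$, the paper's $\varrho$) to pass from test functions in $C_c^\infty(\R^d;\R^d)$ to admissible test functions in $C_c^\infty(\Omega;\R^d)$, exploiting that $\gamma-\onebm$ is supported in $\overline{\Omega'}\subset\Omega$. Your presentation is slightly more streamlined because you subtract the constant $\onebm$ at the outset and invoke the translation-invariance of total variation once, whereas the paper carries $\widetilde{\gamma}$ through and removes the constant contribution later via explicit divergence-theorem computations; the underlying ideas are the same.
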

\begin{proof}
    For any $\gamma\in\cX_{\BV}(R)$, it holds that $\widetilde{\gamma}\in L^\infty(\R^d)$ and thus $\widetilde{\gamma}\in L^1_{\mathrm{loc}}(\R^d)$. Hence, since $\R^d$ is open, $V(\widetilde{\gamma};\R^d)$ is well-defined \cite[Definition~14.2, p.~460]{leoni2017first}. For a set $U\subseteq\R^d$, write $\Phi_U$ for the set of all $\phi\in C_c^{\infty}(U;\R^d)$ such that $\norm{\phi}_{L^\infty(U;\R^d)}\leq 1$. Then
    \begin{align*}
        V(\widetilde{\gamma};\R^d) &=
        \sup_{\phi\in\Phi_{\R^d}} \sum_{i=1}^d\int_{\R^d}\widetilde{\gamma}(x)\partial_i\phi_i(x)\dd{x}\\
        &=
        \sup_{\phi\in\Phi_{\R^d}} \sum_{i=1}^d\biggl(\int_{\Omega}\widetilde{\gamma}(x)\partial_i\phi_i(x)\dd{x} + \int_{\R^d\setminus\Omega}\partial_i\phi_i(x)\dd{x}\biggr)\\
        &= \sup_{\phi\in\Phi_{\R^d}} \sum_{i=1}^d \biggl(\int_{\Omega} \gamma(x)\partial_i\phi_i(x)\dd{x} + \biggl[\int_{\R^d}\partial_i\phi_i(x)\dd{x} - \int_{\Omega}\partial_i\phi_i(x)\dd{x}\biggr]\biggr)\\
        &= \sup_{\phi\in\Phi_{\R^d}} \sum_{i=1}^d \biggl(\int_{\Omega} \bigl(\gamma(x)-1\bigr)\partial_i\phi_i(x)\dd{x}\biggr)\,.
    \end{align*}
    In the third line, the second integral equals zero by the divergence theorem due to the compact support of $\phi\in\Phi_{\R^d}$. It remains to show that
    \begin{align*}
        \sup_{\phi\in\Phi_{\R^d}} \sum_{i=1}^d \biggl(\int_{\Omega} \bigl(\gamma(x)-1\bigr)\partial_i\phi_i(x)\dd{x}\biggr) = \sup_{\phi\in\Phi_{\Omega}} \sum_{i=1}^d \biggl(\int_{\Omega} \gamma(x)\partial_i\phi_i(x)\dd{x}\biggr)
    \end{align*}
    because the rightmost quantity equals $V(\gamma;\Omega)$.    
    If $\phi\in \Phi_\Omega$, then $\sup_{x\in\R^d}\abs{\phi(x)}=\sup_{x\in\Omega}\abs{\phi(x)}\leq 1$ because $\phi$ is compactly supported in $\Omega$. Since $\Omega\subseteq\R^d$, it holds that $\phi\in C_c^\infty(\R^d;\R^d)$ as well. We have implicitly extended $\phi$ by zero to all of $\R^d$. This implies that $\Phi_\Omega\subseteq \Phi_{\R^d}$ and thus
    \begin{align*}
        V(\widetilde{\gamma};\R^d)&\geq \sup_{\phi\in\Phi_{\Omega}} \sum_{i=1}^d \biggl(\int_{\Omega} \bigl(\gamma(x)-1\bigr)\partial_i\phi_i(x)\dd{x}\biggr)\\
        &= V(\gamma;\Omega)\,.
    \end{align*}
    To achieve the final equality, we use the fact that $\supp(\phi)\subset\Omega$ and apply the divergence theorem.
    
    To show the upper bound, fix any $\phi\in\Phi_{\R^d}$ and $i\in\{1,\ldots, d\}$. Let $\rchi_U$ denote the indicator function of a set $U\subseteq\R^d$. Since $\gamma=\onebm$ on $\Omega\setminus\Omega'$ and $\widetilde{\gamma}=\onebm$ on $\R^d\setminus\Omega$ from \eqref{eqn:compact_support_set} and \eqref{eqn:conductivity_set_prelim}, it follows that $\widetilde{\gamma}=\onebm$ on $\R^d\setminus\Omega'$, all in the a.e. sense. Then
    \begin{align*}
        \int_{\Omega} \bigl(\gamma(x)-1\bigr)\partial_i\phi_i(x)\dd{x}&=\int_{\R^d}\rchi_{\Omega'}(x) \bigl(\widetilde{\gamma}(x)-1\bigr)\partial_i\phi_i(x)\dd{x}\\
        &=\int_{\R^d}\rchi_{\Omega'}(x) \bigl(\widetilde{\gamma}(x)-1\bigr)\partial_i\xi_i(x)\dd{x}\\
        &=\int_{\Omega'} \bigl(\gamma(x)-1\bigr)\partial_i\xi_i(x)\dd{x}
    \end{align*}
    for any smooth enough $\xi\colon\R^d\to\R^d$ such that $\xi_i=\phi_i$ on $\Omega'$ for each $i$. By compact containment, we can find another domain $\Omega''\subset\R^d$ such that $\overline{\Omega'}\subset\Omega''\subset\overline{\Omega''}\subset\Omega$.
    Let $\varrho\in C_c^\infty(\R^d;\R)$ be a smooth cutoff function such that $\varrho \equiv 1$ on $\overline{\Omega'}$ and $\varrho\equiv 0$ on $\R^d\setminus \Omega''$. Define $\xi\defeq \varrho\phi$ in the pointwise sense. Then $\xi$ belongs to $C_c^\infty(\Omega;\R^d)$ because smoothness is preserved under pointwise multiplication and $\supp(\xi)\subseteq \supp(\varrho)\subseteq \Omega''\subset\Omega $. Furthermore, $\sup_{x\in\Omega}\abs{\xi(x)}\leq \sup_{x\in\R^d}\abs{\xi(x)}\leq 1$ because both $\sup_{x\in\R^d}\abs{\varrho(x)}$ and $\sup_{x\in\R^d}\abs{\phi(x)}$ are bounded above by one. We deduce that $\xi\in\Phi_{\Omega}$. Summing the previous display and taking the supremum on the right yields
    \begin{align*}
        \sum_{i=1}^d \biggl(\int_{\Omega} \bigl(\gamma(x)-1\bigr)\partial_i\phi_i(x)\dd{x}\biggr)\leq \sup_{\xi'\in\Phi_\Omega}\sum_{i=1}^d \biggl(\int_{\Omega'} \bigl(\gamma(x)-1\bigr)\partial_i\xi'_i(x)\dd{x}\biggr)\,.
    \end{align*}
    By replacing $\Omega'$ with $\Omega$ in the integral, taking the supremum over $\phi\in\Phi_{\R^d}$ on the left-hand side of the preceding inequality, and applying the divergence theorem a final time, the desired result $V(\widetilde{\gamma};\R^d)\leq V(\gamma;\Omega)$ follows.
\end{proof}

The proof of \cref{lem:ntd_to_dtn} relies on the following standard fact.
\begin{lemma}[uniformly bounded DtN maps]\label{lem:dtn_uniform}
    There exists a constant $c>0$ depending on $d$, $\Omega$, $m$, and $M$ such that
    \begin{align}\label{eqn:dtn_uniform}
        \sup_{\gamma\in\Gamma}\,\norm{\dtn}_{\sL\left(H^{1/2}(\pOmega)/\C;H^{-1/2}(\pOmega)\right)}\leq c\,.
    \end{align}
\end{lemma}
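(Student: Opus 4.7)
The plan is to establish \eqref{eqn:dtn_uniform} via the standard variational characterization of the DtN map together with the uniform ellipticity and boundedness encoded by the definition of $\Gamma$ in \eqref{eqn:conductivity_set_prelim}. Specifically, for a representative $f\in H^{1/2}(\pOmega)$ of an equivalence class in $H^{1/2}(\pOmega)/\C$, let $u=u_{\gamma,f}\in H^1(\Omega)/\C$ denote the unique (up to constants) weak solution of the Dirichlet problem $-\nabla\cdot(\gamma\nabla u)=0$ in $\Omega$ with $u|_{\pOmega}=f$. Then for any $g\in H^{1/2}(\pOmega)$ with $H^1(\Omega)$-extension $v$ satisfying $v|_{\pOmega}=g$, one has
\begin{align*}
    \ip{\dtn f}{g}_{H^{-1/2}(\pOmega), H^{1/2}(\pOmega)} = \int_\Omega \gamma(x)\,\nabla u(x)\cdot\nabla v(x)\dd{x}\,.
\end{align*}
This identity will serve as the starting point of all the estimates.

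First, I would use Lax--Milgram on the quotient space $H^1(\Omega)/\C$ equipped with the norm $\norm{\nabla\slot}_{L^2(\Omega)}$ (which is equivalent to the standard quotient norm by Poincar\'e--Wirtinger). The bilinear form $(w_1,w_2)\mapsto \int_\Omega\gamma\,\nabla w_1\cdot\nabla w_2\dd{x}$ is uniformly coercive with constant $m$ and uniformly bounded with constant $M$ over $\gamma\in\Gamma$. Combined with a continuous right-inverse of the trace operator $T\colon H^1(\Omega)/\C\to H^{1/2}(\pOmega)/\C$---whose operator norm depends only on $\Omega$ and $d$---this yields the \emph{a priori} estimate
\begin{align*}
    \norm{\nabla u_{\gamma,f}}_{L^2(\Omega)} \leq C_{\Omega,d}\,\frac{M}{m}\,\norm{[f]}_{H^{1/2}(\pOmega)/\C}
\end{align*}
uniformly over $\gamma\in\Gamma$, where the ratio $M/m$ arises from combining ellipticity with the choice of the harmonic extension that minimizes the energy.

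Second, I would apply Cauchy--Schwarz to the variational identity, use $\gamma\leq M$ pointwise, and choose $v$ to be an efficient $H^1$-extension of $g$ so that $\norm{\nabla v}_{L^2(\Omega)}\lesssim \norm{g}_{H^{1/2}(\pOmega)}$. This gives
\begin{align*}
    \abs[\big]{\ip{\dtn f}{g}} \leq M\,\norm{\nabla u_{\gamma,f}}_{L^2(\Omega)}\norm{\nabla v}_{L^2(\Omega)} \leq c\,\norm{[f]}_{H^{1/2}(\pOmega)/\C}\norm{g}_{H^{1/2}(\pOmega)}
\end{align*}
for some constant $c$ depending only on $d$, $\Omega$, $m$, and $M$. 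Since $\dtn f\in H^{-1/2}_\diamond(\pOmega)\subseteq H^{-1/2}(\pOmega)$ vanishes on constants, this duality bound is unaffected by replacing $g$ by $g-\overline{g}$, so taking the supremum over the unit ball of $H^{1/2}(\pOmega)$ recovers the $H^{-1/2}(\pOmega)$ norm of $\dtn f$ and yields~\eqref{eqn:dtn_uniform}.

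No step is genuinely hard; the only care needed is to keep the quotient structure $H^{1/2}(\pOmega)/\C$ consistent with the fact that $\dtn$ annihilates constants and produces mean-zero output, so that the operator norm is well-defined. All constants arising from the trace inequality and Poincar\'e--Wirtinger depend purely on the geometry of $\Omega$, while the dependence on the conductivity enters only through the uniform bounds $m$ and $M$ defining $\Gamma$, ensuring the supremum over $\gamma\in\Gamma$ remains finite.
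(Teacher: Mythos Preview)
Your proposal is correct and follows essentially the same approach as the paper: both use the variational identity for $\dtn$, apply Cauchy--Schwarz together with the bound $\gamma\le M$, and control $\norm{\nabla u_{\gamma,f}}_{L^2(\Omega)}$ by a Lax--Milgram argument using a continuous right inverse of the trace (the paper isolates this last step as a separate ``Dirichlet gradient estimate'' lemma). The only cosmetic difference is that the paper works with representatives $f\in H^{1/2}(\pOmega)$ and passes to the quotient norm at the end via $f\mapsto f-z\onebm$, whereas you phrase things directly on the quotient space throughout.
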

\begin{proof}
    Fix $\gamma\in\Gamma$. The proof uses the variational definition of the DtN map. For any representative $f$ of equivalence class $[f]\in H^{1/2}(\pOmega)/\C$ and any $h\in H^{1/2}(\pOmega)$, it holds by definition that
    \begin{align*}
        \dtn f(h)= \int_\Omega \gamma(x)\nabla u_{\gamma,f}(x) \cdot \overline{\nabla (Eh)(x)} \dd{x}\,.
    \end{align*}
    In the preceding display, $u=u_{\gamma,f}$ solves \eqref{eqn:elliptic} with Dirichlet data $f=u|_{\pOmega}$ and $E\colon H^{1/2}(\pOmega)\to H^1(\Omega)$ is the continuous right inverse of the trace operator $v\mapsto v|_{\partial\Omega}$~\cite[Thm.~8.3, p.~39]{lions2012non}. We estimate
    \begin{align*}
        \abs{\dtn f(h)}&\leq M\int_\Omega \abs{\nabla u_{\gamma,f}(x)}\abs{\nabla(Eh)(x)}\dd{x}\\
        &\leq  M\norm{\nabla u_{\gamma,f}}_{L^2(\Omega)}\norm{\nabla (Eh)}_{L^2(\Omega)}\\
        &\leq M\norm{\nabla u_{\gamma,f}}_{L^2(\Omega)}\norm{Eh}_{H^1(\Omega)}\\
        &\leq M\norm{E}_{\sL(H^{1/2}(\pOmega);H^1(\Omega))}\norm{\nabla u_{\gamma,f}}_{L^2(\Omega)}\norm{h}_{H^{1/2}(\pOmega)}\,.
    \end{align*}
    We applied the Cauchy--Schwarz inequality twice in the preceding display. Thus,
    \begin{align*}
        \norm{\dtn f}_{H^{-1/2}(\pOmega)} &= \sup_{\norm{h}_{H^{1/2}(\pOmega)}\leq 1} \abs{\dtn f(h)}\\
        &\leq M\norm{E}_{\sL(H^{1/2}(\pOmega);H^1(\Omega))}\norm{\nabla u_{\gamma,f}}_{L^2(\Omega)}\,.
    \end{align*}
    By \cref{lem:diri_grad_estimate}, there exists $c_0>0$ independent of $\gamma$ (but depending on $M$ and $m$) such that $\norm{\nabla u_{\gamma,f}}_{L^2(\Omega)}\leq c_0 \norm{f}_{H^{1/2}(\pOmega)}$. Replacing $f$ by $f-z\onebm$, we deduce that $\norm{\dtn f}_{H^{-1/2}(\pOmega)}\leq c_0M\norm{E}\inf_{z\in\C}\norm{f-z\onebm}_{H^{1/2}(\pOmega)}$.
    The result follows by defining $c\defeq c_0M\norm{E}$ and taking the supremum over $[f]$ in the $H^{1/2}(\pOmega)/\C$ unit ball.
\end{proof}

The proof of \cref{lem:dtn_uniform} requires the following gradient estimate for the Dirichlet problem.
\begin{lemma}[Dirichlet gradient estimate]\label{lem:diri_grad_estimate}
    Let $u=u_{\gamma,f}$ denote the unique weak solution to the Dirichlet problem $\nabla\cdot(\gamma\nabla u)=0$ in $\Omega$ and $u=f$ on $\partial\Omega$. Then there exists $c>0$ depending on $d$, $M$, $m$, and $\Omega$ such that $\sup_{\gamma\in\Gamma}\norm{\nabla u_{\gamma,f}}_{L^2(\Omega)}\leq c \norm{f}_{H^{1/2}(\pOmega)}$.
\end{lemma}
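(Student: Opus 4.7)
The plan is to use the standard trick of lifting the boundary data to the interior and reducing the Dirichlet problem to a homogeneous one to which Lax--Milgram-type coercivity arguments apply. Specifically, let $E\colon H^{1/2}(\pOmega)\to H^1(\Omega)$ denote the continuous right inverse of the trace operator, with operator norm $C_E=C_E(\Omega,d)$, as cited in the proof of \cref{lem:dtn_uniform}. Define the lift $v\defeq Ef\in H^1(\Omega)$ and write $u=v+w$, where $w\in H^1_0(\Omega)$ solves the inhomogeneous Dirichlet problem
\begin{align*}
\int_\Omega \gamma(x) \nabla w(x)\cdot\overline{\nabla\varphi(x)}\dd{x}=-\int_\Omega \gamma(x)\nabla v(x)\cdot\overline{\nabla\varphi(x)}\dd{x}\qfa\varphi\in H^1_0(\Omega)\,.
\end{align*}
Existence and uniqueness of this $w$ follow from the Lax--Milgram theorem and the Poincar\'e inequality on $H_0^1(\Omega)$, using the uniform ellipticity $m\leq\gamma\leq M$ a.e. It follows that $u=v+w$ is the desired weak solution to the Dirichlet problem, which is unique.

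Next I would obtain the energy estimate on $w$ by choosing the test function $\varphi\defeq w\in H^1_0(\Omega)$. Using $\gamma\geq m$ a.e. on the left-hand side and $\gamma\leq M$ a.e. together with the Cauchy--Schwarz inequality on the right-hand side yields
\begin{align*}
m\,\norm{\nabla w}_{L^2(\Omega)}^2\leq \int_\Omega\gamma(x)\abs{\nabla w(x)}^2\dd{x}\leq M\norm{\nabla v}_{L^2(\Omega)}\norm{\nabla w}_{L^2(\Omega)}\,,
\end{align*}
from which we deduce $\norm{\nabla w}_{L^2(\Omega)}\leq (M/m)\norm{\nabla v}_{L^2(\Omega)}$. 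By the triangle inequality and the continuity of $E$,
\begin{align*}
\norm{\nabla u_{\gamma,f}}_{L^2(\Omega)}\leq \norm{\nabla w}_{L^2(\Omega)}+\norm{\nabla v}_{L^2(\Omega)}\leq \bigl(1+M/m\bigr)\norm{v}_{H^1(\Omega)}\leq \bigl(1+M/m\bigr)C_E\norm{f}_{H^{1/2}(\pOmega)}\,.
\end{align*}
Setting $c\defeq (1+M/m)C_E$ delivers the assertion, and since this constant depends only on $d$, $m$, $M$, and $\Omega$ but not on the specific choice of $\gamma\in\Gamma$, the supremum bound follows immediately.

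I do not anticipate a significant obstacle here: every ingredient (trace extension, Lax--Milgram, Poincar\'e, uniform ellipticity) is classical and the estimate is uniform in $\gamma\in\Gamma$ because the coercivity and boundedness constants are $m$ and $M$ respectively. The only subtlety worth noting is that one should verify uniqueness of the weak solution $u_{\gamma,f}$ (so that the decomposition $u=v+w$ indeed recovers it), which is immediate from the coercivity argument applied to the homogeneous problem in $H_0^1(\Omega)$.
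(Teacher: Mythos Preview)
Your proof is correct and follows essentially the same route as the paper: lift $f$ via the trace right inverse $E$, reduce to a homogeneous problem for $w=u-Ef\in H^1_0(\Omega)$, test against $w$ to get $\norm{\nabla w}_{L^2}\leq (M/m)\norm{\nabla(Ef)}_{L^2}$, and conclude with the triangle inequality to obtain the same constant $c=(1+M/m)\norm{E}$.
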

\begin{proof}
    Fix $\gamma\in\Gamma$ and $f\in H^{1/2}(\pOmega)$. Consider the weak solution $u$ to the Dirichlet problem $\nabla\cdot(\gamma\nabla u)=0$ in $\Omega$ and $u=f$ on $\partial\Omega$. Recall that $E\colon H^{1/2}(\pOmega)\to H^1(\Omega)$ is the continuous right inverse of the trace operator $v\mapsto v|_{\partial\Omega}$~\cite[Thm.~8.3, p.~39]{lions2012non}. Define $w$ such that $u\defeq w+ Ef$. Then $w$ satisfies the weak formulation
    \begin{align*}
        \int_\Omega \gamma\nabla w\cdot\overline{\nabla v}\dd{x} = -\int_\Omega\gamma\nabla(Ef)\cdot\overline{\nabla v}\dd{x}\qfa v\in H^1_0(\Omega)\,.
    \end{align*}
    Taking $v=w$ and using the uniform bounds on $\gamma\in\Gamma$ gives
    \begin{align*}
        m\norm{\nabla w}_{L^2(\Omega)}^2&\leq \abs[\bigg]{\int_\Omega\gamma\nabla(Ef)\cdot\overline{\nabla w}\dd{x}}\\
        &\leq  M\norm{\nabla(Ef)}_{L^2(\Omega)}\norm{\nabla w}_{L^2(\Omega)}\\
        &\leq M\norm{E}_{\sL(H^{1/2}(\partial\Omega);H^1(\Omega))}\norm{f}_{H^{1/2}(\partial\Omega)}\norm{\nabla w}_{L^2(\Omega)}\,.
    \end{align*}
    Rearranging terms and applying the triangle inequality delivers the required estimate
    \begin{align*}
        \norm{\nabla u}_{L^2(\Omega)}\leq (1 + M/m)\norm{E}_{\sL(H^{1/2}(\partial\Omega);H^1(\Omega))}\norm{f}_{H^{1/2}(\partial\Omega)}\,.
    \end{align*}
    Taking the supremum over $\gamma$ completes the proof.
\end{proof}

Using the fact that $\gamma\in\Gamma'$ is identically one near the boundary, the next lemma provides a Lipschitz-type stability estimate for the forward solution operator of the homogeneous Neumann problem. In particular, the bound is uniform over $\Gamma'$ and allows for arbitrary Sobolev regularity of the input Neumann data. The proof follows that of an analogous result for the Dirichlet problem \cite[Lemma~19, p.~197]{abraham2019statistical}.
\begin{lemma}[Neumann problem: generalized Lax--Milgram continuity bound]\label{lem:lax_milgram_general}
    For any $s\in\R$ and any $g\in {H}_\diamond^s(\partial\Omega)$, there exists $C_s>0$ such that
    \begin{align}\label{eqn:uniform_w_continuity_bound}
        \sup_{\gamma\in\Gamma'}\norm{u_{\gamma,g}-u_{\onebm,g}}_{H^1(\Omega)/\C}\leq C_s\norm{g}_{{H}_\diamond^s(\partial\Omega)}\,,
    \end{align}
    where $u=u_{\gamma',g}$ solves \eqref{eqn:elliptic} with conductivity $\gamma=\gamma'\in\Gamma'$ and Neumann data $g$.
\end{lemma}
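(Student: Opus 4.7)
Fix $s\in\R$ and $g\in H^s_\diamond(\partial\Omega)$. Let $u_\gamma\defeq u_{\gamma,g}$ and $u_0\defeq u_{\onebm,g}$, and set $w\defeq u_\gamma - u_0\in H^1(\Omega)/\C$. The weak formulations of \eqref{eqn:elliptic} for $u_\gamma$ and $u_0$ have the same right-hand side $\langle g, v|_{\partial\Omega}\rangle$, so subtracting them yields
\begin{align*}
    \int_\Omega \gamma(x)\nabla w(x)\cdot \overline{\nabla v(x)}\dd{x}
    =-\int_\Omega \bigl(\gamma(x)-1\bigr)\nabla u_0(x)\cdot\overline{\nabla v(x)}\dd{x}
\end{align*}
for every $v\in H^1(\Omega)$. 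Since $\gamma\in\Gamma'$ satisfies $\gamma\equiv 1$ on $\Omega\setminus\Omega'$, the right-hand side is in fact a compactly supported integral over $\Omega'\subset\subset\Omega$. This localization is the key structural observation and will allow all further bounds to depend only on the behavior of $u_0$ on the interior set $\Omega'$.

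Next I would choose as test function a zero-mean representative of $w$ and run a standard energy estimate. Using the uniform bounds $m\leq \gamma\leq M$ together with the Cauchy--Schwarz inequality and the localization of the source term gives
\begin{align*}
    m\norm{\nabla w}_{L^2(\Omega)}^2
    \leq (M+1)\norm{\nabla u_0}_{L^2(\Omega')}\norm{\nabla w}_{L^2(\Omega)}\,.
\end{align*}
Combined with the Poincaré inequality on $H^1(\Omega)/\C$, this yields a constant $C'>0$ depending only on $d$, $m$, $M$, and $\Omega$ such that
\begin{align*}
    \norm{w}_{H^1(\Omega)/\C}\leq C'\norm{\nabla u_0}_{L^2(\Omega')}\,.
\end{align*}
Crucially, the right-hand side does not involve $\gamma$ at all, so uniformity over $\Gamma'$ is immediate once this bound is established.

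It remains to prove an $s$-dependent bound of the form $\norm{\nabla u_0}_{L^2(\Omega')}\leq C_s''\norm{g}_{H^s_\diamond(\partial\Omega)}$. This is a pure interior regularity statement for the Neumann problem on the reference conductivity $\gamma\equiv\onebm$: since $u_0$ is harmonic in $\Omega$ and $\Omega'$ is compactly contained in $\Omega$, the function $u_0$ is smooth on $\overline{\Omega'}$ regardless of how rough the boundary datum $g$ is. Quantitatively, one can represent
\begin{align*}
    u_0(x)=\int_{\partial\Omega}N(x,y)g(y)\,\sigma(dy)+\text{const}\qfa x\in\Omega\,,
\end{align*}
where $N$ is the Neumann Green's function of the Laplacian on $\Omega$. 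Standard parametrix and elliptic boundary regularity theory (e.g., \cite{lions2012non}) show that the map $x\mapsto N(x,\slot)$ lies in $C^\infty(\overline{\Omega'};C^\infty(\partial\Omega))$ because $\overline{\Omega'}\cap\partial\Omega=\emptyset$. For any $s\in\R$, smooth functions embed continuously into $H^{-s}(\partial\Omega)$, so duality pairing yields
\begin{align*}
    \abs[\big]{\partial^\beta u_0(x)}=\abs[\big]{\langle \partial^\beta_x N(x,\slot),g\rangle_{H^{-s}(\partial\Omega),H^s(\partial\Omega)}}
    \leq C_{\beta,s}\norm{g}_{H^s_\diamond(\partial\Omega)}
\end{align*}
uniformly in $x\in\overline{\Omega'}$ and for any multi-index $\beta$. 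Choosing $|\beta|=1$ and integrating over $\Omega'$ produces the required estimate. Combining with the previous paragraph gives \eqref{eqn:uniform_w_continuity_bound} with $C_s=C' C_{1,s}|\Omega'|^{1/2}$.

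The main obstacle is ensuring a proper interpretation of $u_0$ for very negative $s$, where the Neumann trace theorem does not apply directly. This is not a serious issue: one can either define $u_0$ by transposition against the standard Neumann-to-Dirichlet theory, or simply take the Green's function representation above as the definition of $u_0$ modulo constants. Either route produces the same distributional solution and is compatible with the interior regularity argument, because the singularities of $N$ are confined to the diagonal, which is disjoint from $\overline{\Omega'}\times\partial\Omega$.
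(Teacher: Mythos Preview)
Your proof is correct and follows essentially the same route as the paper: derive the equation for $w=u_\gamma-u_0$, localize the source term to $\Omega'$ using $\gamma\equiv 1$ on $\Omega\setminus\Omega'$, run an energy (Lax--Milgram) estimate to bound $\norm{w}_{H^1(\Omega)/\C}$ by $\norm{\nabla u_0}_{L^2(\Omega')}$, and then control the latter by $\norm{g}_{H^s_\diamond(\partial\Omega)}$ via interior regularity of the harmonic reference solution. The only cosmetic difference is in this last step: the paper invokes the Lions--Magenes mapping $g\in H^s_\diamond(\partial\Omega)\mapsto u_0\in H^{s+3/2}(\Omega)/\C$ together with a harmonic interior regularity lemma, whereas you argue directly from the Neumann Green's function representation and duality; both routes are standard and, as you note, simultaneously furnish the needed definition of the solutions when $s$ is very negative.
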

\begin{proof}
    Let $s\in\R$ and $g\in {H}_\diamond^s(\partial\Omega)$ be arbitrary. Let $u_{\onebm,g}$ solve the Neumann Laplace equation with Neumann data $g$, i.e., \eqref{eqn:elliptic} with conductivity $\gamma\equiv 1$. This solution exists in $H^{s+3/2}(\Omega)/\C$ and is unique by classical results \cite[Chp.~2, Remark 7.2, pp.~188--189, taking $\Omega=\Omega$, $A=-\Delta$, $f\equiv 0$, $s=s+3/2$, $m=m_0=1$, $g_0=g$, and $N=\{z\onebm\colon z\in\C\}$]{lions2012non}.    
    Also fix $\gamma\in\Gamma'$. Consider the PDE
    \begin{equation}\label{eqn:elliptic_temp}
        -\nabla\cdot(\gamma \nabla w)=\nabla\cdot(\gamma\nabla u_{\onebm,g}) \ \text{ in } \Omega \qa \dfrac{\partial w}{\partial \mathsf{n}}=0 \ \text{ on } \pOmega\, .
    \end{equation}
    Define sesquilinear form $B_\gamma$ and conjugate linear functional $f_{\gamma,g}$, respectively, by
    \begin{align*}
        B_\gamma(w,v)\defeq \int_\Omega \gamma\nabla w\cdot\overline{\nabla v}\dd{x} \qa f_{\gamma,g}(v)\defeq -\int_\Omega \gamma\nabla u_{\onebm,g}\cdot \overline{\nabla v}\dd{x}\,.
    \end{align*}
    The weak form of the Neumann problem \eqref{eqn:elliptic_temp} is to find $w=w_{\gamma,g}\in H^1(\Omega)/\C$ such that
    \begin{align}\label{eqn:weak_neumann_temp}
        B_\gamma(w,v)=f_{\gamma,g}(v)\qfa v\in H^1(\Omega)/\C\,.
    \end{align}
    Since such a weak solution $w_{\gamma,g}$ defines a weak solution $u_{\gamma,g}\defeq w_{\gamma,g} + u_{\onebm,g}$ to the original problem \eqref{eqn:elliptic} \cite[Thm.~A.2 and its proof, pp.~392--394]{hanke2011convex}, it remains to bound the $H^1(\Omega)/\C$ norm of $w_{\gamma,g}$ uniformly over the set $\Gamma'$ of conductivities.
    
    To this end, we apply the Lax--Milgram theorem. To show continuity of $B_\gamma$, we apply the Cauchy--Schwarz inequality to obtain
    \begin{align*}
        \abs{B_\gamma(w,v)}&\leq \int_\Omega \abs{\gamma(x)}\abs{\nabla w(x)}\abs{\nabla v(x)}\dd{x}\\
        &\leq M\norm{\nabla w}_{L^2(\Omega)}\norm{\nabla v}_{L^2(\Omega)}\\
        &\leq M\norm{w-z\onebm}_{H^1(\Omega)}\norm{v-z\onebm}_{H^1(\Omega)}
    \end{align*}
    for any $z\in\C$ and any $w$ and $v$ in $H^1(\Omega)/\C$. The previous display implies that
    \begin{align*}
        \abs{B_\gamma(w,v)}\leq M \norm{w}_{H^1(\Omega)/\C}\norm{v}_{H^1(\Omega)/\C}\,,
    \end{align*}
    which is the desired continuity.
    
    By the Poincar\'e--Wirtinger inequality \cite[Remark~1, p.~376]{hanke2011convex},
    \begin{align*}
        B_\gamma(v,v)=\int_\Omega \gamma(x)\abs{\nabla v(x)}^2\dd{x}
        \geq m \norm{\nabla v}^2_{L^2(\Omega)}
        \geq Cm\norm{v}^2_{H^1(\Omega)/\C}\,.
    \end{align*}
    This furnishes coercivity of $B_\gamma$.
    
    To conclude, we bound the operator norm of the conjugate linear functional $f_{\gamma,g}$. Since $\overline{\Omega'}\subset \Omega$, it holds that $\Omega=(\Omega\setminus\Omega')\cup\Omega'$. Thus,
    \begin{align*}
        -f_{\gamma,g}(v)=\int_{\Omega\setminus\Omega'} \nabla u_{\onebm,g}\cdot \overline{\nabla v}\dd{x} + \int_{\Omega'}\gamma\nabla u_{\onebm,g}\cdot \overline{\nabla v}\dd{x}
    \end{align*}
    because $\gamma\equiv 1$ on $\Omega\setminus\Omega'$. Since $\int_{\partial\Omega}g\dd{x}=0$ and $ u_{\onebm,g}\in H^1(\Omega)/\C $ solves the Neumann Laplace equation inside $\Omega$, we deduce that $\int_{\Omega} \nabla u_{\onebm,g}\cdot \overline{\nabla v}\dd{x}=0$. Therefore,
    \begin{align*}
        \int_{\Omega\setminus\Omega'} \nabla u_{\onebm,g}\cdot \overline{\nabla v}\dd{x}&=
        \int_{\Omega} \nabla u_{\onebm,g}\cdot \overline{\nabla v}\dd{x}
        - \int_{\Omega'} \nabla u_{\onebm,g}\cdot \overline{\nabla v}\dd{x}\\
        &=-\int_{\Omega'} \nabla u_{\onebm,g}\cdot \overline{\nabla v}\dd{x}\,.
    \end{align*}
    Using the previous display, we now estimate
    \begin{align*}
        \abs{f_{\gamma,g}(v)}&=\abs[\bigg]{\int_{\Omega'} \bigl(1-\gamma(x)\bigr)\nabla u_{\onebm,g}(x)\cdot \overline{\nabla v}(x) \dd{x}}\\
        &\leq \norm{\onebm-\gamma}_{L^\infty(\Omega)}\norm{\nabla u_{\onebm,g}}_{L^2(\Omega')}\norm{\nabla v}_{L^2(\Omega')}\\
        &\leq (1+M)\norm{u_{\onebm,g}}_{H^1(\Omega')/\C}\norm{v}_{H^1(\Omega)/\C}\,.
    \end{align*}
    We used an argument similar to the one showing the continuity of $B_\gamma$. It follows that the operator norm $\norm{f_{\gamma,g}}_*$ of $f_{\gamma,g}$ is bounded above by $(1+M)\norm{u_{\onebm,g}}_{H^1(\Omega')/\C}$.
    
    By interior harmonic regularity \cite[Lemma~22, p.~197, applied with exponents $1$ and $s+3/2$]{abraham2019statistical},
    \begin{align*}
        \norm{u_{\onebm,g}}_{H^{1}(\Omega')/\C}\leq C_s \norm{u_{\onebm,g}}_{H^{s+3/2}(\Omega)/\C}\,.
    \end{align*}
    Again by \cite[Chp. 2, Remark 7.2, pp.~188--189]{lions2012non}, it holds that
    \begin{align*}
        \norm{u_{\onebm,g}}_{H^{s+3/2}(\Omega)/\C}\leq C_s' \norm{g}_{H^s(\partial\Omega)}=C_s' \norm{g}_{{H}_\diamond^s(\partial\Omega)}\,.
    \end{align*}
    The equality of norms holds by \cite[Remark~(iii), p.~193]{abraham2019statistical}. Hence, $\norm{f_{\gamma,g}}_*\leq C_s'' (1+M) \norm{g}_{{H}_\diamond^s(\partial\Omega)}<\infty$. The Lax--Milgram theorem \cite[Thm.~1, Sec.~6.2.1, pp.~297--298]{evans1998partial} asserts the existence of a unique $w_{\gamma,g}\in H^1(\Omega)/\C$ solving \eqref{eqn:weak_neumann_temp}. In particular, $B_\gamma(w_{\gamma,g},w_{\gamma,g})=f_{\gamma,g}(w_{\gamma,g})$. This equality, the operator norm bound on $f_{\gamma,g}$, and the coercivity of $B_\gamma$ imply \eqref{eqn:uniform_w_continuity_bound} as required.    
\end{proof}

\Cref{lem:lax_milgram_general} is used in the following proof of \cref{lem:ntd_unif_op}, which pertains to the Neumann problem. The proof closely follows that of \cite[Lemma~20, p.~197]{abraham2019statistical} for the Dirichlet problem.
\begin{proof}[Proof of \cref{lem:ntd_unif_op}]
    In the proof, we freely allow $C$ to denote a constant that changes from line to line.
    Consider the Neumann problem~\eqref{eqn:elliptic} with $\gamma\in\Gamma'$. To set up the proof, define connected open sets $D$, $U$, and $U_0$ in $\R^d$ such that
	\begin{align*}
		\overline{\Omega'}\subset D\subset\overline{D}\subset\Omega 
	\end{align*}
	and
	\begin{align*}
		\partial D \subseteq U\subset \overline{U}\subset U_0\subset\overline{U_0}\subset \Omega\setminus\Omega'\,.
	\end{align*}
	Let the Neumann data be denoted by $g\in{H}_\diamond^s(\pOmega)$. Define the difference of solutions
	\begin{align*}
		w\defeq u_{\gamma,g}-u_{\onebm,g}\,,
	\end{align*}
	where $u_{\gamma,g}$ solves \eqref{eqn:elliptic} with fixed but arbitrary conductivity $\gamma$ and $u_{\onebm,g}$ solves \eqref{eqn:elliptic} with $\gamma\equiv 1$ (i.e., the Laplace equation). In the following, it will be important to recognize that
	\begin{align*}
		\lap w=0 \qon \Omega\setminus \Omega'
	\end{align*}
	because $\gamma\equiv 1$ on $\Omega\setminus \Omega'$. Thus, $w=w_{\gamma,g}$ is also harmonic on the domain $\Omega\setminus\overline{D}\subset \Omega\setminus \Omega'$. This fact will allow us to control the dependence of constants on $\gamma$.
	
	Let $t\in\R$. We begin by estimating the boundary traces of $w$. To this end, we first note by \Cref{lem:app_boundary}~\ref{item:lem_bdry1} that $\pOmega\subseteq \partial(\Omega\setminus\overline{D})$ because $\overline{D}\subseteq\Omega$. Thus, $\norm{w}_{H^t(\pOmega)/\C}\leq C\norm{w}_{H^t(\partial(\Omega\setminus\overline{D}))/\C}$ by monotonicity and the norm equivalence of the spectral (recall Eqn.~\ref{eqn:defn_boundary_sobolev_spectral}) and interpolation definitions of fractional Sobolev norms on compact boundary manifolds 
	\cite[Chp.~1, Sec.~7.3, pp.~34--37]{lions2012non}.
	
	Next, for any $z\in\C$, we apply appropriate boundary trace theorems to the harmonic function $w-z\onebm$ to obtain
	\begin{align*}
		\norm{w}_{H^t(\partial(\Omega\setminus\overline{D}))/\C}\leq \norm{w-z\onebm}_{H^t(\partial(\Omega\setminus\overline{D}))}\leq C \norm{w-z\onebm}_{H^{t+1/2}(\Omega\setminus\overline{D})}\,.
	\end{align*}
	For $t>0$, the preceding result follows from \cite[Chp.~1, Thm.~9.4, pp.~41--42, taking $\mu=0$ and $s=t+1/2$]{lions2012non}. For $t\leq -1/2$, we invoke \cite[Chp.~2, Thm.~6.5 and Remark 6.4, pp.~175--177, taking $m=1$, $m_0=0$, $r=-t-1/2$, and $A=-\Delta$]{lions2012non}. The final case $-1/2<t\leq 0$ is due to \cite[Chp.~2, Thm.~7.3, pp.~187--188, taking $m=1$, $m_0=0$, $0<s=t+1/2<2$, and $A=-\Delta$]{lions2012non}. Taking the infimum over $z\in\C$ in the previous display delivers the bound
	\begin{align*}
		\norm{w}_{H^t(\partial(\Omega\setminus\overline{D}))/\C}\leq C \norm{w}_{H^{t+1/2}(\Omega\setminus\overline{D})/\C}\,.
	\end{align*}
	
	To bound the right hand side from above in the last display, we use stability bounds for the solution operator of the Neumann problem applied to the Laplace  equation on the domain $\Omega\setminus\overline{D}$. First, since the boundary of a set equals the boundary of its complement,
	\begin{align*}
		\partial(\Omega\setminus\overline{D}) = \partial(\Omega \cap (\overline{D})^\comp)=\partial(\Omega^\comp\cup \overline{D}) \subseteq \partial \Omega \cup \partial \overline{D}\subseteq \partial \Omega \cup \partial D\,.
	\end{align*}
	The first inclusion follows from $\partial(A\cup B)\subseteq \partial A\cup \partial B$ and the second from $\partial\overline{A}\subseteq \partial A$ for any sets $A$ and $B$ in a metric space. Next, application of \cite[Chp.~2, Remark 7.2, pp.~188--189, taking $\Omega=\Omega\setminus\overline{D}$, $A=-\Delta$, $f\equiv 0$, $s=t+1/2$, $m=m_0=1$, and $N=\{z\onebm\colon z\in\C\}$]{lions2012non} and the monotonicity implied by the previous display deliver the bounds
	\begin{align*}
		\norm{w}_{H^{t+1/2}(\Omega\setminus\overline{D})/\C}&\leq C \norm[\bigg]{\frac{\partial w}{\partial\mathsf{n}}}_{H^{t-1}(\partial(\Omega\setminus\overline{D}))}\\
		&\leq C\left(\norm[\bigg]{\frac{\partial w}{\partial\mathsf{n}}}_{H^{t-1}(\partial\Omega)} + \norm[\bigg]{\frac{\partial w}{\partial\mathsf{n}}}_{H^{t-1}(\partial D)}\right)\\
		&= C\norm[\bigg]{\frac{\partial w}{\partial\mathsf{n}}}_{H^{t-1}(\partial D)}\,.
	\end{align*}
	The equality in the final line holds because the Neumann data satisfies 
	\begin{align*}
		\frac{\partial w}{\partial\mathsf{n}}=\frac{\partial u_{\gamma,g}}{\partial\mathsf{n}}-\frac{\partial u_{\onebm,g}}{\partial\mathsf{n}}=g-g=0 \qon \pOmega\,.
	\end{align*}
	Next, we bound the Neumann data by the interior solution. Let $S\defeq D\cap U$. Since $\partial D\subseteq U$ by construction, \cref{lem:app_boundary}~\ref{item:lem_bdry2} implies that 
	\begin{align*}
		\partial S=\partial(D\cap U)\supseteq (U\cap\partial D)\cup(D\cap\partial U)=\partial D \cup(D\cap\partial U)\supseteq \partial D\,.
	\end{align*}
	Similar application of boundary trace theorems to the domain $S\subseteq U$ leads to
	\begin{align*}
		\norm[\bigg]{\frac{\partial w}{\partial\mathsf{n}}}_{H^{t-1}(\partial D)} = \norm[\bigg]{\frac{\partial (w-z\onebm)}{\partial\mathsf{n}}}_{H^{t-1}(\partial D)}
		&\leq C \norm[\bigg]{\frac{\partial (w-z\onebm)}{\partial\mathsf{n}}}_{H^{t-1}(\partial S)}\\
		&\leq C \norm{w-z\onebm}_{H^{t+1/2}(S)}\\
		&\leq C \norm{w-z\onebm}_{H^{t+1/2}(U)}
	\end{align*}
	for any $z\in\C$. For $t>1$, \cite[Chp.~1, Thm.~9.4, pp.~41--42, taking $\mu=1$, $m=1$, and $s=t+1/2$]{lions2012non} gives the second inequality. The case $t\leq -1/2$ follows from \cite[Chp.~2, Thm.~6.5 and Remark 6.4, pp.~175--177, taking $m=1$, $m_0=1$, $r=-t-1/2$, and $A=-\Delta$]{lions2012non}. Finally, the inequality holds for $-1/2<t\leq 1$ by \cite[Chp.~2, Thm.~7.3, pp.~187--188, taking $m=1$, $m_0=1$, $0<s=t+1/2<2$, and $A=-\Delta$]{lions2012non}.
	
	Continuing, we use the preceding display and the interior regularity of harmonic functions \cite[Lemma~22, p.~197]{abraham2019statistical} to yield
	\begin{align*}
		\norm[\bigg]{\frac{\partial w}{\partial\mathsf{n}}}_{H^{t-1}(\partial D)}\leq C \norm{w}_{H^{t+1/2}(U)/\C}
		\leq C'\norm{w}_{H^r(U_0)/\C}
	\end{align*}
	for any $r\in\R$ (with $C'$ depending on $r$), and in particular for $r=1$. Then by the set monotonicity $U_0\subset \Omega\setminus \Omega'\subseteq \Omega$, it holds that
	\begin{align*}
		\norm[\bigg]{\frac{\partial w}{\partial\mathsf{n}}}_{H^{t-1}(\partial D)}\leq C'\norm{w}_{H^1(U_0)/\C}
		\leq C'\norm{w}_{H^1(\Omega)/\C}\,.
	\end{align*}
	Use of the generalized Lax--Milgram result from \cref{lem:lax_milgram_general} delivers the upper bound
	\begin{align*}
		\norm{w}_{H^1(\Omega)/\C}\leq \norm{w}_{H^1(\Omega)}\leq C_s\frac{M}{m}\norm{g}_{{H}_\diamond^s(\partial\Omega)}
	\end{align*}
	for a constant $C_s$ that is independent of the conductivity $\gamma$.
	
    Putting together the pieces gives
    \begin{align*}
        \norm{w}_{H^t(\partial \Omega)/\C} = \norm{(\ntd - \sR_{\onebm})g}_{H^t(\partial \Omega)/\C}\leq \widetilde{C}_{s,t}\frac{M}{m}\norm{g}_{H_{\diamond}^s(\partial\Omega)}
    \end{align*}
    for a constant $\widetilde{C}_{s,t}>0$ depending on $s$ and $t$. Therefore,
    \begin{align}\label{eqn:proof_uniform_shift_ntd}
        \sup_{\gamma\in\Gamma'}\norm{\ntd - \sR_{\onebm}}_{\sL({H}_\diamond^s(\partial\Omega);H^t(\partial\Omega)/\C)}\leq \widetilde{C}_{s,t}\frac{M}{m}<\infty
    \end{align}
    as asserted.
	
    To finish the proof, we must show the first assertion of the lemma. The usual boundary trace and stability theorems applied to the harmonic function  $u_{\onebm,g}$ give
    \begin{align*}
        \norm{\sR_{\onebm}g}_{H^{s+1}(\pOmega)/\C} = \norm{u_{\onebm,g}}_{H^{s+1}(\pOmega)/\C}&\leq C\norm{u_{\onebm,g}}_{H^{s+3/2}(\Omega)/\C}\\
        &\leq C' \norm{g}_{H^s(\pOmega)}\,.
    \end{align*}
    Specifically, for $s>-1$, the first inequality is due to \cite[Chp.~1, Thm.~9.4, pp.~41--42, taking $\mu=1$, $m=1$, and $s=s+3/2$]{lions2012non}, while for $s\leq -3/2$ and $-3/2<s\leq -1 $ it follows from \cite[Chp.~2, Thm.~6.5 and Remark 6.4, pp.~175--177, taking $m=1$, $m_0=0$, $r=-s-3/2$, and $A=-\Delta$]{lions2012non} and \cite[Chp.~2, Thm.~7.3, pp.~187--188, taking $m=1$, $m_0=0$, $0<s=s+3/2<2$, and $A=-\Delta$]{lions2012non}, respectively. The second inequality in the previous display is a consequence of \cite[Chp.~2, Remark 7.2, pp.~188--189, taking $\Omega=\Omega$, $A=-\Delta$, $f\equiv 0$, $s=s+3/2$, $m=m_0=1$, $g_0=g$, and $N=\{z\onebm\colon z\in\C\}$]{lions2012non}.
    From \eqref{eqn:proof_uniform_shift_ntd} and the preceding display,
    \begin{align*}
        \sup_{\gamma\in\Gamma'} \norm{\ntd}_{\sL({H}_\diamond^s;H^{s+1}/\C)}&\leq \sup_{\gamma\in\Gamma'} \Bigl(\norm{\ntd - \sR_{\onebm}}_{\sL(H^s_\diamond;H^{s+1}/\C)} + \norm{\sR_{\onebm}}_{\sL(H^s_\diamond;H^{s+1}/\C)}\Bigr)\\
        &\leq \widetilde{C}_{s,s+1}\frac{M}{m} + C'\,.
    \end{align*}
    This is the required result.
\end{proof}

This appendix concludes with the proof of \cref{lem:op_to_hs} from \cref{sec:approx_stability}.
\begin{proof}[Proof of \cref{lem:op_to_hs}]
    The proof is similar to that of \cref{lem:hs_to_op}. 
	Let $\gamma\in\Gamma'$, $\gamma_0\in\Gamma'$, and $s$ and $t$ be arbitrary. Denote the shifted NtD maps by $\tntd\defeq \ntd-\sR_{\onebm}$ and $\tntdz\defeq \ntdz - \sR_{\onebm}$. Let $\norm{\slot}$ denote the $\HS(H^s_\diamond(\pOmega);H^t(\pOmega)/\C)$ norm for brevity. We begin with the triangle inequality. For any $J\in\N$, it holds that
	\begin{align*}
		\norm{\ntd-\sR_{\gamma_0}} &\leq \norm{{\tntd} - P_{J}\tntd} + \norm{P_{J}\tntd - P_{J}\tntdz}  + \norm{P_{J}\tntdz - \tntdz}\\
		&\leq 2\sup_{\gamma'\in\Gamma'}\norm{\widetilde{\sR}_{\gamma'} - P_{J}\widetilde{\sR}_{\gamma'}} + \norm{P_{J}\tntd - P_J\tntdz}\,.
	\end{align*}
	We used the linear projection operator $P_J=\Pi_{JJ}$ from \eqref{eqn:projection_operator}. For any $\varrho \geq 0$, \cref{lem:op_to_hs_rn} (applied with $K=J$, $r=s$, $s=t$, $p=s-\varrho(d-1)$, and $q=t+\varrho(d-1)$) shows that
	\begin{align*}
		\norm{{\tntd} - P_J\tntd}\leq C \norm{\tntd}_{\sL(H^{s-(\varrho+1)(d-1)}_\diamond(\pOmega);H_{\phantom{\diamond}}^{t+\varrho(d-1)}(\pOmega)/\C)} J^{-\varrho}\leq C_{\varrho,s,t,d} J^{-\varrho}\,.
	\end{align*}
	The final inequality follows by \cref{lem:ntd_unif_op} (applied with $s=s-(\varrho+1)(d-1)$ and $t=t+\varrho(d-1)$). The constant $C_{\varrho,s,t,d}>0$ depends on $\varrho$, $s$, $t$, and $d$ but does not depend on $\gamma$. Thus, we have also obtained a bound on the supremum.
    
    Next, \Cref{lem:ntd_norm_equiv} (with $K=J$, $r=s$, $s=t$, $p=(d-1)-1/2$, and $q=1/2$) yields
	\begin{align*}
		\norm{P_J\tntd - P_J\tntdz}&\leq C J^{\frac{\beta(s,t,d)}{d-1}}\norm{\tntd -\tntdz}_{\HS(H^{(d-1)-1/2}_\diamond(\pOmega);H_{\phantom{\diamond}}^{1/2}(\pOmega)/\C)}\\
		&\leq C' J^{\frac{\beta(s,t,d)}{d-1}}\norm{\ntd -\ntdz}_{\sL(H^{-1/2}_\diamond(\pOmega);H_{\phantom{\diamond}}^{1/2}(\pOmega)/\C)}\,,
	\end{align*}
	where $\beta(s,t,d)\defeq (d-3/2-s)_+ + (t-1/2)_+\geq 0$. The inequality in the last line of the previous display is due to \Cref{lem:op_to_hs_rn} (applied with $r=p=(d-1)-1/2$ and $s=q=1/2$). 
    
    By the triangle inequality and the first assertion of \Cref{lem:ntd_unif_op}, it holds that 
    \begin{align*}
    \sup_{(\gamma,\gamma_0)\in\Gamma'\times\Gamma'}\norm{\ntd -\ntdz}_{\sL(H^{-1/2}_\diamond(\pOmega);H_{\phantom{\diamond}}^{1/2}(\pOmega)/\C)}\leq C\,.
    \end{align*}
    Thus, we may choose the natural number $J$ such that
	\begin{align*}
		J \asymp \norm{\ntd -\ntdz}_{\sL(H^{-1/2}_\diamond(\pOmega);H_{\phantom{\diamond}}^{1/2}(\pOmega)/\C)}^{-\frac{(d-1)}{\varrho(d-1) + \beta(s,t,d)}}
	\end{align*}
	balances the two terms $J^{-\varrho} \asymp J^{\beta(s,t,d)/(d-1)}\norm{\ntd -\ntdz}_{\sL(H^{-1/2}_\diamond(\pOmega);H_{\phantom{\diamond}}^{1/2}(\pOmega)/\C)}$. We deduce that
	\begin{align*}
		\norm{\ntd-\sR_{\gamma_0}}\leq C \norm{\ntd -\ntdz}_{\sL(H^{-1/2}_\diamond(\pOmega);H_{\phantom{\diamond}}^{1/2}(\pOmega)/\C)}^{\frac{\varrho(d-1)}{\varrho(d-1) + \beta(s,t)}}\,.
	\end{align*}
	Finally, choosing $\varrho\defeq \beta(s,t,d)/(d-1)$ if $\beta(s,t,d)>0$ and $\varrho>0$ otherwise completes the proof.
\end{proof}

\section{Auxiliary lemmas}\label{app:lemmas}
This appendix collects several auxiliary technical lemmas that are agnostic to the EIT application setting.

The first lemma develops set inclusions involving boundaries. We recall that the boundary of a set is the intersection of its closure with the closure of its complement.
\begin{lemma}[comparison of boundaries]\label{lem:app_boundary}
	Let $(X, \mathsf{d})$ be a metric space and $A\subseteq X$ and $B\subseteq X$ be subsets. The following holds true.
	\begin{enumerate}[label=(\roman*)]
		\item If $\overline{B}\subseteq \Int(A)$, then $\partial A\subseteq \partial(A\setminus B)$.\label{item:lem_bdry1}
		
		\item If $A$ and $B$ are both open in $(X, \mathsf{d})$, then $(A\cap\partial B)\cup (\partial A\cap B)\subseteq \partial(A\cap B)\setminus (\partial A\cap\partial B)$.\label{item:lem_bdry2}
	\end{enumerate}
\end{lemma}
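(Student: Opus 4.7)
The plan is to unravel the defining identity $\partial S = \overline{S} \cap \overline{S^{\comp}}$ and verify both inclusions pointwise; no advanced machinery is required beyond elementary point-set topology in metric spaces.

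For part (i), I would fix $x \in \partial A$. Since $\partial A \cap \Int(A) = \emptyset$ and $\overline{B} \subseteq \Int(A)$ by hypothesis, it follows that $x \notin \overline{B}$. Hence there exists an open ball $U \ni x$ disjoint from $\overline{B}$, and in particular disjoint from $B$. To verify $x \in \overline{A \setminus B}$, I would take an arbitrary open neighborhood $V$ of $x$; since $x \in \overline{A}$, the open set $V \cap U$ meets $A$ at some point $y$, and $y \notin B$ because $y \in U$. Thus $y \in V \cap (A \setminus B)$, so $x \in \overline{A \setminus B}$. The complementary inclusion $x \in \overline{(A \setminus B)^{\comp}}$ is immediate from $(A \setminus B)^{\comp} \supseteq A^{\comp}$ and $x \in \overline{A^{\comp}}$.

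For part (ii), I would split the union into two cases. If $x \in A \cap \partial B$, then openness of $A$ means that for any open neighborhood $V$ of $x$, the intersection $V \cap A$ is also an open neighborhood of $x$; since $x \in \overline{B}$, the set $V \cap A \cap B$ is nonempty, so $x \in \overline{A \cap B}$. The inclusion $x \in \overline{(A \cap B)^{\comp}}$ is clear because $(A \cap B)^{\comp} \supseteq B^{\comp}$ and $x \in \overline{B^{\comp}}$. Hence $x \in \partial(A \cap B)$. Openness of $A$ then forces $x \in \Int(A) = A$, so $x \notin \partial A$, which rules out $x \in \partial A \cap \partial B$. The case $x \in \partial A \cap B$ is fully symmetric upon swapping the roles of $A$ and $B$.

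The main obstacle is essentially bookkeeping rather than any deep technicality. One must avoid the tempting but false identities $\overline{A \cap B} = \overline{A} \cap \overline{B}$ and $\partial(A \cap B) = \partial A \cap \partial B$, and deploy each hypothesis exactly where it is needed: the containment $\overline{B} \subseteq \Int(A)$ is what separates $x$ from $B$ by a fixed open ball in part (i), whereas the openness of $A$ and $B$ in part (ii) both supplies smaller open neighborhoods on demand and excludes the offending intersection $\partial A \cap \partial B$ via $\partial A \cap A = \emptyset = \partial B \cap B$.
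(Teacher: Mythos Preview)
Your proof is correct. Both parts verify the required inclusions $\partial A\subseteq \overline{A\setminus B}\cap\overline{(A\setminus B)^\comp}$ and $(A\cap\partial B)\subseteq \partial(A\cap B)\setminus(\partial A\cap\partial B)$ by direct pointwise neighborhood arguments: you repeatedly shrink an arbitrary open neighborhood of $x$ using the available hypotheses (the ball $U$ disjoint from $\overline{B}$ in part~(i), the open set $A$ itself in part~(ii)) and then invoke $x\in\overline{A}$ or $x\in\overline{B}$ to produce the witness point.

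The paper proceeds differently, by global set-algebraic manipulations rather than pointwise reasoning. For part~(i) it chains inclusions such as $\partial A\subseteq\overline{A^\comp}\subseteq\overline{A^\comp\cup B}=\overline{(A\setminus B)^\comp}$, and then decomposes $\overline{A}\subseteq\overline{(A\setminus B)\cup B}$ and kills the $\overline{B}$ contribution via $\overline{A^\comp}\cap\overline{B}\subseteq\overline{A^\comp}\cap\Int(A)=\emptyset$. Part~(ii) likewise runs through identities like $A\cap\partial B\subseteq\overline{B^\comp}\subseteq\overline{(A\cap B)^\comp}$ and a somewhat longer chain with an auxiliary set $T=A^\comp$ to reach $\overline{A\cap B}$. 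Your neighborhood argument is shorter and, in part~(ii) especially, more transparent: it uses openness of $A$ exactly once to convert $V$ into $V\cap A$, whereas the paper's detour through $T$ obscures that this is the only ingredient needed. The paper's approach has the minor advantage of never invoking the metric (balls), though of course your argument works verbatim in any topological space by replacing ``open ball'' with ``open neighborhood.''
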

\begin{proof}
	We begin with \cref{item:lem_bdry1}. By definition of boundary and by basic set operations,
	\begin{align*}
		\partial A &=\overline{A^\comp}\cap \overline{A}\\
		&\subseteq \overline{A^\comp}\\
		&\subseteq \overline{A^\comp}\cup\overline{B}\\
		&=\overline{A^\comp\cup B}\\
		&=\overline{(A\cap B^\comp)^\comp}\\
		&=\overline{(A\setminus B)^\comp}\,.
	\end{align*}
	Moreover, $A=(A\setminus B)\cup (A\cap B)\subseteq (A\setminus B)\cup B$. Therefore,
	\begin{align*}
		\partial A &=\overline{A^\comp}\cap \overline{A}\\
		&\subseteq \overline{A^\comp}\cap \overline{(A\setminus B)\cup B}\\
		& = \overline{A^\comp}\cap\bigl(\overline{A\setminus B} \cup \overline{B}\bigr)\\
		&= \bigl(\overline{A^\comp}\cap \overline{A\setminus B}\bigr) \cup \bigl(\overline{A^\comp}\cap\overline{B}\bigr)\\
		&\subseteq \bigl(\overline{A\setminus B}\bigr) \cup \bigl(\overline{A^\comp}\cap\overline{B}\bigr)\,.
	\end{align*}
	By the hypothesis $\overline{B}\subseteq \Int(A)$, it holds that $\overline{A^\comp}\cap\overline{B}\subseteq \overline{A^\comp}\cap \Int(A) = \emptyset$. Thus, $\partial A\subseteq \overline{A\setminus B}$. It follows that $\partial A\subseteq \overline{A\setminus B} \cap \overline{(A\setminus B)^\comp}=\partial(A\setminus B)$ as asserted in \cref{item:lem_bdry1}.
	
	Next, we prove \cref{item:lem_bdry2} under the hypothesis that $A$ and $B$ are open. By symmetry, it suffices to prove that $(A\cap\partial B)\subseteq \partial(A\cap B)\setminus (\partial A\cap\partial B)$. If $x\in A\cap\partial B$, then $x\in A$. Thus, $x\notin\partial A=\overline{A}\setminus\Int(A)$ because $A=\Int(A)$ is open. We have shown that $x\notin (\partial A\cap\partial B)$. It remains to show that $x\in\partial(A\cap B)$. By the first part of the proof of \cref{item:lem_bdry1}, it holds that
	\begin{align*}
		A\cap\partial B=A\cap \overline{B}\cap\overline{B^\comp}\subseteq \overline{B^\comp}\subseteq \overline{(B\cap S)^\comp}
	\end{align*}
	for any set $S$. In particular, taking $S=A$ gives $x\in \overline{(A\cap B)^\comp}$. By the second part of the proof of \cref{item:lem_bdry1}, for any set $T$ it holds that
	\begin{align*}
		\partial B = \overline{B^\comp}\cap \overline{B}\subseteq  \overline{B^\comp}\cap\overline{(B\setminus T)\cup T}= (\overline{B^\comp} \cap \overline{B\setminus T}) \cup (\overline{B^\comp} \cap \overline{T})\,.
	\end{align*}
	Thus, $A\cap \partial B\subseteq (A\cap \overline{B^\comp} \cap \overline{B\setminus T}) \cup (\overline{B^\comp} \cap \overline{T}\cap A)$. The choice $T=A^\comp$ and the fact that $A^\comp$ is closed yield the set inclusions
	\begin{align*}
		A\cap \partial B& \subseteq (A\cap \overline{B^\comp} \cap \overline{B\setminus A^\comp}) \cup (\overline{B^\comp} \cap A^\comp \cap A)\\
		&= (A\cap \overline{B^\comp} \cap \overline{A\cap B}) \cup (\overline{B^\comp} \cap \emptyset)\\
		& = (A\cap \overline{B^\comp} \cap \overline{A\cap B})\\
		&\subseteq \overline{A\cap B}\,.
	\end{align*}
	We deduce that $x\in \overline{A\cap B}\cap \overline{(A\cap B)^\comp} = \partial (A\cap B)$ as required.
\end{proof}

The statements of the next two lemmas involve the family of orthogonal projection operators $\{\Pi_{JK}\}_{(J,K)\in\N\times\N}$ from \eqref{eqn:projection_operator}. The first result of the two is due to~\cite[Lemma~18, p. 195]{abraham2019statistical}. It controls Hilbert--Schmidt norms and Hilbert--Schmidt projection errors by operator norms involving dimension-dependent input space smoothness.
\begin{lemma}[general operator norm and Hilbert--Schmidt norm comparisons]\label{lem:op_to_hs_rn}
    Let $d\geq 2$ and $\Omega\subset \R^d$ be a bounded domain with smooth boundary $\pOmega$. 
	Let $p\leq r$ and $q\geq s$. There exist positive numbers $C$ and $C'$ depending only on $\Omega$ and the differences $p-r$ and $q-s$ such that the following holds. If $T\in\sL(H^{p-(d-1)}_\diamond(\pOmega);H_{\phantom{\diamond}}^q(\pOmega)/\C)$, then $T\in\HS(H^r_\diamond(\pOmega);H^s(\pOmega)/\C)$ and
	\begin{align}
		\norm{T}_{\HS(H^r_\diamond(\pOmega);H^s(\pOmega)/\C)}\leq C \norm{T}_{\sL(H^{p-(d-1)}_\diamond(\pOmega);H_{\phantom{\diamond}}^q(\pOmega)/\C)}\,.
	\end{align}
    Additionally, for any $J\in\N$ and $K\in\N$, it holds that
	\begin{align}
		&\norm{T-\Pi_{JK}T}_{\HS(H^r_\diamond(\pOmega);H^s(\pOmega)/\C)}\\
        &\qquad\qquad\qquad\qquad \leq C' \norm{T}_{\sL(H^{p-(d-1)}_\diamond(\pOmega);H_{\phantom{\diamond}}^q(\pOmega)/\C)}\max\Bigl(J^{-\bigl(\frac{r-p}{d-1}\bigr)}, K^{-\bigl(\frac{q-s}{d-1}\bigr)}\Bigr)\,.\notag
    \end{align}
\end{lemma}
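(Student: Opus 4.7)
My plan is to express the Hilbert--Schmidt norm in the orthonormal basis $\{\varphi_j^{(r)}\}_{j\geq 1}$ of $H^r_\diamond(\pOmega)$ coming from \eqref{eqn:defn_boundary_sobolev_spectral}, and then use spectral calculus for $-\Delta_{\pOmega}$ together with Weyl's law $(1+\lambda_j)\asymp j^{2/(d-1)}$ \cite[Corollary~16, p.~194]{abraham2019statistical} to convert the hypothesized operator norm into a convergent series. Abbreviate the operator norm on the right-hand side of \eqref{eqn:op_to_hs_rn} by $\norm{T}_*$. The identity $\varphi_j^{(r)}=(1+\lambda_j)^{-(r-p+(d-1))/2}\varphi_j^{(p-(d-1))}$ combined with the continuous embedding $H^q/\C\embeds H^s/\C$ (immediate from $q\geq s$ and the spectral definition of the norms) yields the pointwise estimate
\begin{align*}
    \norm{T\varphi_j^{(r)}}_{H^s/\C}\leq \norm{T\varphi_j^{(r)}}_{H^q/\C}\leq \norm{T}_{*}\,(1+\lambda_j)^{-(r-p+(d-1))/2}.
\end{align*}

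For the first assertion, squaring this and summing in $j$ reduces the task to showing that $\sum_{j\geq 1}(1+\lambda_j)^{-(r-p+(d-1))}<\infty$. By Weyl's law this sum is comparable to $\sum_{j\geq 1} j^{-2(r-p)/(d-1)-2}$, which converges since $r\geq p$ forces the exponent to be at most $-2$. The resulting constant depends only on the implicit constants in Weyl's law (hence on $\Omega$) and on $r-p$.

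For the truncation bound, I would use that $\Pi_{JK}$ factors as $P_J^{\mathrm{out}}\circ T\circ P_K^{\mathrm{in}}$, where $P_J^{\mathrm{out}}$ and $P_K^{\mathrm{in}}$ are the orthogonal projections onto the span of the first $J$ and $K$ Laplace--Beltrami eigenfunctions (the underlying subspace being independent of the Sobolev exponent). With $Q_J^{\mathrm{out}}\defeq I-P_J^{\mathrm{out}}$ and $Q_K^{\mathrm{in}}\defeq I-P_K^{\mathrm{in}}$, the elementary splitting
\begin{align*}
    T-\Pi_{JK}T=T\,Q_K^{\mathrm{in}}+Q_J^{\mathrm{out}}\,T\,P_K^{\mathrm{in}}
\end{align*}
separates input and output truncation errors. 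The first summand reduces to the argument above restricted to $j>K$: the tail sum $\sum_{j>K}j^{-2(r-p)/(d-1)-2}\lesssim K^{-1-2(r-p)/(d-1)}$ yields a squared HS norm that decays at least as fast as $K^{-2(r-p)/(d-1)}$. For the second summand I would expand each $T\varphi_k^{(r)}$ in the $L^2(\pOmega)$ eigenbasis and insert the inequality $(1+\lambda_j)^{q-s}\geq (1+\lambda_{J+1})^{q-s}$ in the spectral definition of $\norm{\cdot}_{H^s/\C}^2$; this extracts the prefactor $(1+\lambda_{J+1})^{-(q-s)}\asymp J^{-2(q-s)/(d-1)}$ and leaves $\norm{T\varphi_k^{(r)}}_{H^q/\C}^2$, which the operator norm controls. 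Summing over $k\leq K$ against the convergent series from the first step produces a bound of order $\norm{T}_*\,J^{-(q-s)/(d-1)}$.

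Combining the two summands via $a+b\leq 2\max(a,b)$ finishes the truncation estimate. The main obstacle will be careful bookkeeping of Sobolev exponents and verification that all series involved converge; both requirements are ensured precisely by $r\geq p$ and $q\geq s$, which prevent borderline cases in Weyl's law. Since the lemma is invoked in this paper only through \cref{lem:hs_to_op,lem:op_to_hs} with $J=K$, the precise assignment of the exponents $r-p$ and $q-s$ to $J$ versus $K$ inside the $\max$ is inconsequential for the downstream applications.
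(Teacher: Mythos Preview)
The paper does not prove this lemma itself; it attributes the result to \cite[Lemma~18, p.~195]{abraham2019statistical} and records only the statement in \cref{app:lemmas}. Your argument is the natural spectral computation one would expect to find in that reference: expand in the Laplace--Beltrami eigenbasis, convert regularity gaps into eigenvalue powers, and appeal to Weyl's law to control the resulting series and tails. The approach is correct and essentially canonical for this kind of estimate.

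One remark: your analysis naturally produces the bound $\max\bigl(K^{-(r-p)/(d-1)},\,J^{-(q-s)/(d-1)}\bigr)$, i.e., with the roles of $J$ and $K$ swapped relative to the stated conclusion (since in the paper's definition of $\Pi_{JK}$, the index $J$ truncates the output and $K$ the input). You already note this and correctly observe that every invocation in the paper uses $P_J=\Pi_{JJ}$, so the discrepancy has no downstream effect. It is worth flagging that this is likely a transcription issue in the statement rather than a defect in your argument.
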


Recall that $x_+\defeq\max(x,0)$ is the positive part of a number $x$. The final lemma of this appendix is implied by \cite[Lemma~17, p.~195]{abraham2019statistical}. It can be viewed as a quantitative form of finite-dimensional norm equivalence for Hilbert--Schmidt operators.
\begin{lemma}[finite-dimensional Hilbert--Schmidt norms]\label{lem:ntd_norm_equiv}
    Let $d\geq 2$ and $\Omega\subset \R^d$ be a bounded domain with smooth boundary $\pOmega$. For any real $p$, $q$, $r$, and $s$, there exists $C>0$ depending only on $\Omega$, $p-r$, and $s-q$ such that for all $T\in\HS(H^p_\diamond(\pOmega);H_{\phantom{\diamond}}^q(\pOmega)/\C)$,
	\begin{align}
    \norm{\Pi_{JK}T}_{\HS(H_\diamond^r(\pOmega);H_{\phantom{\diamond}}^s(\pOmega)/\C)}\leq C J^{\frac{(p-r)_+}{d-1}}K^{\frac{(s-q)_+}{d-1}}\norm{\Pi_{JK}T}_{\HS(H_\diamond^p(\pOmega);H_{\phantom{\diamond}}^q(\pOmega)/\C)}\,.
	\end{align}
\end{lemma}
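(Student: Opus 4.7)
The plan is to pass to the eigenbasis of $-\Delta_{\pOmega}$, write $\Pi_{JK}T$ as an explicit finite matrix, and then compare the two Hilbert--Schmidt norms term by term using Weyl's law. Everything reduces to a weighted $\ell^2$ inequality between two truncated double sums.

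First, I would rewrite $\Pi_{JK}T$ in the $L^2(\pOmega)$-orthonormal basis $\{\varphi_j\}_{j\geq 1}$ of Laplace--Beltrami eigenfunctions. Setting $T_{jk}\defeq \ip{T\varphi_k}{\varphi_j}_{L^2(\pOmega)}$ (initially for $T$ densely defined; the general case follows by approximation), a direct computation using $\ip{w}{\varphi_k^{(\alpha)}}_{H^\alpha}=(1+\lambda_k)^{\alpha/2}\ip{w}{\varphi_k}_{L^2}$ collapses the scaling factors in \eqref{eqn:projection_operator} and gives
\begin{align*}
    (\Pi_{JK}T)(w)=\sum_{j=1}^{J}\sum_{k=1}^{K} T_{jk}\, \ip{w}{\varphi_k}_{L^2(\pOmega)}\,\varphi_j\,,
\end{align*}
which is independent of the auxiliary smoothness indices used in the definition of $\Pi_{JK}$, as already noted after \eqref{eqn:projection_operator}. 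This explicit formula is the key starting point.

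Second, I would compute each Hilbert--Schmidt norm by expanding against the orthonormal basis $\{\varphi_k^{(\alpha)}\}_k$ of the input space and using $\norm{\varphi_j}_{H^\beta}^2=(1+\lambda_j)^\beta$. This yields the ``diagonal'' representations
\begin{align*}
    \norm{\Pi_{JK}T}_{\HS(H^r_\diamond(\pOmega);H^s(\pOmega)/\C)}^2
    &=\sum_{k=1}^{K}\sum_{j=1}^{J}(1+\lambda_j)^{s}(1+\lambda_k)^{-r}\abs{T_{jk}}^2\,,\\
    \norm{\Pi_{JK}T}_{\HS(H^p_\diamond(\pOmega);H^q(\pOmega)/\C)}^2
    &=\sum_{k=1}^{K}\sum_{j=1}^{J}(1+\lambda_j)^{q}(1+\lambda_k)^{-p}\abs{T_{jk}}^2\,.
\end{align*}
The desired inequality thus reduces to the pointwise weight comparison
\begin{align*}
    (1+\lambda_j)^{s-q}(1+\lambda_k)^{p-r} \;\lesssim\; J^{\frac{2(s-q)_+}{d-1}} K^{\frac{2(p-r)_+}{d-1}}
\end{align*}
for all $1\leq j\leq J$ and $1\leq k\leq K$, with implicit constants that depend only on $\Omega$ and the exponent differences (so that the roles played by $J,K$ are, up to the paper's convention for which truncation index goes with the input vs.\ the output direction, exactly those in the statement).

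Third, the pointwise bound comes from Weyl's law $\lambda_j\asymp j^{2/(d-1)}$ on the compact $(d-1)$-manifold $\pOmega$ \cite[Corollary~16, p.~194]{abraham2019statistical}, which gives $(1+\lambda_j)\lesssim j^{2/(d-1)}$. A short case analysis on the sign of $s-q$ and of $p-r$ handles both regimes uniformly: if $s-q\geq 0$ then $(1+\lambda_j)^{s-q}\lesssim J^{2(s-q)/(d-1)}$ for $j\leq J$, whereas if $s-q<0$ then $(1+\lambda_j)^{s-q}\leq 1= J^{2(s-q)_+/(d-1)}$; the same dichotomy applies to the $k$-factor. Taking square roots in the resulting inequality for the squared norms yields the claimed estimate.

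The only subtle point, and the one I would double-check most carefully, is the bookkeeping in the first step: matching the index conventions in \eqref{eqn:projection_operator} so that the final exponents $(p-r)_+$ and $(s-q)_+$ attach to the correct truncation parameters $J$ and $K$ as stated. Once this bookkeeping is fixed and Weyl's law is in hand, the rest of the argument is a purely algebraic comparison of two finite weighted sums.
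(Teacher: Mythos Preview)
Your approach is correct and is exactly the natural direct argument. The paper does not actually give its own proof of this lemma; it simply notes that the result ``is implied by \cite[Lemma~17, p.~195]{abraham2019statistical}'' and moves on. Your eigenbasis expansion and Weyl's law comparison is precisely how one would prove that cited result.

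One remark on the bookkeeping you flagged: your computation in fact yields the bound
\[
\norm{\Pi_{JK}T}_{\HS(H^r_\diamond;H^s/\C)}\leq C\, J^{\frac{(s-q)_+}{d-1}}\,K^{\frac{(p-r)_+}{d-1}}\,\norm{\Pi_{JK}T}_{\HS(H^p_\diamond;H^q/\C)},
\]
because in the definition \eqref{eqn:projection_operator} the index $j$ (running to $J$) labels the \emph{output} direction and $k$ (running to $K$) labels the \emph{input} direction. This has $J$ and $K$ interchanged relative to the displayed statement. Since the paper only ever invokes this lemma with $J=K$ (via $P_J=\Pi_{JJ}$ in the proofs of \cref{lem:hs_to_op} and \cref{lem:op_to_hs}), the swap is immaterial for the applications; but your instinct to double-check the indexing was warranted.
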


\section{Numerical experiment details}\label{app:numerics}
This appendix expands upon \cref{sec:numerics} by providing supplementary details about the numerical results, dataset generation, implementation of FNOs, and computing environment used in the experiments.

In this appendix, we take $\Omega\defeq\D$ to be the unit disk and interpret $\T\simeq[0, 2\pi]_{\mathrm{per}}\simeq\partial\D$. For $a<b$, let $\Unif[a,b]$ be the uniform distribution on the interval $[a,b]$. For some $x\in\R^2$ and radius $0<r<1$, define the open ball $B_r(x)\defeq\{y\in\R^2\colon \abs{y-x}<r\}$. Thus, $\D=B_1(0)$. The set of eigenfunctions $\{\varphi_j\}_{j\in\Z_{\geq 0}}$ of the Laplace--Beltrami operator $-\Delta_{\partial\D}$ is the standard real Fourier basis (i.e., sines and cosines) of $L^2(\T;\R)$. For computational convenience, we re-index this basis in terms of complex exponentials as $\{\varphi_j\}_{j\in\Z}\subset L^2(\T;\C)$. That is, $\theta\mapsto\varphi_j(\theta)=\exp(\mathsf{i}j\theta)/\sqrt{2\pi}$ for each $j\in\Z$.

\subparagraph*{\emph{\textbf{Further results and discussion.}}}
We now present supplemental results that support those in \cref{sec:numerics}. To set the notation and conventions, we recall that shaded bands in all figures denote two standard deviations from the mean (excess) error over five independent training runs. The rates in \cref{tab:rates_all} are estimated using nonlinear curve fitting based on the mean expected relative $L^1(\D)$ test errors over the five training runs. In addition to $L^1(\D)$ errors, we also considered the $L^0(\D)$ ``distance'' and Dice score for the shape detection experiments. These are defined by
\begin{align}
    \sfd_{L^0(\D)}(\gamma,\gamma')\defeq \int_{\D} \rchi_{\{\rchi_{\{\gamma(x)>c\}}\neq \rchi_{\{\gamma'(x)>c\}}\}} \dd{x}
\end{align}
and
\begin{align}
    \mathsf{DSC}(\gamma,\gamma')\defeq \frac{2\int_{\D}\rchi_{\{\gamma(x)>c\}}\rchi_{\{\gamma'(x)>c\}}\dd{x}}{\epsilon + \int_{\D}\rchi_{\{\gamma(x)>c\}}\dd{x} + \int_{\D}\rchi_{\{\gamma'(x)>c\}}\dd{x}}\,,
\end{align}
respectively. We take $\epsilon=10^{-8}$ and $c=50$ because the true $\gamma$ takes values in $\{1,100\}$.

The purple lines in \cref{fig:two_phase_noisy,fig:three_phase,fig:lognormal_noisy} are linear least squares fits (on log-log scales) to either noise laws of the form
$\mathrm{Err}_{N,\delta} - e_N=C \delta^{\rho}$ (H\"older power law) or $\mathrm{Err}_{N,\delta} - e_N=C (\log(1/\delta))^{-\rho}$ (logarithmic modulus of continuity), and a sample complexity power law $\mathrm{Err}_{N,\delta} - e_\delta=cN^{-r}$. The parameters $C$, $c$, $\rho$, $e_N$, $e_\delta$, and $r$ are found by nonlinear curve fitting. The terms $e_N$ and $e_\delta$ represent irreducible errors due to fixed sample size or noise level, respectively; they contribute to the saturation of the curves in \cref{fig:two_phase_noisy,fig:three_phase,fig:lognormal_noisy}. We find that the shape detection and three phase inclusion dataset errors are best explained by logarithmic noise laws, while those of the lognormal dataset fit a power law better as in \cref{subfig:noise_power_lognormal_noisy_scaled}. The same conclusions here and in \cref{sec:numerics} hold for noisy training data and \emph{noise-free} test data. Clean test data error still increases as the training data noise level increases, but more mildly as seen in \cref{fig:noise_all_clean}. The corresponding FNOs are regularized by absorbing the noise from the training data.

\begin{figure}[tb]
	\centering
	\begin{subfigure}[]{0.325\textwidth}
		\centering
		\includegraphics[width=\textwidth]{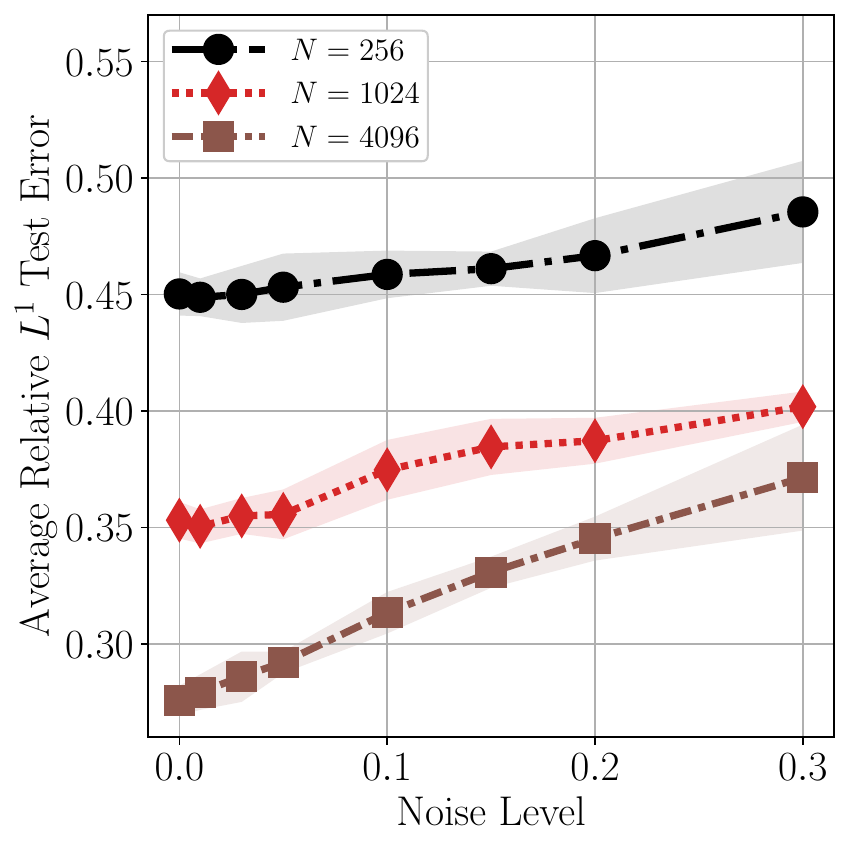}
		\caption{Shape detection}
		\label{subfig:noise_sweep_three_phase_clean}
	\end{subfigure}
	\begin{subfigure}[]{0.325\textwidth}
		\centering
		\includegraphics[width=\textwidth]{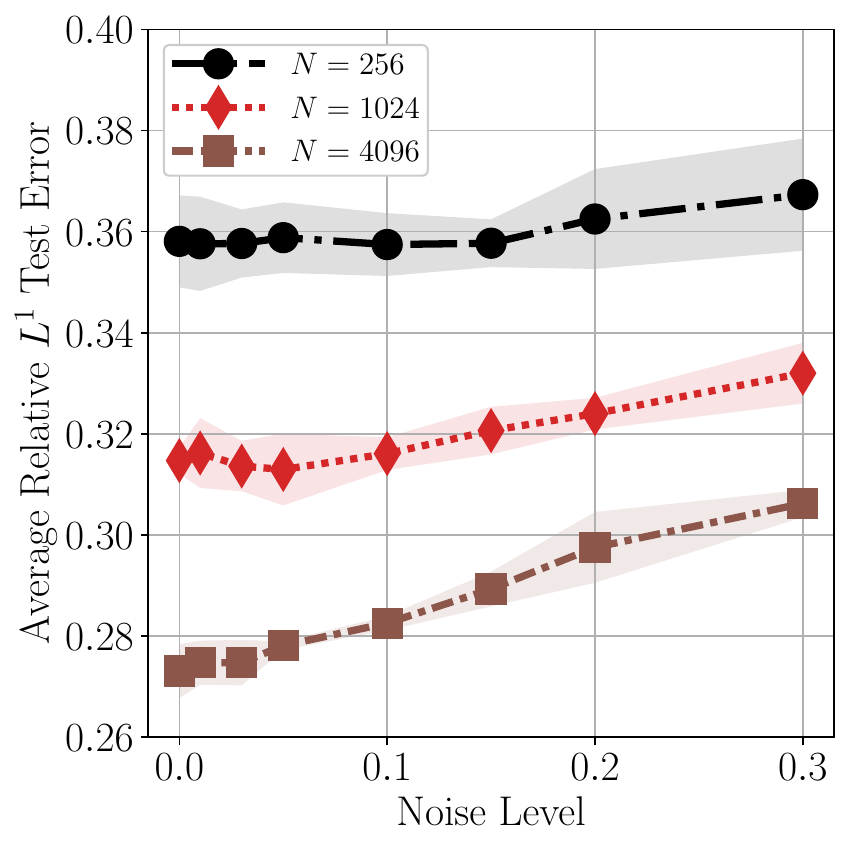}
		\caption{Three phase}
		\label{subfig:noise_log_three_phase_clean}
	\end{subfigure}
    \begin{subfigure}[]{0.32\textwidth}
		\centering
		\includegraphics[width=\textwidth]{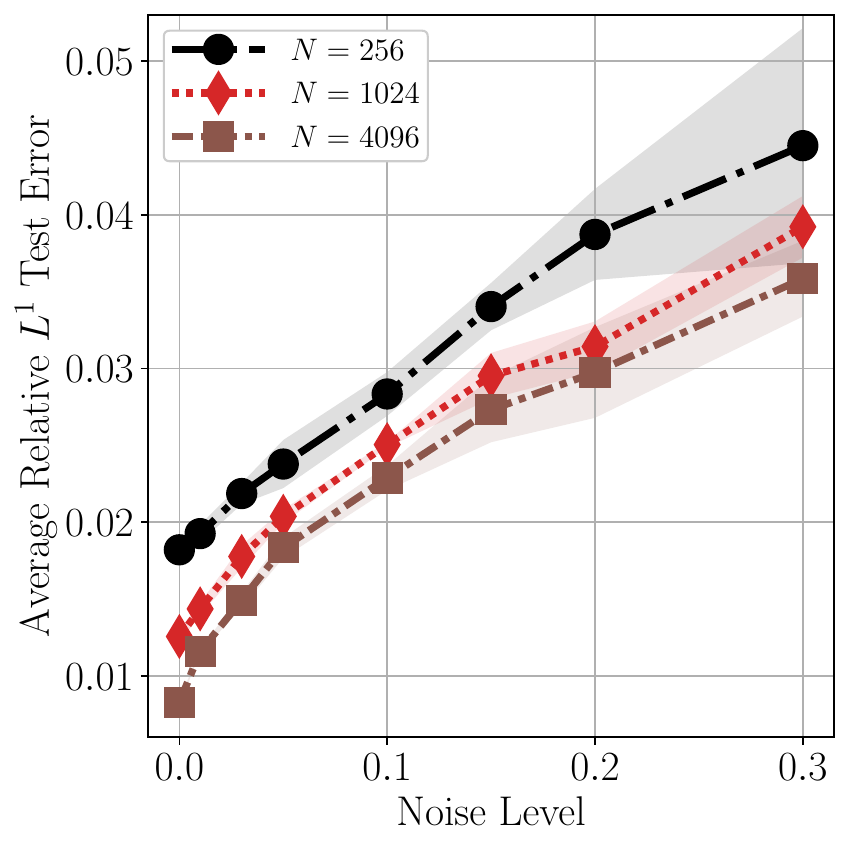}
		\caption{Lognormal}
		\label{subfig:data_sweep_three_phase_clean}
	\end{subfigure}
    \caption{Noise robustness for all three datasets under \emph{clean test data}.}
	\label{fig:noise_all_clean}
\end{figure}

\begin{figure}[tb]
    \centering
    \captionsetup{skip=10pt}
    \includegraphics[width=\textwidth]{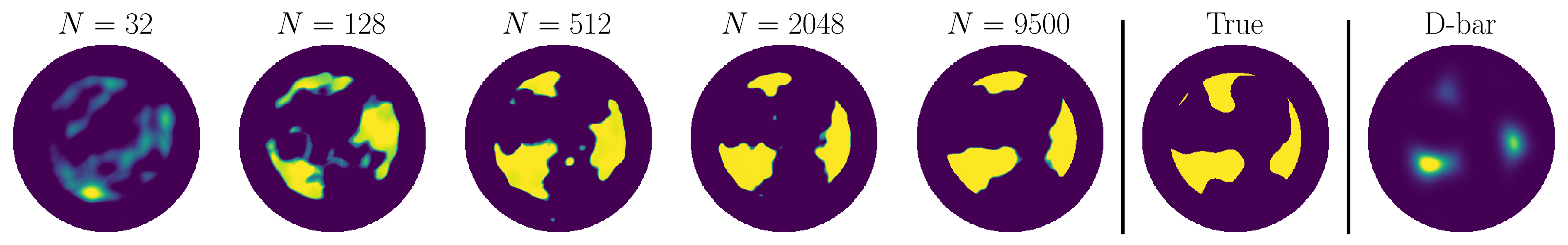}\\[-0.3\baselineskip]
    \includegraphics[width=\textwidth]{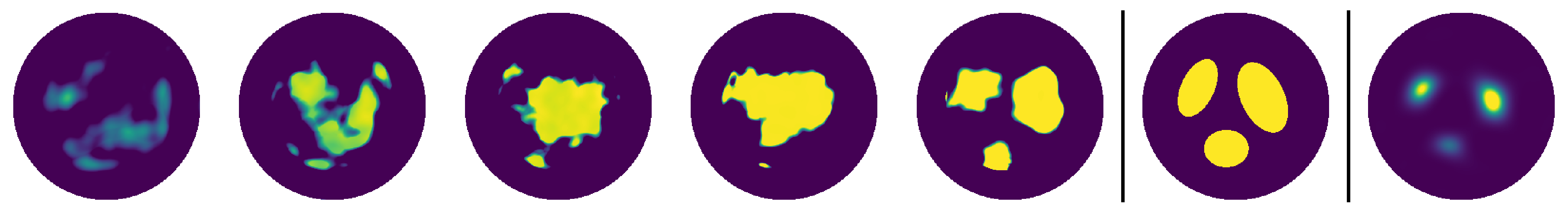}\\[-0.3\baselineskip]
    \includegraphics[width=\textwidth]{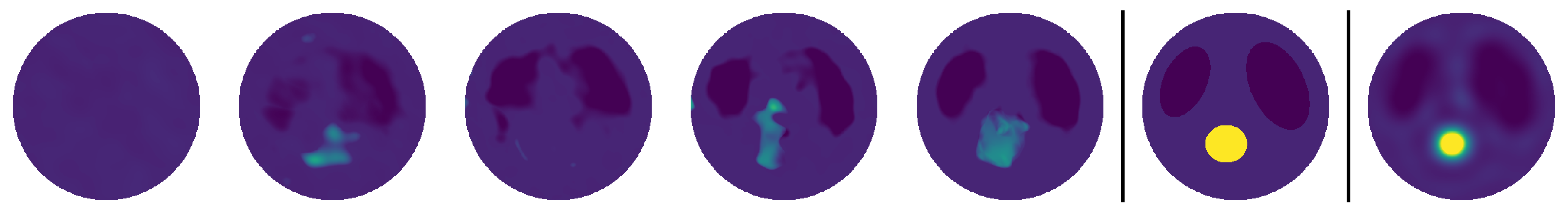}
    \caption{FNO reconstruction of three discontinuous conductivities as training dataset size $N$ increases. D-bar reconstructions are shown in comparison. The data are noise-free.}
    \label{fig:compare_dbar_clean}
\end{figure}

\Cref{fig:compare_dbar_clean} is analogous to \cref{fig:compare_dbar}, but now with both noise-free training and test data instead of just noise-free training data. The first two rows correspond to FNOs trained on the shape detection dataset and the last row to the three phase inclusion dataset. The conductivity in the first row is a sample from the test set, while the latter two are deterministic heart and lungs phantoms that are ``out-of-distribution.'' Due to perfect boundary data (up to discretization error), the D-bar method performs much better than shown in \cref{fig:compare_dbar} and now detects both high and low conductivity inclusions correctly. However, the results are still substantially oversmoothed compared to the crisp FNO reconstructions with large enough $N$.
Both \cref{fig:compare_dbar,fig:compare_dbar_clean} use the bounded noise model \eqref{eqn:kle_2d_boundary} for test data corruption. The regularized D-bar method implementation is discussed later in this appendix.

\begin{figure}[tb!]
    \centering
    \includegraphics[width=\textwidth]{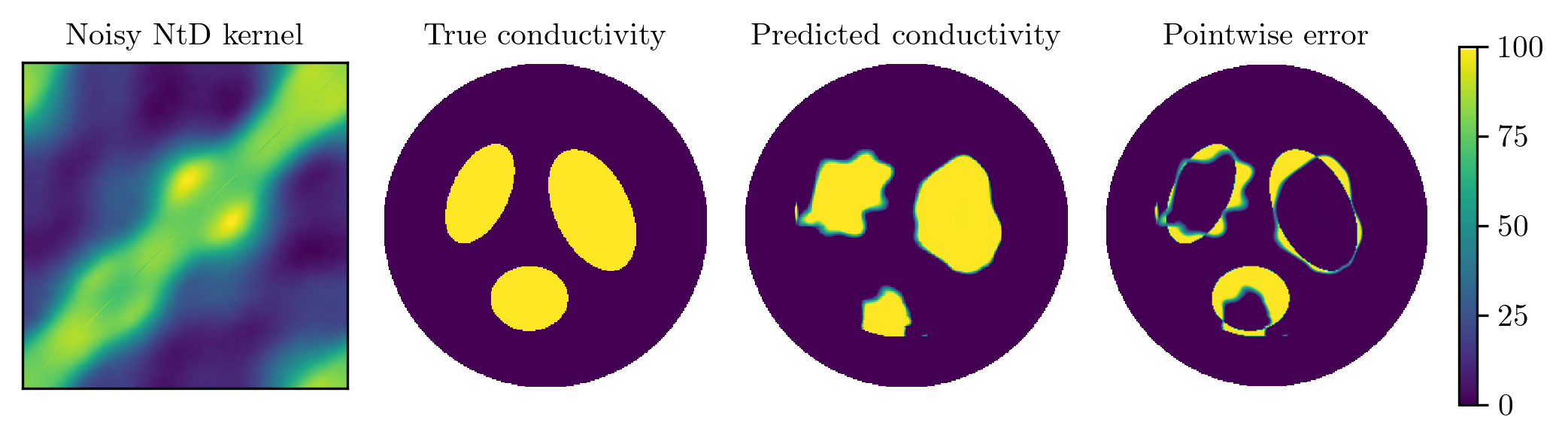}
    \caption{Heart and lungs phantom reconstruction with no training noise and $1\%$ test noise.}
	\label{fig:heart_shape_noisy}
\end{figure}

In \cref{fig:tile_shape,fig:tile_three_phase,fig:tile_lognormal}, we train on Gaussian process noise perturbations to NtD kernel boundary data and test on boundary data corrupted by noise constructed via random series with uniformly distributed coefficients \eqref{eqn:kle_2d_boundary}. This is already a form of input distribution shift that the FNO can easily handle. We further evaluate the out-of-distribution capabilities of the trained FNOs in \cref{subfig:heart_three_phase,fig:compare_dbar,fig:compare_dbar_clean,subfig:heart_three_phase_noisy,fig:heart_shape_noisy} for deterministic heart and lungs phantoms \cite[Chp.~12]{mueller2012linear} unseen during training.

\begin{figure}[tbhp!]
	\centering
    \includegraphics[width=0.99\textwidth]{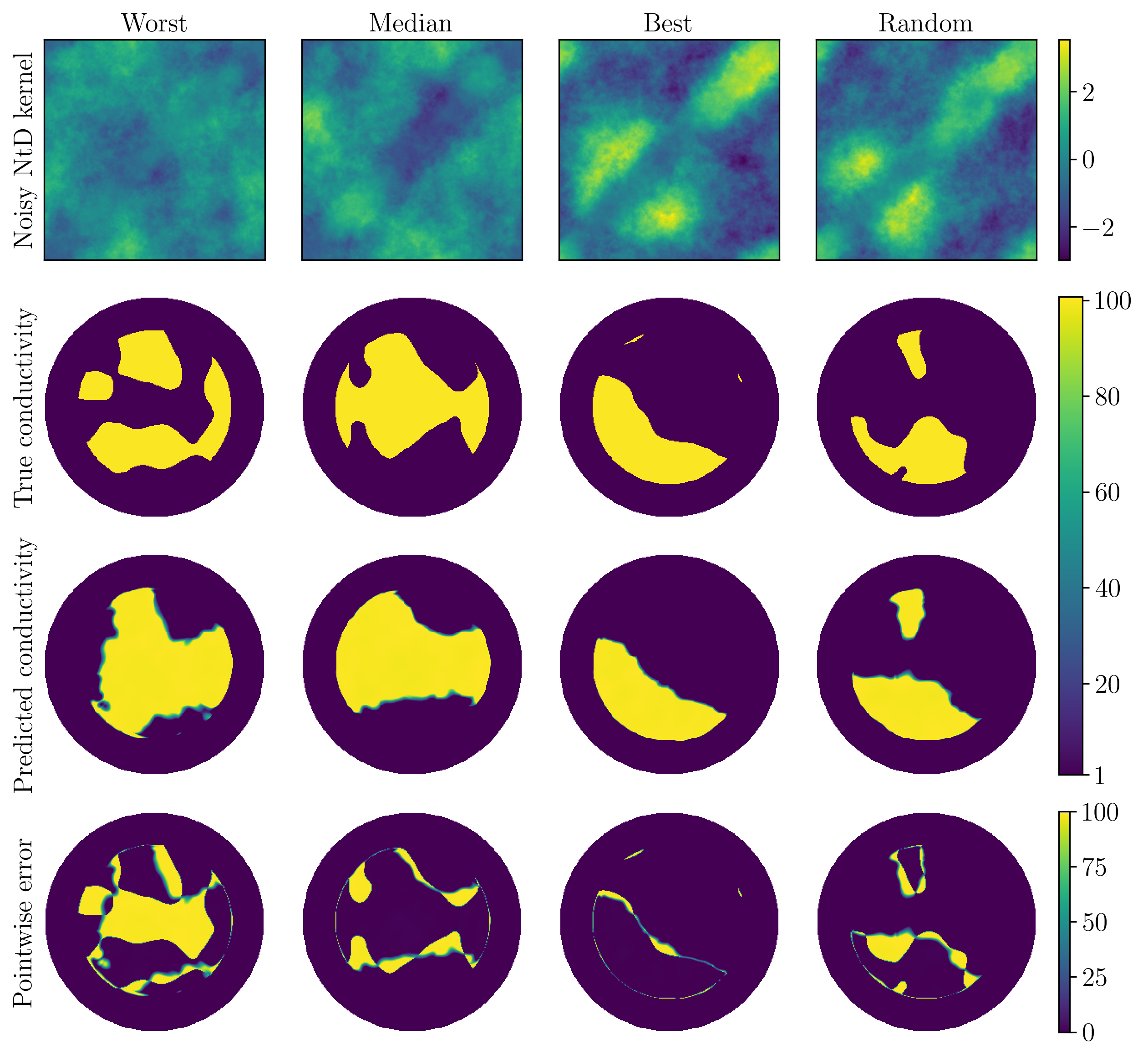}
    \caption{Sample reconstructions from the shape detection dataset, with max, median, and min errors with respect to the $L^0$ ``distance.'' Training noise is $30\%$ and test noise is $20\%$.}
	\label{fig:tile_shape_30noise20}
\end{figure}

We conclude our discussion by visualizing reconstructions obtained with very high training and test data noise levels. 
We also tried training on clean data and testing on highly noisy NtD kernels, but the reconstructions were generally poor and contained numerical artifacts.
\Cref{fig:tile_shape_30noise20} is analogous to \cref{fig:tile_shape} for shape detection, but now with $30\%$ training noise and $20\%$ testing noise. In the top row of noisy NtD kernels, the individual features are almost completely washed out by the severe noise. Nonetheless, reconstructions have acceptable accuracy. Row three columns one and two shows that the reconstructions are regularized: the finer scale cavities in the true conductivities are filled in smoothly. The average test set scores are $0.357$ $L^1(\D)$ error, $0.267$ $L^0(\D)$ distance, and $0.825$ Dice score. The clean test data errors were not much better at $0.348$ $L^1(\D)$ error, $0.258$ $L^0(\D)$ distance, and $0.836$ Dice score. \Cref{tab:shape_scores_all} provides the scores for the four individual reconstructions shown in \cref{fig:tile_shape_30noise20}.

\begin{table}[tb]%
    \captionsetup{width=\textwidth,skip=10pt}
    \caption{Relative errors in $L^1$ norm, ``$L^0$ norm'', or Dice score for the shape detection reconstructions in \cref{fig:tile_shape,fig:tile_shape_30noise20}. Higher Dice scores are better, unlike the norms.}
    \label{tab:shape_scores_all}
    \centering
    \renewcommand{\arraystretch}{1.2}
    \begin{tabular}{llcccc}
        \toprule
        & & \multicolumn{4}{c}{Error of Test Sample} \\
        \cmidrule(l){3-6}
        Loss & Figure & Worst & Median & Best & Random \\
        \midrule
        \multirow{2}{*}{$L^1$} 
            & \cref{fig:tile_shape} &  $0.778$ & $0.244$ & $0.041$ & $0.533$\\
            & \cref{fig:tile_shape_30noise20} &  $1.041$ & $0.329$ & $0.071$ & $0.605$\\
        \midrule
        \multirow{2}{*}{$L^0$} 
            & \cref{fig:tile_shape} &  $0.495$   & $0.187$ & $0.041$ & $0.197$ \\
            & \cref{fig:tile_shape_30noise20} & $0.643$  & $0.252$& $0.054$ & $0.199$ \\
        \midrule
        \multirow{2}{*}{Dice} 
            & \cref{fig:tile_shape} &  $0.522$ & $0.874$ & $0.980$ & $0.953$\\
            & \cref{fig:tile_shape_30noise20} & $0.380$ & $0.834$ & $0.965$ & $0.889$ \\
        \bottomrule
    \end{tabular}
\end{table}

\Cref{fig:tile_three_phase_30noise20} tells a similar story, except this time the regularization due to high training noise actually leads to visually better test set reconstructions than in \cref{fig:tile_three_phase}, at least for the four samples shown in \cref{subfig:tile_three_phase_noisy}. The average relative $L^1(\D)$ error over the whole test dataset is $0.300$. The individual worst, median, best, and random test errors are $0.604$, $0.294$, $0.039$, and $0.348$, respectively. The heart and lung reconstruction in \cref{subfig:heart_three_phase_noisy} is also reasonable given the noise level.

\begin{figure}[ptbh!]
	\centering
    \begin{subfigure}[]{0.99\textwidth}
        \centering
        \includegraphics[width=\textwidth]{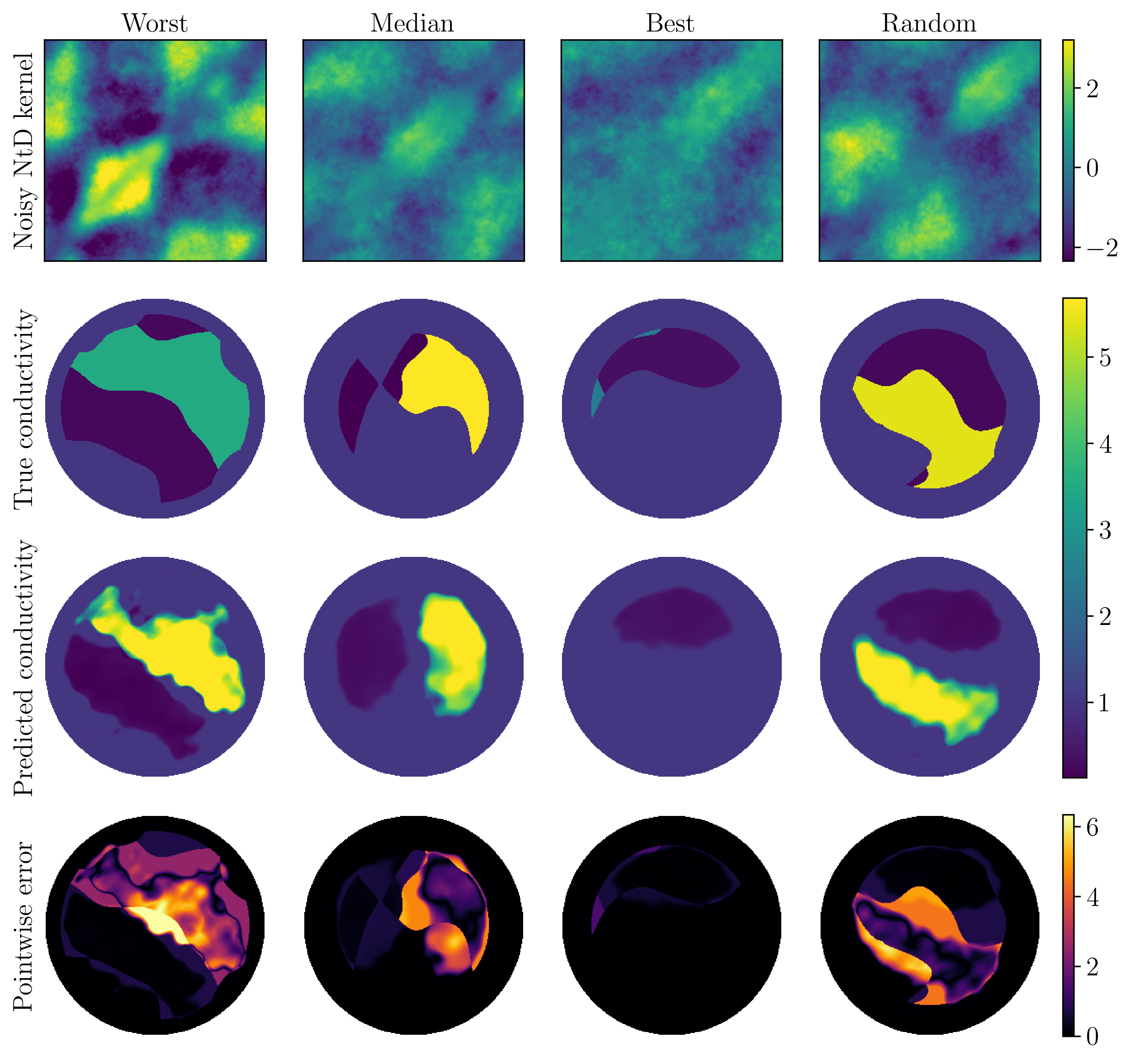}
        \caption{Sample reconstructions from the three phase inclusion dataset subject to high noise levels.}
    	\label{subfig:tile_three_phase_noisy}
    \end{subfigure}
    \begin{subfigure}[]{0.99\textwidth}
        \centering
        \includegraphics[width=\textwidth]{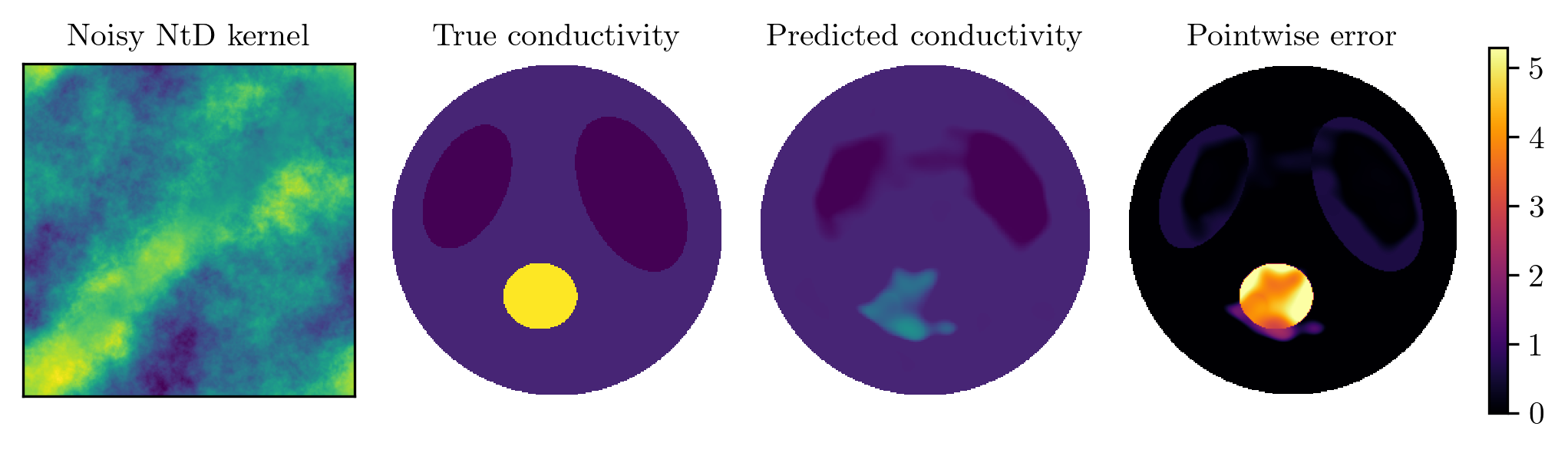}
        \caption{Same as \cref{subfig:tile_three_phase_noisy} for a deterministic heart and lungs phantom.}
        \label{subfig:heart_three_phase_noisy}
    \end{subfigure}
    \caption{Reconstruction of test set samples (top row) and a synthetic heart and lungs phantom (bottom row) subject to $30\%$ training noise and $20\%$ test noise.}
    \label{fig:tile_three_phase_30noise20}
\end{figure}

For the lognormal conductivity dataset, we reduced to $10\%$ training noise and $6\%$ test noise due to its increased sensitivity. \Cref{fig:tile_lognormal_10noise6} contains the results. The top row of noisy NtD kernels does not prevent the FNO from returning accurate reconstructions that capture the large scale features correctly. Pointwise errors are also reasonable and have slightly larger lengthscale features than the analogous row four in \cref{fig:tile_lognormal}. The $L^1$ error over the whole test set is $0.023$. Individually, the worst, median, best, and random test errors are $0.055$, $0.022$, $0.010$, and $0.023$, respectively.

\begin{figure}[tbhp!]
	\centering
    \includegraphics[width=0.99\textwidth]{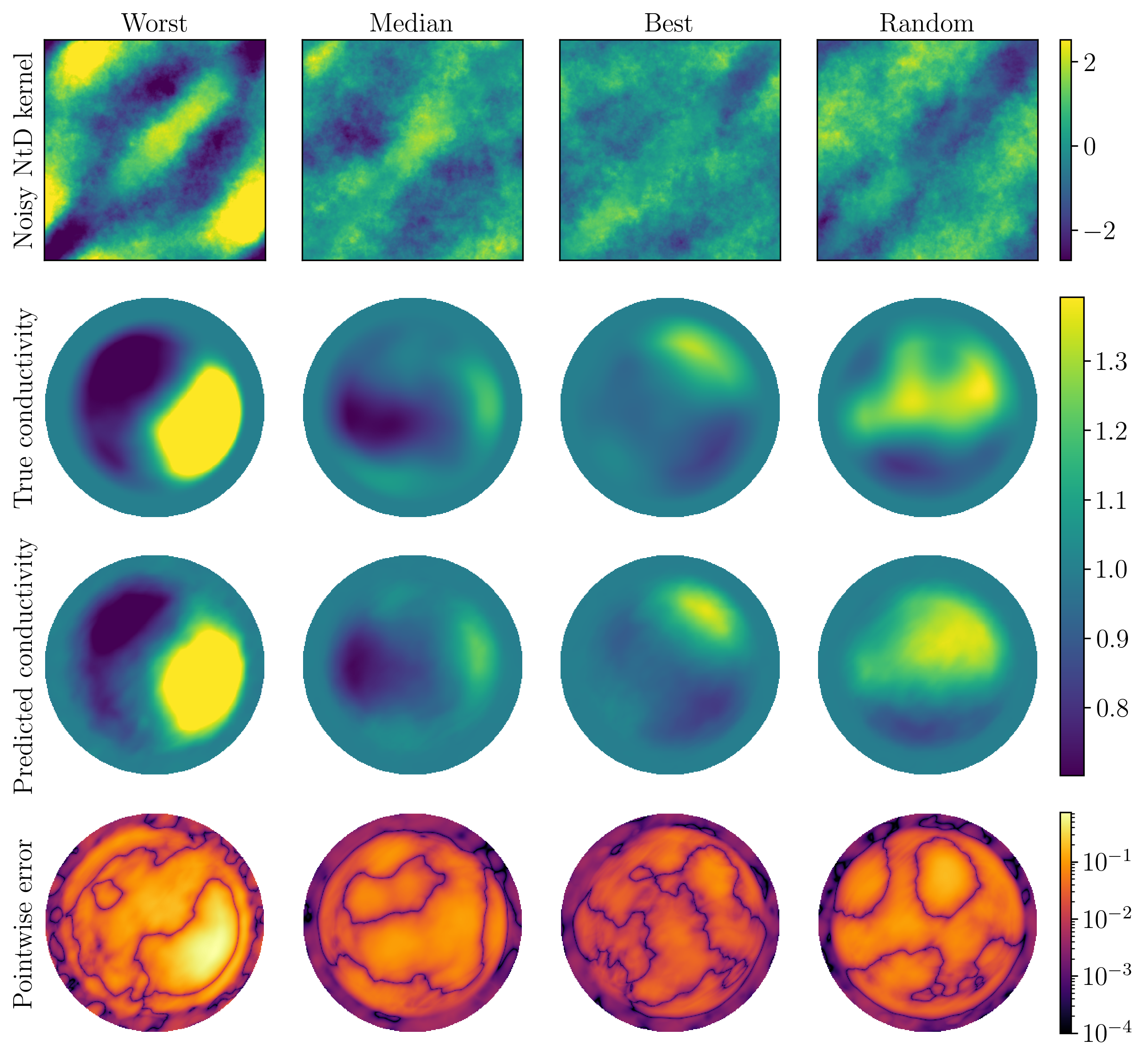}
    \caption{Sample reconstructions from the lognormal conductivity dataset. The training noise is $10\%$ and the test noise is $6\%$.}
	\label{fig:tile_lognormal_10noise6}
\end{figure}

Due to the end-to-end training, the results in \cref{fig:tile_shape_30noise20,fig:tile_three_phase_30noise20,fig:tile_lognormal_10noise6} highlight the ability of the FNO to handle much higher noise levels than traditional inverse solvers.

\subparagraph*{\emph{\textbf{Dataset generation.}}}
Each of the three datasets (hereafter referred to as ``Shape'', ``Three Phase'', and ``Lognormal'', respectively) corresponding to \cref{sec:numerics_shape,sec:numerics_three,sec:numerics_lognormal} is comprised of $10^4$ pairs of conductivities and NtD kernels. The forward map $\gamma\mapsto\ntd$ is implemented with a P1 finite element solver. The conductivities are sampled from the Shape, Three Phase, or Lognormal data distributions (details to follow) on a $256\times 256$ uniform grid representing $(-1,1)^2$, restricted to $\D\subset(-1,1)^2$, and interpolated to the finite element mesh. For each $\gamma$, we approximate $\ntd$ as follows. For every $j\in \sJ_{256}\defeq \{-127,-126,\ldots, 127, 128\}\setminus\{0\}$, solve \eqref{eqn:elliptic} with Neumann data $g=\varphi_j$ to obtain the Dirichlet data $\ntd\varphi_j$. Using these 255 PDE solves, we construct an NtD matrix with entries $\ip{\varphi_j}{\ntd\varphi_k}_{L^2(\T;\C)}$ in Fourier basis coordinates. The kernel $\kappa_\gamma$ is then given by
\begin{align}\label{eqn:app_kernel_from_matrix}
    \kappa_\gamma(\theta,\theta')\defeq \sum_{j\in\sJ_{256}}\sum_{k\in\sJ_{256}}\ip{\varphi_j}{\ntd\varphi_k}_{L^2(\T;\C)}\, \varphi_j(\theta)\overline{\varphi_k(\theta')}
\end{align}
for $(\theta,\theta')\in\T^2$. 

To model noisy boundary data, for an NtD kernel $\kappa_\gamma$ we define the perturbed kernel $\kappa_\gamma^{(\delta)}$ for a noise level $\delta$ by \eqref{eqn:ntd_noise_model}. In particular, the noise is additive, but not independent due to its dependence on $\gamma$ through $\kappa_\gamma$.
The noise factor $\xi\in L^2_\diamond(\T^2;\R)$ in the display \eqref{eqn:ntd_noise_model} is defined in law by the real (or imaginary) part of random series expansions of the form \eqref{eqn:kle_2d_boundary} with the real and imaginary parts of $\{\zeta_{ij}\}$ either i.i.d. $\Unif[-\sqrt{3},\sqrt{3}]$ or $\normal(0,1)$ random variables. Let $\al>1$ and $\tau>0$. The coefficients $\{c_{ij}\}$ in \eqref{eqn:kle_2d_boundary} are defined for $(i,j)\neq (0,0)$ by
\begin{align}
    c_{ij}\defeq \tau^{2\al-2}(4\pi^2(i^2+j^2)+\tau^2)^{-\al}
\end{align}
and $c_{00}=0$. We take $\al=1.5$ and $\tau=10$. This implies that $\supp(\Law(\xi))$ is compact in $L^2(\T^2)$ by the discussion following \eqref{eqn:kle_2d_boundary} in \cref{sec:approx_discuss}. 

We now describe the random field models for the three types of conductivities studied in \cref{sec:numerics}.
Let $\mu_{\tau,\al}=\normal(0,\cC_{\tau,\al})$ be a Gaussian measure on $L^2_\diamond((-1,1)^2)$ with covariance operator
\begin{align}\label{eqn:app_covariance_matern}
    \cC_{\tau,\al}\defeq (\tau/2)^{2\al-2}\bigl(-\Delta + (\tau/2)^2\Id\bigr)^{-\al}\,.
\end{align}
The Laplacian in the preceding display is defined with homogeneous Neumann boundary conditions.
The operator $\cC_{\tau,\al}$ is diagonalized in the Fourier cosine basis. Thus, samples from $\mu$ can be generated efficiently using the FFT; see \cite[Sec.~4]{nelsen2024operator}. Let $\rchi_A$ denote the indicator function of a set $A$ and $A^\comp\defeq (-1,1)^2\setminus A$ be the complement of $A$ with respect to $(-1,1)^2$. The three conductivity distributions are given as follows.

\begin{enumerate}[label=(\roman*),leftmargin=1.75\parindent,topsep=1.67ex,itemsep=0.5ex,partopsep=1ex,parsep=1ex]
    \item \sfit{(shape detection)} Fix $\tau=20$, $\al=4.5$, and $r=0.7$.
    Let $u\sim\mu_{\tau,\al}$. Let $U\defeq {\{x\in(-1,1)^2\colon u(x)\geq 0\}}$ be the induced zero super level set. Define
    \begin{align}\label{eqn:app_gamma_shape}
        \gamma\defeq \Bigl(\bigl(100\,\rchi_{U} + \rchi_{U^\comp}\bigr)\rchi_{B_r(0)} + \rchi_{B_r(0)^\comp}\Bigr)\Big|_{\D}
    \end{align}
    to be a draw from the Shape conductivity data distribution. The indicators involving $\Omega'\defeq B_r(0)$ ensure that $\gamma\in\Gamma'$. It follows that $\gamma$ takes values in the set $\{1, 100\}$ almost surely, and thus the contrast ratio equals $100$.

    \item \sfit{(three phase inclusions)} Independently draw $\tau\sim\Unif[12,18]$, $\al\sim\Unif[4,5]$, $r\sim\Unif[0.65,0.9]$, $c_1\sim\Unif[2,10]$, and $c_2\sim\Unif[0.1,0.5]$. Let $u_1$ and $u_2$ be independent draws from $\mu_{\tau,\al}$. Let $U_i\defeq \{x\in(-1,1)^2\colon u_i(x)\geq 0\}$ for $i\in\{1,2\}$ be the induced zero super level sets. Define
    \begin{align}\label{eqn:app_gamma_three_phase}
        \gamma\defeq \Bigl(\bigl(c_1\rchi_{U_1\cap U_2} + c_2\rchi_{U_1\cap U_2^\comp} + \rchi_{U_1^\comp}\bigr)\rchi_{B_r(0)} + \rchi_{B_r(0)^\comp}\Bigr)\Big|_{\D}
    \end{align}
    to be a draw from the Three Phase conductivity data distribution. The term in inner parentheses creates a vector level set field with three values $c_1>0$, $c_2>0$, and $1$ (the background value). The indicators involving $\Omega'\defeq B_r(0)$ from \eqref{eqn:compact_support_set} ensure that $\gamma\in\Gamma'$ in \eqref{eqn:conductivity_set_prelim}. It follows that $\gamma$ takes values in the (random) set $\{1, c_1, c_2\}$ almost surely. The contrast ratio is bounded above by $100$ almost surely.

    \item \sfit{(lognormal conductivities)} For $w\geq 0$, Define the cutoff function~\cite{miller2006applied} $\rho_{\lambda, r_{\pm}}(\slot;w)\colon [0,1]\to [0,1]$ by
    \begin{align}
        t\mapsto \rho_{\lambda, r_{\pm}}(t;w)\defeq
        \begin{cases}
           1  & \text{if }\, 0\leq t\leq r_{-} \,,\\
                      \frac{1+w}{2} + \frac{1-w}{2}\tanh\Bigl(\frac{(t-r_{-})^{-1} + (t-r_+)^{-1}}{\lambda}\Bigr)  & \text{if }\, r_{-}<t\leq r_+\,,\\
        0\, , & \text{otherwise}\,.
        \end{cases}
    \end{align}
    Independently draw $\tau\sim\Unif[7,9]$, $\al\sim\Unif[3,4]$, $r_-\sim\Unif[0.50,0.55]$, $r_+\sim\Unif[0.85,0.95]$, and $\lambda\sim \Unif[7.5,8.5]$. Draw $u\sim \mu_{\tau,\al}$ independently. Let 
    \begin{align}
        v\defeq \exp(u)\rchi_{B_{r_+}(0)} +\rchi_{B_{r_+}(0)^\comp} 
    \end{align}
    be defined pointwise, which is well-defined by a Sobolev embedding because $\al\geq 3>1=d/2$ almost surely. Let the function $\gamma$ defined for each $x\in\D$ by
    \begin{align}\label{eqn:app_gamma_lognormal}
        \gamma(x)\defeq v(x) \rchi_{(B_{r_+}(0)\setminus B_{r_-}(0))^\comp}(x) + v(x)\rho_{\lambda, r_{\pm}}\bigl(\abs{x};\tfrac{1}{v(x)}\bigr)\rchi_{(B_{r_+}(0)\setminus B_{r_-}(0))}(x)
    \end{align}
    be a draw from the Lognormal conductivity data distribution. The cutoff function ensures that $\gamma\equiv 1$ in a neighborhood of the boundary of the disk.
\end{enumerate}
Conductivities $\gamma$ drawn from the Shape and Three Phase data distributions belong to $\BV(\D)$ almost surely. Indeed, excursion sets of smooth enough Gaussian processes have finite perimeter \cite{adler2007random}. The fields $\gamma$ are linear combinations of excursion set indicator functions. The perimeter of an excursion set equals the total variation of its indicator \cite[Chp.~14]{leoni2017first}. However, such an argument does not guarantee uniform BV norm bounds as in \eqref{eqn:conductivity_set}. This is an interesting technical point to further explore.

\subparagraph*{\emph{\textbf{Implementation details.}}}
We use a standard implementation of FNO \cite{li2020fourier,huang2025operator}. The series in \eqref{eqn:fno_layer} is discretized by truncating to $J$ terms per dimension. The hyperparameter choices include $L=2$ layers, $J=12$ terms for a total of $144$ retained Fourier modes, ReLU activation for layer one and identity activation for layer two, and hidden channel width of $d_\mathrm{c}=48$. The final MLP defining $\cQ$ in \eqref{eqn:fno_torus} has width 256 and ReLU activation. The lifting layer $\cS$ is pointwise affine. These architectural choices are obtained from a small hyperparameter grid search. In particular, the small layer size of $L=2$ helps to reduce overfitting in this ill-posed inverse problem. We use standard input grid concatenation and zero padding to and from the latent torus $\T^2$. The total FNO parameter count is approximately 2.7 million.

We train the FNOs using stochastic gradient descent with batch size 32 and the AdamW optimizer with initial learning rate $8\times 10^{-3}$ and weight decay of $10^{-4}$. The learning rate decays according to a standard cosine annealing scheduler over 250 total epochs. The loss function is the relative $L^1(\D)$ loss defined in \eqref{eqn:erm} with $\epsilon\defeq 10^{-8}$. We enforce that the FNO map into functions defined on domain $\D$ by masking $(-1,1)^2$. All loss calculations use this mask to assign zero error on $(-1,1)^2\setminus\D$.

The test set is fixed with $400$ samples. We also hold out $100$ fixed samples as validation data to select the best model (over $250$ training epochs) for testing. All models are trained on input and output resolution $128\times 128$ and tested on resolution $256\times 256$ to avoid an inverse crime \cite{kaipio2005statistical}. This resolution transfer is trivial with the FNO due to its resolution invariance. For both the training and test data, we use standard pointwise empirical mean and variance data standardization for the input NtD kernel functions and no normalization for the output conductivities. All visualizations show \emph{normalized} NtD kernels to emphasize sample diversity.

Our implementation of the D-bar method used in \cref{fig:compare_dbar,fig:compare_dbar_clean} is based on publicly available MATLAB code \cite[Chp.~15]{mueller2012linear}. We discretize the unit circle with $128$ equally-spaced grid points. We discretize the complex plane on a $128\times 128$ grid and restrict to the disk of radius $10$. The NtD maps are discretized using Fourier modes $j\in\mathcal{J}_{128}$. The D-bar equation and the reconstructions are performed on $256\times 256$ resolution grids. To regularize, we truncate the scattering transform to radius 4.5 (7.7) for the noisy (resp., clean) shape detection test sample, 4.9 (7.9) for the noisy (resp., clean) shape detection heart and lungs phantom, and 5.2 (9.1) for the noisy (resp., clean) three phase heart and lungs phantom.
Each D-bar reconstruction requires one to two hours of wall-clock time on a $256\times 256$ grid. These timings could be improved by parallelizing the GMRES real-linear D-bar solves via batching over points $x\in\Omega$.

\subparagraph*{\emph{\textbf{Computational resources.}}}
The majority of all computations are performed on a machine equipped with an NVIDIA GeForce RTX 4090 GPU (24 GB VRAM) running CUDA version 12.4, Intel i7-10700K CPU (8 cores/16 threads) running at 4.9 GHz, and 64 GB of DDR4-3200 CL16 RAM. 
We generate data in MATLAB (R2019b, Update 6) using the Partial Differential Equation Toolbox and Image Processing Toolbox.
The data generation for shape detection and three phase inclusions each required 168 hours of wall-clock time on a finite element mesh with 262144 P1 elements and 131585 nodes. The lognormal conductivity data generation required 42 hours of wall-clock time on a finite element mesh with 65536 P1 elements and 33025 nodes.
All FNO training experiments are performed in Python using the PyTorch framework in \texttt{float32} single precision.
For each of the three datasets, the total training and evaluation time for the results presented in \cref{sec:numerics} is 48 hours of wall-clock time when accelerated with one NVIDIA GeForce RTX 4090 GPU.

\end{appendices}

\bibliography{references}

\end{document}